\title{Hyperbolic and Parabolic Unimodular Random Maps}
\author{Omer Angel \quad Tom Hutchcroft \quad Asaf Nachmias \quad Gourab Ray}
\date{\small \today}
\crefname{theorem}{Theorem}{Theorems}
\crefname{thm}{Theorem}{Theorems}
\crefname{mainthm}{Theorem}{Theorems}
\crefname{lemma}{Lemma}{Lemmas}
\crefname{lem}{Lemma}{Lemmas}
\crefname{remark}{Remark}{Remarks}
\crefname{prop}{Proposition}{Propositions}
\crefname{defn}{Definition}{Definitions}
\crefname{corollary}{Corollary}{Corollaries}
\crefname{conjecture}{Conjecture}{Conjectures}
\crefname{question}{Question}{Questions}
\crefname{chapter}{Chapter}{Chapters}
\crefname{section}{Section}{Sections}
\crefname{figure}{Figure}{Figures}
\theoremstyle{plain}
\newtheorem{mainthm}{Theorem}
\newtheorem{thm}{Theorem}[section]
\newtheorem*{thm*}{Theorem}
\newtheorem{lemma}[thm]{Lemma}
\newtheorem{lem}[thm]{Lemma}
\newtheorem{corollary}[thm]{Corollary}
\newtheorem{prop}[thm]{Proposition}
\newtheorem{conjecture}[thm]{Conjecture}
\newtheorem{question}[thm]{Question}
\theoremstyle{definition}
\newtheorem{example}[thm]{Example}
\theoremstyle{remark}
\newtheorem*{remark}{Remark}
\numberwithin{equation}{section}
\titleformat{\section}{\normalfont\Large}{\thesection}{1em}{}
\titleformat{\subsection}{\normalfont\bfseries}{\thesubsection}{1em}{}
\titleformat{\subsubsection}{\normalfont\bfseries}{\thesubsubsection}{1em}{}
\titleformat{\paragraph}[runin]{\bfseries}{\theparagraph}{}{}
\titlespacing*{\section}{0pt}{3.5ex plus 1ex minus .2ex}{2.3ex plus .2ex}
\titlespacing*{\subsection}{0pt}{3.25ex plus 1ex minus .2ex}{1.5ex plus .2ex}
\titlespacing*{\subsubsection}{0pt}{3.25ex plus 1ex minus .2ex}{1.5ex plus .2ex}
\titlespacing*{\paragraph}{0pt}{1ex plus .2ex minus .2ex}{1.5ex plus .2ex}
\renewcommand{\P}{\mathbb P}
\newcommand{\E}{\mathbb E}
\newcommand{\C}{\mathbb C}
\newcommand{\R}{\mathbb R}
\newcommand{\Z}{\mathbb Z}
\renewcommand{\H}{\mathbb H}
\newcommand{\N}{\mathbb N}
\newcommand{\F}{\mathfrak F}
\newcommand{\eps}{\varepsilon}
\newcommand{\eqd} {\overset{d}{=}}
\newcommand{\dual}{\dagger}
\newcommand{\Ffin}{F_{\mathrm{fin}}}
\DeclareMathOperator{\genus}{genus}
\newcommand{\cE}{\mathcal E}
\newcommand{\cG}{\mathcal G}
\newcommand{\cM}{\mathcal M}
\newcommand{\cR}{\mathcal R}
\newcommand{\sA}{\mathscr A}
\newcommand{\sE}{\mathscr E}
\newcommand{\bbH}{\mathbb H}
\newcommand{\bbK}{\mathbb K}
\newcommand{\bbX}{\mathbb X}
\renewcommand{\H}{\mathbb H}
\newcommand{\FUSF}{\mathsf{FUSF}}
\newcommand{\WUSF}{\mathsf{WUSF}}
\newcommand{\FMSF}{\mathsf{FMSF}}
\newcommand{\WMSF}{\mathsf{WMSF}}
\newcommand{\UST}{\mathsf{UST}}
\newcommand{\Prev}{\P_\mathrm{rev}}
\newcommand{\Reff}{{\mathcal R}_\mathrm{eff}}
\DeclareMathOperator{\ang}{ang}
\DeclareMathOperator{\area}{area}
\DeclareMathOperator{\EEL}{EEL}
\newcommand{\asaf}[1]{{#1}}
\newcommand{\tom}[1]{{ #1}}
\newcommand{\angl}{{\rm ang}}
\begin{document}

\begin{abstract}
  We show that for infinite planar unimodular random rooted maps, many
  global geometric and probabilistic properties are equivalent, and are
  determined by a natural, local notion of average curvature. This
  dichotomy includes properties relating to amenability, conformal
  geometry, random walks, uniform and minimal spanning forests, and
  Bernoulli bond percolation. We also prove that every simply connected
  unimodular random rooted map is sofic, that is, a Benjamini-Schramm
  limit of finite maps.
\end{abstract}

\maketitle  

\vspace{-1cm}

\section{Introduction}

In the classical theory of Riemann surfaces, the Uniformization Theorem
states that every simply connected, non-compact Riemann surface is
conformally equivalent to either the plane or the disc, which are
inequivalent to each other by Liouville's theorem. The dichotomy
provided by this theorem manifests itself in several different ways,
relating to analytic, geometric and probabilistic properties of
surfaces.  In particular, if $S$ is a simply connected, non-compact
Riemann surface, then either
\begin{quote}
  \textbf{$S$ is parabolic:} it is conformally equivalent to the plane,
  admits a compatible Riemannian metric of constant curvature $0$, does not
  admit non-constant bounded harmonic functions, and is recurrent for
  Brownian motion,
\end{quote}
or else
\begin{quote}
  $S$ \textbf{is hyperbolic:} it is conformally equivalent to the disc,
  admits a compatible Riemannian metric of constant curvature $-1$, admits
  non-constant bounded harmonic functions, and is transient for Brownian
  motion.
\end{quote}

In the 1990's, a discrete counterpart to this dichotomy began to develop in
the setting of bounded degree planar graphs \cite{beardon1991circle,HeSc,HS93,BS96a,BS96b}.  A major milestone in this theory was the seminal
work of He and Schramm \cite{HeSc,HS93}, who studied \emph{circle
  packings} of infinite triangulations of the plane.  They proved that
every infinite triangulation of the plane can be circle packed in either
the unit disc or in the plane, but not both.  A triangulation is called
\textbf{CP hyperbolic} or \textbf{CP parabolic} accordingly.
He and Schramm also connected the circle packing type to isoperimetric
and probabilistic properties of the triangulation, showing in particular
that, in the bounded degree case, CP parabolicity is equivalent to the
recurrence of simple random walk. Later, Benjamini and Schramm
\cite{BS96a,BS96b} provided an analytic aspect to this dichotomy,
showing that every bounded degree, infinite planar graph admits
non-constant bounded harmonic functions if and only if it is transient
for simple random walk, and that in this case the graph also admits
non-constant bounded harmonic functions of finite Dirichlet energy.
Most of this theory fails without the assumption of bounded degrees, as
one can easily construct pathological counter-examples to the theorems
above.

The goal of this paper is to develop a similar theory for
\emph{unimodular random rooted maps}, without the assumption of bounded
degree. In our earlier work \cite{AHNR15}, we studied circle packings
of, and random walks on, random plane triangulations of unbounded
degree.  In this paper, we study many further properties of unimodular
random planar maps, which we do not assume to be triangulations. Our
results also have consequences for unimodular random maps that are not
planar, which we develop in \cref{Sec:multiplyconnected}.  Our main
result may be stated informally as follows; see \cref{thm:dichotomy} for
a complete and precise statement.

\begin{thm*}[The Dichotomy Theorem]
  Every infinite, planar, unimodular random rooted planar map is either
  hyperbolic or parabolic.  The map is hyperbolic if and only if its
  average curvature is negative and is parabolic if and only if its
  average curvature is zero.  The type of a unimodular random rooted map
  determines many of its properties.
\end{thm*}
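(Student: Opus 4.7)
The plan is to organize everything around a single scalar invariant, the \emph{average curvature}, defined for a unimodular random rooted map $(M,\rho)$ by
\[
K(M,\rho) = \E\!\left[2\pi - \sum_{f \ni \rho} \frac{(\deg(f)-2)\pi}{\deg(f)}\right],
\]
where the sum runs over faces incident to the root. By a unimodular Gauss-Bonnet argument — apply the classical Euler identity to exhaustions by finite submaps and pass to the limit using the mass transport principle — one shows $K(M,\rho) \leq 0$ almost surely, with only the two cases $K=0$ (to be identified with parabolic) and $K<0$ (to be identified with hyperbolic) possible for infinite maps. The informal theorem then asserts that this sign controls the map's conformal, random-walk, spanning-forest, and percolation behaviour simultaneously.

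The main technical engine is a canonical embedding. Extending the strategy of \cite{AHNR15}, I would first reduce to triangulations by adding extra vertices inside each face (keeping track of the bookkeeping via a mass transport so that unimodularity is preserved), and then circle-pack the resulting triangulation. By a He-Schramm type argument the packing lives either in $\C$ or in the unit disc $\mathbb{D}$, and unimodularity together with the no-atoms arguments from \cite{AHNR15} force the type to be deterministic on each ergodic component. The matching of this type with the sign of $K$ is obtained by computing the angle sum at a typical vertex in the embedding and applying mass transport to trade local angle defects for global geometry.

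From the canonical embedding one reads off the dichotomy's consequences case by case. In the parabolic case $K=0$ one combines recurrence of the walk (via the classical He-Schramm extremal length argument, now in its unimodular form) with Liouville-type arguments to rule out non-constant bounded harmonic functions, then deduces $\WUSF=\FUSF$ being a one-ended tree and standard critical behaviour of Bernoulli percolation. In the hyperbolic case $K<0$ the packing in $\mathbb{D}$ gives transience and boundary convergence of the walk, nontriviality of the Poisson boundary, separation $\WUSF\neq\FUSF$ (each component has infinitely many ends in $\FUSF$), and an infinite-cluster regime at $p_c$ in the sense of Benjamini-Schramm. Each implication is packaged as a dedicated mass transport replacing the classical amenability/isoperimetric arguments.

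The principal obstacle is discarding the bounded-degree hypothesis that underlies essentially all of He-Schramm and Benjamini-Schramm. Standard circle-packing tools such as the Ring Lemma are simply false with unbounded degrees, so potentially wild local geometry must be controlled using unimodularity alone — via tail bounds on $\deg(\rho)$ and carefully chosen transport functions that integrate degree and curvature simultaneously. A secondary challenge is that each listed equivalence (conformal type, walk behaviour, spanning forests, percolation, amenability) must be threaded through the same invariant $K$, rather than deduced from one another as in the bounded-degree literature; this forces the proof into a long but parallel sequence of mass-transport arguments, each establishing one property within the parabolic or hyperbolic class.
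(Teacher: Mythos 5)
Your proposal rests on two load-bearing steps that do not hold up. First, the opening claim that $\bbK(M,\rho)\le 0$ follows from ``Euler's identity on exhaustions plus a mass-transport limit'' is not a proof: finite planar maps have \emph{positive} total curvature ($4\pi$ for genus zero), boundary contributions in an exhaustion do not vanish, and the average curvature is only upper semicontinuous under local limits --- indeed Benjamini--Schramm limits of finite planar maps satisfy $\bbK\ge 0$, so a naive limiting argument points in the wrong direction. In the paper the inequality $\bbK\le 0$ is itself a theorem, obtained from the identity $\E[\deg_\F(\rho)]=2-\frac{1}{\pi}\bbK(M,\rho)$ for the free uniform (and minimal) spanning forest, proved via planar duality ($\F^\dagger$ is the wired forest of $M^\dagger$, whose expected root degree is $2$), together with stochastic domination of $\WUSF$ by $\FUSF$; connectivity of the FUSF (via one-endedness of WUSF components in the transient dual, including a variant of Wilson's algorithm when the dual is not locally finite) is what ties this invariant to the other properties. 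Second, your main engine --- triangulate the faces and circle-pack --- is exactly the route the paper explains cannot be used: most of the seventeen properties (free vs.\ wired spanning forests, $p_c$ versus $p_u$, harmonic Dirichlet functions, recurrence of subtrees, etc.) are not invariant under adding vertices inside faces or passing to subgraphs, and the unbounded-degree circle-packing theory of the earlier work both covers only a few of these properties and requires moment assumptions stronger than $\E[\deg(\rho)]<\infty$. The paper's conformal-type equivalence is instead proved directly on a Riemann surface built by gluing polygons, using Delaunay/Poisson--Delaunay couplings and hyperbolic area estimates, not a circle packing of an auxiliary triangulation.

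Beyond the architecture, several asserted conclusions are simply false and would break the statement you are proving. A parabolic unimodular map need \emph{not} be recurrent (the paper stresses that recurrence is not part of the dichotomy; only bounded-degree subgraphs and subtrees are recurrent, equivalently the map is VEL parabolic), so ``recurrence via He--Schramm extremal length'' cannot be an ingredient of the parabolic case under the hypothesis $\E[\deg(\rho)]<\infty$. In the hyperbolic case there is \emph{no} infinite cluster at $p_c$ for invariantly nonamenable unimodular graphs --- the correct conclusion is $p_c<p_u$ --- and the free uniform spanning forest is connected almost surely in \emph{both} phases, so ``$\FUSF\neq\WUSF$ because each FUSF component has infinitely many ends'' misstates how the two forests differ (the difference is detected by the expected degree exceeding $2$). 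Finally, the theorem concerns planar, not merely simply connected, maps; the multiply-connected case (cylinder versus Cantor-tree topologies, handled via the classification of the associated surface and the universal cover) needs a separate argument that your outline does not address.
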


The many properties we show to be determined by the type of the map are
far-reaching, relating to aspects of the map including amenability,
random walks, harmonic functions, spanning forests, Bernoulli bond
percolation, and the conformal type of associated Riemann surfaces.  The
seeds of such a dichotomy were already apparent in \cite{AHNR15}, in
which we proved that a unimodular random rooted plane triangulation is
CP parabolic almost surely if and only if the expected degree of the
root is six (which is equivalent to the average curvature being zero),
if and only if the triangulation is \textbf{invariantly amenable} -- a
notion of amenability due to Aldous and Lyons \cite{AL07} that is
particularly suitable to unimodular random rooted graphs. A notable
property that is \emph{not} a part of \cref{thm:dichotomy} is recurrence
of the random walk: while every hyperbolic unimodular random rooted map
is transient, not every parabolic unimodular random rooted map is
recurrent. (\cref{thm:dichotomy} can be combined with the work of
Gurel-Gurevich and the third author \cite{GGN13} to deduce that a
parabolic unimodular random planar map is recurrent under the additional
assumption that the degree of the root has an \emph{exponential tail}.)

A \textbf{map} is a \emph{proper} (i.e., locally finite) embedding of a
graph into an oriented surface viewed up to orientation preserving
homeomorphisms of the surface.  (Other definitions extend to
non-orientable maps, but we shall not be concerned with those here.)  A
\textbf{rooted map} is a map together with a distinguished root vertex.
The map is called \textbf{planar} if the surface is homeomorphic to an
open subset of the sphere, and is \textbf{simply connected} if the
surface is homeomorphic to the sphere or the plane.  (In particular,
every simply connected map is planar.)  A random rooted map is said to
be \textbf{unimodular} if it satisfies the \textbf{mass-transport
  principle}, which can be interpreted as meaning that `every vertex of
the map is equally likely to be the root'.  See \cref{Sec:background}
for precise definitions of each of these terms.

The \textbf{curvature} of a map is a local geometric property, closely
related to the Gaussian curvature of manifolds that may be constructed
from the map; see \cref{Sec:curvature} for a precise definition. For one
natural manifold constructed from the map by gluing together regular
polygons, the curvature at each vertex $v$ is
\[\kappa(v) = 2\pi- \sum_{f \perp v} \frac{\deg(f)-2}{\deg(f)}\pi,\]
where the sum is taken over faces of the map incident to $v$, and a face
is counted with multiplicity if more than one of the corners of the face
are located at $v$.  We can define the average curvature of $(M,\rho)$
to be the expectation $\E[\kappa(\rho)]$. \cref{thm.invcurvature} states
that the average curvature is a canonical quantity associated to the
random map, in the sense that any unimodular way of associating a
manifold to the map will result in the same average curvature. Observe
that the average curvature of a unimodular random triangulation is equal
to $(6 - \E[\deg(\rho)])\pi/3$, so that a unimodular triangulation has
expected degree greater than six if and only if it has negative average
curvature. This relates the dichotomy described in \cite{AHNR15} to that
of \cref{thm:dichotomy}.

Classical examples of unimodular random maps are provided by Voronoi
diagrams of stationary point processes \cite{BPP} and (slightly
modified) Galton-Watson trees \cite[Example 1.1]{AL07}, as well as
lattices in the Euclidean and hyperbolic planes, and arbitrary local
limits of finite maps.  Many local modifications of maps, such as taking
Bernoulli percolation or uniform or minimal spanning trees, preserve
unimodularity, giving rise to many additional examples.  Unimodular
random maps, most notably the uniform infinite planar triangulation
(UIPT) \cite{UIPT1} and quadrangulation (UIPQ)
\cite{Krikun05,UIPQinfty}, have also been studied in the context of
$2$-dimensional quantum gravity; see the survey \cite{Garban13} and
references therein.  More recently, hyperbolic variants of the UIPT have
been constructed \cite{AR13,PSHIT}.  Many of these examples do not have
uniformly bounded degrees, so that the deterministic theory is not
applicable to them.

\subsection{The Dichotomy Theorem}\label{sec:dichotomystatement}

Since many of the notions tied together in the following theorem are well
known, we first state the theorem, and defer  definitions to
individual sections dealing with each of the properties.

\begin{mainthm}[The Dichotomy Theorem]\label{thm:dichotomy}
  Let $(M,\rho)$ be an infinite, ergodic, unimodular random rooted planar
  map and suppose that $\E[\deg(\rho)]<\infty$.  Then the average curvature
   of $(M,\rho)$ is non-positive and the following are
  equivalent:
  \begin{enumerate}[nosep]
  \item \label{iCurv} $(M,\rho)$ has average curvature zero.
  \item \label{iAmen} $(M,\rho)$ is invariantly amenable.
  \item \label{iAmenablesub} Every bounded degree subgraph of $M$ is
    amenable almost surely.
  \item \label{iTree2} Every subtree of $M$ is amenable almost surely.
  \item \label{iRecurrent} Every bounded degree subgraph of $M$ is
    recurrent almost surely.
  \item \label{iTree} Every subtree of $M$ is recurrent almost surely.
  \item \label{iBS} $(M,\rho)$ is a Benjamini-Schramm limit of finite
    planar maps.
  \item \label{iBS2} $(M,\rho)$ is a Benjamini-Schramm limit of a sequence
    $\langle M_n \rangle_{n\geq0}$ of finite maps such that
    \[
    \frac{\text{genus}(M_n)}{\#\{\text{vertices of $M_n$}\}}
    \xrightarrow[n\to\infty]{}0.
    \]
  \item \label{iConf} The Riemann surface associated to $M$ is conformally
    equivalent to either the plane $\C$ or the cylinder $\C/\Z$ almost
    surely.
  \item \label{iLiouville} $M$ does not admit any non-constant bounded
    harmonic functions almost surely.
  \item \label{iDirichelet} $M$ does not admit any non-constant harmonic
    functions of finite Dirichlet energy almost surely.
  \item \label{iUSF} The laws of the free and wired uniform spanning
    forests of $M$ coincide almost surely.
  \item \label{iWUSFConn} The wired uniform spanning forest of $M$ is
    connected almost surely.
  \item \label{iIntersect} Two independent random walks on $M$ intersect
    infinitely often almost surely.
  \item \label{iMSF} The laws of the free and wired minimal spanning
    forests of $M$ coincide almost surely.
  \item \label{iPerc} Bernoulli$(p)$ bond percolation on $M$ has at most
    one infinite connected component for every $p\in[0,1]$ almost surely
    (in particular, $p_c=p_u$).
  \item \label{iVEL} $M$ is vertex extremal length parabolic almost surely.
  \end{enumerate}
\end{mainthm}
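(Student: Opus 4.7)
The plan is to organize the seventeen conditions into a few clusters and prove their equivalence by reducing each to the core assertion that zero average curvature coincides with invariant amenability. First I would establish the central dichotomy $(1)\Leftrightarrow(2)$. The direction $(2)\Rightarrow(1)$ follows from a discrete Gauss--Bonnet argument applied along a unimodular F\o lner exhaustion $\{F_n\}$: the sum $\sum_{v\in F_n}\kappa(v)$ equals $2\pi\chi(F_n)$ plus a boundary contribution controlled by $|\partial F_n|$, and after dividing by $|F_n|$ both the Euler-characteristic and boundary densities tend to zero, so the mass transport principle forces $\E[\kappa(\rho)]\leq 0$, with equality precisely in the amenable case. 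For the converse, invariant non-amenability combined with a mass transport redistributing curvature along a bounded-degree non-amenable invariant subgraph should force $\E[\kappa(\rho)]<0$; producing such a subgraph is where the finite expected degree hypothesis enters, via an Aldous--Lyons type truncation.

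Next, I would reduce the equivalences involving conformal structure to the triangulation setting already handled in \cite{AHNR15}. The idea is to augment $M$ into a unimodular random triangulation $M^{\triangle}$ by canonically placing a vertex inside each face of degree at least four and triangulating. This augmentation preserves invariant amenability and transforms the average curvature in a controlled way; moreover, the natural Riemann-surface construction is compatible with it, so properties $(9)$, $(17)$, and the circle-packing machinery of \cite{AHNR15} transfer between $M$ and $M^{\triangle}$, establishing $(1)\Leftrightarrow(2)\Leftrightarrow(9)\Leftrightarrow(17)$.

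Properties $(3)$--$(6)$ then follow from the Aldous--Lyons theory of unimodular random graphs: invariant amenability propagates to bounded-degree subgraphs and subtrees, which then inherit amenability and (by Nash-Williams/Kaimanovich-Varopoulos) recurrence; conversely, failure of invariant amenability combined with finite expected degree yields a bounded-degree non-amenable invariant subgraph. The spanning-forest, random walk, and percolation equivalences $(12)$--$(16)$ come from adapting the Benjamini-Lyons-Peres-Schramm framework to the unimodular setting: $(13)\Leftrightarrow(14)$ is a unimodular version of Morris' theorem, the coincidence of \FUSF{} and \WUSF{} in $(12)$ is equivalent to absence of nonconstant harmonic Dirichlet functions $(11)$, the \FMSF{}/\WMSF{} equivalence in $(15)$ runs parallel to the USF case, and $p_c=p_u$ in $(16)$ follows from amenability via a uniqueness-monotonicity argument with the converse using invariant non-amenability to produce multiple infinite clusters. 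Property $(10)$ is deduced from $(9)$ via the conformal invariance of bounded harmonic functions and a discrete-to-continuous comparison between simple random walk on $M$ and Brownian motion on the associated Riemann surface.

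The principal obstacle is the soficity direction $(2)\Rightarrow(7),(8)$: given invariant amenability, one must build a sequence of finite maps converging to $(M,\rho)$ in the Benjamini-Schramm sense with genus-per-vertex tending to zero. Naively truncating along a F\o lner exhaustion and pairing up the boundary can create arbitrarily high genus, so the real work is to perform the gluing in a genus-efficient way, exploiting the planarity of $M$ to give the boundary of each F\o lner set a cyclic decomposition that can be capped off by adding few faces. A symmetrization over a uniformly random root in $F_n$ then recovers mass-transport invariance in the limit. The converse directions $(7),(8)\Rightarrow(2)$ are comparatively routine, since every finite map is trivially invariantly amenable and invariant amenability passes to Benjamini-Schramm limits provided the genus-to-vertex ratio vanishes.
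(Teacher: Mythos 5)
The central gap is that your outline never establishes the quantitative link between curvature and everything else: neither the assertion $\bbK(M,\rho)\le 0$ nor the equivalence (1)$\Leftrightarrow$(2) is actually proved. The F\o lner/Gauss--Bonnet computation does not survive unbounded degrees: Euler's formula applies to the curvature of the finite cluster as a map in its own right, and the discrepancy between that and the restriction of $\kappa_M$ to the cluster is \emph{not} controlled by $|\partial_E K_\omega(\rho)|$ -- a single boundary edge can cut a face having arbitrarily many corners inside the cluster, so small invariant Cheeger constant does not make the boundary correction small. The converse direction is worse: ``redistributing curvature along a bounded-degree nonamenable invariant subgraph'' has no mechanism, and in fact the paper proves (Proposition on curvature of submaps) that $\bbK(\omega,\rho)=\bbK(M,\rho)$ for any spanning submap, so curvature cannot be forced negative by exhibiting a bad subgraph. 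The paper's actual bridge is the expected-degree formula $\E[\deg_\F(\rho)]=2-\frac{1}{\pi}\bbK(M,\rho)$ for the free uniform (and minimal) spanning forest, proved by planar duality together with the fact that the WUSF of the dual has expected degree two -- with substantial extra work when the dual is not locally finite (Wilson's algorithm rooted at the infinite faces) -- plus the connectivity of the FUSF, which rests on one-endedness of WUSF components in transient unimodular graphs. This is what yields $\bbK\le 0$, the equivalences (1)$\Leftrightarrow$(12)$\Leftrightarrow$(13)$\Leftrightarrow$(15), and then (2) via Aldous--Lyons' results on unimodular trees of expected degree two. Your ``adaptation of the BLPS framework'' only relates (11)--(16) to one another, not to (1) or (2), so the core of the theorem is missing.

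Several of the steps you call routine are also not. Hyperfiniteness does \emph{not} pass to Benjamini--Schramm limits merely because finite graphs are hyperfinite (the $3$-regular tree is such a limit); (7),(8)$\Rightarrow$(2) genuinely requires the Lipton--Tarjan and Gilbert--Hutchinson--Tarjan separator theorems, where planarity/low genus is the whole content. Conversely, what you identify as the principal obstacle, (2)$\Rightarrow$(7),(8), is immediate in the paper: the clusters of a finitary exhaustion, with the inherited map structure, are already finite planar maps rooted at a uniform vertex, so no genus-efficient capping is needed. Your derivation of (3)--(6) by ``amenability propagates to bounded-degree subgraphs'' is false for general graphs (an amenable graph can contain a nonamenable bounded-degree subtree); the paper instead routes these through VEL parabolicity and its monotonicity under subgraphs, with the Benjamini--Lyons--Schramm/Aldous--Lyons bounded-degree nonamenable subforest theorem for the converse. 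Finally, reducing (9) and (17) to the triangulations of \cite{AHNR15} by starring faces is explicitly flagged in the paper as insufficient: the listed properties are not stable under such modifications, the construction breaks down for infinite faces, and the associated Riemann surface changes; the paper's proof that conformal type matches amenability is a new argument via Schwarz--Pick area estimates and a unimodular coupling with a Poisson--Delaunay triangulation, and (10) is obtained from (14) by the Benjamini--Curien--Georgakopoulos planarity result rather than by a random-walk/Brownian-motion comparison, which is exactly the kind of step that is delicate without bounded degrees.
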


In light of this theorem, we call a unimodular random rooted map
$(M,\rho)$ with $\E[\deg(\rho)]<\infty$ \textbf{parabolic} if its
average curvature is zero (and, in the planar case, clauses (1)--(17)
all hold), and \textbf{hyperbolic} if its average curvature is negative
(and, in the planar case, the clauses all fail).

\begin{figure}
  \centering
  \includegraphics[width=\textwidth]{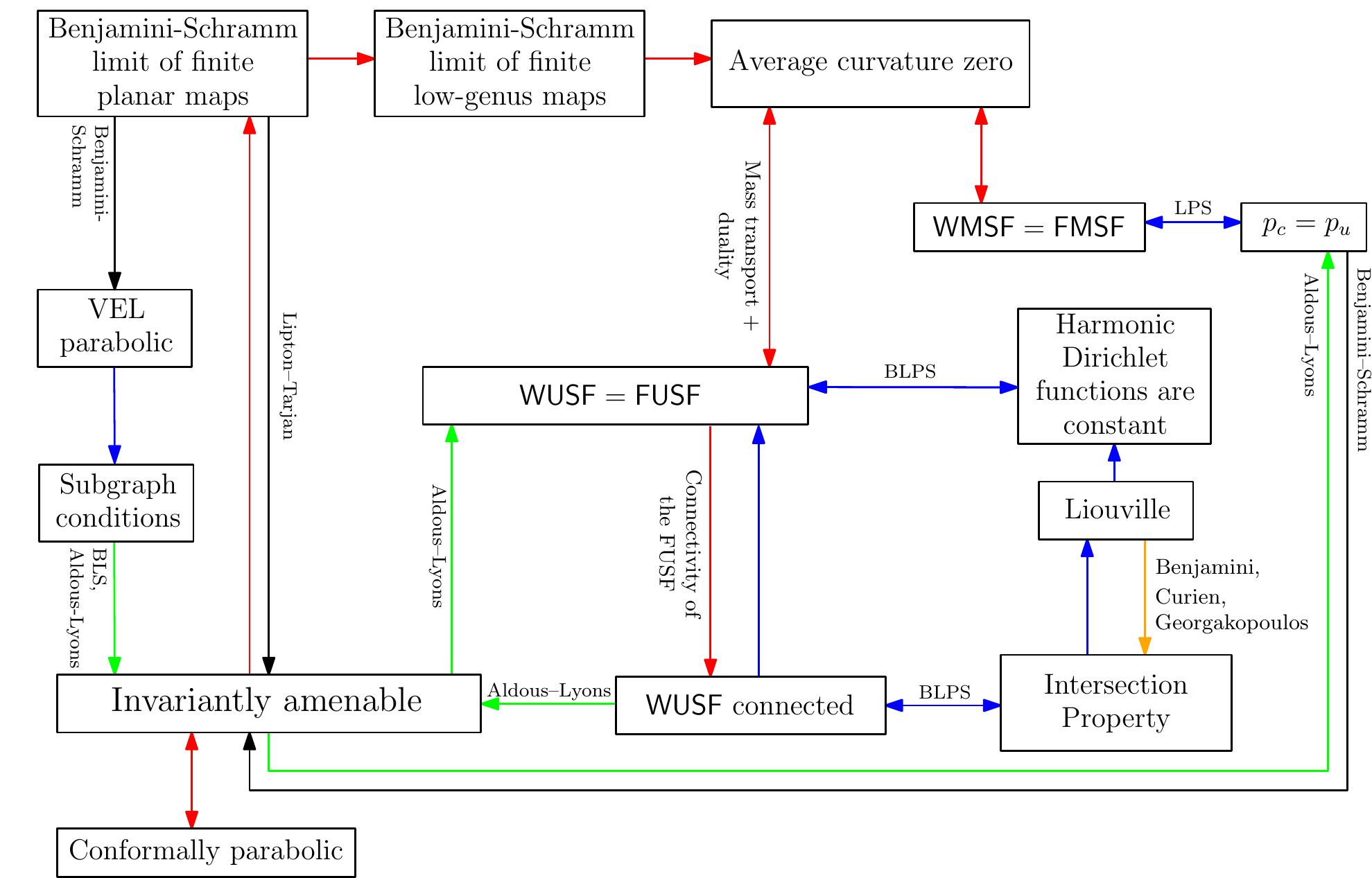}

  \caption{The logical structure for the proof of \cref{thm:dichotomy} in
    the simply connected case.  Implications new to this paper are in red.
    Blue implications hold for arbitrary graphs; the orange implication
    holds for arbitrary planar graphs, and green implications hold for
    unimodular random rooted graphs even without planarity.  A few
    implications between items that are known but not used in the proof are
    omitted.}
  \label{fig:logic}
\end{figure}

\begin{figure}
\includegraphics[width=\textwidth]{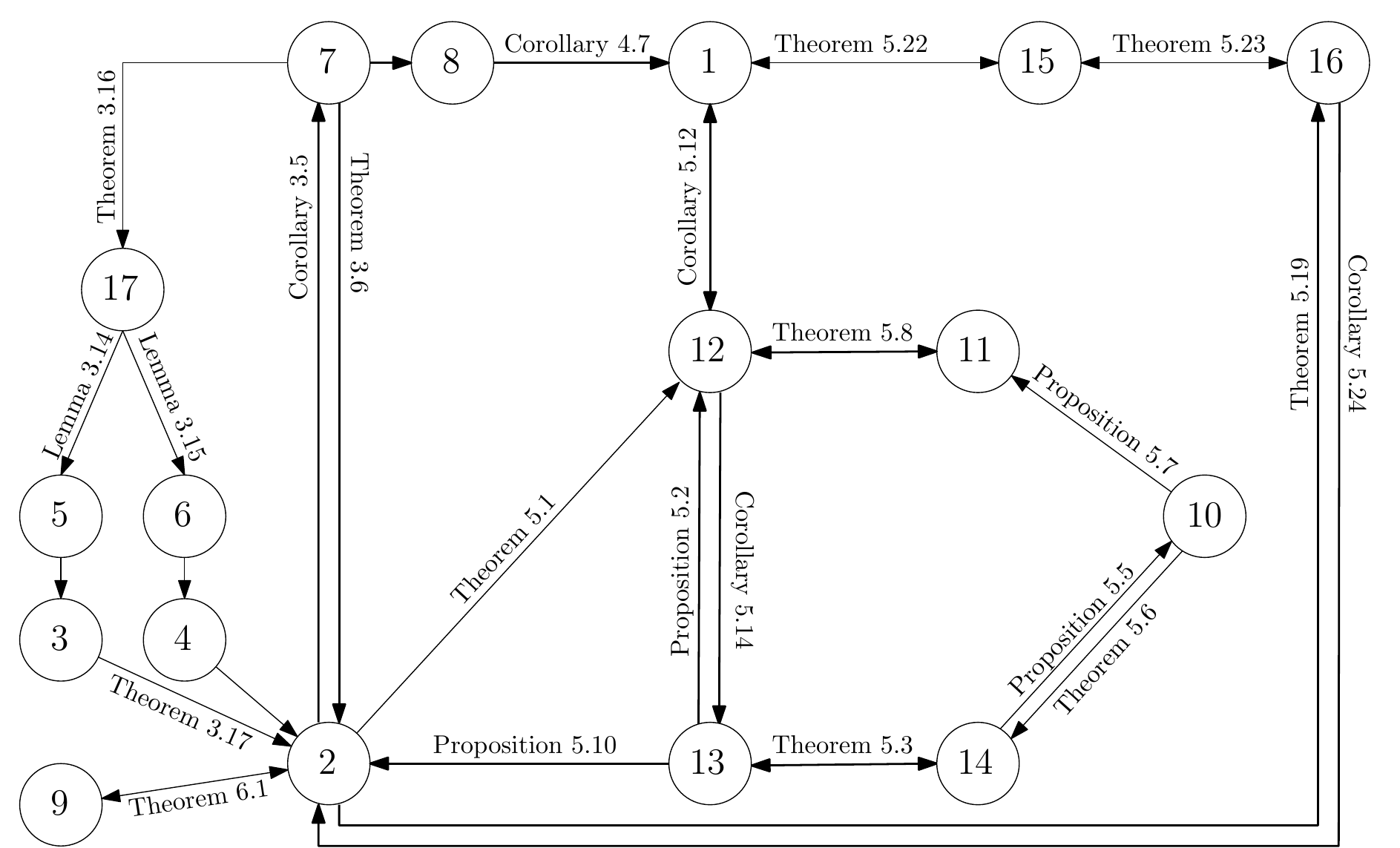}
\caption{The numbers of the theorems, propositions, lemmas and corrolaries forming the individual implications used to prove \cref{thm:dichotomy} in the simply connected case. Unlabelled implications are trivial.}
\label{fig:logic2}
\end{figure}

\medskip

As one might guess from the structure of \cref{thm:dichotomy}, the proof
consists of many separate arguments for the different implications.
 Some of the implications
are already present in the literature, and part of this paper is spent
surveying the earlier works that form the individual implications between
the long list of equivalent items in \cref{thm:dichotomy}.  For the sake of
completeness we also include proofs of several standard technical results
from the ergodic theory literature using probabilisitic terminology.

%
Some of the implications in \cref{thm:dichotomy} hold in any graph.  For
example, \eqref{iLiouville} implies \eqref{iDirichelet} for any graph, and
\eqref{iIntersect} and \eqref{iWUSFConn} are always equivalent (see
\cite{BLPS}).  Other implications hold for any planar graph.  For example
\eqref{iLiouville} is equivalent to \eqref{iIntersect}, see \cite{BCG12}.
We do not provide a comprehensive list of the assumptions needed for each
implication, but some of this information is encoded in \cref{fig:logic}.

Most of the paper is dedicated to proving the theorem under the additional
assumption that $M$ is simply connected; the multiply-connected case is
easier and is handled separately in \cref{Sec:multiplyconnected}.  The
logical structure of the proof in the simply connected case is summarized
in \cref{fig:logic,fig:logic2}. \cref{fig:logic} also shows which implications were already known and
which are proved in the present paper. 

\tom{
Let us note that there are several natural ways of turning a planar map into a triangulation, which allow some of the implications between items in \cref{thm:dichotomy} to be reduced to the case of triangulations, in which some of the implications were already treated in \cite{AHNR15}. However, many of the properties listed in \cref{thm:dichotomy} are not invariant to taking subgraphs, and most of the properties were not treated in \cite{AHNR15}. Thus, we cannot rely on the methods of our previous paper \cite{AHNR15}. 
}




\subsection{Unimodular planar maps are sofic}



Let $\langle G_n \rangle_{n\geq0}$ be a sequence of (possibly random)
finite graphs.  We say that a random rooted graph $(G,\rho)$ is the
\textbf{Benjamini-Schramm limit} of the sequence
$\langle G_n \rangle_{n\geq0}$ if the random rooted graphs $(G_n,\rho_n)$
converge in distribution to $(G,\rho)$ with respect to the local topology
on rooted graphs (see \cref{Sec:unimod}), where $\rho_n$ is a uniform
vertex of $G_n$.  Benjamini-Schramm limits of finite maps, marked graphs and marked maps are
defined similarly, except that the local topology takes into account the
additional structure.  When $G_n$ is a uniformly chosen map of size $n$, this
construction gives rise to the aforementioned UIPT and UIPQ.

It is easy to see that every (possibly random) finite graph with
conditionally uniform root is unimodular.  Moreover, unimodularity is
preserved under distributional limits in the local topology.  It follows
that every Benjamini-Schramm limit of finite random graphs is unimodular.
A random rooted graph that can be obtained in this way is called
\textbf{sofic}.  It is a major open problem to determine whether the
converse holds, that is, whether every unimodular random rooted graph is
sofic \cite[Section 10]{AL07}.  The next theorem answers this question
positively for simply connected unimodular random maps.

\begin{mainthm}\label{thm:soficmaps}
  Every simply connected unimodular random rooted map is sofic.
\end{mainthm}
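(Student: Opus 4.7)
My plan is to split into two cases according to the type of the map given by the Dichotomy Theorem (\cref{thm:dichotomy}). If $(M,\rho)$ is parabolic, then item \eqref{iBS} of \cref{thm:dichotomy} already tells us that $(M,\rho)$ is a Benjamini--Schramm limit of finite planar maps, and hence sofic. The entire work of the proof is therefore in the hyperbolic case, where by item \eqref{iBS2} of the dichotomy any approximating finite maps must have genus growing at least linearly (along a subsequence) in their vertex count; no sequence of planar finite maps will suffice.

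In the hyperbolic case, I would construct approximating finite maps by a truncate-and-glue procedure. For each radius $R$, form the finite planar-with-boundary map $B_R(\rho)$, and close it into a closed orientable surface $M_R$ by identifying boundary edges in pairs via a random perfect matching. The cyclic order induced by the planar embedding on each boundary cycle gives a combinatorial way to encode the matching so that the resulting object is a genuine map, i.e.\ a proper embedding into an oriented surface; this is the step where planarity and simple connectedness of $M$ are essentially used. Taking $\rho_R$ uniform in $M_R$, one then needs to verify that $(M_R,\rho_R)\Rightarrow(M,\rho)$ in the local topology, and that this convergence is compatible with the map structure (not just the underlying graph).

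The central obstacle is that, since the hyperbolic case is invariantly non-amenable, the boundary $\partial B_R(\rho)$ has size comparable to the whole ball, so in a single such truncation a positive proportion of vertices of $M_R$ lie near the identified region and see an altered neighbourhood. To overcome this I would carry out the closure hierarchically: glue many independent copies of $B_R(\rho)$ along their boundaries according to a random pattern, then iterate, so that a typical vertex in the resulting finite map is pushed arbitrarily deep into an unaltered region as the iteration count is sent to infinity jointly with $R$. The mass-transport principle, combined with the independence and exchangeability of the copies, should yield unimodularity of the distributional limit and identify it with $(M,\rho)$. The most delicate step will be showing that these hierarchical boundary identifications can be realised in a planarity-compatible way at every stage while simultaneously achieving the correct asymptotic frequencies for each rooted $R$-neighbourhood type; handling this is where the unimodular structure of $(M,\rho)$, rather than just its graph structure, plays the decisive role.
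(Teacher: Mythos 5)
The step that fails is your hyperbolic case. Gluing boundary edges of $B_R(\rho)$ in pairs (or gluing many copies of $B_R(\rho)$ hierarchically) does not push a typical vertex away from the seams: invariant nonamenability means precisely that \emph{no} finitary truncation of $M$ has boundary of vanishing relative size, so in every copy of $B_R(\rho)$ a uniformly positive proportion of vertices lie within bounded distance of identified edges, and this proportion is unchanged by taking more copies or iterating the gluing. For the limit to be $(M,\rho)$ you would need the local statistics of rooted neighbourhoods \emph{across the seams} to match those of $M$, and a random perfect matching of boundary edges gives no mechanism for this — arranging it is not a delicate verification but is essentially the whole content of soficity for nonamenable graphs, which is why the naive truncate-and-patch scheme is exactly what breaks down in general. (A secondary issue: you invoke \cref{thm:dichotomy}, which requires ergodicity and $\E[\deg(\rho)]<\infty$, whereas \cref{thm:soficmaps} carries no moment assumption, so even the parabolic/hyperbolic split is not available as stated.)

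The paper avoids any explicit finite approximation of the hyperbolic maps. It proves that the free uniform spanning forest of a simply connected unimodular random rooted map is connected (\cref{thm:FUSFconnectivity}), so $(M,\rho)$ is coupling equivalent to a unimodular random rooted tree; unimodular trees are strongly sofic (\cref{thm:sofictrees}), strong soficity is preserved under unimodular coupling equivalence (\cref{thm:soficcouplings}), and the map structure is then recovered by encoding the rotation system as a marking of the underlying graph. In other words, the finite approximants are produced abstractly, by transporting sofic approximations of a spanning tree through the coupling, rather than by surgery on balls of $M$; if you want to salvage your approach you would need to supply, in the hyperbolic case, an idea playing the role that FUSF connectivity (treeability) plays in the paper.
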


The proof of \cref{thm:soficmaps} relies on the corresponding result for
trees, which is due to Bowen \cite{Bowen03}, Elek \cite{Elek10}, and
Benjamini, Lyons and Schramm \cite{URT}.  As was observed by Elek and
Lippner \cite{ElekLipp10}, it follows that \emph{treeable} unimodular graphs (that is, unimodular graphs that exhibit an invariant spanning tree) are
sofic, and so the key new step is proving the connectivity of the free
uniform spanning forest.

Note that the finite maps converging to a given infinite unimodular random
rooted map $(M,\rho)$ need not be planar. Indeed, \cref{thm:dichotomy}
characterises the Benjamini-Schramm limits of finite \emph{planar} maps
exactly as the parabolic unimodular random rooted maps.  Moreover, if
$\langle M_n \rangle_{n\geq0}$ is a sequence of finite maps converging to
an infinite hyperbolic unimodular random rooted map, then the approximating
maps $M_n$ must have genus comparable to their number of vertices as $n$
tends to infinity.


\section{Unimodular Maps}\label{Sec:background}

\subsection{Maps}\label{Sec:maps} We provide here a brief background to the concept of maps, and refer the reader to \cite[Chapter 1.3]{LaZv04} for a comprehensive treatment. Let $G=(V,E)$ be a connected graph, which may contain self-loops and
multiple edges.
An \textbf{embedding} of a graph in a surface $S$ is a drawing of the graph in
the surface with non-crossing edges. Given an embedding, the connected
components of the complement of the image of $G$ are called \textbf{faces}.
An embedding is said to be \textbf{proper} if the following conditions hold:
\begin{enumerate}
\item it is locally finite (every compact set in $S$ intersects finitely
  many edges),
\item every face is homeomorphic to an open disc, and
\item for every face $f$, if we consider the oriented edges of $G$ that
  have their right hand side incident to $f$, and consider the
  permutation that maps each such oriented edge to the oriented edge
  following it in the clockwise order around the face, then this
  permutation has a single orbit.
\end{enumerate}
For example, the complete graph on three vertices can be properly
embedded in the sphere but not in the plane.  If $S$ is simply connected
or compact, then any embedding that satisfies (1) and (2) must also
satisfy (3). For this reason, the condition (3) is not included in many
references that deal primarily with finite maps.  An example of an
embedding that satisfies (1) and (2) but not (3) is given by drawing
$\Z$ along a straight line in an infinite cylinder.

We define a (locally finite) \textbf{map} $M$ to be a connected, locally
finite graph $G$ together with an equivalence class of proper embeddings
of $G$ into oriented surfaces, where two embeddings are equivalent if
there is an orientation preserving homeomorphism between the two
surfaces that sends one embedding to the other.  If $M$ is a map with
underlying graph $G$, we refer to any proper embedding of $G$ that falls
into the equivalence class of embeddings corresponding to $M$ as an
embedding of $M$. A map is said to be \textbf{planar} if it is embedded
into a surface homeomorphic to an open subset of the sphere, and is said
to be \textbf{simply connected} if it is embedded into a simply
connected surface (which is necessarily homeomorphic to either the
sphere or the plane).





Let $M$ be a map with underlying graph $G$ and let $z$ be a proper embedding
of $M$ into a surface $S$. If every face of $M$ has finite degree, the \textbf{dual map} of
$M$, denoted $M^\dagger$ is defined as follows. The underlying graph of
$M^\dagger$, denoted $G^\dagger$, has the faces of $M$ as vertices, and has
an edge drawn between two faces of $M$ for each edge in $M$ that is
incident to both of the faces. We define an embedding $z^\dagger$ of
$G^\dagger$ into $S$ by placing each vertex of $G^\dagger$ in the interior of
the corresponding face of $M$ and each edge of $G^\dagger$ so that it
crosses the corresponding edge of $G$ but no others.  We define $M^\dual$
to be the map with underlying graph $G$ represented by the pair
$(S,z^\dual)$: Although the embedding $z^\dual$ is not uniquely defined,
every choice of $z^\dual$ defines the same map.  The construction gives a
canonical bijection between edges of $G$ and edges of $G^\dual$.  We write
$e^\dual$ for the edge of $G^\dual$ corresponding to $e$. If $e$ is an
oriented edge, we let $e^\dual$ be oriented so that it crosses $e$ from
right to left as viewed from the orientation of $e$.

Despite their
topological definitions, maps and their duals can in fact be defined
entirely combinatorially.  Given any graph, we consider each edge as two
oriented edges in opposite directions.  We write
$E^\rightarrow=E^\rightarrow(G)$ for the set of oriented edges of a graph.
For each directed edge $e$, we have a head $e^+$ and tail $e^-$, and write $-e$ for the reversal of $e$.  Given a map
$M$ and a vertex $v$ of $M$, let $\sigma_v=\sigma_v(M)$ be the cyclic
permutation of the set $\{e \in E^\rightarrow : e^-=v\}$ of oriented edges
emanating from $v$ corresponding to counter-clockwise rotation in $S$.
This procedure defines a bijection between maps and graphs labelled by
cyclic permutations.

\begin{thm}[\cite{LaZv04}]\label{thm:permmaps}
  Given a connected, locally finite graph $G$ and a collection of cyclic
  permutations $\sigma_v$ of the sets $\{e\in E^\rightarrow : e^-=v\}$,
  there exists a unique map $M$ with underlying graph $G$ such that
  $\sigma(M)=\sigma$.
\end{thm}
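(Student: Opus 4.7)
\medskip

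The plan is to treat existence and uniqueness separately, using the orbit structure of a natural \emph{face permutation} built from $\sigma$ to produce (or recognize) the map.

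For existence, I would define the face permutation $\phi$ on $E^\rightarrow(G)$ by $\phi(e)=\sigma_{e^+}(-e)$ (the convention being that following $\phi$ traces the oriented boundary of the face lying on a fixed side of $e$). I would then construct a surface $S$ by taking, for each orbit $O$ of $\phi$, a closed topological disc $D_O$ with a boundary split into $|O|$ arcs if $|O|<\infty$, or a closed half-plane with boundary split into a bi-infinite sequence of arcs if $|O|=\infty$, each arc being labelled by the corresponding oriented edge of $O$. Glue the arc labelled $e$ in $D_O$ to the arc labelled $-e$ in $D_{O'}$ (where $O'$ is the $\phi$-orbit of $-e$) by an orientation-reversing homeomorphism, so that the orientations of the discs piece together consistently. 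This produces an oriented topological surface $S$ together with an embedding $z$ of $G$ whose faces are exactly the interiors of the $D_O$'s (each of which is an open disc by construction). Local finiteness of $z$ follows from local finiteness of $G$ and the fact that each oriented edge appears in exactly one $\phi$-orbit. By construction the counter-clockwise cyclic order of edges around any vertex $v$ in $S$ is $\sigma_v$, so $\sigma(M)=\sigma$, and the ``single orbit per face'' condition (3) is built in because each face arises from a single $\phi$-orbit.

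For uniqueness, suppose $M_1,M_2$ are two maps with underlying graph $G$ and $\sigma(M_1)=\sigma(M_2)=\sigma$. The combinatorial face structure of a map is determined by $\sigma$: indeed, tracing a face boundary is exactly iterating $\phi$, so the $\phi$-orbits must be in bijection with the faces of either map. Using property (3) of a proper embedding, in each $M_i$ the closure of a face is the continuous image of a disc (or half-plane) whose boundary is traversed by its $\phi$-orbit exactly once. One can therefore build an orientation-preserving homeomorphism between the two surfaces face by face: first map vertex to vertex, then extend across each edge in a way compatible with $\sigma$, and finally extend to each face using the fact that any orientation-preserving homeomorphism between the boundaries of two discs (or half-planes) extends to their interiors. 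The resulting homeomorphism carries the embedding of $G$ in $M_1$ to the embedding in $M_2$, so $M_1=M_2$ as maps.

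The main technical obstacle is the infinite-degree case: when a $\phi$-orbit is infinite (biinfinite) we must glue a half-plane rather than a polygon, and we must verify that the result is a Hausdorff $2$-manifold with a \emph{locally finite} embedding and that each face is an open disc rather than some other non-compact planar domain (e.g.\ an annulus). This is precisely what condition (3) in the definition of a proper embedding rules out, and where the corresponding property of the construction -- that distinct orbits give distinct discs with no extra identifications -- must be checked carefully. Once this local-finiteness and disc-face verification is in place, the rest of the argument is a formal gluing construction.
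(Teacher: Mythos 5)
Your proposal is correct and matches the approach the paper has in mind: the paper cites this result to Lando--Zvonkin and itself only alludes to the proof via the same disc-gluing construction ("gluing topological discs according to the combinatorics of the map"), with faces given by orbits of the face permutation (the paper's convention is $\sigma^\dagger(e)=\sigma^{-1}(-e)$, differing from your $\phi$ only in orientation convention). Your existence-by-gluing and uniqueness-by-face-wise extension of homeomorphisms, together with the noted check that infinite orbits yield half-plane faces and that condition (3) of properness is exactly what makes the correspondence bijective, is the standard argument and is sound.
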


In light of \cref{thm:permmaps}, we formally identify a map $M$ with the
pair $(G,\sigma)$.  Given such a combinatorial specification of a map
$M$ as a pair $(G,\sigma)$, we may form an embedding of the map into a
surface $S(M)$ by gluing topological discs according to the
combinatorics of the map (see \cref{fig:map2}).

Note that we can write $\sigma(e)$ for $\sigma_v(e)$, since necessarily
$v=e^-$.  Thus $\sigma$ is a permutation on the set of directed edges of
$M$.  Formally, a \textbf{corner} in the map at a vertex $v$ is an
ordered pair of directed edges $(e,\sigma(e))$, with $e^-=v$.  Of
course, a corner is determined by the directed edge $e$.  Just as
vertices of $M$ are orbits of $\sigma$, the faces of a map
$M=(G,\sigma)$ can be defined abstractly as orbits of the permutation
$\sigma^\dual: E^\rightarrow \to E^\rightarrow$ defined by
$\sigma^\dual(e) = \sigma^{-1}(-e)$ for each $e \in E^\rightarrow$.  The
dual $e^\dual$ of a directed edge $e$ is defined to have the orbit of
$e$ as its tail and the orbit of $-e$ as its head, so that we again have
a bijection between directed edges of $M$ and their duals. Using this
bijection, we can consider $\sigma^\dual$ to acts on dual edges, and the
dual map $M^\dual$ is then constructed abstractly as
$M^\dual = (G^\dual,\sigma^\dual)$.  We define maps with infinite degree
vertices, and duals of maps with infinite degree faces, directly through
this abstract formalism.

\begin{figure}[t] \centering
  \includegraphics[height=25mm]{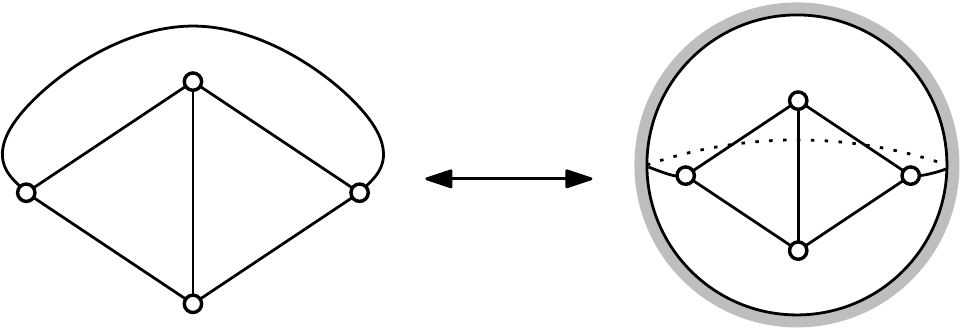}
  \hfill \includegraphics[height=25mm]{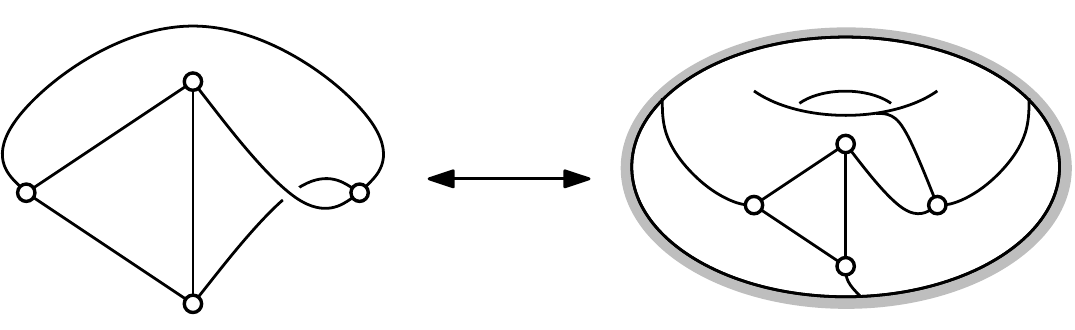}
  \footnotesize{
  \caption{Different maps with the same underlying graph.
      The two maps both have $K_4$ as their underlying graph, but
    the left is a sphere map while the right is a torus map. Both maps are represented both abstractly as a graph together with a cyclic permutation of the edges emanating from each vertex (left) and as a graph embedded in a surface (right).}
    \label{fig:map2}
    }
\end{figure}


If $e$ is an oriented edge in a map, we write $e^\ell$ for the face of $M$
to the left of $e$ and $e^r$ for the face of $M$ to the right of $e$, so
that $e^\ell=(e^\dual)^-$ and $e^r=(e^\dual)^+$. Given a map $M$, we write
$f \perp v$ if the face $f$ is incident to the vertex $v$, that is, if
there exists an oriented edge $e$ of $M$ such that $e^-=v$ and
$e^\ell=f$. When writing a sum of the form $\sum_{f \perp v}$, we use the
convention that a face $f$ is counted with multiplicity according to the
number of oriented edges $e$ of $M$ such that $e^-=v$ and $e^\ell=f$.
Similarly, when writing a sum of the form $\sum_{u\sim v}$, we count each
vertex $u$ with multiplicity according to the number of oriented edges $e$
of $M$ such that $e^-=v$ and $e^+=u$. The degree of a face is defined to be the number of oriented edges with $e^r=f$, i.e., the degree of the face in the dual.

\subsection{Unimodularity and the mass transport principle}\label{Sec:unimod}

As noted, a \textbf{rooted graph} $(G,\rho)$ is a connected, locally finite
(multi)graph $G=(V,E)$ together with a distinguished vertex $\rho$, called
the \textbf{root}.  A graph isomorphism $\phi: G \rightarrow G'$ is an
isomorphism of rooted graphs is a graph isomorphism that maps the root to
the root.

The \textbf{local topology} (see \cite{BeSc}) is the topology on the set
$\cG_\bullet$ of isomorphism classes of rooted graphs induced by the metric
\[
  d_\mathrm{loc} \left((G,\rho),(G',\rho')\right) = e^{-R},
\]
where
\[
  R = R\left((G,\rho),(G',\rho')\right)
  = \sup\left\{R\geq 0 : B_R(G,\rho) \cong B_R(G',\rho') \right\},
\]
i.e., the maximal radius such that the balls $B_R(G,\rho)$ and
$B_R(G',\rho')$ are isomorphic as rooted graphs.

A \textbf{random rooted graph} is a random variable taking values in the
space $\cG_\bullet$ endowed with the local topology.  Similarly, a
\textbf{doubly-rooted graph} is a graph together with an ordered pair of
distinguished (not necessarily distinct) vertices.  Denote the space of
isomorphism classes of doubly-rooted graphs equipped with this topology by
$\cG_{\bullet\bullet}$.

The spaces of rooted and doubly-rooted maps are defined similarly and are
denoted $\cM_\bullet$ and $\cM_{\bullet\bullet}$ respectively.  For this,
an isomorphism $\phi$ of rooted maps is preserves the roots and the map
structure, so that $\phi\circ \sigma = \sigma' \circ \phi$ for vertices
inside the balls.

A further generalisation of these spaces will also be useful. A
\textbf{marked graph} (referred to by Aldous and Lyons \cite{AL07} as a
network) is defined to be a locally finite, connected graph
together with a function $m:E \cup V \to\bbX$ assigning each vertex
and edge of $G$ a \textbf{mark} in some Polish space $\mathbb{X}$, referred
to as the \textbf{mark space}.  The local topology on the set of
isomorphism classes of rooted graphs with marks in $\bbX$ is the topology
induced by the metric
\[
  d_\mathrm{loc}((G,\rho,m),(G',\rho',m')) = e^{-R}
\]
where $R((G,\rho,m),(G',\rho',m'))$ is the largest $R$ such that there
exists an isomorphism of rooted graphs
$\phi: (B_G(\rho,b), m, \rho) \to (B_{G'}(\rho',b),\rho')$ such that
$d_{\mathbb{X}}(m'(\phi(x)),m(x))\leq 1/R$ for every vertex or edge $x$ of
$B_G(\rho,n)$, and $d_\bbX$ is a metric compatible with the topology of
$\bbX$.  The space of isomorphism classes of rooted marked graphs with
marks in $\bbX$ is denoted $\cG_\bullet^\bbX$.  The space of rooted marked
maps is defined similarly.

It is possible to consider rooted maps and rooted marked maps as rooted
marked graphs by encoding the permutations $\sigma_v$ in the marks -- in
particular, this means that any statement that holds for all unimodular
random rooted marked graphs also holds for all unimodular random rooted
marked maps.  See \cite[Example 9.6]{AL07}.


A \textbf{mass transport} is a Borel function
$f:\mathcal{G}_{\bullet\bullet} \to [0,\infty]$.  A random rooted graph
$(G,\rho)$ is said to be \textbf{unimodular} if it satisfies the
\textbf{Mass Transport Principle}: for every mass transport $f$,
\begin{equation*}\label{eq:MTP}\tag{MTP}
  \E\bigg[\sum_{v\in V} f(G,\rho,v)\bigg] =
  \E\bigg[\sum_{u\in V} f(G,u,\rho)\bigg].
\end{equation*}
That is,
\begin{center}
  \emph{Expected mass out equals expected mass in.}
\end{center}
Unimodularity of random rooted maps, marked graphs and marked maps are defined similarly.
A probability measure $\P$ on $\cG_\bullet$ is said to be unimodular if a
random rooted map with law $\P$ is unimodular.  Unimodular probability
measures on rooted maps, marked graphs and marked maps are defined
similarly.

It will also be convenient to work with signed mass transports.  If a measurable
$f:\mathcal{G}_{\bullet\bullet} \to \R$ satsifies
\[
  \E\left[\sum_{v\in V} \left|f(G,\rho,v)\right|\right] < \infty,
\]
then the conclusion of the mass transport principle holds for $f$.
This follows easily from the usual mass transport principle for
non-negative $f$.


The mass transport principle was first introduced by H\"aggstr\"om
\cite{Hagg97} to study dependent percolation on Cayley graphs.  The
formulation of the mass transport principle presented here was suggested by
Benjamini and Schramm \cite{BeSc} and developed systematically by Aldous
and Lyons \cite{AL07}.  The unimodular probability measures on
$\cG_\bullet$ form a weakly closed, convex subset of the space of
probability measures on $\cG_\bullet$, so that weak limits of unimodular
random graphs are unimodular.  In particular, a weak limit of finite graphs
with uniformly chosen roots is unimodular: such a limit of finite graphs is
referred to as a \textbf{Benjamini-Schramm limit}.  It is a major open
problem to determine whether all unimodular random rooted graphs arise as
Benjamini-Schramm limits of finite graphs \cite[\S 10]{AL07}.



\medskip

We will make frequent use of the fact that unimodularity is stable under most reasonable ways of modifying a graph locally without changing its vertex set. The following lemma is a formalization of this fact.

\begin{lemma}
\label{lem:unimodstability}
Let $\bbX_1$ and $\bbX_2$ be polish spaces.
Let $(G,\rho,m)$ be a unimodular random rooted $\bbX_1$-marked graph, and suppose that $(G',\rho,m')$ is a random rooted marked graph with the same vertex set as $G$, such that, for every pair of vertices $u,v$ in $G$, the conditional distribution of $(G',u,v,m')$ given $(G,\rho,m)$ coincides a.s.\ with some measurable function of the isomorphism class of $(G,u,v,m)$. Then $(G',\rho,m')$ is unimodular.
\end{lemma}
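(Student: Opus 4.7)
The plan is to reduce the mass transport principle for $(G',\rho,m')$ to the mass transport principle for $(G,\rho,m)$ by taking conditional expectations and building a new $\bbX_1$-marked mass transport. Let $\kappa(G,u,v,m)$ denote the conditional distribution of $(G',u,v,m')$ on $\cG^{\bbX_2}_{\bullet\bullet}$ provided by the hypothesis; by assumption, $\kappa$ depends only on the isomorphism class of $(G,u,v,m)$ and is a measurable function of it. Given a Borel $f' : \cG^{\bbX_2}_{\bullet\bullet} \to [0,\infty]$, I would define
\[
  F(G,u,v,m) \;=\; \int f'(H,u,v,n)\, d\kappa(G,u,v,m)(H,n),
\]
which is a measurable, non-negative function of the isomorphism class of $(G,u,v,m)$, hence a legitimate mass transport for $\bbX_1$-marked graphs.

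Next I would apply the mass transport principle for $(G,\rho,m)$ to $F$ to get
\[
  \E\!\left[\sum_{v\in V(G)} F(G,\rho,v,m)\right] \;=\; \E\!\left[\sum_{u\in V(G)} F(G,u,\rho,m)\right].
\]
It then remains to identify each side with the corresponding side of the mass transport principle for $(G',\rho,m')$. By the hypothesis, for each fixed pair of vertices $u,v$ the conditional expectation $\E[f'(G',u,v,m') \mid (G,\rho,m)]$ equals $F(G,u,v,m)$ almost surely. Since the vertex set of $G'$ equals that of $G$, summing over $v$ (or $u$) and using Tonelli together with the tower property of conditional expectation gives
\[
  \E\!\left[\sum_{v} F(G,\rho,v,m)\right] \;=\; \E\!\left[\sum_{v} f'(G',\rho,v,m')\right],
\]
and analogously with the roles of the two arguments swapped. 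Combining this with the displayed MTP for $F$ yields the MTP for $(G',\rho,m')$.

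The only genuinely delicate point is measurability: one must check that the map $(G,u,v,m) \mapsto \kappa(G,u,v,m)$ into the space of Borel probability measures on $\cG^{\bbX_2}_{\bullet\bullet}$ is Borel, so that $F$ is Borel and the MTP applies. This follows from the hypothesis that $\kappa$ is a measurable function of the isomorphism class, combined with standard facts about regular conditional distributions on Polish spaces (which is why the assumption that $\bbX_1,\bbX_2$ are Polish is included). Once measurability is in hand, every other step is a routine application of Tonelli and the tower property, so I expect essentially no further obstacles.
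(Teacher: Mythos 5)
Your proposal is correct and follows essentially the same route as the paper: both define $F(G,u,v,m)=\E[f(G',u,v,m')\mid(G,\rho,m)]$ (you via the conditional kernel $\kappa$, which is the same object), note that it is a legitimate mass transport by the hypothesis, apply the mass transport principle for $(G,\rho,m)$, and conclude by the tower property and Tonelli. Your extra remarks on measurability of the kernel are a fine elaboration of a point the paper treats implicitly, but they do not change the argument.
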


\begin{proof}
Let $f:\cG^{\bbX_2}_{\bullet\bullet}\to[0,\infty]$ be a mass transport.
 For each pair of vertices $u,v$ in $G$,
  the conditional expectation
  \[F(G,u,v,m)=\E\left[f(G',u,v,m')\mid (G,\rho,m)\right]\] coincides a.s.\ with a measurable function of  $(G,u,v,m)$, and so is itself a mass transport. Since $(G,\rho,m)$ is unimodular we deduce that
\begin{multline}
\label{eq:conditionalMTP}
\E\sum_vf(G',\rho,v,m') = \E\sum_v F(G,\rho,v,m)\\= \E\sum_v F(G,v,\rho,m) = \E\sum_uf(G',u,\rho,m'),
\end{multline}
verifying that $(G',\rho,m')$ satisfies the mass transport principle.
\end{proof}

Later in the paper, we will often claim that various random rooted
graphs obtained from unimodular random rooted graphs in various ways are
unimodular. These claims can always be justified either as a direct
consequence of \cref{lem:unimodstability}, or otherwise by applying the
mass transport principle to the conditional expectation as in
\cref{eq:conditionalMTP}.



\subsection{Reversibility}\label{Sec:reverse}

Recall that the \textbf{simple random walk} on a locally finite graph $G=(V,E)$ with no isolated vertices is the Markov process $\langle X_n\rangle_{n\geq0}$ on the state space $V$ with transition probabilities $p(u,v)$ defined to be the fraction of edges emanating from $u$ that end in $v$.
A probability measure $\P$ on $\cG_\bullet$ is said to be \textbf{stationary} if, when $(G,\rho)$ is a random rooted graph with law $\P$ and $\langle X_n \rangle_{n\geq 0}$ is a simple random walk on $G$ started at the root, $(G,\rho)$ and  $(G,X_n)$ have the same distribution for all $n$ (in particular we require that every vertex of $(G,\rho)$ has at least one edge a.s., since otherwise the random walk is not defined).

The measure $\P$ is said to be \textbf{reversible} if furthermore
\[ (G,\rho,X_n) \eqd (G,X_n,\rho)\]
for all $n$. A random rooted graph is said to be reversible if its law is reversible. Reversible random rooted maps, marked graphs and marked maps are defined similarly. Every reversible random rooted graph is clearly stationary, but the converse need not hold in general \cite[Examples 3.1 and 3.2]{BLPS99}.


Let $\mathcal{G}_\leftrightarrow$  be the  space of isomorphism classes of connected, locally finite graphs equipped with a distinguished
bi-infinite path, which we endow with a natural variant of the local topology.  If $(G,\rho)$ is a reversible random rooted graph and $\langle X_n\rangle_{n\geq 0}$ and $\langle X_{-n}\rangle_{n\geq 0}$ are independent simple random walks started from $\rho$, then the sequence
\[\langle (G,\langle X_{n+k} \rangle_{n \in \Z} )\rangle_{k\in \Z}\]
of $\cG_\leftrightarrow$-valued random variables is stationary.

The following correspondence between unimodular and reversible random rooted graphs is implicit in \cite{AL07} and was proven explcitly in \cite{BC2011}. Similar correspondences also hold between unimodular and reversible random rooted maps, marked graphs and marked maps.
\begin{prop}[\cite{AL07,BC2011}]
Let $\P$ be a unimodular probability measure on $\cG_\bullet$ with $\asaf{\E}[\deg(\rho)]<\infty$ and let $\P_\mathrm{rev}$ denote the $\deg(\rho)$-biasing of $\P$, defined by
\[\Prev((G,\rho)\in\sA) :=\frac{\asaf{\E}[\deg(\rho)\mathbbm{1}((G,\rho)\in\sA)]}{\asaf{\E}[\deg(\rho)]}\]
for every Borel set $\sA\subseteq \cG_\bullet$.  Then  $\Prev$ is reversible. Conversely, if $\P$ is a reversible probability measure on $\cG_\bullet$ \asaf{with $\P(\deg(\rho)>0)=1$}, then biasing $\P$ by $\deg(\rho)^{-1}$ yields a unimodular probability measure on $\cG_\bullet$.
\end{prop}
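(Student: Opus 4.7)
The plan is to verify directly that degree-biasing converts unimodular to reversible, and that $\deg(\rho)^{-1}$-biasing reverses this. Write $m(u,v)$ for the number of edges between $u$ and $v$, so that the simple random walk transitions are $p(u,v)=m(u,v)/\deg(u)$ and $m(u,v)=m(v,u)$; more generally, detailed balance for the $n$-step kernel $p_n$ reads $\deg(u)p_n(u,v)=\deg(v)p_n(v,u)$.

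For the forward direction, I would first establish the one-step identity $(G,\rho,X_1)\eqd(G,X_1,\rho)$ under $\Prev$. For any nonnegative Borel $F:\cG_{\bullet\bullet}\to[0,\infty]$, unfolding the bias and the transition gives
\[
\E\bigl[\deg(\rho)\,F(G,\rho,X_1)\bigr]=\E\biggl[\sum_v m(\rho,v)\,F(G,\rho,v)\biggr].
\]
The map $(G,u,v)\mapsto m(u,v)F(G,u,v)$ is a bona fide mass transport, so the MTP for $\P$ together with $m(u,v)=m(v,u)$ identifies the right-hand side with $\E\bigl[\sum_u m(\rho,u)F(G,u,\rho)\bigr]=\E\bigl[\deg(\rho)\,F(G,X_1,\rho)\bigr]$. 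Dividing by $\E[\deg(\rho)]$ yields the one-step identity under $\Prev$; stationarity follows by integrating out one coordinate, and the $n$-step reversibility is then the standard consequence of one-step detailed balance plus stationarity of a Markov chain.

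For the converse, let $\P'$ denote the $\deg(\rho)^{-1}$-biasing of the reversible measure $\P$. The MTP for $\P'$ applied to a nonnegative transport $f$ is equivalent to
\[
\E\biggl[\frac{1}{\deg(\rho)}\sum_v f(G,\rho,v)\biggr]=\E\biggl[\frac{1}{\deg(\rho)}\sum_v f(G,v,\rho)\biggr].
\]
Applying $(G,\rho,X_n)\eqd(G,X_n,\rho)$ to the Borel function $F(G,u,v):=f(G,u,v)/\bigl(\deg(u)p_n(u,v)\bigr)$, extended by $0$ outside $\{p_n(u,v)>0\}$, and invoking detailed balance to cancel the $p_n$ factors, produces the restricted identity
\[
\E\biggl[\frac{1}{\deg(\rho)}\sum_v f(G,\rho,v)\mathbbm{1}[p_n(\rho,v)>0]\biggr]=\E\biggl[\frac{1}{\deg(\rho)}\sum_v f(G,v,\rho)\mathbbm{1}[p_n(v,\rho)>0]\biggr].
\]
Since $G$ is connected almost surely, every $v$ is reachable from $\rho$ by walks of all sufficiently large length of suitable parity, so passing to the monotone limit (working with both parities, or replacing simple random walk by lazy simple random walk whose $n$-step kernel is eventually positive between any two vertices in the same component) recovers the unrestricted equality.

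The only genuine subtlety is this bipartite issue in the converse direction, resolved by combining $n$ and $n+1$ or by passing to the lazy walk from the outset; the remaining steps are routine bookkeeping with the symmetry $m(u,v)=m(v,u)$ and detailed balance.
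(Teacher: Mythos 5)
Your argument is correct in substance; note, though, that the paper does not prove this proposition at all --- it is quoted from \cite{AL07,BC2011} with only a heuristic remark --- so your write-up is a self-contained verification rather than a variant of an argument in the text. Two comments on the details. In the forward direction, the step ``one-step detailed balance plus stationarity gives $n$-step reversibility'' is a statement about the \emph{doubly-rooted} law, not merely about the environment chain of rooted graphs, so the textbook Markov-chain fact does not apply verbatim; it does go through by a short induction using the Markov property, but you can avoid it entirely by running your mass-transport computation once for each $n$ with the transport $(G,u,v)\mapsto \deg(u)p_n(u,v)F(G,u,v)$ and the fixed-graph identity $\deg(u)p_n(u,v)=\deg(v)p_n(v,u)$, which yields $(G,\rho,X_n)\eqd(G,X_n,\rho)$ under $\Prev$ in one stroke. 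In the converse, your walk-based argument is essentially a hands-on proof of the special case you need of the Aldous--Lyons fact that involution invariance implies unimodularity (one could alternatively stop at $n=1$, observe that one-step reversibility gives the mass-transport identity for transports supported on adjacent pairs, and quote \cite[Proposition 2.2]{AL07}); your limiting step is fine provided you spell out the bookkeeping you allude to: either pass to the lazy walk, noting that its reversibility follows from that of the simple walk by binomial averaging of the $k$-step identities and that positivity of its $n$-step kernel is monotone in $n$, or take monotone limits along each parity separately and then split the transport according to the (isomorphism-invariant) event that $G$ is bipartite, since summing the even and odd limits double-counts every vertex exactly when $G$ is non-bipartite. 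Also note that connectivity together with $\P(\deg(\rho)>0)=1$ guarantees all degrees are positive, which is what makes $\deg(\rho)^{-1}$-biasing well normalized and the kernels' positivity monotone; with these points made explicit the proof is complete.
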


\subsection{Ergodicity}\label{Sec:ergod}

A probability measure $\P$ on $\cG_\bullet$ is said to be \textbf{ergodic} if $\P(\sA)\in\{0,1\}$ for every event $\sA \subseteq \mathcal{G}_\bullet$ that is invariant to
    changing the root in the sense that
    \[(G,\rho) \in \sA \iff (G,v)\in\sA \text{ for all } v \in V.\]
    A random rooted graph is said to be ergodic if its law is ergodic.
    Aldous and Lyons \cite[\S4]{AL07} proved that the ergodic unimodular probability measures on $\cG_\bullet$ are exactly the extreme point of the weakly closed,
    convex set of  unimodular  probability measures on. It follows by Choquet theory that every unimodular  random rooted graph is a mixture of ergodic unimodular  random rooted graphs, meaning that  the graph may be sampled by first sampling a random ergodic unimodular probability measure on $\cG_\bullet$ from some distribution,  and then sampling from this randomly chosen measure. In particular, to prove almost sure statements about unimodular  random rooted graphs, it suffices to consider the ergodic case. Analogous statements hold for unimodular random rooted maps, marked graphs, and marked maps.

\subsection{Duality}\label{Sec:dual}
Let $\P$ be a unimodular probability measure on $\cM_\bullet$ such that $M^\dagger$ is locally finite $\P$-a.s., and
let $(M,\rho)$ be a random rooted map with law $\Prev$. Conditional on $(M,\rho)$, let $\eta$ be an oriented edge of $M$ sampled uniformly at random from the set $E_\rho^\rightarrow$, and let $\rho^\dagger=\eta^r$. We define $\Prev^\dagger$ to the law of the random rooted map $(M^\dagger,\rho^\dagger)$.
\begin{prop}[Aldous-Lyons {\cite[Example 9.6]{AL07}}]\label{prop:dualmaps}
    Let $\P$ be a unimodular probability measure on $\cM_\bullet$ such that $M^\dagger$ is locally finite $\P$-a.s. and $\P[\deg(\rho)]<\infty$. Then $\Prev^\dagger$ is a reversible probability measure on $\cM_\bullet$.
\end{prop}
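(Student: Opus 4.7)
The plan is to realize $\Prev^\dagger$ as the $\deg_{M^\dagger}(\rho^\dagger)$-biasing of an auxiliary unimodular measure $\P^\dagger$ on $\cM_\bullet$, and then invoke the reversibility correspondence from the previous proposition. This is the strategy indicated in \cite[Example 9.6]{AL07}, and it lets us avoid verifying reversibility for every walk length $n$ separately.

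First I would define $\P^\dagger$ on $\cM_\bullet$ by the unbiasing formula
\[
 \E_{\P^\dagger}\bigl[F(M^\dagger, \rho^\dagger)\bigr] := C^{-1}\, \E_\P\Bigl[\sum_{\eta:\, \eta^- = \rho} \frac{F(M^\dagger, \eta^r)}{\deg(\eta^r)}\Bigr]
\]
for nonnegative Borel $F:\cM_\bullet \to [0,\infty]$, where $C := \E_\P\bigl[\sum_{\eta:\, \eta^- = \rho} 1/\deg(\eta^r)\bigr]$ is finite (since $\deg(\eta^r)\geq 1$ and $\E_\P[\deg(\rho)]<\infty$) and positive (since $M^\dagger$ is assumed locally finite, every face at $\rho$ has finite degree). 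A short computation using $\deg_{M^\dagger}(\eta^r)=\deg_M(\eta^r)$ together with the definitions of $\Prev$ and $\Prev^\dagger$ verifies that the $\deg_{M^\dagger}(\rho^\dagger)$-biasing of $\P^\dagger$ returns $\Prev^\dagger$ exactly, and in particular $\E_{\P^\dagger}[\deg_{M^\dagger}(\rho^\dagger)] = C^{-1}\E_\P[\deg(\rho)] < \infty$, so the hypotheses of the reversibility correspondence are met.

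The main step is to verify the mass transport principle for $\P^\dagger$. For any nonnegative Borel $F: \cM_{\bullet\bullet} \to [0,\infty]$, I would apply the mass transport principle for $\P$ to the auxiliary transport on $M$ defined by
\[
 H(M, u, v) := \sum_{\substack{\eta:\, \eta^- = u \\ \eta':\, \eta'^- = v}} \frac{F(M^\dagger,\, \eta^r,\, \eta'^r)}{\deg(\eta^r)\, \deg(\eta'^r)}.
\]
Summing $H(M,\rho,v)$ over $v$ and then grouping the oriented edges $\eta'$ by their right-face $g$, the identity $\sum_{\eta':\, \eta'^r = g} 1 = \deg(g)$ cancels one copy of the face-degree factor and yields $\sum_{\eta:\, \eta^- = \rho}\sum_g F(M^\dagger, \eta^r, g)/\deg(\eta^r)$; taking $\E_\P$ and dividing by $C$, this is exactly $\E_{\P^\dagger}[\sum_g F(M^\dagger, \rho^\dagger, g)]$. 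The symmetric reduction of $\sum_u H(M,u,\rho)$ produces $C\cdot\E_{\P^\dagger}[\sum_g F(M^\dagger, g, \rho^\dagger)]$. The MTP for $\P$ then transfers directly into the MTP for $\P^\dagger$, so $\P^\dagger$ is unimodular, and the previous proposition applied to $\P^\dagger$ gives reversibility of its degree-biasing $\Prev^\dagger$.

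The main technical subtlety is choosing the auxiliary transport $H$. The dummy oriented edge $\eta'$ must be introduced precisely so that the double inverse-degree weighting collapses, after marginalizing over $v$, via $\sum_{\eta'^r = g} 1 = \deg(g)$ into the clean form needed to recognize $\E_{\P^\dagger}[\sum_g F(\cdot\,, \rho^\dagger, g)]$. This trick of weighting oriented edges of $M$ by inverse face degrees is exactly the bridge between mass transport principles on $M$ and on its dual $M^\dagger$; once it is in place, the proof is mechanical.
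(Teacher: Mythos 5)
Your argument is correct: the measure you define by inverse-face-degree unbiasing is exactly the measure $\P^\dagger$ that the paper introduces in \eqref{eq:dualmeasure} (note that the multiset of right faces of oriented edges leaving $\rho$ coincides with the multiset of faces $f\perp\rho$ in the paper's left-face convention), your mass-transport computation does establish its unimodularity, and its $\deg(\rho^\dagger)$-biasing is $\Prev^\dagger$ with $\E^\dagger[\deg(\rho^\dagger)]=C^{-1}\E[\deg(\rho)]<\infty$ as in \eqref{eq:dualdegree}, so the unimodular--reversible correspondence (in its version for maps) applies. The paper gives no detailed proof of \cref{prop:dualmaps} --- it cites \cite[Example 9.6]{AL07} and offers only the heuristic that the edge--dual-edge bijection makes $\eta^\dagger$ uniform among dual edges --- so your write-up is a faithful formalization of precisely that route rather than a genuinely different one.
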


Intuitively, under the measure $\P_\mathrm{rev}$, the edge $\eta$ is `uniformly distributed' among the edges of $M$. Since the map sending each edge to its dual is a bijection, it should follow that $\eta^\dagger$ is uniformly distributed among the edges of $M^\dagger$, making the measure $\P^\dagger_\mathrm{rev}$ reversible also.

We define $\P^\dagger$ to be the unimodular measure on $\cM_\bullet$ obtained as the $\deg(\rho^\dagger)^{-1}$-biasing of $\Prev^\dagger$. We refer to $\P^\dagger$ as the \textbf{dual} of $\P$ and say that $\P$ is \textbf{self-dual} if $\P^\dagger=\P$. We write $\E_\mathrm{rev}$, $\E_\mathrm{rev}^\dagger$, and $\E^\dagger$ for the associated expectation operators.
We may express $\P^\dagger$ directly in terms of $\P$ by
\begin{equation} \tom{\P^\dagger\big((M^\dagger,\rho^\dagger) \in \sA\big)} = \E\bigg[\sum_{f\perp\rho}\deg(f)^{-1}\bigg]^{-1}\E\bigg[\sum_{f \perp \rho}\deg(f)^{-1}\mathbbm{1}\left((M^\dagger,f) \in \sA\right)\bigg]
\label{eq:dualmeasure}
\end{equation}
for every Borel set $\sA\subseteq \cM_\bullet$. In particular, we can calculate
\begin{align}\E^\dagger[\deg(\rho^\dagger)]&= \E\bigg[\sum_{f\perp\rho}\deg(f)^{-1}\bigg]^{-1}\E\bigg[\sum_{f \perp \rho}\deg(f)^{-1}\deg(f)\bigg]\nonumber\\
& = \E\bigg[\sum_{f\perp\rho}\deg(f)^{-1}\bigg]^{-1}\E[\deg(\rho)]<\infty.\label{eq:dualdegree}\end{align}
The factor $\E[\sum_{f\perp\rho}\deg(f)^{-1}]^{-1}$ may be thought of as the ratio of the number of vertices of $M$ to the number of faces of $M$.
\begin{example} Let $M$ be a finite map and let $\P$ be the law of the unimodular random rooted map $(M,\rho)$ where $\rho$ is a vertex of $M$ chosen uniformly at random. Then $\P^\dagger$ is the law of the unimodular random rooted map $(M^\dagger,\rho^\dagger)$ obtained by rooting the dual $M^\dagger$ of $M$ at a uniformly chosen face $\rho^\dagger$ of $M$.
\end{example}

\section{Percolations and Invariant Amenability}

\subsection{Markings and Percolations}

Let $(G,\rho)$ be a unimodular random rooted graph with law $\P$. Given
a random rooted graph $(G,\rho)$ and a polish space $\bbX$, an
\textbf{$\bbX$-marking} of $(G,\rho)$ is a random assignment
$m: E \cup V \to \bbX$ of marks to the edges and vertices of $(G,\rho)$
(possibly defined on some larger probability space) such that the random
rooted marked graph $(G,\rho,m)$ is unimodular.  A \textbf{percolation}
on $(G,\rho)$ is a $\{0,1\}$-marking of $(G,\rho)$, which we think of as
a random subgraph of $G$ consisting of the \textbf{open} edges
satisfying $\omega(e)=1$, and open vertices satisfying $\omega(v)=1$. We
assume without loss of generality that if an edge is open then so are
both of its endpoints. We call $\omega$ a \textbf{bond percolation} if
$\omega(v)=1$ for every vertex $v \in V$ almost surely.  The\textbf{
  cluster} $K_\omega(v)$ of $\omega$ at the vertex $v$ is the connected
component of $v$ in $\omega$. A percolation is said to be
\textbf{finitary} if all of its clusters are finite almost surely.  A
percolation is said to be \textbf{connected} if all open vertices form a
single cluster almost surely.

Let us gather here the following simple and useful technical lemmas
concerning markings and percolations.

\begin{lemma}\label{lem:unimodclusters}
  Let $(G,\rho,m)$ be a unimodular random marked graph. Let $\omega$ be
  a percolation on $(G,\rho,m)$ and let $m|_{K_\omega(\rho)}$ denote the
  restriction of $m$ to $K_\omega(\rho)$. Then the conditional law of
  $(K_\omega(\rho),\rho,m|_{K_\omega)\rho)})$ given that
  $\omega(\rho)=1$ is also unimodular. In particular, if $\omega$ is
  finitary, then $\rho$ is uniformly distributed on its cluster.
\end{lemma}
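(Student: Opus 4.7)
The plan is to deduce unimodularity of the conditional law from a single application of the mass transport principle for the enriched marked graph $(G,\rho,(m,\omega))$, which is itself unimodular since, by definition, a percolation is a $\{0,1\}$-marking that preserves unimodularity.

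Concretely, given a test transport $f:\cG^{\bbX}_{\bullet\bullet}\to[0,\infty]$, I would lift it to a transport on the enriched graph by setting
\[
\tilde f\bigl(G,u,v,(m,\omega)\bigr) := \mathbf{1}\bigl[\omega(u)=1,\ v\in K_\omega(u)\bigr]\cdot f\bigl(K_\omega(u),u,v,m|_{K_\omega(u)}\bigr).
\]
The cluster $K_\omega(u)$ together with its inherited marks is a measurable function of $(G,u,m,\omega)$, so $\tilde f$ is a genuine mass transport on marked graphs. The crucial observation is that on the event $v\in K_\omega(u)$ one has $K_\omega(u)=K_\omega(v)$, so the cluster entering $\tilde f$ depends only on the unordered pair. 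Applying \eqref{eq:MTP} to $\tilde f$ for the unimodular graph $(G,\rho,(m,\omega))$ yields
\[
\E\!\left[\mathbf{1}[\omega(\rho)=1]\!\!\sum_{v\in K_\omega(\rho)}\!\! f\bigl(K_\omega(\rho),\rho,v,m|_{K_\omega(\rho)}\bigr)\right]
= \E\!\left[\mathbf{1}[\omega(\rho)=1]\!\!\sum_{u\in K_\omega(\rho)}\!\! f\bigl(K_\omega(\rho),u,\rho,m|_{K_\omega(\rho)}\bigr)\right].
\]
Dividing both sides by $\P(\omega(\rho)=1)$ (assumed positive, else the statement is vacuous) gives the mass transport principle for the conditional law of $(K_\omega(\rho),\rho,m|_{K_\omega(\rho)})$ given $\omega(\rho)=1$, establishing unimodularity.

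For the final assertion, suppose $\omega$ is finitary, so $|K_\omega(\rho)|<\infty$ almost surely on $\{\omega(\rho)=1\}$. For any bounded measurable $g$ on rooted marked graphs, I would apply the MTP just obtained to the transport $f(H,u,v,m') := g(H,v,m')/|H|$, yielding
\[
\E\bigl[g(K_\omega(\rho),\rho,m|_{K_\omega(\rho)})\bigm|\omega(\rho)=1\bigr]
= \E\!\left[\frac{1}{|K_\omega(\rho)|}\sum_{v\in K_\omega(\rho)}\!\! g\bigl(K_\omega(\rho),v,m|_{K_\omega(\rho)}\bigr)\;\bigg|\;\omega(\rho)=1\right],
\]
which is exactly the statement that, conditional on the cluster and its inherited marks, $\rho$ is uniform on $K_\omega(\rho)$.

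The only real obstacle is bookkeeping: verifying the measurability of the cluster-extraction map and handling the conditioning cleanly. The substantive content is that $\omega$ being a percolation delivers unimodularity of $(G,\rho,(m,\omega))$ for free, after which a single invocation of the MTP with the lifted transport does all the work.
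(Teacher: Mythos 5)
Your proposal is correct and follows essentially the same route as the paper: lift the test transport $f$ to a transport on the unimodular marked graph $(G,\rho,m,\omega)$ via the indicator of $v\in K_\omega(\rho)$, apply the mass transport principle once, and read off the MTP for the conditional law (the paper's proof is the same computation, stated slightly more tersely). Your explicit treatment of the conditioning and of the uniform-root consequence via $f(H,u,v,m')=g(H,v,m')/|H|$ is just the routine bookkeeping the paper leaves implicit.
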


\begin{proof}
  Let $f$ be a mass transport, and let $g$ be the mass transport
  \[g(G,u,v,m,\omega) := \mathbbm{1}\left(v \in K_\omega(u)\right)
    f(K_\omega(u),u,v,m|_{K_\omega(u)}).
  \]
  Applying the mass transport principle for $(G,\rho,\omega)$, we have
  \begin{align*}
    \E\sum_{v \in K_\omega(\rho)} f(K_\omega(\rho),\rho,v,m|_{K_\omega(\rho)})
    &= \E\sum_{v \in V(G)}g(G,\rho,v ,m,\omega)
    = \E\sum_{v \in V(G)}g(G,v,\rho ,m,\omega)\\
    &= \E\sum_{v \in K_\omega(\rho)}
    f(K_\omega(\rho),v,\rho,m|_{K_\omega(\rho)}). \qedhere
  \end{align*}
\end{proof}

The next two lemmas are proved by abstract probabilistic arguments.
They state (roughly) that multiple markings on a graph can be combined,
and that a marking on a percolation cluster can be extended to a marking
on the entire graph.

\begin{lemma}[Coupling markings I]\label{lem:couplingmarks}
  Let $(G,\rho)$ be a unimodular random rooted graph and let
  $\{m_i : i \in I\}$ be a set of markings of $(G,\rho)$ indexed by a
  countable set $I$ with mark spaces $\{\bbX_i:i\in I\}$.
  Then there exists a $\prod_{i \in I}\bbX_i$-marking $m$ of
  $(G,\rho)$ such that $(G,\rho,\pi_i\circ m)$ and $(G,\rho,m_i)$ have
  the same distribution for all $i\in I$, where $\pi_i$ denotes the
  projection of $\prod_{i\in I}\bbX_i$ onto $\bbX_i$ for each $i\in I$.
\end{lemma}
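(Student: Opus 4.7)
The strategy is to couple the markings to be conditionally independent given the underlying rooted graph. Since each $\cG^{\bbX_i}_\bullet$ is Polish and $(G,\rho,m_i)$ projects onto $\P$, the disintegration theorem yields a Borel probability kernel $Q_i:\cG_\bullet\to\mathcal{P}(\cG^{\bbX_i}_\bullet)$ giving a regular conditional distribution of $m_i$ given $(G,\rho)$. I would sample $(G,\rho)\sim\P$ and, conditionally independently across $i$, sample $\tilde m_i\sim Q_i((G,\rho),\cdot)$; set $m=(\tilde m_i)_{i\in I}$ valued in $\prod_i\bbX_i$. By construction, the marginals $(G,\rho,\pi_i\circ m)\eqd(G,\rho,m_i)$ hold.

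The main task is verifying that $(G,\rho,m)$ is itself unimodular. A monotone class reduction lets us focus on non-negative mass transports $f$ depending on only finitely many marks $(m_i)_{i\in J}$. The key tool I plan to use is the following identity, valid for any unimodular $\P$ on $\cG_\bullet$, any kernel $Q$ with $\P\cdot Q$ unimodular, and any mass transport $h$ on marked doubly-rooted graphs:
\[\E_\P\sum_u\int Q((G,u),dm)\, h(G,u,\rho,m)=\E_\P\sum_u\int Q((G,\rho),dm)\, h(G,u,\rho,m).\]
Both sides equal $\E_\P\sum_v\int Q((G,\rho),dm)\,h(G,\rho,v,m)$: the left side, by the mass transport principle for $\P$ applied to the doubly-rooted transport $(G,v,b)\mapsto\int Q((G,v),dm)h(G,v,b,m)$; the right side, by the mass transport principle for $\P\cdot Q$ applied to $h$.

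Given this identity, MTP for $(G,\rho,m)$ is verified as follows. First, MTP on the underlying $(G,\rho)$ moves all kernels in $\E_\mu\sum_v f(G,\rho,v,m)$ from $(G,\rho)$ to $(G,u)$. Then the identity is applied once for each $i\in J$, with intermediate mass transports of the form $h_k(G,a,b,m_{i_k})=\int\prod_{j<k}Q_{i_j}((G,b),dm_{i_j})\prod_{j>k}Q_{i_j}((G,a),dm_{i_j})f(G,a,b,m)$, to swap each $Q_{i_k}$ from the first root $a=u$ to the second root $b=\rho$. After $|J|$ swaps all kernels are back at $(G,\rho)$, recovering $\E_\mu\sum_u f(G,u,\rho,m)$.

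The hardest part will be the measurability and isomorphism-class bookkeeping: the disintegrated kernels $Q_i$ must be chosen as measurable functions of isomorphism classes, and each intermediate transport $h_k$ must be checked to be iso-invariant so that the individual mass transport principles can be legitimately invoked. This is standard but somewhat delicate in the Polish-space / regular-conditional-probability framework, and it absorbs essentially all of the work in the proof.
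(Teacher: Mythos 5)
Your construction is exactly the one the paper uses: sample $(G,\rho)$ and then sample the markings $m_i$ conditionally independently from their conditional distributions given $(G,\rho)$; the paper simply asserts this and explicitly omits all further details. Your additional sketch of the unimodularity verification --- the kernel-swapping identity proved by playing the mass transport principle for $\P$ against that for each $\P\cdot Q_i$, together with the acknowledged isomorphism-class/automorphism bookkeeping (e.g.\ via automorphism-invariant lifts of the disintegrated kernels) --- is sound and goes beyond what the paper records.
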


\begin{proof}
  One such marking is given by first sampling $(G,\rho)$ and then
  sampling the markings $m_i$ independently from their conditional
  distributions given $(G,\rho)$.  The details of this construction are
  are omitted.
\end{proof}

\begin{lemma}[Coupling markings II]\label{lem:couplingmarks2}
  Let $(G,\rho)$ be a unimodular random rooted graph, let $\omega$ be a
  connected percolation on $(G,\rho)$, and let $m$ be an $\bbX$-marking
  of the unimodular random rooted graph given by sampling
  $(K_\omega(\rho),\rho)$ conditional on $\omega(\rho)=1$. Then there
  exists a $\bbX$-marking $\hat m$ of $(G,\rho)$ such that the laws of
  $(K_\omega(\rho),\rho,\hat m|_{K_\omega(\rho)})$ and
  $(K_\omega(\rho),\rho,m)$ coincide.
\end{lemma}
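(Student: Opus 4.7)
The plan is to extend $m$ to $\hat m$ by resampling marks on the cluster $K_\omega(\rho)$ according to the appropriate conditional distribution, and filling in a fixed default mark $x_0 \in \bbX$ outside the cluster. Since we only need equality in distribution and unimodularity of $(G,\rho,\hat m)$, we are free to use a distributional construction rather than an a.s.\ coupling.

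First, I would apply \cref{lem:unimodclusters} (with $m$ taken to be $\omega$) to deduce that, conditional on $\omega(\rho)=1$, the rooted marked graph $(K_\omega(\rho),\rho)$ is unimodular; let $\mu_K$ denote this unimodular probability measure on $\cG_\bullet$. By hypothesis, we are given an $\bbX$-marking of this random rooted graph, i.e., a unimodular probability measure $\nu$ on $\cG^\bbX_\bullet$ whose push-forward onto $\cG_\bullet$ equals $\mu_K$. Since $\cG^\bbX_\bullet$ and $\cG_\bullet$ are Polish spaces, standard disintegration theory yields a probability kernel $\kappa$ from $\cG_\bullet$ to markings such that $\nu = \mu_K \otimes \kappa$; in particular $\kappa((H,u),\cdot)$ depends only on the isomorphism class of $(H,u)$.

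Next I would construct $\hat m$ as follows. Sample $(G,\rho,\omega)$ from its given unimodular law. Conditional on $(G,\rho,\omega)$, and on the event $\omega(\rho)=1$, sample a marking $\hat m|_{K_\omega(\rho)}$ of $K_\omega(\rho)$ from the kernel $\kappa\bigl((K_\omega(\rho),\rho),\cdot\bigr)$. Assign $\hat m(x) = x_0$ for every vertex or edge $x$ of $G$ not in $K_\omega(\rho)$ (and also if $\omega(\rho) = 0$, set $\hat m \equiv x_0$ on $G \setminus \{\rho\}$, or handle that trivial case separately). By construction, for every pair of vertices $u,v$ in $G$, the conditional distribution of $(G,u,v,\hat m)$ given $(G,\rho,\omega)$ is a measurable function of the isomorphism class of $(G,u,v,\omega)$, since the kernel $\kappa$ is iso-invariant and the extension by $x_0$ is canonical. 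Hence by \cref{lem:unimodstability}, the random rooted marked graph $(G,\rho,\omega,\hat m)$ is unimodular; projecting out $\omega$ shows $(G,\rho,\hat m)$ is unimodular.

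Finally, I would verify the distributional claim. By construction, the conditional distribution of $(K_\omega(\rho),\rho,\hat m|_{K_\omega(\rho)})$ given $(K_\omega(\rho),\rho)$ and $\omega(\rho)=1$ is exactly $\kappa((K_\omega(\rho),\rho),\cdot)$, which is the same disintegration that defines the law of $(K_\omega(\rho),\rho,m)$. Integrating against the common marginal $\mu_K$ gives the desired equality of laws. The main conceptual point, though not technically the hard one, is confirming that the disintegration $\kappa$ descends to a kernel on isomorphism classes; this is routine once one fixes Borel representatives of the Polish spaces $\cG_\bullet$ and $\cG^\bbX_\bullet$, and the rest of the argument is a direct application of \cref{lem:unimodstability}.
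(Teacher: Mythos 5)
Your first step agrees with the paper's: disintegrating the law of $(K_\omega(\rho),\rho,m)$ over $\mu_K$ and sampling the marks conditionally independently of the ambient graph given the rooted cluster is exactly how the paper begins. The gap is in the second step, where you invoke \cref{lem:unimodstability}. That lemma requires the conditional law of $(G,u,v,\omega,\hat m)$ given $(G,\rho,\omega)$ to coincide with a measurable function of the isomorphism class of $(G,u,v,\omega)$ alone, i.e.\ to be independent of where the root sits; your construction is genuinely root-dependent. Most blatantly, when $\omega(\rho)=0$ you set every mark to $x_0$ even though the (connected) cluster of $\omega$ is still present in $G$, while when $\omega(\rho)=1$ the cluster receives nontrivial marks; the doubly-rooted class $(G,u,v,\omega)$ cannot see which of these cases occurred. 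A concrete counterexample: let $G$ be the ladder $\Z\times\{0,1\}$, let $\omega$ be one of the two sides chosen by a fair coin (an invariant, connected percolation), and let $m$ be i.i.d.\ Uniform$[0,1]$ marks on the cluster, with $x_0$ a fixed point of $[0,1]$. Under your $\hat m$, the transport in which every vertex whose mark equals $x_0$ sends unit mass to its partner across the rung has expected mass $1/2$ out of the root but expected mass $1$ into it, so $(G,\rho,\hat m)$ fails the mass transport principle and is not a marking of $(G,\rho)$. There is also a secondary issue even on the event $\omega(\rho)=1$: the kernel $\kappa$ produces laws on isomorphism classes rooted at $\rho$, and turning these into concrete markings of the cluster in a way that does not depend on which vertex of the cluster the root happens to be is not automatic (it is essentially part of what has to be proved), so the hypothesis of \cref{lem:unimodstability} is not verified there either.

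What is missing is the paper's re-rooting step, which is the real content of the proof. After building, as you did, the coupling in which the ambient graph and the marks are conditionally independent given the rooted cluster (so that the root is conditioned to lie in $\omega$), the paper lets $v(u)$ be a uniformly chosen nearest vertex of $\omega$ to each vertex $u$ of $G$, notes via the mass transport principle that $\E|\{u:v(u)=\rho\}|=\P(\rho\in\omega)^{-1}<\infty$, then samples the constructed marked graph biased by $|\{u:v(u)=\rho\}|$ and re-roots at a uniform vertex of $\{u:v(u)=\rho\}$. This size-biasing exactly undoes the conditioning on $\rho\in\omega$ and yields a unimodular $(G,\rho',\omega,\hat m)$ whose root has the correct (unconditioned) law; some step of this kind is unavoidable, since the law of $(G,\rho,\omega)$ conditioned on $\omega(\rho)=1$ is not unimodular, and padding by a constant mark cannot repair that. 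To fix your argument, replace the appeal to \cref{lem:unimodstability} by this biasing and re-rooting construction (or an equivalent one), and verify the equality of laws on the cluster afterwards, which then goes through as in your final paragraph.
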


\begin{proof}
  One such marking is given as follows: First sample
  $(K_\omega(\rho),\rho)$ from its conditional distribution given
  $\omega(\rho)=1$. Then, independently, sample $(G,\rho)$ and $m$
  independently conditional on $(K_\omega(\rho),\rho)$. Extend $m$ to the
  vertices and edges of $G$ not present in $\omega$ by setting $m$ to be
  some constant $x_0\in \bbX$ on those vertices.  This yields the law of a
  random rooted marked graph $(G,\rho,\omega,m)$ in which the root always
  satisfies $\omega(\rho)=1$.

  Now, for each vertex $u$ of $G$, let $v(u)$ be chosen uniformly from the
  set of vertices of $\omega$ closest to $u$, independently from everything
  else. (In particular, $v(u)=u$ if $\omega(u)=1$.) Note that (by the mass
  transport principle), the expectation of $|\{ u : v(u)=\rho\}|$ is
  $\P(\rho\in\omega)^{-1}$ and so is finite. Sample the marked graph
  $(G,\rho,\omega,m)$ biased by $|\{ u : v(u)=\rho\}|$, and let $\rho'$ be
  uniform in $\{ u : v(u)=\rho\}$.  Then $(G,\rho',\omega,m)$
  yields the desired coupling.
\end{proof}

\subsection{Amenability}

Recall that the \textbf{(edge) Cheeger constant} of an infinite graph
$G=(V,E)$ is defined to be
\begin{equation*}
  \mathbf{i}_E(G)=\inf \left\{\frac{|\partial_E W|}{|W|}
    : W \subset V \text{ finite}\right\}
\end{equation*}
where $\partial_E W$ denotes the set of edges with exactly one end in
$W$. The graph is said to be \textbf{amenable} if its Cheeger constant
is zero and \textbf{nonamenable} if it is positive.


Nonamenability is often too strong a condition to hold for random graphs.
For example, it is easily seen that every infinite cluster of a
Bernoulli percolation on a nonamenable Cayley graph is almost surely
amenable, since the cluster will contain atypical `bad regions' that
have small expansion.  In light of this, Aldous and Lyons introduced the
following weakened notion of nonamenability for unimodular random
graphs.  (Another way to overcome this issue is to use \emph{anchored
  expansion}, which is less relevant in our setting. See \cite{AL07} for
a comparison of the two notions.)
The \textbf{invariant Cheeger constant} of an ergodic unimodular random
rooted graph $(G,\rho)$ is defined to be
\begin{equation}
  \mathbf{i}^{\mathrm{inv}}(G,\rho)= \inf \left\{
    \E\left[ \frac{|\partial_E K_\omega(\rho)|}{|K_\omega(\rho)|}\right]
    : \omega \text{ a finitary percolation on $G$}
  \right\}.
\end{equation}
(This is a slight abuse of notation:
$\mathbf{i}^{\mathrm{inv}}(G,\rho)$ is really a function of the
\emph{law} of $(G,\rho)$.)  An ergodic unimodular random rooted graph
$(G,\rho)$ is said to be \textbf{invariantly amenable} if
$\mathbf{i}^{inv}(G,\rho)=0$ and \textbf{invariantly nonamenable}
otherwise. More generally, we say that a unimodular random rooted graph
is invariantly amenable if its ergodic decomposition is supported on
invariantly amenable unimodular random rooted graphs, and invariantly
nonamenable if its ergodic decomposition is supported on invariantly
nonamenable unimodular random rooted graphs. This is a property of the
law of $(G,\rho)$ and not of an individual graph. Invariant amenability
and nonamenability is defined similarly for ergodic unimodular random
rooted maps, marked graphs and marked maps.

Let $(G,\rho)$ be a unimodular random rooted graph and let $\omega$ be a
finitary percolation on $G$.  Let $\deg_\omega(\rho)$ denote the degree
of $\rho$ in $\omega$ and let
\[
  \alpha(G,\rho)
  = \sup \left\{\E\!\left[\deg_\omega(\rho)\right] : \omega \text{ a finitary
      percolation on $G$}\right\}
\]
An easy application of the mass transport principle \cite[Lemma
8.2]{AL07} shows that for any finitary percolation
\[
  \E[\deg_\omega(\rho)]
  = \E\left[\frac{\sum_{v \in K_\omega(\rho)}
      \deg_\omega(v)}{|K_\omega(\rho)|}\right]
  = \E\left[\frac{\sum_{v \in K_\omega(\rho)} \deg(v) -
      |\partial_E K_\omega(\rho)|}{|K_\omega(\rho)|}\right].
\]
It follows that, if $\E[\deg(\rho)]<\infty$,
\[
  \mathbf{i}^\mathrm{{inv}}(G,\rho) = \E[\deg(\rho)] - \alpha(G,\rho).
\]
Similarly, if $(G,\rho)$ is an ergodic unimodular random rooted graph
with $\E[\deg(\rho)]=\infty$, then $\alpha(G,\rho)<\infty$ is a
sufficient condition for $\mathbf{i}^\mathrm{inv}(G,\rho)$ to be
positive.

\subsection{Hyperfiniteness}


A closely related notion to amenability is hyperfiniteness.  A
unimodular random rooted graph $(G,\rho)$ is said to be
\textbf{hyperfinite} if there exists a sequence of percolations
$(\omega_i)_{i\geq1}$ of $(G,\rho)$ such that each of the percolations
$\omega_i$ is finitary, $\omega_i \subseteq \omega_{i+1}$ almost surely,
and $\bigcup_{i\geq1}\omega_i = G$ almost surely.  We call such a
sequence a \textbf{finitary exhaustion} of $(G,\rho)$.  The following is
standard in the measured equivalence relations literature and was noted
to carry through to the unimodular random rooted graph setting by Aldous
and Lyons \cite{AL07}.

\begin{thm}[{\cite[Theorem 8.5]{AL07}}]\label{prop:invexh}
  Let $(G,\rho)$ be a unimodular random rooted graph with
  $\E[\deg(\rho)]<\infty$.  Then $(G,\rho)$ is invariantly amenable if
  and only if it hyperfinite.
\end{thm}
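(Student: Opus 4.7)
My plan is to leverage the identity $\mathbf{i}^{\mathrm{inv}}(G,\rho) = \E[\deg(\rho)] - \alpha(G,\rho)$ proved earlier in the excerpt, which reduces the whole proposition to statements about expected degrees in finitary percolations. The forward direction is easy: given a finitary exhaustion $(\omega_i)_{i \geq 1}$, the sequence $\deg_{\omega_i}(\rho)$ is pointwise nondecreasing with limit $\deg(\rho)$, so monotone convergence gives $\E[\deg_{\omega_i}(\rho)] \to \E[\deg(\rho)]$, forcing $\alpha(G,\rho) = \E[\deg(\rho)]$ and hence $\mathbf{i}^{\mathrm{inv}}(G,\rho) = 0$.

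For the reverse direction, I would begin by using $\mathbf{i}^{\mathrm{inv}}(G,\rho) = 0$ and the identity to select finitary bond percolations $(\omega_n)_{n \geq 1}$ with $\E[\deg(\rho) - \deg_{\omega_n}(\rho)] < 2^{-n}$ (one may harmlessly promote any finitary percolation to a bond percolation by opening all vertices without changing degrees or destroying finitariness). Using \cref{lem:couplingmarks}, I would then couple all of them into a single $\{0,1\}^{\N}$-marking $\vec\omega = (\omega_n)_{n \geq 1}$ of $(G,\rho)$, so that $(G,\rho,\vec\omega)$ is unimodular, and define the tail intersections
\[\omega'_n := \bigcap_{m \geq n} \omega_m.\]
By \cref{lem:unimodstability}, each $(G,\rho,\omega'_n)$ remains a unimodular bond percolation; it is finitary (since $\omega'_n \subseteq \omega_n$) and monotone ($\omega'_n \subseteq \omega'_{n+1}$). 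The only remaining task is to verify that $\bigcup_n \omega'_n = G$ almost surely, which is equivalent to showing that every edge eventually lies in all $\omega_m$.

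To check this, set $T(v) := \sup\{n : \deg_{\omega_n}(v) < \deg(v)\} \in \{0,1,\ldots,\infty\}$. Since $\deg(\rho) - \deg_{\omega_n}(\rho)$ is a nonnegative integer with mean at most $2^{-n}$, Markov's inequality gives $\P(\deg_{\omega_n}(\rho) \neq \deg(\rho)) < 2^{-n}$, and Borel--Cantelli yields $T(\rho) < \infty$ almost surely. To propagate this from the root to all vertices, I would apply the mass transport principle to the transport $h(G,u,v,\vec\omega) := \mathbbm{1}(T(v) = \infty)$: the mass received at $\rho$ equals $|V(G)| \cdot \mathbbm{1}(T(\rho) = \infty)$, which is $0$ almost surely (using the standard convention $\infty \cdot 0 = 0$), so
\[\E \sum_v \mathbbm{1}(T(v) = \infty) = \E \sum_u h(G,u,\rho,\vec\omega) = 0,\]
forcing $T(v) < \infty$ at every vertex almost surely. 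Hence every edge of $G$ eventually lies in $\omega_m$ for all large $m$, and therefore in some $\omega'_n$, completing the construction of the finitary exhaustion.

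The main technical obstacle is this propagation step. A naive approach would take ascending unions $\omega_1 \cup \cdots \cup \omega_n$, but these need not be finitary, because distinct finite clusters can merge into infinite ones as more edges are added. The decreasing tail intersections I use above bypass this issue, at the cost of needing the Borel--Cantelli and mass transport argument to ensure that they still exhaust $G$; it is precisely here that the unimodularity of the coupled marking $\vec\omega$ is indispensable, since lifting an almost sure statement from the root to all vertices simultaneously is exactly the role the mass transport principle is designed to play.
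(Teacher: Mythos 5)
Your proof is correct and follows essentially the same route as the paper: the identity $\mathbf{i}^{\mathrm{inv}}(G,\rho)=\E[\deg(\rho)]-\alpha(G,\rho)$ with monotone convergence for one direction, and for the other the same choice of $\omega_n$ with error $2^{-n}$, coupling via \cref{lem:couplingmarks}, decreasing tail intersections, and Borel--Cantelli. Your explicit mass-transport argument for propagating $T(\rho)<\infty$ to all vertices is simply a spelled-out version of the step the paper dispatches with ``it follows by unimodularity that $\bigcup\hat\omega_i=G$.''
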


We provide a short proof below for completeness.

\begin{proof}
  First suppose that $(G,\rho)$ is hyperfinite, and let
  $(\omega_i)_{i\geq0}$ be a finitary exhaustion of $(G,\rho)$.  By the
  monotone convergence theorem,
  $\E[\deg_{\omega_n}(\rho)] \to \E[\deg(\rho)]$, so that
  \[
    \alpha(G,\rho) \geq \limsup_{i\to\infty}\E[\deg_{\omega_i}(\rho)]=
    \E[\deg(\rho)]
  \]
  and hence $(G,\rho)$ is invariantly amenable.  Suppose conversely that
  $(G,\rho)$ is invariantly amenable. For each $i\geq1$, there exists a
  finitary percolation $\omega_i$ on $G$ such that
  $\E [\deg(\rho)-\deg_{\omega_i}(\rho)] \leq 2^{-i}$.
  By \cref{lem:couplingmarks}, these can be coupled.
  For each $i\geq 1$, let $\hat \omega_i = \bigcap_{j\geq i} \omega_j$.
  Then $\hat\omega_i$ is an increasing sequence of finitary
  percolations.  Furthermore, by construction,
  \[
    \E [\deg(\rho)-\deg_{\hat \omega_i}(\rho)] \leq
    \sum_{j\geq i} \E [\deg(\rho)-\deg_{\omega_i}(\rho)] \leq 2^{-i+1}.
  \]
  By Borel-Cantelli, $\rho$ is in the interior of its $\omega_i$-cluster
  for all sufficiently large $i$ almost surely. It follows by
  unimodularity that $\bigcup \hat\omega_i = G$, so that
  $(\hat\omega_i)_{i\geq0}$ is a finitary exhaustion of $(G,\rho)$.
\end{proof}

\begin{remark}
  Invariantly amenable unimodular random rooted graphs are always
  hyperfinite whether or not $\E[\deg(\rho)]<\infty$.  However, the
  graph obtained by replacing each edge of the canopy tree (i.e., the
  Benjamini-Schramm limit of the balls in a $3$-regular tree) at height
  $n$ by $2^n$ parallel edges is hyperfinite but nonamenable (since we
  use edge boundaries).
\end{remark}


\begin{corollary}[\cref{thm:dichotomy}, \eqref{iAmen} implies \eqref{iBS}]
  \label{cor:invariantexhaustion}
  Let $(M,\rho)$ be a hyperfinite ergodic random rooted map. Then
  $(M,\rho)$ is a Benjamini-Schramm limit of finite planar maps.
\end{corollary}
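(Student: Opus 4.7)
The plan is to use hyperfiniteness to extract a finitary exhaustion $(\omega_i)_{i \geq 1}$ of $(M,\rho)$ via \cref{prop:invexh}, and then to build the approximating sequence directly from the clusters of $\omega_i$ that contain the root. For each $i$, let $K_i = K_{\omega_i}(\rho)$, which is finite almost surely, and let $M_i$ denote the \emph{induced sub-map} of $M$ on $K_i$: its vertex set is $K_i$, its edges are those edges of $M$ with both endpoints in $K_i$, and the cyclic permutation at each $v \in K_i$ is the restriction of the cyclic permutation of $M$ at $v$ to these remaining edges. Since $\omega_i|_{K_i}$ is a connected spanning subgraph of the induced subgraph, the underlying graph of $M_i$ is connected.

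I would then verify that each $M_i$ is a finite planar map. Taking the proper embedding of $M$ in its planar surface (viewed as a subset of $S^2$), restriction to $K_i$ and its induced edges yields a finite connected graph embedded in $S^2$ (after one-point compactification of the unbounded region). Because the embedded image is finite and connected, the Jordan curve theorem implies that every complementary component in $S^2$ is an open topological disk, and the cyclic ordering at each vertex read off from the embedding coincides by construction with the restriction of the original cyclic permutation in $M$. Hence $M_i$ is a proper finite planar map.

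For the convergence, \cref{lem:unimodclusters} gives that, conditional on $\omega_i(\rho) = 1$ (which holds almost surely), the root $\rho$ is uniformly distributed over $K_i$; thus $(M_i,\rho)$ realises the random finite planar map $M_i$ with a uniformly chosen root, matching the definition of a Benjamini--Schramm approximating sequence. Since $\bigcup_i \omega_i = M$ almost surely, for any fixed radius $R$ the entire $(R+1)$-ball $B_{R+1}(M,\rho)$ is eventually contained in $K_i$ for all sufficiently large $i$; whenever this happens, every edge of $M$ inside $B_R(M,\rho)$ and every cyclic ordering at a vertex of that ball is faithfully preserved in $M_i$, so $B_R(M_i,\rho) \cong B_R(M,\rho)$ as rooted maps. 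This yields the required convergence in the local topology.

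The main subtle point is the planarity verification: one needs the abstract cyclic orderings inherited from $M$ to be realised by an actual planar embedding of $M_i$ in which every face is a disk. Connectedness of $K_i$, ensured by its being a cluster of $\omega_i$, is essential here, since disconnected induced subgraphs can produce non-disk complementary regions (for instance, annuli arising from nested components) and thereby spoil the planarity of the induced sub-map as an abstract map.
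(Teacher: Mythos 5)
Your proposal is correct and takes essentially the same route as the paper: extract a finitary exhaustion, turn the root's cluster into a finite map with the rotation system inherited from $M$, use \cref{lem:unimodclusters} to see that the root is uniform on that finite map, and observe that the balls around $\rho$ eventually stabilise, giving convergence in the local topology. The only differences are cosmetic — you take the induced sub-map on the cluster's vertex set rather than the cluster itself, and you spell out the planarity of the finite pieces (via the spherical embedding), which the paper leaves implicit — and neither affects the argument.
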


\begin{proof}
  Let $(\omega_i)_{i\geq1}$ be a finitary exhaustion of $(M,\rho)$.  Let
  $M_i$ be the finite map with underlying graph $K_{\omega_i}(\rho)$ and
  map structure inherited from $M$.  Then $(M_i,\rho_i)$ is a finite
  unimodular random rooted map, with the root is uniform in $M_i$ (by By
  \cref{lem:unimodclusters}).  Moreover, $(M_i,\rho)$ converges to
  $(M,\rho)$ almost surely and hence also in distribution.
\end{proof}

\begin{thm}[\cref{thm:dichotomy}: \eqref{iBS} implies \eqref{iAmen}]
  Let $(M,\rho)$ be an ergodic unimodular random planar map that is a
  Benjamini-Schramm limit of finite planar maps. Then $(M,\rho)$ is
  hyperfinite.
\end{thm}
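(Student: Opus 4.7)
My plan is to apply the Lipton--Tarjan planar separator theorem to the finite approximants $M_n$, combined with a degree truncation, and then pass to the Benjamini--Schramm limit to produce percolations on $(M,\rho)$ exhibiting invariant amenability. Hyperfiniteness then follows from \cref{prop:invexh}. Throughout I use that $\E[\deg(\rho)]<\infty$, which is in force in \cref{thm:dichotomy}.

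The main input is the standard iterated form of the Lipton--Tarjan theorem: for every $\delta>0$ there exists a constant $C(\delta)$ such that every finite planar graph on $n$ vertices has a vertex subset $S$ of size at most $\delta n$ whose removal leaves components of at most $C(\delta)$ vertices. Given $\eta>0$, I would choose $K$ large enough that $\E[\deg(\rho)\mathbbm{1}[\deg(\rho)>K]]<\eta$, let $H_n$ be the set of vertices of $M_n$ of degree larger than $K$, and apply the separator theorem to the planar graph $M_n\setminus H_n$ (of maximum degree at most $K$) to obtain a separator $S_n$ of size at most $\delta|V(M_n)|$, chosen via a measurable, root-independent algorithm. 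Let $\omega_n$ be the percolation on $M_n$ with open vertex set $V(M_n)\setminus(H_n\cup S_n)$ and with an edge open iff both endpoints are. Then $(M_n,\rho_n,\omega_n)$ is unimodular, all clusters of $\omega_n$ have at most $C(\delta)$ vertices, and a direct double-counting argument (each closed edge is incident to $H_n\cup S_n$) gives
\[
\E\bigl[\deg(\rho_n)-\deg_{\omega_n}(\rho_n)\bigr]\leq 2K\delta + 2\E\bigl[\deg(\rho_n)\mathbbm{1}[\deg(\rho_n)>K]\bigr],
\]
which can be made smaller than $3\eta$ by setting $\delta=\eta/K$ and taking $n$ large.

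By tightness a subsequence of $(M_n,\rho_n,\omega_n)$ converges in the local topology to some $(M,\rho,\omega^\eta)$, where $\omega^\eta$ is a unimodular percolation on $(M,\rho)$ all of whose clusters have at most $C(\delta)$ vertices almost surely (a local event, hence preserved in the limit). Applying weak convergence to the bounded continuous local functions $\deg_\omega(\rho)\wedge N$ and then letting $N\to\infty$ by dominated convergence (where $\E[\deg(\rho)]<\infty$ is used), I would conclude $\E[\deg_{\omega^\eta}(\rho)]\geq\E[\deg(\rho)]-3\eta$. Letting $\eta\to 0$ yields $\alpha(M,\rho)=\E[\deg(\rho)]$, so $\mathbf{i}^{\mathrm{inv}}(M,\rho)=0$, and $(M,\rho)$ is hyperfinite by \cref{prop:invexh}.

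The main obstacle is the unbounded-degree setting: the Lipton--Tarjan theorem controls only the vertex count of the separator, so without an extra step an unlucky separator consisting of a few very high degree vertices could destroy an enormous number of edges and spoil the Cheeger estimate. The degree truncation step, enabled by the first-moment assumption, resolves this by segregating the heavy vertices from the separator and converting the vertex-count bound into the edge bound actually needed.
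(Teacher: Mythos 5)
Your overall route---iterated Lipton--Tarjan separators applied to the approximants after removing high-degree vertices, passing to a subsequential local limit of the resulting percolations, and concluding $\alpha(M,\rho)=\E[\deg(\rho)]$ and hence hyperfiniteness via \cref{prop:invexh}---is exactly the argument the paper intends (its proof is only a pointer to [LT80] and [AHNR15]). However, there is a genuine gap at the step where you claim that $\E\bigl[\deg(\rho_n)\mathbbm{1}[\deg(\rho_n)>K]\bigr]$ is smaller than $\eta$ for large $n$, with $K$ chosen from the \emph{limit} law. Benjamini--Schramm convergence gives convergence in distribution of $\deg(\rho_n)$ to $\deg(\rho)$, but no uniform integrability of the root degrees of the approximants, so this truncated expectation need not converge to $\E[\deg(\rho)\mathbbm{1}[\deg(\rho)>K]]$. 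Concretely, let $M_n$ be a path on $n$ vertices in which one fixed edge is replaced by $n$ parallel edges (a legitimate planar map, since multiple edges are allowed): then $(M_n,\rho_n)\to(\Z,0)$, yet $\E[\deg(\rho_n)\mathbbm{1}[\deg(\rho_n)>K]]\to 2$ for every fixed $K$. Grafting such parallel-edge gadgets sparsely onto any convergent sequence shows that the inequality $\E[\deg_{\omega^\eta}(\rho)]\geq\E[\deg(\rho)]-3\eta$ cannot be obtained the way you obtain it: the offending term is estimated in the approximants, where it can simply fail to be small, even though the theorem itself remains true.

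The repair is to keep the problematic quantity local and evaluate it in the limit rather than in $M_n$. Split $\deg(\rho_n)-\deg_{\omega_n}(\rho_n)$ into the contribution from $\rho_n\in H_n\cup S_n$, the edges from $\rho_n$ to $S_n$, and the edges from $\rho_n$ to $H_n$. The $S_n$-terms are controlled uniformly in $n$ by double counting in the finite graph together with $\deg\leq K$ on $S_n$, giving a bound of order $K\delta$. For the $H_n$-terms, do \emph{not} take expectations in $M_n$: the truncated observables $(\deg(\rho_n)\wedge N)\,\mathbbm{1}(\deg(\rho_n)>K)$ and $\bigl(\#\{\text{edges from } \rho_n \text{ to vertices of degree} >K\}\bigr)\wedge N$ are bounded continuous functions of the ball of radius $2$, so their expectations converge to the corresponding quantities for $(M,\rho)$; and in the limit the mass transport principle gives $\E\bigl[\#\{\text{edges from } \rho \text{ to vertices of degree} >K\}\bigr]=\E[\deg(\rho)\mathbbm{1}(\deg(\rho)>K)]$, which is small for large $K$ precisely because the \emph{limit} has finite expected degree. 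With this reordering (choose $K$ from the limit law, then $\delta$, then let $n\to\infty$ and finally $N\to\infty$ by monotone convergence), your construction of $\omega_n$ and the remainder of your argument go through verbatim and yield $\alpha(M,\rho)=\E[\deg(\rho)]$, hence hyperfiniteness by \cref{prop:invexh}.
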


This is a corollary to the Lipton-Tarjan planar separator theorem
\cite{LT80}; see \cite{AHNR15} for details.  Similarly, it is possible
to deduce that item \eqref{iBS2} of \cref{thm:dichotomy} implies item
\eqref{iAmen} as an application of the low-genus separator theorem of
Gilbert, Hutchinson, and Tarjan~\cite{gilbert1984separator}.

\subsection{Ends}

Recall that an infinite connected graph $G=(V,E)$ is said to be
\textbf{$k$-ended} if $k$ is minimal such that for every finite set $W$
of vertices in $G$, the graph induced by the complement $V\setminus W$
has at most $k$ distinct infinite connected components.  In particular,
an infinite tree is one-ended if it does not contain a simple
bi-infinite path, and is two-ended if it contains a unique bi-infinite
path.

The following proposition, due primarily to Aldous and Lyons
\cite{AL07}, connects the number of ends of a unimodular random rooted
graph to invariant amenability.

\begin{prop}[Ends and Amenability \cite{AL07,AHNR15}]
  \label{prop:endsamen}
  Let $(G,\rho)$ be an infinite ergodic unimodular random rooted graph
  with $\E[\deg(\rho)]<\infty$.  Then $G$ has either one, two, or
  infinitely many ends almost surely.  If $G$ has infinitely many ends,
  then $(G,\rho)$ is invariantly nonamenable.  If $G$ has two ends
  almost surely, then it is almost surely recurrent and $(G,\rho)$ is
  invariantly amenable.
\end{prop}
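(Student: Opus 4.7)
The plan is to address the three assertions in turn: almost-sure constancy of the number of ends (ruling out values $3 \leq k < \infty$), the implication from having infinitely many ends, and the implication from having two ends.

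The number of ends $N(G)$ is a measurable functional of the isomorphism class of $(G,\rho)$ that does not depend on the choice of root, so by ergodicity it equals some constant $k \in \{1,2,\ldots,\infty\}$ almost surely. To rule out $3 \leq k < \infty$, I would use mass transport. Assuming for contradiction that $3 \leq k < \infty$, one can canonically choose a nonempty finite ``branching core'' $W(G) \subset V$ as a measurable function of the isomorphism class, for instance the set of vertices common to every minimum-size finite $W \subset V$ whose removal leaves exactly $k$ infinite components. Transporting unit mass from each vertex of $G$ to each vertex of $W(G)$ produces out-mass $|W(G)| < \infty$ from every vertex but infinite in-mass at every vertex of $W(G)$, contradicting (MTP).

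If $N(G) = \infty$ almost surely, suppose for contradiction that $(G,\rho)$ is invariantly amenable. By \cref{prop:invexh} it is hyperfinite, with a finitary exhaustion $(\omega_n)_{n\geq 1}$. For each $n$, the finite cluster $K_{\omega_n}(\rho)$ has finitely many boundary edges, so $G \setminus K_{\omega_n}(\rho)$ has only finitely many infinite components; yet because $G$ has infinitely many ends, as $n \to \infty$ the number of such components must diverge, forcing $\E|\partial_E K_{\omega_n}(\rho)| \to \infty$. I would then combine this with a mass transport over ``trifurcation edges'' (edges whose removal separates three or more ends) to extract a strictly positive lower bound on $\E\bigl[|\partial_E K_{\omega_n}(\rho)|/|K_{\omega_n}(\rho)|\bigr]$, contradicting hyperfiniteness. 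An alternative route is to extract a unimodular random subforest of $G$ in which every tree has infinitely many ends, and verify directly by MTP that such a forest has positive invariant Cheeger constant.

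If $N(G) = 2$ almost surely, I would canonically orient the two ends (via an MTP-compatible tie-breaking rule on the nested sequence of minimum cuts between them) and construct an integer ``axis coordinate'' $\phi:V \to \Z$ counting signed crossings of a canonical sequence of finite minimum cuts separating the ends. The slabs $\phi^{-1}(k)$ are then a.s.\ finite and, by MTP, have expected size bounded in terms of $\E[\deg(\rho)] < \infty$. The percolations $\omega_n$ obtained by deleting all edges between $\phi^{-1}(k)$ and $\phi^{-1}(k+1)$ for $|k| \geq n$ form a finitary exhaustion, yielding invariant amenability via \cref{prop:invexh}. Recurrence then follows from the Nash-Williams cut criterion applied to the disjoint edge cuts between consecutive slabs, whose expected sizes are bounded by $\E[\deg(\rho)] < \infty$, so that the cut-resistance series diverges a.s.

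The main obstacle throughout is producing each of these canonical, isomorphism-invariant structures (the branching core, the trifurcation skeleton, and the axis coordinate) as a measurable function of $(G,\rho)$ so that MTP applies cleanly. The tie-breaking required is standard in the unimodular literature but delicate: for the two-ended case, for example, one must simultaneously select a canonical nested sequence of minimum cuts and an equivariant orientation, and verify that the resulting slab structure is genuinely unimodular.
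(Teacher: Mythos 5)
The paper does not actually prove this proposition---it is quoted from \cite{AL07} and \cite{AHNR15}---so your argument has to stand on its own, and its first step has a genuine gap. The proposed ``branching core'' need not exist: the set of vertices common to every minimum-size separator can be empty, and the obvious repair (the union of all minimum-size separators) can be infinite. Concretely, take two bi-infinite paths $(u_n)_{n\in\Z}$ and $(w_n)_{n\in\Z}$ with rungs $u_n\sim w_n$ for all $n\le 0$. This graph has exactly three ends, the minimum size of a finite vertex set whose removal leaves three infinite components is $2$, and $\{u_i,w_j\}$ is such a set for \emph{every} $i,j\ge 1$; hence the common intersection is empty and the union of all minimum separators is infinite. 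Since your contradiction argument must apply to an as-yet-arbitrary graph with $3\le k<\infty$ ends (you cannot invoke unimodularity to exclude such geometry before the contradiction is derived), the mass-transport step never gets started, and this is not a matter of routine tie-breaking: no simple canonical finite nonempty set is available in general. The standard route avoids any core: one shows by MTP/ergodicity that if with positive probability $G\setminus B_r(\rho)$ has at least three infinite components, then a.s.\ infinitely many vertices $v$ have this property; placing such a ball deep inside a one-ended branch of another one forces at least two ends inside that branch, and iterating shows the number of ends is infinite, ruling out $3\le k<\infty$ directly.

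The other two parts are the right ideas but under-specified exactly where the content lies. For infinitely many ends, you correctly note that $\E|\partial_E K_{\omega_n}(\rho)|\to\infty$ proves nothing; the whole argument is the Burton--Keane-type counting lemma (in any finite cluster $K$, the number of $r$-trifurcation points of $G$ inside $K$ is at most a constant times $|\partial_E K|$), which you only gesture at---with it, \cref{lem:unimodclusters} gives $\mathbf{i}^{\mathrm{inv}}(G,\rho)\ge c\,\P(\rho \text{ is a trifurcation})>0$. For two ends, the exhaustion ``cut the slab boundaries with $|k|\ge n$'' is not isomorphism-equivariant: it singles out an origin on the axis, which cannot in general be chosen canonically (consider $\Z$ itself), so your $\omega_n$ is not a percolation as written; the standard fix is to select which inter-slab cuts to delete by an invariant randomization (e.g.\ i.i.d.\ Bernoulli$(1/n)$ marks on the cuts), after which \cref{prop:invexh} applies. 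Similarly, bounded \emph{expected} cut sizes do not by themselves give $\sum_n 1/|\Pi_n|=\infty$ a.s.\ in Nash--Williams; you need a stationarity/ergodic-average argument along the axis, or the observation that the minimal two-end-separating cut size is an a.s.\ constant realized by infinitely many pairwise disjoint cutsets. Alternatively, hyperfiniteness in the two-ended case can be obtained more cheaply by exhibiting a unimodular coupling with $\Z$ and invoking \cref{lem:hypcoupling}, which is exactly how the paper argues in the related \cref{lem:hyperfinitefaces}.
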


Similarly, a connected topological space $X$ is said to be $k$-ended if
over all compact subsets $K$ of $X$, the complement $X\setminus K$ has a
maximum of $k$ connected components that are not precompact.

\begin{lem}\label{Lem:endscomparison}
  Let $M$ be a map. Then the underlying graph of $M$ has at least as
  many ends as the associated surface $S=S(M)$.
\end{lem}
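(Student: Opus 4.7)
The plan is to transfer a witness to $k$ ends of $S$ into a witness to at least $k$ ends of $G$. Suppose $S$ has at least $k$ ends, so there is a compact $K \subset S$ whose complement has at least $k$ non-precompact connected components $C_1, \dots, C_k$. Local finiteness of the embedding implies that $K$ meets only finitely many edges $E_K$ of $G$, and the set $W \subset V$ of vertices of $G$ that lie in $K$ or are endpoints of edges in $E_K$ is finite. Setting
\[
K^* := K \cup \bigcup_{e \in E_K} \overline{e},
\]
we obtain a compact set with $V \cap K^* = W$. For each $i$, since $K^* \setminus K$ is a compact subset of $S \setminus K$, the set $C_i \setminus K^*$ remains non-precompact, so contains a non-precompact connected component $C_i^*$ of $S \setminus K^*$; the $C_i^*$ are pairwise disjoint.

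The main claim is that each $C_i^*$ contains at least one infinite connected component of $G \setminus W$. Given a vertex $v \in V \cap C_i^*$ (automatically $v \notin W$), every edge of $G$ incident to $v$ lies outside $E_K$ and hence has its open interior in the same component of $S \setminus K^*$ as $v$, namely $C_i^*$. Iterating, the connected component of $v$ in $G \setminus W$ is contained in $C_i^*$. Pairwise disjointness of the $C_i^*$ then forces the corresponding components of $G \setminus W$ to be distinct for different $i$, yielding $k$ distinct infinite components of $G \setminus W$ once $V \cap C_i^*$ is shown to be non-empty and the resulting component of $G \setminus W$ is shown to be infinite.

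The main obstacle is this last topological step. If $V \cap C_i^*$ were empty or finite, every edge with interior in $C_i^*$ would have both endpoints in the finite set $W \cup (V \cap C_i^*)$, so $C_i^*$ would be contained in the union of the interiors of finitely many faces together with finitely many open edges. Non-precompactness of $C_i^*$ would then force some face $f$ meeting $C_i^*$ to have non-compact closure, i.e., infinite degree (using condition (3) of the definition of a proper embedding, which presents the boundary walk of $f$ as a single orbit of $\sigma^\dagger$). Compactness of $\partial C_i^* \subseteq K^*$ combined with the non-precompactness of $C_i^* \cap f$ in $S$ forces $C_i^*$ to accumulate on infinitely many boundary vertices of $f$; by the identity $\overline{C_i^*} \setminus K^* = C_i^*$ (which holds since components of $S \setminus K^*$ are open and pairwise disjoint), such accumulation points outside $W$ lie in $C_i^*$ itself, contradicting the assumption that $V \cap C_i^*$ was empty or finite. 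The careful topological analysis of this face case, especially when $f$ has infinite degree, is the delicate part of the proof; once it is in place, the three paragraphs combine to exhibit at least $k$ infinite components of $G \setminus W$ and therefore at least $k$ ends of $G$.
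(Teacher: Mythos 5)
You follow the same route as the paper's (very terse) proof: fix a compact witness $K$, enlarge it to absorb the finitely many edges it meets, pass to the finite vertex set $W$, and argue that distinct non-precompact components of the complement of the enlarged compact set must contain distinct infinite components of $G\setminus W$. The bookkeeping in your first two paragraphs is correct and matches the paper's argument.

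What you present, however, is a plan, and the step you leave open is not a routine detail: it is where essentially all the content of the lemma lies (the paper compresses it into the single clause that, faces being topological discs, each non-precompact component of $S\setminus K$ contains infinitely many edges). Your third paragraph asserts, rather than proves, that non-precompactness of $C_i^*\cap f$ together with compactness of $\partial C_i^*$ forces $C_i^*$ to accumulate along infinitely many boundary edges/vertices of $f$. The assertion is true and your contradiction framework can be completed, for instance as follows. By \cref{thm:permmaps} one may take the embedding to be the polygon-gluing one, so that $\overline f$ is the image of the closed half-plane $\overline{\H}$ under a map $q$ which is a homeomorphism of $\H$ onto $f$, maps $\partial\H$ onto the single bi-infinite boundary walk of $f$ (here condition (3) enters), and is proper by local finiteness of the embedding and of the graph; in particular $q^{-1}(K^*)$ is compact. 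Pull $C_i^*\cap f$ back to an unbounded open set $U\subseteq\H$ whose boundary inside $\H$ lies in $q^{-1}(K^*)$. If $\overline U\cap\partial\H$ were bounded, then choosing a ball $B$ containing $q^{-1}(K^*)\cup(\overline U\cap\partial\H)$, the set $U\setminus B$ would be a nonempty, proper, simultaneously open and closed subset of the connected set $\overline\H\setminus B$, a contradiction; hence $\overline U$ meets $\partial\H$ in an unbounded set, so $C_i^*=\overline{C_i^*}\setminus K^*$ contains points of infinitely many distinct boundary edges of $f$, and since only finitely many edges meet $K^*$ or are incident to the finite set $W$, the endpoints of these edges give infinitely many vertices in $V\cap C_i^*$. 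Finally, infinitely many vertices in $C_i^*$ yield an \emph{infinite} component of $G\setminus W$ only after the easy, but currently unstated, observation that $G\setminus W$ has finitely many components altogether (because $W$ is finite and $G$ is connected and locally finite), so the vertices in $C_i^*$ cannot all lie in finite ones. With these two points supplied, your outline does assemble into a correct proof along the paper's lines.
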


Note that $M$ can have more ends than $S(M)$.  For example the doubly
infinite path has $2$ ends, and the 3-regular tree is infinitely-ended,
but both associated surfaces are homeomorphic to the plane and hence
one-ended.

\begin{proof}
  For each compact subset $K$ of $S$, let $W_K$ be the set
  of vertices of $M$ that are adjacent to an edge intersecting $K$.
  Since every face of $M$ in $S$ is a topological disc, each
  non-precompact connected component of $S\setminus K$ contains
  infinitely many edges of $M$, and there are no connections in
  $M\setminus W_K$ between these components.  Thus $M \setminus W_K$
  has at least as many infinite connected components as $S\setminus K$
  has non-precompact connected components.
\end{proof}

Finally, let us note the following simple topological fact.

\begin{lem}\label{lem:endsfaces}
  Let $M$ be a map.  Then the underlying graph of $M$ has at least as
  many ends as $M$ has faces of infinite degree.
\end{lem}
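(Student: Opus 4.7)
Let $f_1,\ldots,f_k$ be distinct infinite-degree faces of $M$ (the case of infinitely many infinite-degree faces is handled by taking $k\to\infty$ after proving the statement for each finite $k$). The plan is to combine a combinatorial observation about the boundary walks of infinite-degree faces with a topological argument in the surface $S=S(M)$.

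First, for each face $f_i$ the boundary walk is a bi-infinite sequence of oriented edges forming an infinite orbit of $\sigma^\dual$. Since each oriented edge appears at most once in the orbit and each vertex $v$ has only $\deg(v)<\infty$ oriented edges emanating from it, the walk visits each vertex at most $\deg(v)$ times and hence any finite $W\subset V(G)$ only finitely often. Consequently the walk has exactly two infinite arcs in $G\setminus W$, each a connected walk contained in an infinite connected component of $G\setminus W$; letting $W$ exhaust $V(G)$ along a nested sequence of finite sets, these two tails converge to ends $\xi_i^+,\xi_i^-$ of $G$.

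For the topological step, inside each $f_i$ I pick a closed topological disc $D_i\subset f_i$; the $D_i$ are pairwise disjoint since the $f_i$ are disjoint open subsets of $S$. The space $S^\ast:=S\setminus\bigcup_{i=1}^k D_i$, obtained by removing $k$ pairwise disjoint closed discs from $S$, has $|\mathrm{End}(S^\ast)|=|\mathrm{End}(S)|+k\geq k$, since each removed closed disc contributes exactly one new end. Each of these $k$ new ends has a neighborhood consisting of an annulus lying in $f_i\setminus D_i$ attached to $\partial f_i\subset G$, and so it is naturally associated with the pair of tail-ends $\{\xi_i^+,\xi_i^-\}$ via the boundary walk of $f_i$.

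The main remaining step, which I expect to be the hardest part of the proof, is a Hall-style matching argument: I want to select representatives $\xi_i\in\{\xi_i^+,\xi_i^-\}$ such that the $\xi_i$ are pairwise distinct, yielding $|\mathrm{End}(G)|\geq k$. By Hall's theorem it suffices to show the marriage condition, namely that for every sub-collection $I\subset\{1,\ldots,k\}$ the union $\bigcup_{i\in I}\{\xi_i^+,\xi_i^-\}$ has at least $|I|$ elements. Face tails can genuinely coincide (for instance, both infinite-degree faces of $\mathbb Z$ embedded in the plane produce the same two tail-ends of $\mathbb Z$), so the marriage condition must be extracted from the surface structure: I would verify it by using the disjointness of the faces in $S$ together with the cyclic order of faces around each end of $G$ to rule out collision patterns that would force fewer than $|I|$ distinct ends.
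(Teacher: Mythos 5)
Your step (1) is fine: each infinite face has a bi-infinite boundary walk which meets every finite vertex set only finitely often, so its two tails determine ends $\xi_i^{+},\xi_i^{-}$ of the graph. Step (2), however, buys you nothing: deleting the discs $D_i$ computes ends of the auxiliary surface $S^{*}$, not of the graph, and the ``association'' of the $k$ new surface ends with the pairs $\{\xi_i^{+},\xi_i^{-}\}$ is exactly the information you already extracted in step (1); it is never used afterwards. The entire content of the lemma is the step you defer, namely Hall's condition for the family of pairs $\{\xi_i^{+},\xi_i^{-}\}$, and you do not prove it, so the argument is incomplete as written.

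More seriously, that step cannot be carried out from the ingredients you propose to use (disjointness of the faces inside $S$ plus a ``cyclic order of faces around an end''), because once the surface is allowed to have handles the marriage condition genuinely fails. Take $G$ to be the half-plane lattice $\Z\times\Z_{\geq 0}$ together with an extra bi-infinite path $(p_x)_{x\in\Z}$ and connector edges $p_x\sim(x,1)$; keep the standard planar rotation at every lattice vertex (inserting the connector anywhere in the rotation at $(x,1)$), and at $p_x$ take the cyclic order (edge to $p_{x+1}$, connector, edge to $p_{x-1}$). Since $\sigma^\dagger(e)=\sigma^{-1}(-e)$ only involves the rotation at $e^{+}$, the walk along the row $y=0$ is still a single face of infinite degree, and the chosen rotation at the $p_x$ makes the walk along the path a second face of infinite degree. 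But this graph is one-ended: for any finite $W$, the far portions of the path are joined to the lattice by connectors $p_x\sim(x,1)$ with $|x|$ large. So both infinite faces have both tails converging to the single end of $G$: already the two-element instance of your marriage condition fails and no system of distinct representatives exists; indeed this rotation system (a legitimate map, by the identification of maps with rotation systems in \cref{thm:permmaps}, embedded in an infinite-genus surface) has one end and two infinite faces, so the inequality itself requires a planarity or simple-connectivity assumption on $S$ to be used somewhere. Your outline never uses such an assumption --- the picture of faces arranged cyclically around an end is planar intuition, and it is precisely what breaks when handles reconnect the graph ``behind'' an infinite face. To complete your approach you would need to restrict to planar (or simply connected) $S$ and prove the separation statement there, for instance by a Jordan-curve argument showing that in the plane two distinct infinite faces cannot have all four boundary tails accumulating at a single end of the graph.
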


\subsection{Unimodular Couplings}

Let $(G_1,\rho_1)$ and $(G_2,\rho_2)$ be unimodular random rooted
graphs.  A \textbf{unimodular coupling} of $(G_1,\rho_1)$ and
$(G_2,\rho_2)$ is a unimodular random rooted $\{0,1\}^2$-marked graph
$(G,\rho,\omega_1,\omega_2)$ such that $\omega_1$ and $\omega_2$ are
both connected and the law of the subgraph $(\omega_i,\rho)$ conditioned
on the event that $\omega_i(\rho)=1$ is equal to the law of
$(G_i,\rho_i)$ for each $i$.  We say that two unimodular random rooted
graphs $(G_1,\rho_1)$ and $(G_2,\rho_2)$ are \textbf{coupling
  equivalent} if they admit a unimodular coupling.  (It is not difficult
to show that this is an equivalence relation, arguing along similar
lines to \cref{lem:couplingmarks,lem:couplingmarks2}.)  Coupling
equivalence is closely related to the notion of two graphings generating
the same equivalence relation in the theory of measured equivalence
relations (see \cite[Example 9.9]{AL07}).
%
Unimodular maps are defined to be coupling equivalent if the underlying
graphs are.  Equivalence of marked graphs and maps is defined similarly.

\begin{example}
  Let $(G,\rho)$ be a unimodular random rooted graph and let $\omega$ be
  a connected bond percolation on $G$.  Then $(G,\rho)$ and
  $(\omega,\rho)$ are coupling equivalent.
 \end{example}

\begin{example}
  Let $(M,\rho)$ be a unimodular random rooted map with locally finite
  dual.  Then $(M,\rho)$ is coupling equivalent to its unimodular dual:
  both $(M,\rho)$ and its dual can be represented as percolations of the
  graph formed by combining $M$ and its dual by adding a vertex wherever
  a primal edge crosses a dual edge, as well as keeping the original
  primal and dual edges.  (see \cite[Example 9.6]{AL07} for a similar
  construction).  Note that the graph containing $M$ and $M^\dual$ is
  not a map itself.
\end{example}



The main use of unimodular coupling will be that hyperfiniteness and
strong soficity are both preserved under unimodular coupling. The
corresponding statements for measured equivalence relations are due to
Elek and Lippner \cite{ElekLipp10}.

\begin{prop}\label{lem:hypcoupling}
  Let $(G_1,\rho_1)$ and $(G_2,\rho_2)$ be coupling equivalent
  unimodular random rooted graphs.  Then $(G_2,\rho_2)$ is hyperfinite
  if and only if $(G_1,\rho_1)$ is.
\end{prop}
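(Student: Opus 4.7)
The plan is to prove the forward direction and appeal to symmetry. So assume $(G_1,\rho_1)$ is hyperfinite, and let $(G,\rho,\omega_1,\omega_2)$ be a unimodular coupling of $(G_1,\rho_1)$ and $(G_2,\rho_2)$. The goal is to construct a finitary exhaustion of $(G_2,\rho_2)$ from one of $(G_1,\rho_1)$ by transporting it across the ambient graph $G$.

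First, I would use \cref{lem:couplingmarks2} (coupling markings II) to lift a finitary exhaustion of $(G_1,\rho_1)$ to an increasing sequence of bond percolations $(\eta_n)_{n\geq 1}$ on the coupling, supported on $\omega_1$, such that each $\eta_n$ has finite clusters in $\omega_1$, $\eta_n\subseteq\eta_{n+1}$, and $\bigcup_n\eta_n=\omega_1$ almost surely. Next I would define a ``Voronoi projection'' $\phi:V(G)\to V(\omega_1)$ by sending each vertex $v$ to a closest vertex of $\omega_1$ in $G$, using i.i.d.\ uniform $[0,1]$ labels on the vertices (appended as extra marks) to break ties. This is well-defined almost surely: $G$ is connected and locally finite, and $V(\omega_1)$ is nonempty with positive probability, hence infinite and nonempty almost surely by ergodicity (unimodularity). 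Adding i.i.d.\ tie-breaking labels and taking a measurable function of the result preserves unimodularity by \cref{lem:unimodstability}.

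The key estimate is that preimages of $\phi$ inside $V(\omega_2)$ are almost surely finite. To see this, apply the mass transport principle to
\[
F(G,u,v,\omega_1,\omega_2)=\mathbbm{1}\bigl[u\in V(\omega_2)\bigr]\cdot\mathbbm{1}\bigl[\phi(u)=v\bigr].
\]
The expected mass sent out of $\rho$ equals $\P(\rho\in V(\omega_2))$, which is finite, so the expected mass received at $\rho$, namely $\E\bigl[\mathbbm{1}[\rho\in V(\omega_1)]\cdot|\phi^{-1}(\rho)\cap V(\omega_2)|\bigr]$, is also finite. Hence $|\phi^{-1}(v)\cap V(\omega_2)|<\infty$ almost surely for every $v\in V(\omega_1)$, and in particular $|\phi^{-1}(C)\cap V(\omega_2)|<\infty$ for every finite cluster $C$ of $\eta_n$.

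Now define the percolation $\zeta_n$ on $\omega_2$ that opens every vertex of $\omega_2$ and opens an edge $\{u,u'\}$ of $\omega_2$ if and only if $\phi(u)$ and $\phi(u')$ lie in a common cluster of $\eta_n$. Each $\zeta_n$-cluster is contained in $\phi^{-1}(C)\cap V(\omega_2)$ for some finite $\eta_n$-cluster $C$, so $\zeta_n$ is finitary. Nesting $\eta_n\subseteq\eta_{n+1}$ gives $\zeta_n\subseteq\zeta_{n+1}$, and since $\omega_1$ is connected and $\bigcup_n\eta_n=\omega_1$, any two vertices of $V(\omega_1)$ eventually lie in a common $\eta_n$-cluster; hence for any edge $\{u,u'\}$ of $\omega_2$ we have $\{u,u'\}\in\zeta_n$ for all sufficiently large $n$, so $\bigcup_n\zeta_n=\omega_2$. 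Finally, conditioning on the event $\omega_2(\rho)=1$ and invoking \cref{lem:unimodclusters}, the restricted sequence $(\zeta_n|_{\omega_2})$ is a finitary exhaustion of $(G_2,\rho_2)$, proving hyperfiniteness. The main obstacle is purely bookkeeping: ensuring that each construction (the lifted exhaustion, the tie-broken projection, and the induced percolation) remains a marking of a genuinely unimodular graph, which is handled uniformly by \cref{lem:unimodstability,lem:couplingmarks,lem:couplingmarks2} together with the mass-transport computation above.
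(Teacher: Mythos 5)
Your proof is correct and is essentially the paper's argument: you lift the finitary exhaustion of $(G_1,\rho_1)$ to the coupling via \cref{lem:couplingmarks2}, project along a nearest-point (Voronoi) map onto $\omega_1$ with uniform tie-breaking, and control the cells by a mass-transport computation, exactly the ingredients of the paper's proof, except that the paper factors the transfer through hyperfiniteness of the ambient coupling graph (pull back the exhaustion to $G$, then intersect with the other percolation) while you compose those two steps into a single map from $\omega_2$ to $\omega_1$. The difference is purely organizational, and you are if anything more explicit than the paper in justifying, via the mass-transport principle, why the induced percolations $\zeta_n$ are finitary.
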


\begin{proof}
  It suffices to prove that if $(G,\rho)$ is a unimodular random rooted
  graph and $\omega$ is a connected percolation on $(G,\rho)$, then
  $(G,\rho)$ is hyperfinite if and only if $(\omega,\rho)$ is
  hyperfinite.

  Suppose first that $(G,\rho)$ is hyperfinite and let
  $(\tilde \omega_i)_{i\geq1}$ be a finitary exhaustion of $G$. By
  \cref{lem:couplingmarks}, we may couple $\omega$ and
  $(\omega_i)_{i\geq 1}$ so that
  $(G,\rho,(\omega,(\tilde \omega_i)_{i\geq1}))$ is unimodular.  Under
  such a coupling, $(\omega \cap \tilde\omega_i)_{i \geq 1}$ is a finitary exhaustion of $(\omega,\rho)$, and consequently
  $(\omega,\rho)$ is hyperfinite.

  Suppose conversely that $(\omega,\rho)$ is hyperfinite, and let
  $(\tilde \omega_i)_{i\geq1}$ be a finitary exhaustion of 
  $(\omega,\rho)$.  By \cref{lem:couplingmarks2}, we may assume that
  $(G,\rho,\omega,(\tilde \omega_i)_{i\geq1})$ is unimodular.
  For each vertex $v$ of $G$, let $w(v)$ be a chosen uniformly from the
  set of vertices of $\omega$ minimising the graph distance to $v$ in
  $G$.  For each $i$, define a subgraph $\hat \omega_i$ of $G$ by
  \begin{equation}
    \hat \omega_i(u,v) = 1 \iff w(u) \text{ and } w(v) \text{ are
      connected in $\tilde \omega_i$}.
  \end{equation}
  Then $(\hat \omega_i)_{i\geq1}$ is a finitary exhaustion of $(G,\rho)$.
\end{proof}

\begin{remark}
  It can be deduced from \cref{lem:hypcoupling} and \cite[Theorem
  8.9]{AL07} that a unimodular random rooted graph with finite expected
  degree is hyperfinite if and only if it is coupling equivalent to
  $\Z$.
\end{remark}

One useful application of \cref{lem:hypcoupling} is the following.

\begin{lem}\label{lem:hyperfinitefaces} Let $(M,\rho)$ be an ergodic
  unimodular random rooted map. The number of infinite degree faces is
  either $0$, $1$, $2$, or $\infty$, and if it is $1$ or $2$ then
  $(M,\rho)$ is hyperfinite.
 \end{lem}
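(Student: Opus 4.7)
The plan is to handle the two assertions separately: the classification of $N := \#\{\text{infinite-degree faces of } M\}$, and the hyperfiniteness conclusion when $N\in\{1,2\}$.

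For the classification, since the set of infinite-degree faces is invariant under changing the root, ergodicity gives that $N$ is almost surely a constant $N_0\in\{0,1,2,\ldots\}\cup\{\infty\}$. To exclude $3\le N_0<\infty$, I would adapt the classical Burton--Keane mass-transport argument used to classify the number of ends of a unimodular random rooted graph. By \cref{lem:endsfaces}, $N_0 \ge 3$ already forces $M$ to have at least three ends. Moreover, the proper-embedding condition (3) implies that each infinite face has a single bi-infinite boundary walk under $\sigma^\dagger$, giving $N_0$ canonical ``approaches to infinity.'' I would use this structure to define a notion of ``face-trifurcation vertex'' (roughly, a vertex from which three distinct infinite faces can be reached along disjoint paths), and argue via mass transport that the density of such vertices must be simultaneously positive (by $N_0 \ge 3$) and zero (by a Burton--Keane type summation), yielding a contradiction.

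For the hyperfiniteness when $N\in\{1,2\}$, the single-orbit condition (3) is again the key structural ingredient. Each infinite face has a single bi-infinite boundary walk in $M$, so the union $H$ of these walks is a canonical subgraph of $M$ with at most two ends. I would first view $H$ as an ergodic unimodular random rooted graph by conditioning $(M,\rho,\mathbbm{1}_H)$ on the event $\{\rho\in H\}$ and reweighting appropriately (using the marked-graph formalism and the stability result \cref{lem:unimodstability}). \cref{prop:endsamen} then gives that $(H,\rho')$ is invariantly amenable, hence hyperfinite by \cref{prop:invexh}. To transfer hyperfiniteness from $H$ to $M$, I would exhibit both $H$ and $M$ as connected percolations of a common unimodular marked graph and invoke coupling equivalence via \cref{lem:hypcoupling}.

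The main obstacle is the hyperfiniteness transfer from $H$ to $M$. Although $H$ captures the ``boundaries at infinity,'' the vertices in $M\setminus H$ must be organized unimodularly into finite clusters attached to $H$ so that a finitary exhaustion of $H$ lifts to one of $M$. Controlling $M\setminus H$ combinatorially---using the proper-embedding structure and the fact that all faces of $M\setminus H$ remain finite---to ensure these clusters are almost surely finite is the technical crux, requiring the Voronoi-cell style unimodular assignment of \cref{lem:couplingmarks2} to be applied carefully in the map setting.
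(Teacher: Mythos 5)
Your hyperfiniteness argument has a genuine gap at the step ``$H$ has at most two ends, so \cref{prop:endsamen} gives that $(H,\rho')$ is invariantly amenable.'' \cref{prop:endsamen} yields invariant amenability only in the \emph{two-ended} case; a one-ended unimodular random rooted graph can perfectly well be invariantly nonamenable, and the boundary trace $H$ really can be one-ended: for the canopy tree (a unimodular planar map with a single infinite face) the boundary walk of that face covers the entire one-ended tree, so $H=M$. In that situation your argument establishes nothing, and proving amenability of $H$ there is essentially the content of the lemma itself, so this is not a removable technicality. (Two smaller problems: with two infinite faces the union of the two boundary traces need not be connected -- for the infinite ladder $\Z\times\{0,1\}$ the two infinite faces have disjoint boundaries -- so the conditioning and coupling steps do not apply to $H$ as defined; and the transfer from $H$ to $M$ that you flag as the ``technical crux'' is actually the easy part, since a connected percolation is automatically coupling equivalent to the ambient graph and \cref{lem:hypcoupling} does the transfer outright.) The paper avoids all of this by choosing \emph{one} infinite face $f$ uniformly and not using its full boundary trace: either every vertex of $f$ is separated from infinity by another vertex of $f$, in which case a nested sequence of cutpoints exists and the monotone coupling of Bernoulli $1-1/m$ site percolations is a finitary exhaustion (the cutpoints force all clusters to be finite); or else the vertices of $f$ not separated from infinity by other vertices of $f$ form a bi-infinite simple path, giving a connected percolation isomorphic to $\Z$ and hence coupling equivalence with $\Z$ and hyperfiniteness via \cref{lem:hypcoupling}.

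The classification step is also not secure as sketched. If $3\le N_0<\infty$ then, by \cref{lem:endsfaces} and \cref{prop:endsamen}, $M$ has infinitely many ends and is invariantly nonamenable, and Burton--Keane-type density arguments do not produce contradictions in nonamenable settings (the classical argument leans on amenability of $\Z^d$); nor is it clear how to manufacture from finitely many infinite faces a canonical nonempty \emph{finite} vertex set to feed into a mass-transport contradiction, since each infinite face is incident to infinitely many vertices. The paper gets the classification for free from the hyperfiniteness argument: that argument only uses that the number of infinite faces is finite and nonzero, so a map with $3\le N_0<\infty$ would be hyperfinite, contradicting the invariant nonamenability forced by having at least three ends. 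If you repair the hyperfiniteness part along the paper's lines, you should derive the classification the same way rather than via trifurcations.
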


\begin{proof}
  By applying \cref{prop:endsamen,lem:endsfaces}, it suffices to prove
  that if $M$ has a non-zero but finite number of infinite degree faces
  almost surely, then $(M,\rho)$ is hyperfinite.  Suppose that $M$ has
  finitely many infinite degree faces almost surely.  Conditional on
  $(M,\rho)$, choose one infinite face, $f$, uniformly at random.  The
  boundary of this face gives a coupling of $M$ and $\Z$, except for the
  fact that this boundary need not be simple.

  If for every vertex $v$ of $f$ there exists a vertex $v'$ of $f$ such
  that $v$ is contained in a finite component of $M \setminus \{v'\}$,
  then inductively there exists a sequence of vertices $(v_n)_{n\geq1}$
  of $M$ such that $v_n$ is contained in a finite connected component of
  $M\setminus\{v_{n+1}\}$ for every $n\geq1$.  We deduce in this case
  that $M$ is hyperfinite by taking as a finitary exhaustion the sequence
  of random subgraphs $(\tilde \omega_m)_{m\geq1}$ induced by the
  standard monotone coupling of Bernoulli $1-1/m$ site percolations on
  $M$.  All clusters of $\omega_m$ are almost surely finite due to the
  existence of the cutpoints $(v_n)_{n\geq1}$.

  Otherwise, define a percolation $\omega$ on $(M,\rho)$ as follows.
  Vertices of $f$ that are not separated from infinity by any other
  vertex of $f$ are open, as are the edges of $f$ between such vertices,
  and no other vertices or edges.
  Then $\omega$ is connected and is isomorphic to the bi-infinite line
  graph $\Z$. It follows that $(G,\rho)$ is coupling equivalent to
  $(\Z,0)$ and hence hyperfinite by \cref{lem:hypcoupling}
\end{proof}



\subsection{Vertex Extremal Length and Recurrence of Subgraphs}\label{sec:vel}


Vertex extremal length was introduced by He and Schramm \cite{HeSc} and
is closely connected to circle packing.  Let $G$ be an infinite graph.
For a vertex $v$ of $G$, the \textbf{vertex extremal length} from $v$ to
infinity is defined to be
\begin{equation}
  \label{eq:VELdef}
  \mathrm{VEL}_G(v,\infty) = \sup_m\frac{\inf_{\gamma:v\to\infty}
    m(\gamma)^2}{\|m\|^2},
\end{equation}
where the supremum is over measures $m$ on the vertex set of $G$ such
that $\|m\|^2 = \sum m(u)^2 < \infty$, and the infimum is over paths
$\gamma$ from $v$ to $\infty$ in $G$.  A connected graph is said to be
\textbf{VEL parabolic} if $\mathrm{VEL}(v \to \infty) = \infty$ for some
vertex $v$ of $G$ (and hence for every vertex), and \textbf{VEL
  hyperbolic} otherwise.  As noted in \cite{HeSc}, if $G$ is VEL
parabolic then there exists a vertex measure $m$ where $\|m\|<\infty$
and $m(\gamma)=\infty$ for every path $\gamma:v\to\infty$.

The \textbf{edge extremal length} from $v$ to infinity is defined in the
same way except that the measure $m$ is on the edges of $G$, and is
equal to the \textbf{effective resistance} from $v$ to infinity.  In
particular, $G$ is recurrent if and only if
$\Reff(v,\infty) = \EEL(v,\infty) = \infty$ \cite{Duffin62}. See
\cite{LP:book} for further background.

The VEL type is monotone in the sense that subgraphs of VEL parabolic
graphs are also VEL parabolic.  Consequently, the following two lemmas
give the corresponding implications in \cref{thm:dichotomy}.

\begin{lem}[\cite{HeSc}, \cref{thm:dichotomy},
  \eqref{iVEL} implies \eqref{iRecurrent}]
  Let $G$ be a locally finite, connected graph.  If $G$ is VEL
  hyperbolic, then it is transient.  If $G$ is transient and has
  bouneded degrees then it is VEL hyperbolic.
\end{lem}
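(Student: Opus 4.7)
The plan is to reduce the statement to the relationship between vertex extremal length and edge extremal length, using the fact recalled just above the lemma that $\EEL(v, \infty) = \Reff(v, \infty)$ by Duffin, so that transience corresponds exactly to $\EEL(v, \infty) < \infty$. I would also use the characterization mentioned in the text: $G$ is VEL parabolic iff there exists a vertex measure $m$ with $\|m\|^2 < \infty$ and $m(\gamma) = \infty$ for every simple infinite path $\gamma$ from $v$ (and the analogous statement for EEL and recurrence). This reformulates both implications as problems of transporting measures between vertices and edges while controlling both the path integrals and the $\ell^2$ energy.

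For the first direction, VEL hyperbolic $\Rightarrow$ transient, I would argue the contrapositive: if $G$ is recurrent, with witnessing edge measure $m'$ satisfying $\|m'\|^2 < \infty$ and $m'(\gamma) = \infty$ for all simple paths to infinity, then the vertex measure $m(u) := \max_{e \ni u} m'(e)$ witnesses VEL parabolicity. For any simple path $\gamma = v_0 v_1 \cdots$ with edges $e_0, e_1, \ldots$, the edge $e_i$ is incident to $v_i$, so $m(v_i) \geq m'(e_i)$ and hence $m(\gamma) \geq m'(\gamma) = \infty$. The energy bound $\|m\|^2 \leq 2\|m'\|^2$ follows by bounding $\max_{e \ni u} m'(e)^2 \leq \sum_{e \ni u} m'(e)^2$ and noting that each edge contributes its squared value to exactly its two endpoints. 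Crucially, this step requires no degree bound.

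For the second direction, transient with bounded degrees $\Rightarrow$ VEL hyperbolic, I again argue the contrapositive: given a vertex measure $m$ witnessing VEL parabolicity, define an edge measure by $m'(\{u,v\}) := m(u) + m(v)$. Path integrals satisfy $m'(\gamma) = \sum_i (m(v_i) + m(v_{i+1})) \geq m(\gamma) = \infty$, and by $(a+b)^2 \leq 2a^2 + 2b^2$ the energy is bounded by
\[
\|m'\|^2 \leq 2\sum_v \deg(v)\, m(v)^2 \leq 2D\|m\|^2 < \infty,
\]
where $D$ is the degree bound. Hence $m'$ witnesses EEL parabolicity, i.e.\ $G$ is recurrent.

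The main obstacle — really an unavoidable asymmetry — is precisely the bounded-degree hypothesis appearing in the second direction: the natural edge construction inflates the energy by a factor of $\deg(v)$ at each vertex, which is why the lemma requires a uniform degree bound for this implication. The first direction escapes this issue because the ``max'' construction $m(u) = \max_{e \ni u} m'(e)$ is always dominated in $\ell^2$ by the sum of $m'(e)^2$ over incident edges, regardless of how many there are.
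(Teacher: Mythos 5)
Your proof is correct. Note that the paper does not actually prove this lemma --- it is quoted from He and Schramm \cite{HeSc} without proof --- so your argument supplies the missing details, and it does so by the standard route: comparing vertex extremal length with edge extremal length via Duffin's identity $\EEL(v,\infty)=\Reff(v,\infty)$ and transferring witnessing measures between vertices and edges. Both transfers are sound: the bound $\max_{e\ni u} m'(e)^2 \le \sum_{e\ni u} m'(e)^2$ gives $\|m\|^2\le 2\|m'\|^2$ with no degree hypothesis, while the reverse construction $m'(\{u,v\})=m(u)+m(v)$ picks up the factor $\deg(v)\le D$, and you correctly identify this asymmetry as the reason the bounded-degree assumption appears only in the second implication. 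It is in fact the same mechanism the paper exploits in the very next lemma (the tree case), where the bijection $e\mapsto u(e)$ sending each edge to its endpoint farther from $v$ lets one transfer a vertex measure to an edge measure with $\|\hat m\|^2=\|m\|^2-m(v)^2$, avoiding the degree factor entirely --- your proposal makes clear why that trick is special to trees. One small point worth making explicit: you use that recurrence yields a \emph{single} edge measure $m'$ with $\|m'\|<\infty$ and $m'(\gamma)=\infty$ for every path to infinity. This is the edge analogue of the fact the paper quotes for VEL, and it follows from $\EEL(v,\infty)=\infty$ by the usual normalization-and-summation argument (choose $m_n$ with $\inf_\gamma m_n(\gamma)\ge 2^n$ and $\|m_n\|\le 2^{-n}$, then set $m'=\sum_n m_n$); a sentence to this effect would make the proof self-contained.
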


For trees, as for graphs with bounded degrees, VEL parabolicity is
equivalent to recurrence:

\begin{lem}[\cref{thm:dichotomy}, \eqref{iVEL} implies \eqref{iTree}]
  Let $T$ be a tree.  Then $T$ is transient if and only if it is VEL
  hyperbolic.
\end{lem}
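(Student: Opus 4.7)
My plan is to prove the two directions separately. The direction ``VEL hyperbolic implies transient'' holds for any locally finite connected graph and is the content of the previous lemma, so I only need to prove the converse for trees.

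Thus suppose $T$ is a transient tree with fixed root $v$, so that $\EEL_T(v,\infty)=\Reff_T(v,\infty)<\infty$. I will establish the quantitative comparison
\[
  \VEL_T(v,\infty)\;\leq\;4\max\!\bigl(1,\,\EEL_T(v,\infty)\bigr),
\]
which immediately yields VEL hyperbolicity. The key tool is the canonical bijection, in a tree rooted at $v$, between the non-root vertices and the edges, associating to each $u\neq v$ its parent edge $e_u$. Given any non-negative vertex measure $m$ on $T$ with $m(v)=0$, I will transfer $m$ to the edge measure $\eta(e):=m(e^+)$, where $e^+$ denotes the endpoint of $e$ farther from $v$. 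Then $\|\eta\|^2=\|m\|^2$ via this bijection, and since every simple path from $v$ to infinity in a tree is a ray on which the correspondence $u\leftrightarrow e_u$ restricts to a bijection between non-root vertices and edges of the ray, I also get $\eta(\gamma)=m(\gamma)$ for every such $\gamma$. This exactly matches the vertex-measure ratio of $m$ with the edge-measure ratio of $\eta$, so taking suprema over all such $m$ yields
\[
  \sup_{m:\,m(v)=0}\frac{\bigl(\inf_\gamma m(\gamma)\bigr)^2}{\|m\|^2}\;\leq\;\EEL_T(v,\infty).
\]

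It then remains to upgrade this to all non-negative $m$, which I plan to do by a simple case split absorbing the contribution from $m(v)$. If $\inf_\gamma m(\gamma)\leq 2m(v)$, the ratio of $m$ is bounded by $4m(v)^2/\|m\|^2\leq 4$, since $m(v)^2\leq\|m\|^2$. Otherwise, zeroing out $m$ at the root yields a measure $m'$ with $\|m'\|^2\leq\|m\|^2$ and $\inf_\gamma m'(\gamma)=\inf_\gamma m(\gamma)-m(v)>\tfrac{1}{2}\inf_\gamma m(\gamma)$, so the ratio of $m'$ is at least one quarter the ratio of $m$. Combining the two cases and taking the supremum completes the proof. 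The argument is entirely elementary; the only minor subtlety is the bookkeeping around $m(v)$, where the bijection has to be adjusted at the root, and the factor $4$ is the (harmless) price of this adjustment.
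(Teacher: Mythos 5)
Your argument is correct, and it turns on exactly the same device as the paper's proof: the bijection in a rooted tree between non-root vertices and edges via the endpoint farther from the root, used to convert a vertex measure into an edge measure with the same norm (up to the root term) and the same path-lengths. The difference is in packaging. The paper argues in the direction ``VEL parabolic $\Rightarrow$ recurrent'': it invokes the He--Schramm fact that VEL parabolicity yields a vertex measure $m$ with $\|m\|<\infty$ and $m(\gamma)=\infty$ for every path to infinity, and then the transferred edge measure $\hat m(e)=m(u(e))$ has $\|\hat m\|^2=\|m\|^2-m(v)^2<\infty$ and $\hat m(\gamma)=m(\gamma)-m(v)=\infty$, so $\Reff(v,\infty)=\infty$; since the path-lengths are infinite, subtracting $m(v)$ costs nothing and no case analysis is needed. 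You instead prove the direct implication ``transient $\Rightarrow$ VEL hyperbolic'' via the quantitative comparison $\VEL_T(v,\infty)\leq 4\max\bigl(1,\EEL_T(v,\infty)\bigr)$, handling the root mass by a two-case split; this avoids appealing to the existence of an extremal measure with infinite path-lengths and gives an explicit inequality between the two extremal lengths, at the harmless price of the factor $4$. Both routes are complete; the only bookkeeping point in yours (that the infimum over paths to infinity may be taken over simple paths, so the root is counted once) is standard for non-negative measures via loop-erasure and is the same implicit reduction the paper makes.
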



\begin{proof}
  If $T$ is VEL hyperbolic then it is transient by \cite{HeSc}.  Suppose
  conversely that $T$ is VEL parabolic.  Then there is a vertex measure
  $m$ on $T$ with $\|m\| < \infty$ and $m(\gamma)=\infty$ for every
  infinite path from $v$.  For each edge $e$ of $T$, let $u(e)$ be the
  endpoint of $e$ farther from $v$, and define an edge measure $\hat m$
  on $T$ by setting $\hat m(e)=m(u(e))$ for every edge $e$ of $T$.  Then
  $\|\hat m\|^2 = \|m\|^2 - m(v)^2 <\infty$, and every simple path
  $\gamma:v\to\infty$ has $\hat m(\gamma) = m(\gamma)-m(v) = \infty$.
  Thus $\Reff(v,\infty)=\infty$ and $G$ is recurrent.
\end{proof}

Benjamini and Schramm \cite{BeSc} used circle packing to prove the
following remarkable theorem.  See \cite{AHNR15} for an alternative
proof, and \cite{GN12,angel2016half} for related results establishing recurrence in certain unbounded degree cases.

\begin{thm}[\cite{BeSc}; \cref{thm:dichotomy},  \eqref{iBS} implies
  \eqref{iVEL}]
  Let $(M,\rho)$ be a Benjamini-Schramm limit of finite planar maps.
  Then $M$ is VEL parabolic almost surely.  In particular, if $M$ has
  bounded degrees, then it is almost surely recurrent for simple random
  walk.
\end{thm}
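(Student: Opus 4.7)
The plan is to follow the original circle-packing approach of Benjamini and Schramm. The overall strategy is to pack each finite approximant in $\mathbb{S}^2$ using the Koebe-Andreev-Thurston theorem, pass to a limit circle packing of $M$, and extract from the limit packing a measure witnessing VEL parabolicity.

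First, I would reduce to the case of simple triangulations. Triangulate each finite approximant $M_n$ by adding a vertex in the interior of every face and joining it to all corners of that face, producing a sequence $\tilde M_n$ of finite simple planar triangulations that Benjamini-Schramm converge to a unimodular random simple planar triangulation $(\tilde M,\rho)$ in which $M$ appears as a subgraph. Since VEL parabolicity is monotone under passage to subgraphs, it suffices to prove that $\tilde M$ is VEL parabolic almost surely.

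Second, apply the Koebe-Andreev-Thurston theorem to obtain, for each $n$, a circle packing $P_n$ of $\tilde M_n$ in $\mathbb{S}^2$, unique up to M\"obius transformations of $\mathbb{S}^2$. Use this three-parameter freedom to normalize $P_n$: for example, pick a uniformly random corner at $\rho_n$ and send the three circles corresponding to the three vertices of the adjacent face to a fixed configuration of three mutually tangent congruent circles. This makes the sequence of rooted circle-packed maps tight in a natural local topology on pairs (map, packing), so one can extract a distributional subsequential limit. Using $\tilde M_n \to \tilde M$ in the local topology, this limit is a circle packing $P$ of $\tilde M$ in $\mathbb{S}^2$.

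Third, I would construct a VEL witness from $P$. After a further M\"obius transformation moving some accumulation point of $P$ to $\infty$, view $P$ as a packing in $\mathbb{R}^2$, and set $m(v)$ equal to the Euclidean radius of the circle $C(v)$. Because the open discs of $P$ are disjoint and contained in a bounded region, $\sum_v \pi m(v)^2$ is bounded by the area of that region, so $\|m\|^2 < \infty$. For any infinite simple path $\gamma=(v_0,v_1,\dots)$ emanating from $\rho$ in $\tilde M$, consecutive circles are tangent, so $|c_{v_{i+1}} - c_{v_i}| = m(v_i)+m(v_{i+1})$ and hence $2 m(\gamma) \ge \sum_i |c_{v_{i+1}}-c_{v_i}|$ is at least the total Euclidean length of the polygonal curve through the centers. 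The claim that this length is infinite completes the proof: combined with $\|m\|^2<\infty$, it yields $\mathrm{VEL}_{\tilde M}(\rho,\infty)=\infty$.

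I expect the main obstacle to be the divergence of the polygonal-path length, i.e.\ ruling out that the centers $c_{v_i}$ converge to a finite limit along some infinite $\gamma$. In the bounded-degree case this is handled by the Rodin-Sullivan Ring Lemma, which bounds the ratios of radii of adjacent circles and thereby prevents geometric decay of $m(v_i)$ along a path. Without bounded degrees the Ring Lemma fails, and the argument requires a unimodularity-based substitute: one uses the mass transport principle together with the fact that the finite packings $P_n$ fill $\mathbb{S}^2$ to exclude the accumulation of a positive-mass portion of the limit packing to a single point of $\mathbb{S}^2$, and a diagonal/Borel-Cantelli argument to upgrade this to the almost-sure divergence of $m(\gamma)$ along every infinite path.
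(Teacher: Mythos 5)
A preliminary remark: the paper does not prove this statement at all --- it is quoted from Benjamini and Schramm \cite{BeSc}, with an alternative proof indicated in \cite{AHNR15} --- so what you are attempting is a reconstruction of the original circle-packing argument, and your skeleton (Koebe packings of the finite approximants, a M\"obius normalisation, a local limit of packings, a VEL witness built from radii) is indeed the right one. However, your third step is internally inconsistent as written. If you apply a M\"obius map sending an accumulation point of $P$ to $\infty$, then the discs of the packing are \emph{not} contained in a bounded region --- they exhaust the plane --- and $\sum_v m(v)^2$ is comparable to the (infinite) total area, so $\|m\|=\infty$. If instead you keep the packing in a bounded region (or use spherical radii), then $\|m\|^2<\infty$, but an infinite path whose circles accumulate at a single point $\xi\in\mathbb{S}^2$ has centres converging to $\xi$, so the polygonal length $\sum_i |c_{v_{i+1}}-c_{v_i}|$ can be finite and $m(\gamma)=\infty$ fails. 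No single choice of normalisation makes the measure $m(v)=\mathrm{rad}(C(v))$ work. The correct deduction is: first prove that the limit packing has at most one accumulation point; then send that point to $\infty$ and run the He--Schramm annulus argument \cite{HeSc}, using measures $m_k$ supported on the circles meeting dyadic annuli, each annulus contributing a uniformly positive amount to the vertex extremal length, summed via the series law.

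The deeper problem is that the step you defer to the last sentence is the heart of the theorem, and the mechanism you gesture at (``mass transport on the limit plus Borel--Cantelli'') is not the one that works: the packing is only defined up to a non-equivariant normalisation, and unimodularity of the limit by itself does not preclude two or more accumulation points. What Benjamini and Schramm actually prove is a deterministic combinatorial--geometric lemma about \emph{finite} point sets in the plane (their ``Magic Lemma'', bounding the number of $\delta$-supported points of any finite configuration); applying it to the centres of the finite packings $P_n$ evaluated at the uniformly chosen root $\rho_n$, and passing to the distributional limit, is what yields that the limit packing almost surely has at most one accumulation point. Without this lemma, or a genuine substitute for it, your argument does not get off the ground, and this is precisely where the absence of the Ring Lemma must be paid for. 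Two smaller gaps you should also address: the star-triangulation $\tilde M_n$ need not be a \emph{simple} triangulation (a face incident to the same vertex at two corners produces parallel edges), so Koebe's theorem does not apply verbatim and a further modification is needed; and without bounded degrees the circles of the limit packing can degenerate to points, so even the existence of the limit packing of $\tilde M$ requires an argument beyond the tightness you assert.
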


We finish this section with the following theorem.

\begin{thm}[Benjamini, Lyons and Schramm \cite{BLS99}; Aldous and Lyons
  \cite{AL07}: \cref{thm:dichotomy}, \eqref{iAmenablesub} and \eqref{iTree2} each imply
  \eqref{iAmen}]
  Let $(G,\rho)$ be an invariantly nonamenable unimodular random rooted
  graph.  Then there exists a percolation $\omega$ on $G$ such that
  a.s.\ every connected component of $\omega$ is nonamenable, and there
  exists a constant $M$ such that $\deg(v) \leq M$ for every vertex $v$
  of $G$ such that $\omega(v)=1$.  Furthermore, the percolation $\omega$
  can be taken to be a forest.
\end{thm}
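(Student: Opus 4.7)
The plan is to construct $\omega$ in two stages: first reduce to a bounded-degree subgraph while retaining invariant nonamenability, and then invoke the invariant-forest construction of Benjamini--Lyons--Schramm.

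For the first stage, let $\varepsilon=\mathbf{i}^{\mathrm{inv}}(G,\rho)>0$, and for each $M\ge 1$ let $\omega_M$ be the percolation consisting of every vertex of $G$-degree at most $M$ together with every edge of $G$ between two such vertices. The percolation $\omega_M$ is unimodular by \cref{lem:unimodstability}. I then argue that for $M$ sufficiently large the conditional law of $(\omega_M,\rho)$ given $\omega_M(\rho)=1$ is invariantly nonamenable: given any finitary subpercolation $\eta$ of $\omega_M$, extend $\eta$ to a finitary percolation $\eta^\star$ on $G$ by making each vertex of $G$-degree greater than $M$ a singleton cluster; apply the invariant Cheeger bound for $G$ to $\eta^\star$ and split the resulting expectation according to whether $\deg(\rho)\le M$. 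The contribution from $\{\deg(\rho)>M\}$ is controlled by $\P(\deg(\rho)>M)$ and $\E[\deg(\rho)\mathbbm{1}\{\deg(\rho)>M\}]$, which vanish as $M\to\infty$ in the finite-expected-degree case; in general, one handles the tail by combining the vertex truncation with an invariant edge-thinning (using i.i.d.\ edge labels) that caps the degree directly.

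For the second stage, the main theorem of \cite{BLS99}, extended to the unimodular random rooted setting in \cite{AL07}, asserts that every invariantly nonamenable unimodular random rooted graph of uniformly bounded degree admits an invariant random spanning forest $F$ such that every infinite component of $F$ is almost surely a nonamenable tree. Applying this to $\omega_M$ and taking $\omega$ to be the union of the infinite components of $F$ yields a percolation that is a forest, is supported on vertices of $G$-degree at most $M$, and has every component nonamenable almost surely, as required.

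The main obstacle is the second stage, where one must upgrade from the averaged notion of invariant nonamenability --- which only constrains expected edge-boundary ratios of finitary subpercolations --- to the pointwise property that every individual component of $\omega$ is almost surely nonamenable in the classical Cheeger sense. The construction in \cite{BLS99} bridges this gap by using i.i.d.\ random labels to invariantly select a forest whose branching number can be quantitatively bounded below in terms of $\varepsilon$, so that each component inherits exponential growth and hence classical nonamenability. The first stage, while technically more elementary, requires some care because the edge boundary in the induced subgraph $\omega_M$ can be strictly smaller than in $G$, and this loss must be shown to be negligible for large $M$.
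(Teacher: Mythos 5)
The paper does not actually prove this theorem: it cites \cite{BLS99,AL07} and points to \cite{AHNR15} (Section 3.3.1) for a complete proof of the non-forest part, so your attempt has to be measured against the arguments in those references rather than against anything in the text. Measured that way, there is a genuine gap at the junction of your two stages. Your Stage 1 claims that, for large $M$, the cluster of $\rho$ in $\omega_M$ (conditioned on $\omega_M(\rho)=1$) is an invariantly nonamenable unimodular random rooted graph. As stated this is false: with positive probability the cluster is finite (a low-degree vertex surrounded by high-degree ones), and finite graphs are invariantly amenable, so the ergodic decomposition of the conditioned cluster graph is not supported on invariantly nonamenable graphs. More importantly, the estimate you outline cannot deliver the statement you need even after discarding finite clusters: extending a finitary subpercolation $\eta\subseteq\omega_M$ to $G$ and invoking $\mathbf{i}^{\mathrm{inv}}(G,\rho)>0$ only yields a lower bound on $\inf_\eta \E[\,|\partial_E K_\eta(\rho)|/|K_\eta(\rho)|\,]$ for the conditioned cluster graph, i.e.\ a bound on the \emph{integral} of $\mathbf{i}^{\mathrm{inv}}$ over its ergodic decomposition. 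Since a positive average is compatible with a positive-measure set of invariantly amenable components (exactly as a mixture of a nonamenable graph and $\Z^2$ has positive averaged invariant Cheeger constant), your argument does not rule out that $\omega_M$ has, for every $M$, infinite clusters forming amenable ergodic components. Your Stage 2 citation requires its input to be invariantly nonamenable (i.e.\ every ergodic component nonamenable) in order to conclude that \emph{every} infinite tree of the forest is nonamenable; applied to a graph with amenable components it gives nothing there, so the final $\omega$ may well contain amenable components, and the conclusion of the theorem is not reached. (Incidentally, your parenthetical fix for the case $\E[\deg(\rho)]=\infty$ via ``invariant edge-thinning'' is also too vague to assess; thinning edges can destroy invariant nonamenability. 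In context the theorem is used with $\E[\deg(\rho)]<\infty$, so this is secondary.)

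The missing content is precisely the averaged-to-pointwise step that you flag as ``the main obstacle'' but then delegate to the cited theorem: in \cite{BLS99}, \cite{AL07} and \cite{AHNR15} the truncation to low-degree vertices is only the first move, and it is followed by a genuinely different mechanism --- an equivariant deletion of all finite vertex sets with small edge-boundary (or the i.i.d.-label forest construction applied after such a cleaning step), together with a mass-transport argument showing that what survives is nonempty and that each surviving component satisfies a \emph{pointwise} Cheeger bound. Conditioning on the cluster of $\rho$ in $\omega_M$ and quoting a bounded-degree version of the theorem does not substitute for this: the per-component nonamenability of $\omega_M$'s infinite clusters is exactly the kind of statement the deletion argument is designed to produce, and it is neither proved by your averaged estimate nor true in the generality in which you assert it. To repair the proof you would need either to incorporate that deletion/cleaning step explicitly, or to reformulate Stage 2 so that its hypothesis is the averaged bound you actually establish --- which amounts to reproving the cited result rather than invoking it.
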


In particular, if a unimodular random map is nonamenable then it
contains nonamenable sub-trees with bounded degrees, which in turn are
VEL hyperbolic.
This theorem has been very useful for studying random walks on
invariantly nonamenable unimodular random rooted graphs; see
\cite[Section 5.1]{AHNR15}.  See \cite[Section 3.3.1]{AHNR15} for a
complete proof of the first part of the theorem (in which $\omega$ is
not taken to be a forest).

\section{Curvature}
\label{Sec:curvature}

In this section, we study the average curvature of a unimodular
random rooted map, and its basic properties.  Essentially, the
average curvature is the average Gaussian curvature per
vertex when the map is embedded on a Riemannian manifold.  We begin by
giving a combinatorial definition of the average curvature.  We then
show that the average curvature is a canonical quantity associated to
the random map, in the sense that any unimodular embedding the
map in a Riemannian manifold (satisfying certain integrability
conditions) will result in the same average curvature.

\medskip

Recall that the internal angles of a regular $k$-gon are given by
$(k-2)\pi/k$.  We define the \textbf{angle sum} at a vertex $v$ of a map
$M$ to be
\[
  \theta(v) =\theta_M(v) = \sum_{f \perp v}\frac{\deg(f)-2}{\deg(f)}\pi.
\]
(Recall that this sum counts each face with the appropriate
multiplicity.)  This definition extends to maps with infinite faces,
with the convention that $(\infty-2)/\infty = 1$.  In the case that
every face of $M$ has degree at least 3, embed $M$ in a manifold by gluing together regular polygons (with the $\infty$-gon being the upper half-space $\{x+iy \in \C : y>0\}$ with edges $\{[n,n+1]:n \in \Z\}$).
We then have that $\theta(v)$ is the total angle
of the corners at $v$.

Of course, in general $M$ cannot necessarily be drawn in the Euclidean plane so that all faces are regular polygons, and the angle sum at a vertex of $M$ need not be
$2\pi$.  We define the \textbf{curvature} of $M$ at the vertex $v$ to be
the angle sum deficit
\[
\kappa(v) = \kappa_M(v) = 2\pi - \theta(v).
\]
See \cref{fig:curv} for examples.
This combinatorial definition of curvature is well-known and (in the deterministic setting) has been extensively studied in the literature, see e.g.\ \cite{chen2009gauss,Aleks,higuchi2001combinatorial} and references therein.
We define the \textbf{average curvature} of a unimodular random rooted map $(M,\rho)$, denoted $\bbK(M,\rho)$, to be the expected curvature at the root:
\[
\bbK(M,\rho) = \E[\kappa(\rho)] = 2\pi - \E\left[\sum_{f \perp
    \rho}\frac{\deg(f)-2}{\deg(f)}\pi\right].
\]
Note that if $\E[\deg(\rho)]$ is finite then $\bbK(M,\rho)$ is also finite.

 \begin{figure}[t]
   \centering
   \includegraphics[width = 2.75cm]{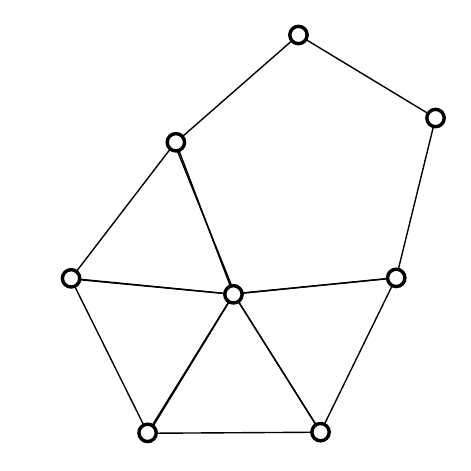}
   \qquad
   \includegraphics[width = 2.75cm]{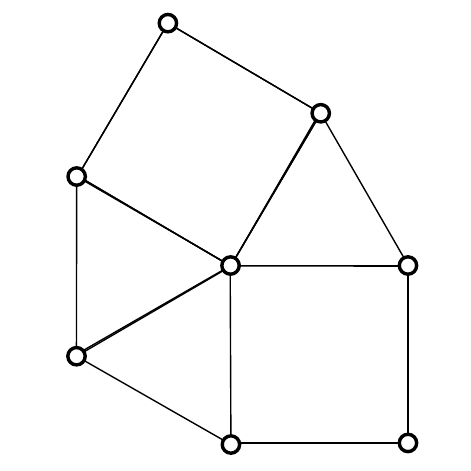}
   \qquad
    \includegraphics[width = 2.75cm]{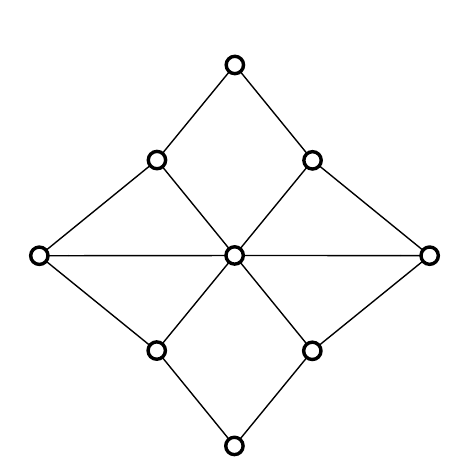}
   \caption{The centre vertices have curvature $\pi/15$, $0$ and $-\pi/3$
     (left to right).}
   \label{fig:curv}
 \end{figure}

\begin{example}[Finite Maps]\label{ex:finite}
  Let $M$ be a finite map and let $\rho$ be a vertex of $M$ chosen
  uniformly at random.  Then a simple calculation using Euler's formula
  gives that
  \begin{equation}
    \label{eq:Euler}
    \bbK(M,\rho) = \frac{1}{|V|}\sum_{v\in V} \kappa(v) =
    \frac{2\pi}{|V|}\left(2-2\genus(M)\right).
  \end{equation}
  %

  If $S$ is a Riemannian surface of genus $g$, and $\kappa(\cdot)$ is
  the associated Gaussian curvature, then the Gauss-Bonnet theorem
  implies that $\int_S \kappa(x)\,dx = 2\pi(2-2g)$.
  Consequently, for an arbitrary Riemannian metric on $S(M)$ we have
  \[
    \bbK(M,\rho) = \frac{1}{|V|}\int_S\!\kappa(x)\,dx.
  \]
  Thus, our definition of curvature agrees with the average Gaussian
  curvature per vertex, and is independent of the choice of metric.
  This is a special case of \cref{thm.invcurvature} below.
\end{example}

\begin{example}[$k$-angulations]
  If $M$ is a $k$-angulation (i.e., every face of $M$ has degree $k$), then
  \[\theta(v)=\frac{k-2}{k}\deg(v)\pi\]
  for every vertex $v$ of $M$.
  Consequently, if $(M,\rho)$ is a unimodular random $k$-angulation, then
  \[
    \bbK(M,\rho)=\left(2-\frac{k-2}{k}\E[\deg(\rho)]\right)\pi.
  \]
  A special case corresponding to $k=\infty$, is when $M$ is an infinite
  plane tree, the angle sum at a vertex $v$ of $M$ is simply
  $\theta(v)=\pi\deg(v)$, and the average curvature is
  $\pi(2-\E\deg(\rho))$.
\end{example}

\begin{example}[Curvature of the dual measure]
  Let $\P$ be a unimodular probability measure on $\cM_\bullet$ such
  that $\P[\deg(\rho)]<\infty$ and $M$ has locally finite dual
  $\P$-a.s., and let $\P^\dagger$ be the dual measure as defined
  above. By
  \eqref{eq:dualmeasure},
  \begin{align*} \bbK(M^\dagger,\rho^\dagger) &= 2\pi - \E^\dagger\bigg[\sum_{v \perp \rho^\dagger}\frac{\deg(v)-2}{\deg(v)}\bigg]\pi\\
  &= 2\pi-\E\Big[\sum_{f \perp \rho}\frac{1}{\deg(f)}\Big]^{-1}\E\bigg[\sum_{f\perp\rho}\frac{1}{\deg(f)}\sum_{v \perp f}\frac{\deg(v)-2}{\deg(v)}\bigg]\pi.\end{align*}
  Applying the mass transport principle yields that
  \begin{align*}
    \bbK(M^\dagger,\rho^\dagger) &= 2\pi-\pi \E\Big[\sum_{f
      \perp \rho} \frac{1}{\deg(f)}\Big]^{-1}
    \E\Big[\sum_{f\perp\rho}\frac{1}{\deg(f)}\sum_{v
      \perp f}\frac{\deg(\rho)-2}{\deg(\rho)}\Big].
  \end{align*}
  This can be rearranged to give
  \begin{align*}
    \bbK(M^\dagger,\rho^\dagger)&= \E\Big[\sum_{f \perp
      \rho}\frac{1}{\deg(f)}\Big]^{-1}\bbK(M,\rho)
  \end{align*}
  Recall that $\E\Big[\sum_{f \perp \rho}\frac{1}{\deg(f)}\Big]^{-1}$ is
  interpreted as the ratio between vertices of $M$ to vertices of
  $M^\dual$.  While the average curvature per vertex is changed, this is
  only since the density of vertices is changed.  In particular, the
  average curvature of $(M,\rho)$ under $\P$ has the same sign as that
  of $(M^\dagger,\rho^\dagger)$ under $\P^\dagger$.
\end{example}

\begin{example}[Self-dual maps]
  Let $(M,\rho)$ be a self-dual unimodular random rooted map. Then
  \eqref{eq:dualdegree} implies that
  $\E[\sum_{f\perp\rho}\deg(f)^{-1}] = 1$ and so
  \begin{align*}
    \bbK(M,\rho)
    &= 2\pi - \E\left[\sum_{f\perp\rho}\frac{\deg(f)-2}{\deg(f)}\right]\pi\\
    &=2\pi-\E[\deg(\rho)]\pi+2\E\Big[\sum_{f\perp\rho}\deg(f)^{-1}\Big]\pi=
    4\pi - \E[\deg(\rho)]\pi.
  \end{align*}
\end{example}



\subsection{Curvature of submaps}

Let $M$ be a map with underlying graph $G$, and let $z$ be a proper
embedding of $M$ into an orientable surface $S$.  A \textbf{submap} of
$M$ is a map represented by a triple $(H,S,z)$, where $H$ is a connected
subgraph of $G$ such that the restriction of $z$ to $H$ is a proper
embedding of $H$ into $S$ (in particular, the faces are topological discs).
 If $M$ is simply
connected, then every connected subgraph of $G$ is also a submap of
$M$. (This may fail if $M$ is not simply connected.)

Since the average curvature is curvature per vertex, it is not
surprising that changing the map without adding or removing vertices
does not change $\bbK$.  Indeed, in the case of a finite map, deleting
edges gives an embedding of the submap on the same surface, with the
same total and hence same average curvature. Recall that we call a percolation $\omega$ on $(M,\rho)$ a bond percolation if $\omega(v)=1$ for every vertex $v$ of $M$ almost surely.

\begin{prop}[Curvature of random submaps]\label{prop:submaps}
  Let $(M,\rho)$ be a unimodular random rooted map with
  $\E[\deg(\rho)]<\infty$ and let $\omega$ be a connected bond percolation on $(M,\rho)$ that is almost surely a submap of $M$.  Then
  \[
    \bbK(\omega,\rho) = \bbK(M,\rho).
  \]
\end{prop}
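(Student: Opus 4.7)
The plan is to prove $\E[\theta_M(\rho)] = \E[\theta_\omega(\rho)]$ directly via the mass transport principle applied to the unimodular marked map $(M,\rho,\omega)$, where $\theta(v) = \sum_{f \perp v}(\deg(f)-2)\pi/\deg(f)$ is the angle sum. Since $\bbK = 2\pi - \E[\theta]$, this is equivalent to the claim; the hypothesis $\E[\deg(\rho)]<\infty$ ensures integrability, as $\theta(v)\leq \pi\deg(v)$.

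The first step is a combinatorial lemma: for each face $F$ of $\omega$, the faces of $M$ contained in $F$ together with the edges of $M\setminus\omega$ lying inside $F$ form a tree in $M^\dagger$. Connectedness follows from $F$ being a topological disc combined with the fact that the only edges of $M$ in the interior of $F$ are those in $M\setminus\omega$. For acyclicity, the key observation is that no vertex of $M$ can lie strictly in the interior of $F$: such a vertex would have all of its incident edges inside $F$, hence in $M\setminus\omega$, and would therefore be isolated in $\omega$, contradicting the connectivity of $\omega$ as a bond percolation. Euler's formula applied to the map of $M$ on the closed disc $\bar F$ then gives exactly $k_F - 1$ interior edges, where $k_F$ is the number of faces of $M$ in $F$. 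Since each interior edge contributes $2$ to $\sum_{f\subseteq F}\deg_M(f)$ and $0$ to $\deg_\omega(F)$, this yields the face-level identity
\[
\sum_{f \subseteq F}(\deg_M(f)-2) = \deg_\omega(F) - 2.
\]

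The second step is the transport itself: for each ordered pair of vertices $(v,w)$, each corner of $M$ at $v$ in some face $f$, and each corner of $\omega$ at $w$ in the face $F$ of $\omega$ containing $f$, send mass $\pi(\deg_M(f)-2)/(\deg_M(f)\deg_\omega(F))$ from $v$ to $w$. The total mass out of $v$ telescopes to $\theta_M(v)$: the inner sum over the $\deg_\omega(F)$ corners of $F$ cancels the $\deg_\omega(F)$ factor, leaving the $M$-corner contribution $\pi(\deg_M(f)-2)/\deg_M(f)$ summed over corners at $v$. The total mass into $w$, grouped first by $\omega$-corners of $w$ in a face $F$ and then over faces $f\subseteq F$ (each contributing its $\deg_M(f)$ $M$-corners), telescopes to $\theta_\omega(w)$ precisely because of the face-level identity above. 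Applying the mass transport principle then gives $\E[\theta_M(\rho)] = \E[\theta_\omega(\rho)]$, completing the proof.

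The main obstacle is identifying the correct transport. A naive alternative would be to decompose $\kappa(v) = 2\pi - \pi\deg(v) + 2\pi\sum_{f\perp v}1/\deg(f)$ and try to handle each term separately, reducing the claim to a weighted Euler identity $\E[\deg_M(\rho)-\deg_\omega(\rho)] = 2\E\big[\sum_{f\perp\rho}^M 1/\deg_M(f) - \sum_{F\perp\rho}^\omega 1/\deg_\omega(F)\big]$; this identity has no obvious local witness because the weights $1/\deg_M(f)$ and $1/\deg_\omega(F)$ differ, and verifying it seems to require essentially the same tree structure anyway. Working directly with $\theta$, the dual tree inside each face of $\omega$ couples the $M$-degrees to the $\omega$-degrees in exactly the form needed for the two sides of the transport to match.
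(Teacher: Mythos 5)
Your transport is exactly the one the paper uses when both $M$ and $\omega$ have locally finite duals, and in that regime your argument is correct; your dual-tree/Euler derivation of the face-level identity $\sum_{f\subseteq F}(\deg_M(f)-2)=\deg_\omega(F)-2$ is a clean way to justify what the paper dismisses as ``easy to see by induction or using Euler's formula.'' The genuine gap is that the proposition makes no such local finiteness assumption: faces of $\omega$ (and of $M$) are allowed to have infinite degree, with the convention $(\infty-2)/\infty=1$ in the definition of $\theta$, and this is not a fringe case --- the key application of the proposition is to $\omega$ equal to the free uniform spanning forest, a spanning tree all of whose faces are infinite. When $\deg_\omega(F)=\infty$, your per-corner mass $\pi(\deg_M(f)-2)/(\deg_M(f)\deg_\omega(F))$ is zero, so the mass out of $v$ no longer telescopes to $\theta_M(v)$ (it omits every $M$-corner lying in an infinite face of $\omega$) and the mass into $w$ omits the contribution $\pi$ per corner of $w$ on an infinite face; equality of the two expectations then no longer yields $\E[\theta_M(\rho)]=\E[\theta_\omega(\rho)]$ unless one separately proves that the two omitted quantities have equal expectation, which is essentially the content of the proposition in this case. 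Likewise your Euler/tree argument presupposes finitely many $M$-faces of finite degree inside $F$, which fails for infinite faces.

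The paper closes this gap with two further steps that your proposal would need as well: when $M^\dagger$ is locally finite but $\omega^\dagger$ is not, it approximates $\omega$ by percolations $\omega_n$ obtained by adding chords between boundary vertices of each infinite face at combinatorial distance at least $n$ along the face, applies the locally finite case to each $\omega_n$, and passes to the limit by dominated convergence (using $|\theta_{\omega_n}(\rho)|\le \pi\deg(\rho)$ and $\E[\deg(\rho)]<\infty$); and when $M^\dagger$ itself is not locally finite, it first builds a unimodular supermap $M'$ with locally finite dual and the same vertex set by triangulating each infinite face via i.i.d.\ uniform labels on its corners, checks $\E[\deg_{M'}(\rho)]<\infty$, and then applies the previous case to both $M\subseteq M'$ and $\omega\subseteq M'$. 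Without these reductions (or some substitute), your proof establishes the proposition only under the additional hypothesis that all faces of $M$ and $\omega$ are finite.
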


\begin{remark}
  It is straightforward to extend of \cref{prop:submaps} to submaps
  $\omega$ of $(M,\rho)$, that do not include every vertex.  Let
  $(N,\rho)$ be the unimodular random map obtained from $(\omega,\rho)$
  by conditioning on the event $\rho\in\omega$, we have that
  \[
    \bbK(N,\rho) = \P(\rho \in \omega)^{-1} \bbK(M,\rho).
  \]
\end{remark}

\begin{proof}
  First suppose that $M$ and $\omega$ both have a locally finite duals
  a.s.  In this case, every face of $\omega$ is a union of finitely many
  faces of $M$.  For a face $f$ of $M$, denote by $\hat f$ the face of
  $\omega$ containing $f$.

  We define a mass transport in three stages as follows.  Each vertex $v$
  sends a mass of $(\deg(f)-2)/\deg(f)$ to each face $f$ incident to
  it, counting with multiplicity as usual.  Each face $f$ then sends all the
  mass it receives to $\hat f$. Finally, each face $\hat f$ of
  $\omega$ redistributes the total mass it receives uniformly to its
  vertices (again, counted with multiplicity).  This procedure yields a mass transport in which the mass sent from $u$ to $v$ is
  \[
    \sum_{u\perp f \subset \hat f \perp v}
    \frac{\deg(f)-2}{\deg(f)\deg(\hat f)},
  \]
  where the sum is over faces $f\subset \hat f$ containing $u$ and $v$
  respectively. In particular, we have
  \[\sum_{v\in V} \, \left|\sum_{u\perp f \subset \hat f \perp v}
    \frac{\deg(f)-2}{\deg(f)\deg(\hat f)}\right| \leq \sum_{u\perp f}
\left|    \frac{\deg(f)-2}{\deg(f)}\right| \leq \deg(u), \]
so that this mass-transport (which can be negative in the presence of degree $1$ faces) meets the integrability requirements needed to apply the signed Mass-Transport Principle.

  The total mass sent from a vertex $v$ is $\theta_M(v)/2\pi$.  The
  total mass passing through a face $f$ of $M$ is $\deg(f)-2$, while the
  total mass passing through a face $\hat f$ of $\omega$ is $\sum_{f\subset \hat f}
  \deg(f)-2$.  It is easy to see (by induction or using Euler's formula)
  that the latter quantity equals $\deg(\hat f)-2$, and it follows that the total mass
  received by a vertex $v$ is
  \[
    \sum_{\hat f \perp v} \frac{\deg(\hat f)-2}{\deg(\hat f)} =
    \frac1{2\pi}\theta_\omega(v).
  \]
  The mass transport principle gives that
  $\E \theta_M(\rho) = \E \theta_\omega(\rho)$, establishing the claim
  in this case.

  \medskip

  Next, suppose that $M$ has locally finite dual but that $\omega$ does
  not.  Every edge of $M$ not in $\omega$ separates some face of
  $\omega$ in two.  For each infinite face $\hat f$ of $\omega$, let
  $(v_i)_{i\in\Z}$ be the vertices at the corners of $\hat f$ in
  counterclockwise order.  These labels are well defined up to an
  additive constant.  For each $n \geq 1$, define a percolation
  $\omega_n$ by letting $e \in \omega_n$ if and only if either
  $e \in \omega$ or else if $e$ connects vertices $v_i$ and $v_j$ of some
  infinite face $\hat f$ with $|i-j|\geq n$.  Observe that, since $M$
  has a locally finite dual, $\omega_n$ has locally finite dual for
  every $n$.  It follows from the first case above that
  $\bbK(\omega_n,\rho) = \bbK(M,\rho)$.  The sequence of random
  variables $|\theta_{\omega_n}(\rho)|$ are bounded by $2\pi\deg(\rho)$,
  so that
  $\bbK(\omega_n,\rho) \to \bbK(\omega,\rho)$ as $n\to\infty$  by the dominated convergence theorem,
  completing the proof in this case.

  \medskip

  Finally, suppose that $M^\dagger$ is not locally finite.  We construct
  a unimodular map $M'$ with locally finite dual such that $M$ (and hence
  also $\omega$) is a submap of $M'$ with the same vertex set.  By the
  previous case, $\bbK(M,\rho) = \bbK(M',\rho) = \bbK(\omega,\rho)$.  To
  this end, assign to each corner $a=(e,e')$ of every infinite face of
  $M$ an i.i.d.\ random variable $U_a$, uniform in $[0,1]$.  For each
  face $f$ of $M$, add an edge between non-adjacent corners $a$ and $b$,
  if there is some $t$ so that $U_a,U_b\leq t$ and every other corner
  $c$ of $f$ between $a$ and $b$ has $U_c > t$.  Here, adding an edge
  between corners means the edge connects the vertices of the corners,
  and is inserted between the two edges of the corners in the cyclic
  edge order at each endpoint.  It is easy to see that this gives a
  triangulation of the infinite face of $M$, so that $(M')^\dual$ is
  locally finite.

  It is easy to see that these edges can be drawn in the face $f$ with
  no crossings: If $a,b,c,d$ are corners appearing in clockwise order in $f$ and the
  edge $(a,c)$ is added, then $U_b > U_c$ and so the edge $(b,c)$
  cannot be added.  The probability that a corner is connected to
  another corner at distance $k$ away along the face is
  $\binom{k+1}{2}^{-1}$, since their labels must be the two smallest of
  $k+1$ exchangeable labels.  Thus, the expected number of edges added at
  a corner of an infinite face is $\sum 2\binom{k+1}{2}^{-1} = 2$.
  It follows that $\E \deg_{M'}(\rho) \leq \E 3\deg_M(\rho) < \infty$, and
  we conclude by applying the previous case.
\end{proof}

\begin{figure}
   \includegraphics[width=0.8\textwidth]{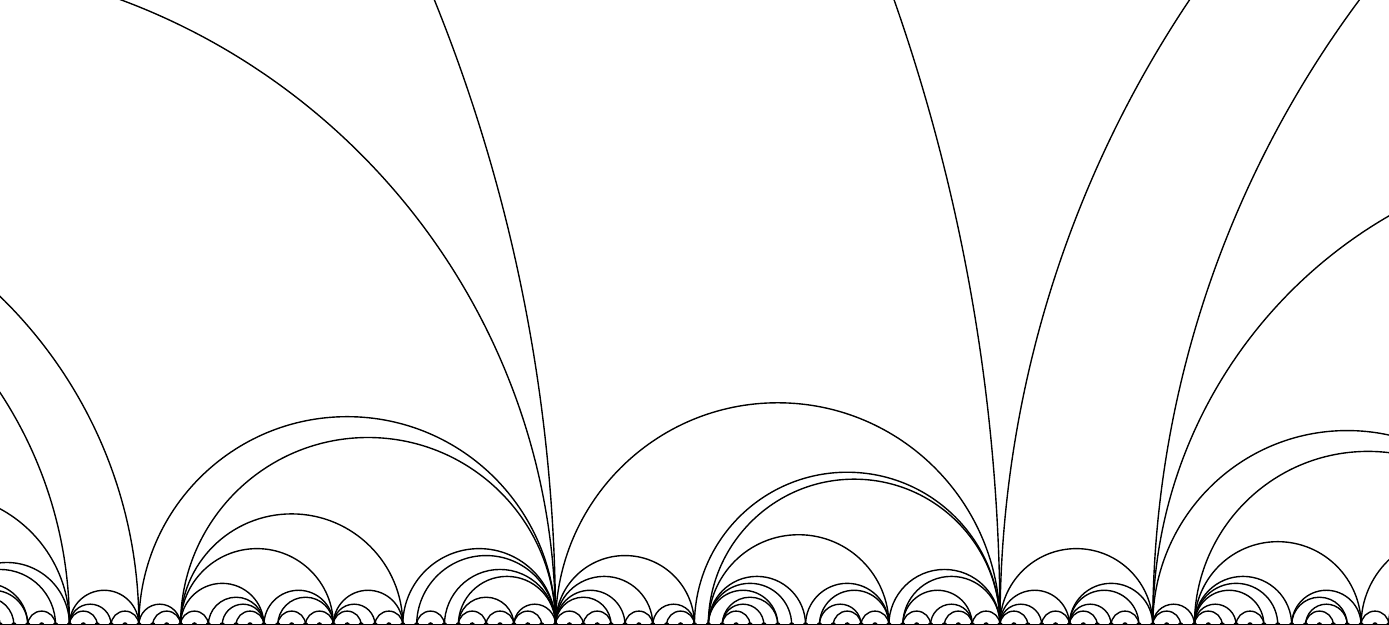}
   \caption{The map $M_1$ is defined by filling in each infinite face of
     $M$ with a system of arcs. This figure demonstrates this procedure
     applied to one of the infinite faces of $\Z$.}
   \label{fig:M1}
 \end{figure}

\begin{prop}
  \label{prop:semicontcurv}
  $\bbK$ is upper-semicontinuous on the set of unimodular maps with
  finite expected degree with respect to the local topology. Explicitly,
  Let $(M_n,\rho_n)$ be a sequence of unimodular random rooted maps with
  $\E[\deg(\rho_n)]<\infty$ that converge weakly to a unimodular random
  rooted map $(M,\rho)$ with $\E[\deg(\rho)]<\infty$.  Then
  \[
    \bbK(M,\rho) \geq \lim_{n\to\infty} \bbK(M_n,\rho_n).
  \]
\end{prop}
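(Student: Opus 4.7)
The plan is to rewrite $\bbK(M,\rho) = \E[2\pi - \theta_M(\rho)]$ with $\theta_M(\rho) := \pi\sum_{f\perp\rho}(1-2/\deg(f))$, and then to show that $\theta$ is a lower semi-continuous functional on $\cM_\bullet$ that is non-negative (at least when every face has degree at least $2$). Granted lower semi-continuity and non-negativity, Fatou's lemma applied to a Skorokhod coupling of the weakly convergent sequence $(M_n,\rho_n)\to(M,\rho)$ yields
\[
\E[\theta(M,\rho)] \leq \liminf_{n\to\infty}\E[\theta(M_n,\rho_n)],
\]
which rearranges directly to the desired bound $\limsup_{n\to\infty}\bbK(M_n,\rho_n) \leq \bbK(M,\rho)$.

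The semi-continuity will follow from two features of the local topology on $\cM_\bullet$: (a) the root degree $\deg(\rho)$ is determined by $B_1(M,\rho)$ and hence continuous; and (b) for each corner $c$ at $\rho$, the face degree $\deg(f_c)$ is lower semi-continuous, since it can be lower-bounded by tracing the $\sigma^\dagger$-orbit of $c$ inside any ball $B_R(M,\rho)$. If $\deg(f_c)<\infty$, then the orbit closes up inside $B_R$ once $R$ is sufficiently large, pinning down $\deg(f_{c,n}) = \deg(f_c)$ for large $n$ in any convergent sequence; if $\deg(f_c) = \infty$, the traceable length grows unboundedly in $R$, forcing $\deg(f_{c,n})\to\infty$. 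Since the hypothesis $\E[\deg(\rho)]<\infty$ implies $\deg(\rho)<\infty$ almost surely, along a Skorokhod coupling with $(M_n,\rho_n)\to(M,\rho)$ a.s.\ the sum $\theta(M_n,\rho_n)\to\theta(M,\rho)$ a.s., because for large $n$ the sum has a stable number of terms ($\deg(\rho)$ many), each converging to its limit.

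The main obstacle is handling faces of degree $1$ (self-loops at $\rho$), which contribute $-\pi$ per corner to $\theta$ and so $\theta$ is not globally non-negative. The cleanest fix is to apply the Fatou argument instead to the shifted, non-negative, LSC function $\theta+\pi\deg(\rho) = \pi\sum_c(2-2/\deg(f_c))$, and to invoke separately the lower semi-continuity of $\deg(\rho)$ (so that $\liminf_n\E[\deg(\rho_n)]\geq\E[\deg(\rho)]$, also by Portmanteau). Combining these two inequalities yields the USC bound on $\bbK$ in the generic case; in the degenerate regime where $\E[\deg(\rho_n)]\to\infty$, one verifies directly, using the combinatorial observation that a vertex can have many degree-$1$ face corners only at the cost of also having faces of compensatingly large degree, that $\bbK(M_n,\rho_n)$ cannot diverge upward.
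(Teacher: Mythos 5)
Your first step --- lower semicontinuity of $\theta$ via continuity of $\deg(\rho)$ and lower semicontinuity of face degrees, followed by Fatou along a Skorokhod coupling --- is exactly the content of the paper's first case (no faces of degree $1$, so $\theta\geq 0$), and that part is fine. The genuine gap is in your treatment of degree-$1$ faces. Applying Fatou to the non-negative LSC function $\theta+\pi\deg(\rho)$ gives
\[
\E[\theta(\rho)]+\pi\E[\deg(\rho)]\;\leq\;\liminf_{n\to\infty}\Bigl(\E[\theta_n(\rho_n)]+\pi\E[\deg(\rho_n)]\Bigr),
\]
and to extract $\E[\theta(\rho)]\leq\liminf_n\E[\theta_n(\rho_n)]$ from this you would need $\limsup_n\E[\deg(\rho_n)]\leq\E[\deg(\rho)]$. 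But the inequality you invoke from Portmanteau, $\liminf_n\E[\deg(\rho_n)]\geq\E[\deg(\rho)]$, points in the \emph{wrong} direction: under local weak convergence degree mass can escape to infinity, so $\limsup_n\E[\deg(\rho_n)]$ may strictly exceed $\E[\deg(\rho)]$ even while staying bounded. In that situation your two inequalities only yield $\E[\theta(\rho)]\leq\liminf_n\E[\theta_n(\rho_n)]+\pi\bigl(\limsup_n\E[\deg(\rho_n)]-\E[\deg(\rho)]\bigr)$, which is weaker than the claim. Consequently the ``degenerate regime'' is not just $\E[\deg(\rho_n)]\to\infty$ but any failure of $\E[\deg(\rho_n)]\to\E[\deg(\rho)]$; and even in the regime you do single out, showing that $\bbK(M_n,\rho_n)$ ``cannot diverge upward'' is not the statement needed --- one needs the quantitative bound $\limsup_n\bbK(M_n,\rho_n)\leq\bbK(M,\rho)$. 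The appeal to a ``combinatorial observation'' about compensating large faces is only a gesture; to make it work you would have to prove a uniform pointwise bound such as $\theta_{M_n}(\rho_n)\geq -2\pi$ (so that a \emph{constant} shift restores non-negativity and Fatou applies directly), and that is precisely the nontrivial content that is missing.

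The paper sidesteps this by a different reduction: it observes that all degree-$1$ faces come from \emph{redundant self-loops}, deletes all such loops from $M$ and from each $M_n$, invokes \cref{prop:submaps} to see that this leaves every average curvature unchanged, and notes that the deletion operation is continuous for the local weak topology, so the modified maps still converge and have $\theta\geq 0$; the first (Fatou) case then applies. If you want to salvage your route without \cref{prop:submaps}, the cleanest repair is to prove the deterministic bound $\theta_M(v)\geq -2\pi$ for every vertex of every infinite map (an Euler-characteristic count over the nesting forest of redundant self-loops at $v$), which is essentially the same bookkeeping the paper packages into the redundant-loop deletion.
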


\begin{proof}
  First suppose that none of the maps $M_n$ have any faces of
  degree $1$.  In this case, $\theta_{M_n}(v)\geq 0$ for every $v$ and
  $n\geq 1$, and the claim follows from Fatou's lemma.

  Otherwise, we say that a self-loop is \textbf{redundant} if it is contractible in
  $S(M)$, and all the vertices of $M$ are in the same component of the
  complement of the loop.  Note that if $M$ is simply connected then
  every self-loop is contractible, and a redundant self-loop may
  surround other redundant self-loops but no other edge.
  Let $\hat M$ and $\hat M_n$ be obtained from $M$ and $M_n$ respectively by deleting all redundant self-loops.  \cref{prop:submaps} implies that $\bbK(\hat M,\rho) = \bbK(M,\rho)$ and $\bbK(\hat M,\rho) = \bbK(\hat M_n,\rho_n) = \bbK(M_n,\rho_n)$ for all $n\geq 1$.
 Since the operation of removing redundant self-loops
  is continuous in the weak toplogy, $(\hat M_n,\rho_n)$ converges
  weakly to $(M,\rho)$, and the claim follows from Fatou's Lemma as above.
\end{proof}

Combining \cref{prop:semicontcurv} with the equation \eqref{eq:Euler}
has the following immediate corollary.

\begin{corollary}[\cref{thm:dichotomy}, \eqref{iBS2} implies \eqref{iCurv}]
  \label{cor:bscurv}
  Let $(M,\rho)$ be a unimodular random rooted map with
  $\E[\deg(\rho)]<\infty$ that is a Benjamini-Schramm limit of a
  sequence of finite maps $(M_n)_{n\geq1}$ such that
  \[
    \frac{\genus(M_n)}{|V(M_n)|} \xrightarrow[n\to\infty]{} 0.
  \]
  Then $\bbK(M,\rho)\geq 0$.
\end{corollary}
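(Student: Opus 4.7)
The proof is a direct combination of the Euler-formula computation from Example ex:finite with the upper semicontinuity of average curvature established in Proposition prop:semicontcurv. First, for each $n$, since $M_n$ is a finite map with uniformly chosen root $\rho_n$, Example ex:finite (equation \eqref{eq:Euler}) gives
\[
\bbK(M_n,\rho_n) \;=\; \frac{4\pi\bigl(1-\genus(M_n)\bigr)}{|V(M_n)|} \;=\; \frac{4\pi}{|V(M_n)|}-\frac{4\pi\genus(M_n)}{|V(M_n)|}.
\]
The second term tends to $0$ by hypothesis. Provided that $(M,\rho)$ is almost surely infinite, weak convergence in the local topology forces $|V(M_n)|\to\infty$, so that the first term tends to $0$ as well, and hence $\bbK(M_n,\rho_n)\to 0$.

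Applying Proposition prop:semicontcurv to the convergent sequence $(M_n,\rho_n)\to (M,\rho)$ then yields
\[
\bbK(M,\rho) \;\geq\; \limsup_{n\to\infty}\bbK(M_n,\rho_n) \;=\; 0,
\]
which is the desired inequality.

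The one edge case to dispatch is when $(M,\rho)$ is almost surely a finite map. In that situation, local convergence forces $|V(M_n)|$ to be eventually bounded, and the hypothesis $\genus(M_n)/|V(M_n)|\to 0$ combined with the integrality and nonnegativity of $\genus(M_n)$ forces $\genus(M_n)=0$ for all large $n$, hence $\genus(M)=0$, so that $\bbK(M,\rho)=4\pi/|V(M)|>0$. No genuine obstacle arises; the only point requiring care is verifying $|V(M_n)|\to\infty$ in the infinite case, which is routine from the definition of Benjamini--Schramm convergence.
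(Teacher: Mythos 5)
Your proposal is correct and is essentially the paper's own proof, which simply combines equation \eqref{eq:Euler} with the upper semicontinuity of \cref{prop:semicontcurv}. The case analysis on whether $|V(M_n)|\to\infty$ is not actually needed: since the term $4\pi/|V(M_n)|$ is nonnegative, one has $\bbK(M_n,\rho_n)\geq -4\pi\genus(M_n)/|V(M_n)|\to 0$, so semicontinuity gives $\bbK(M,\rho)\geq 0$ directly (and this also covers mixtures of finite and infinite limits without splitting into cases).
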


We will prove in \cref{Sec:forests} that $\bbK(M,\rho) \leq 0$ for any
infinite, simply connected, unimodular random rooted planar map with
finite expected degree, so that Benjamini-Schramm limits of low genus
finite maps have $0$ average curvature.

\subsection{Invariance of the curvature} \label{Sec:surfaces}

In this section, we consider \emph{unimodular embeddings} of unimodular
random rooted maps.  We define extend the notion of average curvature to
such an embedding and show that, under certain integrability conditions,
the average curvature associated to the embedding agrees with the
average curvature that we defined combinatorially.  This shows that the
average curvature is a canonical quantity.


We define a \textbf{metric surface embedded map} (MSEM) to be a locally
finite map $M$ together with a proper embedding $z$ of $M$ into an
oriented metric surface $S$.  A rooted MSEM is a MSEM together with a
distinguished root vertex.  Two rooted MSEMs are isomorphic if they are
isomorphic as rooted maps, and there is an orientation preserving
isometry between the two surfaces sending one embedding to the other.

%
We define the local topology on the set of isomorphism classes of rooted
MSEMs by a variation on the local Gromov-Haussdorf topology: Namely, we
set the distance between two rooted MSEMs $(M_1,\rho_1,S_1,z_1)$ and
$(M_2,\rho_2,S_2,z_2)$ to be $e^{-r}$, where $r$ is maximal such that
there is a map isomorphism $\phi$ from the (graph distance) ball of
radius $r$ around $\rho_1$ in $M_1$ to the ball of radius $r$ about
$\rho_2$ in $M_2$ and a correspondence $R$ between the balls of (metric)
radius $r$ around $z_1(\rho_1)$ and $z_2(\rho_2)$ in $S_1$ and $S_2$
respectively, denoted $B_1(r)$ and $B_2(r)$, such that if $(x,x')$ and
$(y,y')\in R$ then $|d_1(x,y) - d_2(x',y')| \leq 1/r$, and such that if
$\phi(e)=e'$ then $R$ restricts to a correspondence between the points of
$e$ and of $e'$ that are contained in the metric balls of radius $r$ around $z_1(\rho_1)$ and $z_2(\rho_2)$ respectively.


%
We define doubly rooted MSEMs, the local topology on the set of
isomorphism classes of doubly rooted MSEMs, and unimodular random rooted
MSEMs similarly to the graph case.  It is straightforward (but rather
tedious) to encode the structure of a MSEM as a marking of the
underlying map of the MSEM, so that all of the usual machinery of
unimodularity transports to this setting.
If $(M,\rho)$ is a unimodular random rooted map and $(M,S,z,\rho)$ is a
unimodular random rooted MSEM with underlying map $(M,\rho)$, we call
$(S,z)$ a \textbf{unimodular embedding} of $M$.


In practice, unimodular embeddings often arise as measurable,
automorphism equivariant functions of the map, such as the embeddings
given by circle packing and the conformal embedding. These are easily
seen to be unimodular since every mass transport on the associated MSEM
is induced by a mass transport on the map.

\medskip



\begin{example}
  Given a map $M$ such that every face of $M$ has degree at least three,
  a natural way to embed it on a surface is as follows.  Associate to
  each face $f$ of degree $d$ a regular $d$-gon with sides of length
  $1$.  If two faces share an edge, we identify the corresponding edges
  of the polygons.  For infinite faces, the corresponding polygon is a
  half-plane $\{z \in \C : {\rm Im}(z) \ge 0\}$ with edges
  $\{[n,n+1]:n \in \Z\}$ along the boundary, which we think of as a
  regular $\infty$-gon.  In this surface, there is no curvature on any
  of the faces, as they are all flat pieces of $\R^2$.  The surface is
  also smooth along the edges, as two faces are glued along straight
  segments of unit length.  Thus, all of the curvature of the surface is
  concentrated on the vertices, and in fact the atom of curvature
  $\kappa_s(v)$ at $v$ (see below) is exactly the value $\kappa(v)$ that
  we defined combinatorially at the beginning of this section.
\end{example}

\begin{example}[glued discs]
  Similarly to the above, we can associate to each face of $M$ of degree
  $d$ a disc of circumference $d$ (a half-plane if $d=\infty$).  Split
  the boundary of each disc to unit length segments, and glue discs
  along these segments.  This is well defined also for maps with faces
  of degree $1$ and $2$.  In this case, the singular curvature at a
  vertex is $\kappa_s(v) = \pi(2-\deg(v)$, but there is also positive
  Gaussian curvature supported on the edges.
\end{example}

The two examples above are jointly generalized by constructions based on
gluing (possibly random) shapes assigned to faces. In what follows, we
restrict ourselves to surfaces that have a smooth Riemannian metric, except
possibly for conical singularities at vertices of the map, and assume that
the edges of the map are embedded as smooth curves in the surface. We call
a unimodular embedding of a map satisfying these conditions a
\textbf{smooth embedding}. For example, any construction based on gluing
polygons yields a smooth embedding.

Recall a corner is a pair $(e,e')$ of oriented edges such that $e'$ is the edge
following $e$ in the counterclockwise order at $e^-$. Given a smooth
embedding of a map $M$, for each corner of $M$, we let $\angl(e,e')$ be the
angle between $e$ and $e'$ at $v$. In a smooth embedding, every oriented
edge $e$ of the map has a well defined total geodesic curvature in the
embedding, which we denote $\kappa_g(e)$.

A smooth Riemannian surface with conical singularities has a Gaussian
curvature $\kappa$ associated with its metric, which is a signed measure on
the surface (see e.g.\ \cite[Chapter~5]{Aleks}). The curvature measure
$\kappa$ is absolutely continuous with respect to the area measure on the
surface except for possible atoms at the conical singularities at the
vertices. We denote by $\kappa_s(v)$ be the atom of curvature at a vertex
$v$, and by $\kappa(f)$ be the total curvature of the face $f$, which is
well-defined if $f$ has finite degree. In a smooth embedding of a map, the
mass of the atom of curvature at a vertex is given by
\[
\kappa_s(v) = 2\pi - \sum_{e:\, e^-=v} \angl(e).
\]
Given a smooth embedding of a map with a locally finite dual, we define the \textbf{total curvature} at a vertex $v$ by
\[
  \kappa(v) = \kappa_s(v) + \sum_{f\perp v} {\kappa(f) \over \deg(f)} .
\]

We are now ready to state our ``invariance of the average curvature''
theorem.

\begin{thm}\label{thm.invcurvature}
  Let $(M,\rho)$ be a unimodular random rooted map with locally finite dual
  and let $(S,z)$ be a smooth unimodular embedding of $M$.  Suppose further
  that either
  \begin{enumerate}[nosep]
  \item $\kappa_g(e)=0$ for every oriented edge $e$ of $M$, and $\kappa(f)
    \leq 0$ for every face $f$ of $M$, or
  \item $\sum_{e : e^- =\rho}|\kappa_g(e)|$ and $|\kappa(\rho)|$ both have
    finite expectation.
  \end{enumerate}
  Then
  \[
  \E [\kappa(\rho)] = \bbK(M,\rho).
  \]
\end{thm}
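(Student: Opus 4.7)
The plan is to derive a pointwise decomposition of the total curvature $\kappa(v)$ by applying Gauss-Bonnet to each face of $M$, and then to show that the two non-combinatorial ``error'' terms vanish in expectation by the mass transport principle, one via non-negativity and one via the antisymmetry of geodesic curvature under edge reversal.

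The first step is a face-by-face Gauss-Bonnet computation. Since $M^\dagger$ is locally finite, every face $f$ is a topological disc bounded by the finite cyclic sequence of oriented edges $\{e : e^\ell = f\}$, and the Gauss-Bonnet theorem for a disc with piecewise smooth boundary gives
\[
\kappa(f) + \sum_{e:\,e^\ell = f}\kappa_g(e) + \sum_{c \text{ of } f}\bigl(\pi - \angl(c)\bigr) = 2\pi.
\]
Solving for $\sum_{c\text{ of }f}\angl(c)$, dividing by $\deg(f)$, summing over all faces $f \perp v$ with the usual corner multiplicity, and combining the result with $\kappa_s(v) = 2\pi - \sum_{e:\,e^-=v}\angl(e)$ and the definition $\theta_M(v) = \sum_{f \perp v}\pi(\deg(f)-2)/\deg(f)$ yields the pointwise decomposition
\[
\kappa(v) \;=\; \bigl(2\pi - \theta_M(v)\bigr) + A(v) - B(v),
\]
where $A(v) := \sum_{f \perp v}\frac{1}{\deg(f)}\sum_{c\text{ of }f}\angl(c) - \sum_{e:\,e^-=v}\angl(e)$ redistributes corner angles among the faces incident to $v$, and $B(v) := \sum_{f\perp v}\frac{1}{\deg(f)}\sum_{e:\,e^\ell = f}\kappa_g(e)$ collects geodesic curvatures along the boundaries of those faces. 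Taking expectations and using $\bbK(M,\rho) = \E[2\pi - \theta_M(\rho)]$ reduces the theorem to $\E A(\rho) = 0$ and $\E B(\rho) = 0$.

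The second step shows $\E A(\rho) = 0$ by a direct mass transport on corners. For each ordered pair of corners $(c', c)$ lying in a common face $f = f(c') = f(c)$, transport the non-negative mass $\angl(c)/\deg(f)$ from the vertex of $c'$ to the vertex of $c$. The total mass sent out of $u$ is exactly $\sum_{f \perp u}\frac{1}{\deg(f)}\sum_{c\text{ of }f}\angl(c)$, while the total mass received at $v$ is $\sum_{e:\,e^-=v}\angl(e)$, since each corner $c$ at $v$ in a face $f$ collects contributions from all $\deg(f)$ corners of $f$. The non-negative MTP forces the two expectations to agree, giving $\E A(\rho) = 0$. For $\E B(\rho) = 0$ I would run the same scheme with $\kappa_g(e(c))$ in place of $\angl(c)$, where $e(c)$ is the oriented edge determining the corner $c$; the MTP then gives $\E B(\rho) = \E\sum_{e:\,e^- = \rho}\kappa_g(e)$, and this last expectation vanishes because the edge transport $F(u,v) := \sum_{e:\,e^- = u,\,e^+=v}\kappa_g(e)$ is antisymmetric ($F(u,v) = -F(v,u)$ by $\kappa_g(-e) = -\kappa_g(e)$), so the signed MTP yields $\E\sum_v F(\rho,v) = -\E\sum_v F(\rho,v) = 0$.

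The main obstacle is justifying the applications of the MTP in the two cases of the hypothesis. In case (2), the assumed $L^1$ bounds on $\kappa(\rho)$ and on $\sum_{e:\,e^-=\rho}|\kappa_g(e)|$ bound the relevant transports in absolute value (after a short auxiliary MTP estimate transferring $|\kappa_g|$ from tails of edges to corners of adjacent faces), permitting the signed MTP to be applied to both the $B$-transport and to $F$. In case (1) the geodesic curvatures vanish, so $B \equiv 0$ pointwise and no signed argument is needed; each summand of the $A$-transport is non-negative, so the non-negative MTP applies in $[0,\infty]$, and the bound $\sum_{c\text{ of }f}\angl(c) \le \pi(\deg(f)-2)$ obtained from $\kappa(f) \le 0$ and Gauss-Bonnet prevents $\E A(\rho)$ from being an indeterminate $\infty - \infty$. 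Once these integrability points are handled the theorem follows by taking expectations in the pointwise identity.
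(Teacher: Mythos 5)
Your skeleton is the same as the paper's --- Gauss--Bonnet applied to each face, a mass transport redistributing corner quantities through the incident faces, and the antisymmetry $\kappa_g(-e)=-\kappa_g(e)$ to kill the geodesic term --- and your case (1) closes, since there $\kappa(f)\le 0$ bounds the angle sums as you say. The gap is in case (2), and it is created precisely by eliminating $\kappa(f)$ pointwise and then transporting the angle part $A$ and the geodesic part $B$ separately. Your $A$-transport moves the raw non-negative masses $\angl(c)/\deg(f)$, so the mass-transport principle only gives equality \emph{in} $[0,\infty]$ of $\E\bigl[\sum_{f\perp\rho}\frac{1}{\deg(f)}\sum_{c\text{ of }f}\angl(c)\bigr]$ and $\E\bigl[\sum_{e:e^-=\rho}\angl(e)\bigr]$. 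To conclude $\E A(\rho)=0$, and indeed to split $\E[\kappa(\rho)]=\bbK(M,\rho)+\E A(\rho)-\E B(\rho)$ at all, you need these quantities to be finite, i.e.\ you need the expected geometric angle sum at the root (equivalently $\E|\kappa_s(\rho)|$, or $\E\bigl|\sum_{f\perp\rho}\kappa(f)/\deg(f)\bigr|$) to be finite \emph{on its own}. Hypothesis (2) does not give this: it controls only the total $\kappa(\rho)=\kappa_s(\rho)+\sum_{f\perp\rho}\kappa(f)/\deg(f)$, in which arbitrarily large cone angles may be offset by large positive face curvature, and your auxiliary MTP estimate transfers only the $|\kappa_g|$ terms, not the angles. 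What your hypotheses do control is the \emph{difference} of the two sides of the $A$-transport, which is consistent with both sides being infinite, so the "$\infty-\infty$" problem you dispose of in case (1) is left open in case (2).

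This is exactly the point the paper's bookkeeping is designed to avoid: it transports the combined corner quantity $\angl(e,e')-\kappa(f)/\deg(f)-\kappa_g(e)$, whose sum over the corners at a vertex equals $2\pi-\kappa(v)-\sum_{e:e^-=v}\kappa_g(e)$ and is therefore the quantity that hypothesis (2) speaks about; Gauss--Bonnet is then used only to evaluate the flux through a face as $(\deg(f)-2)\pi$, and the antisymmetry transport for $\kappa_g$ is run separately, just as you do. So either transport the combined quantity as in the paper, or supply an additional argument (or hypothesis) giving $\E\bigl[\sum_{e:e^-=\rho}\angl(e)\bigr]<\infty$ before splitting the expectation; as written, case (2) of your proof does not close.
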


We remark that it is possible to use \cref{thm.invcurvature} to prove
\cref{prop:submaps} by embedding $M$ in the polygonal manifold
associated to $\omega$.  We have restricted \cref{thm.invcurvature} to
maps with locally finite duals mainly for the sake of brevity.  It is
possible to generalize this identity to maps with infinite faces under
suitable integrability assumptions.

 The necessity of some integrability condition
is demonstrated by the following example.
Consider the map obtained by a unimodular triangulation of each of the
two infinite faces of $\Z$ (e.g.\ as in \cref{fig:M1}).  This map admits
a unimodular embedding in either the Euclidean or hyperbolic plane, by
drawing the edges of $\Z$ as segments of length one along a bi-infinite
geodesic, and drawing the other edges of the map as semi-circular arcs.
This embedding is a measurable function of the map, and is therefore
unimodular.  There are no conical singularities in either case, but
$\sum_{f\perp v} \kappa(f)/\deg(f)$ is $0$ in the Euclidean embedding
and negative at every vertex in the hyperbolic embedding, violating
invariance of the curvature.  In fact, it is easily seen that the map
has average degree $6$, so that $\bbK(M,\rho)\geq 0$.  It is also easy
to see that the hyperbolic embedding does not satisfy the integrability
requirements of \cref{thm.invcurvature}.


\begin{proof}
  The proof relies on a generalization of the mass transport that we used to
  prove \cref{prop:submaps} in the case that both duals were locally finite.  Now, however, we need to allow for the curvature of the faces
  and the geodesic curvature of
  the edges, using the fact that the angle sum around a face is related to these quantities
  by the Gauss-Bonnet Theorem (see e.g.\ \cite[Chapter VI, Section
  7]{Aleks}).

  Each corner $(e,e')$ of $M$ is associated with a vertex $v=e^-$ and face
  $f=e^r$.  The Gauss-Bonnet Theorem implies that for any face $f$ of
  $M$,
  \[
    \sum_{e: e^r= f} [\pi-\angl(e,e')] + \kappa(f) + \sum_{e: e^r=f}
    \kappa_g(e) = 2\pi.
  \]
  Here, the sum over edges is taken for the directed clockwise cycle
  around $f$.  We rewrite this identity as
  \begin{equation}\label{eq:gaussbonnet}
    \sum_{e:e^r= f} \left[ \angl(e,e') - {\kappa(f) \over \deg(f)} -
      \kappa_g(e) \right] = (\deg(f)-2)\pi.
  \end{equation}

  We now define two mass transports on $M$.  First, for each directed edge
  $e=(u,v)$, let $u$ send to $v$ a mass of $\kappa_g(e)$.  The expected
  mass sent from $\rho$ is $\E \sum_{e^-=\rho} \kappa_g(e)$, while the
  expected mass received by $\rho$ is $\E \sum_{e^-=\rho} \kappa_g(-e)= -\E \sum_{e^-=\rho} \kappa_g(e)$.
  Equating the two we find that
  \begin{equation}
    \label{eq:kappa_g0}
    \E \sum_{e^-=\rho} \kappa_g(e) = 0.
  \end{equation}
  The integrability condition needed to apply the signed Mass-Transport Principle here is given by condition (2), while the equality is trivial
  under condition (1).

  Our second mass transport is defined in two stages as follows.  For
  each vertex $u$ and corner $(e,e')$ at $u$ with $e^r=f$, the vertex
  $u$ sends to the face $f$ a mass of
  $\angl(e) - {\kappa(f) \over \deg(f)} - \kappa_g(e)$.  Each face then
  redistributes the mass it recieves uniformly to the vertices at its corners, with the usual multiplicity.  This defines a mass transport $\phi$, given explicitly by
  \[
   \phi(u,v) = \sum_{f : f \perp u, f \perp v} \,\, \sum_{e: e^- =v, e^r =f}\,\, \frac{1}{\deg(f)} \left[ \angl(e,e') - {\kappa(f) \over \deg(f)} -
      \kappa_g(e) \right].
  \]

  Summing the mass sent from $\rho$ through a face $f$ to the corners of
  $f$ gives
  \[
    \sum_v \phi(\rho,v) = \sum_{e^-=\rho} \angl(e,e') - {\kappa(f) \over
      \deg(f)} - \kappa_g(e).
  \]
  If $(1)$ holds then $\phi(u,v)$ is non-negative for every $u$ and $v$, while if $(2)$  holds then the expectation $\E\sum_{v\in V} |\phi(\rho,v)|$ is finite, and we can apply the Mass-Transport Principle to $\phi$ in either case. Taking expectations and using \eqref{eq:kappa_g0}, we have
  \[
    \E \sum_v \phi(\rho,v) = \E\left[ \theta(\rho) - \sum_{e^-=\rho}
      {\kappa(f) \over \deg(f)} \right] = 2\pi - \E\kappa(\rho).
  \]
  Applying \eqref{eq:gaussbonnet}, it follows that the total mass passing through a face $f$
  is $(\deg(f)-2)\pi$, and so the expected total mass received by $\rho$ is
  \[
    \E \sum_v \phi(v,\rho) = \E \sum_{f\perp \rho}
    \frac{(\deg(f)-2)\pi}{\deg(f)} = 2\pi-\bbK(M,\rho),
  \]
  completing the proof. \qedhere
\end{proof}

\begin{example}[Voronoi diagrams and Delaunay triangulations]
  \label{ex:delaunay}
  We say that a set of points in the plane is in \textbf{general
    position} if no four points in the set lie on any given circle or
  line.  Given a set $Z$ of points in general position in either the
  Euclidean plane or the hyperbolic plane, the \textbf{Delaunay
    triangulation} of $Z$ is a simple triangulation with vertex set $Z$, defined so that three points $u,v,$ and $w$ of $Z$ form a triangle if and only if the unique
  disc containing $u,v,$ and $w$ in its boundary does not contain any
  other points of $Z$.

  Suppose that $Z$ is an isometry-invariant, locally finite point
  process in either the Euclidean plane or the hyperbolic plane, and let
  $\hat Z$ be the Palm version of $Z$ that is conditioned to have a
  point at the origin. Then the Delaunay triangulation of $\hat Z$,
  rooted at $0$ is unimodular.  See \cite{BPP} for a study of the
  Poissonian case.  This triangulation, embedded with geodesic segments for
  edges, satisfies condition $(1)$ of \cref{thm.invcurvature}, and we
  deduce that, unsurprisingly, Delaunay triangulations of hyperbolic
  point processes are hyperbolic while Delaunay triangulations of
  Euclidean point processes are parabolic.  (This also follows from the
  methods of \cite{AHNR15}.)  The dual of the Delaunay triangulation is
  the Voronoi diagram of the point process, which can also be made
  unimodular as in \cref{Sec:dual}.  The Voronoi diagram is also
  hyperbolic in the hyperbolic plane and parabolic in the Euclidean
  plane.
\end{example}


\section{Spanning forests}\label{Sec:forests}

It is evident from \cref{fig:logic} that the analysis of \emph{random
  spanning forests} in a map $M$ is central to the proof of
\cref{thm:dichotomy}.  The free and wired uniform spanning forests
$\FUSF$ and $\WUSF$ are known to encode properties of the random walk:
any graph $G$ has the intersection property if and only if its wired uniform spanning forest is
almost surely connected, while $G$ admits non-constant harmonic functions of
finite Dirichlet energy if and only if the laws of the free and wired uniform spanning forest are distinct.  The (free and wired) minimal spanning forests, on the other hand, are related to Bernoulli bond
percolation: $p_c(G) < p_u(G)$ if and only if the laws of the two
minimal spanning forests are distinct.

\subsection{Uniform Spanning Forests}

Our primary method to relate the average curvature of a map $M$ with the
various probabilistic properties listed in \cref{thm:dichotomy} is a
formula relating the average degree of the \emph{free uniform spanning
  forest} of $M$ to its average curvature.  To this aim, we begin with a
succinct review of uniform spanning forest.  We refer the reader to
\cite{BLPS,LP:book} for a comprehensive treatment.

For a finite graph $G$, let $\UST_G$ be the uniform measure on
spanning trees of $G$ (i.e.\ connected subgraphs of $G$ containing every
vertex and no cycles), which is the law of a percolation on $G$.  There
are two natural ways to define infinite volume limits of the uniform
spanning tree. Let $G=(V,E)$ be an infinite, locally finite, connected
graph. An \textbf{exhaustion} of $G$ is an increasing sequence
$\langle V_n \rangle_{n\geq1}$ of finite connected subsets of $V$ such
that $\bigcup_{n\geq1}V_n = V$. Given an exhaustion
$\langle V_n \rangle_{n\geq1}$ of $G$, we define $G_n$ to be the
subgraph of $G$ induced by $V_n$ for each $n\geq1$.
The \textbf{free uniform spanning forest} measure of $G$ is
defined as the weak limit of the uniform spanning tree measures on the graphs $G_n$. That is, for each finite set $S \subset E$,
\[
  \FUSF_G(S \subset T) := \lim_{n \to \infty} \UST_{G_n}(S \subset T).
\]

For each $n\geq1$, we also construct a graph $G^*_n$ from $G$ by
identifying every vertex in $V\setminus V_n$ into a single vertex
$\partial_n$, and deleting all of the resulting self loops from
$\partial_n$ to itself.  We then define the \textbf{wired uniform
  spanning forest} measure of $G$ to be the weak limit of the uniform
spanning tree measures on the graphs $G_n^*$.  That is, for each finite
set $S \subset E$,
\[
  \WUSF_G(S \subset T) := \lim_{n \to \infty} \UST_{G_n^*}(S \subset T).
\]
The study of uniform spanning forests was pioneered by Pemantle
\cite{Pem91}, who showed that that both limits exist for any graph $G$
and in particular are independent of the choice of exhaustion.

A crucial link between the USFs and amenability is the following result
of Aldous and Lyons \cite[Proposition 18.14]{AL07}.

\begin{thm}[\cref{thm:dichotomy}, \eqref{iAmen} implies \eqref{iUSF}]
  If $(G,\rho)$ is an invariantly amenable unimodular random rooted
  graph, then $\FUSF_G=\WUSF_G$ almost surely.
\end{thm}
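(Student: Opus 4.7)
The plan is to prove $\FUSF_G = \WUSF_G$ almost surely by showing that the expected root-degrees in the two forests are equal, and then invoking stochastic domination. Specifically, $\WUSF$ is stochastically dominated by $\FUSF$ on any graph, and it is a classical theorem of Benjamini, Lyons, Peres and Schramm that $\E[\deg_{\WUSF}(\rho)] = 2$ on any infinite unimodular random rooted graph with $\E[\deg_G(\rho)] < \infty$. Under a monotone coupling witnessing $\WUSF \subseteq \FUSF$, if the expected root-degrees agree then $\FUSF$ and $\WUSF$ must coincide on every edge incident to $\rho$ almost surely; a straightforward mass-transport argument then promotes this coincidence to every edge of $G$. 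It therefore suffices to prove $\E[\deg_{\FUSF}(\rho)] \leq 2$ under the assumption of invariant amenability.

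To this end, apply \cref{prop:invexh} to fix a finitary exhaustion $(\omega_i)_{i \geq 1}$ of $(G,\rho)$. For each $i$, sample a random forest $T_i \subseteq E(G)$ conditionally on $(G, \omega_i)$ by taking, independently on each finite cluster $C$ of $\omega_i$, a uniform spanning tree of the finite induced subgraph $G[C]$. Since this construction is automorphism-equivariant, \cref{lem:unimodstability} implies that $(G,\rho,\omega_i,T_i)$ is unimodular. Writing $V_i := K_{\omega_i}(\rho)$, the mass-transport principle applied to
\[
f(G,u,v,\omega,T) = \frac{\mathbbm{1}[v\in K_\omega(u)]\,\deg_T(v)}{|K_\omega(u)|}
\]
together with the identity $\sum_{v\in V_i}\deg_{T_i}(v) = 2(|V_i|-1)$ gives
\[
\E[\deg_{T_i}(\rho)] = 2 - 2\,\E\!\left[\frac{1}{|V_i|}\right],
\]
which converges to $2$ as $i\to\infty$ by dominated convergence, since $|V_i|\to\infty$ almost surely.

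It remains to verify that $\E[\deg_{T_i}(\rho)] \to \E[\deg_{\FUSF}(\rho)]$. Conditionally on $G$, the sets $(V_i)$ form an increasing exhaustion of $V(G)$ by finite subsets, and the restriction of $T_i$ to the root's cluster is precisely a uniform spanning tree of $G[V_i]$. Pemantle's exhaustion-independence theorem then yields that, conditionally on $G$, the edge marginals of $\UST_{G[V_i]}$ converge to those of $\FUSF_G$; averaging over $G$ by bounded convergence gives convergence of the unconditional edge marginals at $\rho$, and the uniform bound $\deg_{T_i}(\rho)\leq\deg_G(\rho)$ ensures the expected degrees converge as well, so that $\E[\deg_{\FUSF}(\rho)] = 2$. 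The main technical point is the verification that $(G,\rho,\omega_i,T_i)$ is unimodular, which requires sampling the per-cluster USTs in an automorphism-equivariant manner; this can be arranged using Wilson's algorithm driven by an exchangeable family of random labels attached to the oriented edges of $G$.
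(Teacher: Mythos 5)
The paper does not actually prove this statement at all---it is quoted from Aldous and Lyons \cite[Proposition 18.14]{AL07} and used as a black box---so the comparison is between your self-contained argument and that citation. Judged on its own terms, your proof is correct in the setting in which the theorem is invoked in \cref{thm:dichotomy}, namely infinite $(G,\rho)$ with $\E[\deg(\rho)]<\infty$ (the finite case is trivial, both forests being the UST), and it is essentially the standard Aldous--Lyons-style argument: reduce to the expected-degree criterion (since $\FUSF_G$ stochastically dominates $\WUSF_G$ and $\E[\deg_{\WUSF}(\rho)]=2$, equality of the laws is equivalent to $\E[\deg_{\FUSF}(\rho)]\leq 2$), then use hyperfiniteness to build finitary forests $T_i$ of cluster-wise uniform spanning trees, compute $\E[\deg_{T_i}(\rho)]=2-2\E\left[1/|V_i|\right]$ by mass transport, and pass to the limit via exhaustion-independence of the free limit. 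Your final worry about equivariant sampling is already settled by \cref{lem:unimodstability}: conditionally on $(G,\omega_i)$, the product over clusters of the $\UST$ measures of the induced subgraphs is an isomorphism-invariant measurable kernel, so $(G,\rho,\omega_i,T_i)$ is unimodular without any auxiliary labelling scheme.

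Two caveats. First, the statement as displayed carries no moment assumption, whereas you use $\E[\deg(\rho)]<\infty$ both when quoting \cref{prop:invexh} and in the dominated convergence step. The direction you need of \cref{prop:invexh} (invariant amenability implies hyperfiniteness) holds without the moment assumption, as noted in the remark following it, and the dominated convergence can be avoided: by Rayleigh monotonicity, $\FUSF_G(e\in \F)\leq \UST_{G[V_i]}(e\in T)$ as soon as both endpoints of $e$ lie in $V_i$, so on the event that all neighbours of $\rho$ lie in $V_i$ one has $\E[\deg_{\F}(\rho)\mid G]\leq \E[\deg_{T_i}(\rho)\mid G,(\omega_j)_{j\geq 1}]$, and monotone convergence then yields $\E[\deg_{\F}(\rho)]\leq \lim_i \E[\deg_{T_i}(\rho)]=2$ with no integrability of $\deg(\rho)$ required. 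Second, your exhaustion $(V_i)$ is random; this is harmless, but the appeal to Pemantle's exhaustion-independence should be made conditionally on $G$ \emph{and} the sequence $(\omega_i)_{i\geq1}$, with respect to which $(V_i)$ is a genuine deterministic exhaustion by finite connected vertex sets.
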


The free and wired uniform spanning forests enjoy the following
properties:
\begin{description}
\item[Free dominates wired] The measure $\FUSF_G$ stochastically
  dominates the measure $\WUSF_G$ for every graph $G$.
\item[Domination and subgraphs] Let $H$ be a connected subgraph of
  $G$. Then the FUSF of $H$ stochastically dominates the restriction of
  the FUSF of $G$ to $H$.
\item[Expected degree of the WUSF] The expected degree in the WUSF of
  the root of any unimodular random rooted graph is $2$
  \cite[Proposition 7.3]{AL07}.
\end{description}

\medskip

Note that, since a \emph{connected} spanning forest cannot be strictly
contained in another spanning forest, the stochastic domination above
has the following immediate consequence.

\begin{prop}[\cref{thm:dichotomy}, \eqref{iWUSFConn} implies \eqref{iUSF}]
  Let $G$ be a graph.  If the wired uniform spanning forest of $G$ is
  connected almost surely, then the wired and free spanning forests of
  $G$ coincide.
\end{prop}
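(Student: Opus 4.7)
The plan is to exploit stochastic domination together with the elementary graph-theoretic fact that a spanning tree is a maximal acyclic subgraph.

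First I would invoke Strassen's theorem, using the stated stochastic domination of $\WUSF_G$ by $\FUSF_G$, to produce a monotone coupling $(\mathfrak{W},\mathfrak{F})$ of samples from $\WUSF_G$ and $\FUSF_G$ such that $\mathfrak{W}\subseteq \mathfrak{F}$ almost surely. Both $\mathfrak{W}$ and $\mathfrak{F}$ are spanning forests of $G$, i.e., acyclic subgraphs containing every vertex of $G$.

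Next, I would use the hypothesis that $\WUSF_G$ is connected almost surely to conclude that $\mathfrak{W}$ is a spanning tree of $G$ almost surely, in the sense that every pair of vertices of $G$ is joined by a (unique) finite path in $\mathfrak{W}$. Any edge $e \in \mathfrak{F}\setminus \mathfrak{W}$ would have its two endpoints already joined by a path in $\mathfrak{W}\subseteq \mathfrak{F}$, and adjoining $e$ would create a cycle in $\mathfrak{F}$, contradicting the fact that $\mathfrak{F}$ is a forest. Hence $\mathfrak{F}=\mathfrak{W}$ almost surely under the coupling, which forces $\FUSF_G = \WUSF_G$ as measures on subgraphs of $G$.

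The argument is essentially immediate from the two bulleted properties stated just before the proposition, so there is no real obstacle; the only subtlety is that for infinite $G$ one must interpret "connected spanning forest" as "spanning tree" in the sense above and verify that the cycle-creation argument goes through for any single extra edge, which it does because both endpoints of $e$ lie in $\mathfrak{W}$ and are joined there by a finite path.
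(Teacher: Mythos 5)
Your argument is correct and is essentially the paper's proof: the paper simply notes that a connected spanning forest cannot be strictly contained in another spanning forest, so the stochastic domination of $\WUSF_G$ by $\FUSF_G$ immediately forces the two measures to coincide. Your write-up merely makes the monotone coupling (Strassen) and the cycle-creation step explicit, which is the same route.
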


Let $(G,\rho)$ be a unimodular random rooted graph and $\F$ a sample of
either $\WUSF_G$ or $\FUSF_G$.  If $\F$ is interpreted as a percolation
on $G$, then the marked graph $(G,\rho,\F)$ is also unimodular.  Indeed,
since the definitions of $\FUSF_G$ and $\WUSF_G$ do not depend on the
choice of exhaustion, for each mass transport
$f:\cG_{\bullet\bullet}^{\{0,1\}}\to[0,\infty]$, the expectations
\begin{align*}
  f^F(G,u,v) = \FUSF_G\left[f(G,u,v,\F) \right] \quad\text{ and }\quad
  f^W(G,u,v)= \WUSF_G\left[f(G,u,v,\F) \right]
\end{align*}
are also measurable.  Using this observation, we deduce the
mass-transport principle for $(G,\rho,\F)$ from that of $(G,\rho)$.

\paragraph{Connections to random walk and potential theory.}

Although the uniform spanning tree of each $G_n$ or $G^*_n$ is
connected, the limiting random subgraph can be disconnected.  Indeed,
Pemantle \cite{Pem91} proved that WUSF and FUSF of $\Z^d$ coincide for
all $d\geq1$, and are connected if and only if $d\leq 4$.  A complete
characterisation of the connectivity of the WUSF was given by Benjamini,
Lyons, Peres and Schramm \cite{BLPS}.  A connected, locally finite graph
is said to have the \textbf{intersection property} if the traces of two
independent simple random walks started from any two vertices of the
graph have infinite intersection almost surely (or, equivalently, if the
two traces intersect almost surely).  A graph is said to have the
\textbf{non-intersection property} if the traces of two independent
simple random walks on the graph have finite intersection almost surely.

\begin{thm}[\cite{BLPS}: \cref{thm:dichotomy}, equivalence of
  \eqref{iIntersect} and \eqref{iWUSFConn}]
  Let $G$ be an infinite, locally finite, connected graph and let $\F$
  be a sample of $\WUSF_G$.  Then $\F$ is connected almost surely if and
  only if $G$ has the intersection property.  Moreover, if $G$ has the
  non-intersection property, then $\F$ has infinitely many connected
  components almost surely.
\end{thm}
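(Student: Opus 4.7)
The plan is to exploit Wilson's algorithm rooted at infinity, which gives a sampling of $\WUSF_G$ via independent loop-erased random walks. First I dispose of the easy case that $G$ is recurrent: then two independent simple random walks meet infinitely often a.s., so the intersection property holds, while $\WUSF_G$ is a single spanning tree. So I assume henceforth that $G$ is transient. Enumerate the vertices as $v_1,v_2,\ldots$, sample independent simple random walks $X^{(i)}$ started at $v_i$, and let $\mathcal T_i$ be the loop-erasure of $X^{(i)}$ stopped on first hitting $\bigcup_{j<i}\mathcal T_j$ (or without stopping, if no such hit ever occurs). By Wilson's theorem, $\F:=\bigcup_i \mathcal T_i$ has the law of $\WUSF_G$. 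The crucial observation is that since $\mathcal T_i$ has the same vertex set as the trace $R_i$ of $X^{(i)}$, under this coupling the vertices $v_i$ and $v_j$ lie in the same component of $\F$ if and only if $R_i\cap R_j\neq\emptyset$.

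For the forward direction, assume $G$ has the intersection property, so $|R_i\cap R_j|=\infty$ a.s.\ for every pair; then $R_i\cap R_j\neq\emptyset$ a.s., and the observation above places every pair of vertices in a common component of $\F$ a.s. A union bound over the countably many pairs yields connectivity a.s. For the converse, if $\F$ is connected a.s., then $\P(R_i\cap R_j\neq\emptyset)=1$ for every $(i,j)$. To upgrade this to infinite intersection, I would apply the strong Markov property: conditioning on a first meeting point $w$ at times $(s,t)$, the continuations $X^{(i)}_{s+\cdot}$ and $X^{(j)}_{t+\cdot}$ are independent simple random walks from $w$, which must themselves meet a.s.\ by the hypothesis applied at $w$; iterating this yields infinitely many intersections a.s.

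For the final assertion, suppose instead that $G$ has the non-intersection property, so $|R_i\cap R_j|<\infty$ a.s. Running the strong Markov argument in its contrapositive form forces $\P(R_i\cap R_j=\emptyset)>0$ for every pair, so $\F$ is disconnected with positive probability. To promote this to \emph{infinitely} many components a.s., I would invoke a zero-one law stating that the number of components of $\WUSF_G$ is a.s.\ constant and takes values in $\{1,\infty\}$: tail-triviality of $\WUSF_G$ follows from its representation via Wilson's algorithm using independent walks, to which Kolmogorov's zero-one law applies, while the exclusion of any finite value greater than one uses the BLPS result that every component of $\WUSF_G$ is one-ended, which prevents distinct components from coalescing in the limit.

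The main obstacle is this zero-one law in the last paragraph: the first two implications are fairly immediate from Wilson's algorithm, but excluding a finite-but-larger-than-one number of components requires structural input about $\WUSF_G$, most importantly the one-endedness of components, which is itself a nontrivial theorem of BLPS.
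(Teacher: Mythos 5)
Your overall strategy --- Wilson's algorithm rooted at infinity --- is the right one (it is essentially how this is proved in \cite{BLPS}; the present paper only cites that result), but the step you call the ``crucial observation'' is false, and it conceals exactly the hard part of the theorem. The loop-erasure $\mathcal{T}_i$ does \emph{not} have the same vertex set as the trace $R_i$: loop-erasing deletes every vertex that is visited only inside an erased loop. In Wilson's algorithm the walk from $v_j$ is stopped when it hits the previously generated \emph{loop-erased} trees, and since components never merge at later stages, taking $v_1=u$, $v_2=v$ one gets: $u$ and $v$ lie in the same component of $\F$ if and only if the walk from $v$ hits $\mathsf{LE}\bigl(X^{(1)}\bigr)$ --- not if and only if $R_1\cap R_2\neq\emptyset$. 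Because $\mathsf{LE}\bigl(X^{(1)}\bigr)\subseteq R_1$, your converse direction survives (connectivity forces the second walk to hit the loop-erasure, hence the traces meet; the upgrade to infinitely many common vertices needs a small repair, since after the first meeting the two continuations start at the \emph{same} vertex and so ``intersect'' trivially --- one should instead use transience, e.g.\ via L\'evy's $0$--$1$ law, to extract infinitely many distinct common vertices). But the forward direction is now genuinely open: from ``the traces of two independent walks a.s.\ intersect'' you must deduce ``the second walk a.s.\ hits the loop-erasure of the first,'' and this implication is precisely the nontrivial lemma at the heart of the BLPS argument, proved there by a separate second-moment/time-reversal argument. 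Your write-up assumes it away through the incorrect identification of $\mathcal{T}_i$ with $R_i$.

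The route you sketch for the ``moreover'' statement also fails as stated. First, the non-intersection property does not force $\P(R_i\cap R_j=\emptyset)>0$ for every pair: attach a pendant vertex $v$ to a vertex $u$ of $\Z^5$; walks from $u$ and $v$ a.s.\ meet (at $u$) yet a.s.\ meet only finitely often. Second, and more seriously, the zero--one law you invoke --- that the number of components of the wired forest lies a.s.\ in $\{1,\infty\}$ --- is not a true statement about general graphs: the number of trees of the WUSF is indeed a.s.\ constant, but this is itself a nontrivial theorem of \cite{BLPS} (it does not follow from Kolmogorov's $0$--$1$ law, and the number of components is not a tail event in the edge configuration), and the constant can be finite and larger than one --- for two copies of $\Z^3$ joined by an edge, the very example the paper mentions, it equals two. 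Your proposed mechanism for excluding finite values bigger than one, one-endedness of WUSF components, is likewise unavailable at this level of generality: it is a theorem for unimodular random rooted graphs and some other classes, whereas the statement here concerns arbitrary locally finite connected graphs, for which WUSF components can be two-ended (e.g.\ $\Z^5$ with an infinite ray attached at a vertex). The actual argument uses the a.s.\ finiteness of intersections to produce, for every $k$, vertices $x_1,\dots,x_k$ whose independent walks have pairwise disjoint traces with positive probability, runs Wilson's algorithm from these to get at least $k$ components with positive probability, and then invokes the a.s.\ constancy of the number of trees. Both halves of the theorem therefore require substantive input that your proposal does not supply.
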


In general, a graph may have neither of the intersection and
non-intersection properties.  For example, the graph formed by
connecting two disjoint copies of $\Z^3$ by a single edge between their
origins does not have either property.  However, it is easily seen that
this is not the case for reversible random rooted graphs.

\begin{lem}\label{lem:intersection}
  Let $(G,\rho)$ be a unimodular random rooted graph with
  $\E[\deg(\rho)] < \infty$.  Then almost surely $G$ has either the
  intersection property or the non-intersection property.  Consequently,
  the wired uniform spanning forest of a unimodular random rooted graph with finite expected
  degree is almost surely either connected or has infinitely many
  connected components.
\end{lem}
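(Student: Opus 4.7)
My plan is as follows. First, by the ergodic decomposition for unimodular probability measures (\cref{Sec:ergod}), I may assume that $(G,\rho)$ is ergodic. Once the main dichotomy (intersection vs.\ non-intersection) is established, the statement on the wired uniform spanning forest is immediate from the BLPS theorem stated just above: the intersection property forces $\mathfrak F$ to be connected almost surely, while the non-intersection property forces $\mathfrak F$ to have infinitely many connected components.

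For the main dichotomy, write $I(u,v):=\P(\{X_n\}_{n\geq 0}\cap \{Y_n\}_{n\geq 0}\text{ infinite}\mid G)$, where $X,Y$ are independent simple random walks from $u,v$. The events $\mathcal I:=\{I\equiv 1\}$ and $\mathcal N:=\{I\equiv 0\}$ on $V(G)\times V(G)$ are rerooting-invariant, so by ergodicity each has probability in $\{0,1\}$; it suffices to exclude $\P(\mathcal I\cup\mathcal N)=0$. The core step is a zero-one law: $I(\rho,\rho)\in\{0,1\}$ almost surely. To obtain it, I pass to $\Prev$ and extend a walk from $\rho$ to a bi-infinite stationary path $X=(X_n)_{n\in\Z}$ via \cref{Sec:reverse}. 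By time-reversibility of the simple random walk on any graph, the two halves $X^+=(X_n)_{n\geq 0}$ and $X^-=(X_{-n})_{n\geq 0}$ are, conditionally on $(G,\rho)$, two independent simple random walks from $\rho$. The event
\[
A:=\bigl\{\{X_n:n\geq 0\}\cap\{X_{-n}:n\geq 0\}\text{ is infinite}\bigr\}
\]
is invariant under the bi-infinite time-shift, and the stationary process $(G,X_n)_{n\in\Z}$ is shift-ergodic as a consequence of ergodicity of $(G,\rho)$ \cite{BC2011}. Hence $\P_{\Prev}(A)\in\{0,1\}$, and since $\P_{\Prev}(A)=\E_{\Prev}[I(\rho,\rho)]$, we conclude $I(\rho,\rho)\in\{0,1\}$ almost surely.

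If $I(\rho,\rho)=0$ a.s., then for all $u,v\in V(G)$, since $G$ is almost surely connected, walks from $\rho$ hit $u$ and $v$ with positive probability, and the strong Markov property yields the bound $I(\rho,\rho)\geq\P_\rho(\text{hit }u)\cdot\P_\rho(\text{hit }v)\cdot I(u,v)$, so $I(u,v)=0$ for all $u,v$ and the non-intersection property holds. If $I(\rho,\rho)=1$ a.s., the symmetric hitting argument gives $I(u,v)>0$ for all $u,v$, and it remains to upgrade this to $I(u,v)=1$. For the upgrade I apply the mass-transport principle to the asymmetric transport $f(u,v)=\mathbbm 1[I(u,u)\notin\{0,1\}]$, which, by the diagonal zero-one law, vanishes at $\rho$ almost surely. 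The mass-transport principle then yields $\E|\{v:I(v,v)\notin\{0,1\}\}|=0$, hence $I(v,v)\in\{0,1\}$ for every $v$, and combined with positivity this gives $I(v,v)=1$ for every $v$. A further combination of mass transport with hitting-time arguments then extends the identity to $I(u,v)=1$ for every pair $(u,v)$, establishing the intersection property.

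The main obstacle is this final step: transferring the zero-one law from the diagonal pair $(\rho,\rho)$ to all pairs $(u,v)$ cannot be done by a single bi-infinite-walk argument, since that argument is adapted to a walk centered at one vertex. It requires a careful combination of the mass-transport principle with the strong-Markov hitting arguments used above, and the technical heart of the lemma lies in propagating a diagonal dichotomy to an off-diagonal dichotomy while exploiting only local data available to a mass transport.
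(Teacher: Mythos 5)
Your core argument is the same as the paper's: pass to the degree-biased reversible law, realise two independent walks from $\rho$ as the two halves of the stationary bi-infinite walk, note that infinite intersection of the two traces is a shift-invariant event, and invoke ergodicity of the shift to get the zero--one law; the WUSF statement is then read off from the quoted BLPS theorem, exactly as in the paper. Your handling of the non-intersection case, via the bound $I(\rho,\rho)\geq \P_\rho(\text{hit }u)\,\P_\rho(\text{hit }v)\,I(u,v)$ and connectivity, is correct.

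The gap is the last step of the intersection case. Having shown $I(v,v)=1$ for every $v$ and $I(u,v)>0$ for every pair, you assert that ``a further combination of mass transport with hitting-time arguments'' gives $I(u,v)=1$, and you yourself flag this as unproved. As it stands this is a genuine gap, and the tools you name will not close it: strong-Markov hitting estimates only produce lower bounds of the form $I(u,v)\geq \P_u(\text{hit }w)\P_v(\text{hit }w')I(w,w')$, which cannot yield the value $1$ unless the hitting probabilities equal $1$, and the mass-transport principle only controls what happens at the root, not at a fixed off-diagonal pair. The missing step is in fact elementary and needs no unimodularity. Deleting the first step of either walk changes its trace by at most one vertex and so does not affect whether the trace intersection is infinite; hence, by the Markov property, $I$ is harmonic in each coordinate separately, i.e.\ $I(u,v)=\sum_{u'}p(u,u')I(u',v)$ and likewise in the second variable. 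Since $I\leq 1$ and $I(v,v)=1$, the maximum principle on the connected graph forces $I(\cdot,v)\equiv 1$, and then $I(u,\cdot)\equiv 1$, giving the intersection property; the same one-coordinate harmonicity also streamlines your $I(\rho,\rho)=0$ case. With this substitution your proof is complete and coincides with the paper's, whose written proof consists precisely of the zero--one law you establish and treats this root-to-all-pairs transfer as standard.
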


\begin{proof}
  By biasing by the degree we may assume that $(G,\rho)$ is reversible.
  We may assume also that $(G,\rho)$ is ergodic, otherwise we take an
  ergodic decomposition.  Let $(X_n)_{n\geq0}$ and $(X_{-n})_{n\geq0}$
  be independent random walks on $G$ started at $\rho$.  Then the event
  that the traces of $(X_n)_{n\geq0}$ and $(X_{-n})_{n\geq0}$ have
  infinite intersection is an invariant event for the stationary
  sequence $\left( G,(X_{n+k})_{n\in \Z} \right)_{k\in\Z}$ and therefore
  has probability either zero or one by ergodicity (see \cite[\S 4]{AL07} and \cite[Theorem 3.1]{AHNR15}).
\end{proof}


\medskip

Recall that a function $h:V\to\R$ defined on the vertex set of a graph
$G=(V,E)$ is said to be \textbf{harmonic}
if
\[
  h(v)=\frac{1}{\deg(v)}\sum_{u\sim v}h(u)
\]
for every vertex $v$ of $G$, or equivalently if $(h(X_n))_{n\geq0}$ is a
martingale when $(X_n)_{n\geq0}$ is a random walk on $G$.  A graph is
said to be \textbf{Liouville} if it does not admit any non-constant
bounded harmonic functions, and \textbf{non-Liouville} otherwise.
The following well-known proposition follows from the martingale convergence
theorem, see \cite[Exercise 14.28]{LP:book}.

\begin{prop}[\cref{thm:dichotomy}, \eqref{iIntersect} implies \eqref{iLiouville}]
  \label{P:inttoliou}
  Let $G$ be a connected graph.  If $G$ has the intersection property,
  then $G$ is Liouville.
\end{prop}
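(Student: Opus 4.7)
The plan is to apply the martingale convergence theorem to $h(X_n)$, where $X$ is simple random walk on $G$, and then use the intersection property to pin down the almost sure limit.

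First, for each vertex $v$, let $X^v = (X^v_n)_{n \geq 0}$ denote a simple random walk started at $v$, and let $h$ be any bounded harmonic function on $G$. Since $h$ is harmonic, $(h(X^v_n))_{n\geq 0}$ is a bounded martingale, so by the martingale convergence theorem it converges almost surely to a bounded random variable $L_v$. By bounded convergence and the martingale property,
\[
h(v) = \E[h(X^v_0)] = \E[L_v].
\]
The task reduces to showing that $L_v$ is almost surely equal to a constant that does not depend on $v$.

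Next, I would use the intersection property to show that $L_v$ is almost surely constant. Let $X$ and $Y$ be two \emph{independent} simple random walks started at the same vertex $v$, with corresponding martingale limits $L^X$ and $L^Y$. By the intersection property, almost surely there exist sequences $m_k, n_k \to \infty$ with $X_{m_k} = Y_{n_k}$, and therefore $h(X_{m_k}) = h(Y_{n_k})$. Passing to the limit, $L^X = L^Y$ almost surely. Since $X$ and $Y$ are independent, the random variables $L^X$ and $L^Y$ are independent, and two independent random variables that are almost surely equal must be almost surely equal to a deterministic constant $c_v$. Hence $L_v = c_v$ a.s., and in particular $h(v) = c_v$.

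Finally, to conclude that $h$ is constant, apply the same argument with $X$ started at $u$ and $Y$ independently started at $v$: the intersection property (which holds for walks from any pair of starting vertices) again forces $L_u = L_v$ almost surely, hence $c_u = c_v$. Therefore $h(u) = h(v)$ for every pair of vertices, so $h$ is constant. There is no real obstacle here; the argument is the classical one, and the only subtlety is making sure one uses the intersection property for walks from two possibly distinct starting vertices in the last step.
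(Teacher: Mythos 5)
Your argument is correct and is exactly the classical martingale-convergence proof that the paper invokes (the paper does not write it out, but simply cites it as a well-known exercise). The only micro-detail worth making explicit is that the intersection property gives infinitely many \emph{distinct common vertices} of the two traces, and the time sequences $m_k,n_k\to\infty$ are then extracted because a walk visits only finitely many distinct vertices by any finite time; with that remark, your proof is complete, and in fact your final two-starting-point step already yields $h(u)=\E[L^X]=\E[L^Y]=h(v)$ directly, so the intermediate constancy-of-$L_v$ argument is not even needed.
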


The converse of \cref{P:inttoliou} does not hold for general graphs.
For example, $\Z^d$ is Liouville for all $d\geq 1$ but has the
intersection property only for $d\leq 4$.  However, Benjamini, Curien
and Georgakopoulos \cite{BCG12} proved that the converse does hold for
planar graphs.

\begin{thm}[\cite{BCG12}; \cref{thm:dichotomy}, \eqref{iLiouville}
  implies \eqref{iIntersect}]
  Let $G$ be a planar graph.  Then $G$ is Liouville if and only if it
  has the intersection property.
\end{thm}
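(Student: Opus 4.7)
The forward direction \eqref{iIntersect}$\Rightarrow$\eqref{iLiouville} is \cref{P:inttoliou}, so the content of the theorem is the converse implication, to which I restrict attention. The plan is to prove the contrapositive: assuming $G$ is a planar graph that fails the intersection property, I will construct a non-constant bounded harmonic function on $G$ using planar topology to define a ``side'' label for pairs of disjoint transient trajectories. Since every recurrent graph has the intersection property, I may assume that $G$ is transient.

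Fix a proper planar embedding of $G$. By assumption, there exist vertices $u$ and $v$ such that two independent simple random walks $X$ from $u$ and $Y$ from $v$ have finite intersection with positive probability, and a $0$-$1$ argument along the lines of \cref{lem:intersection} promotes this to almost sure finite intersection. After the (almost surely finite) last mutual visit, the tails of $X$ and $Y$ are two disjoint sequences of edges in the plane, each escaping every compact set. The key topological input, proved via a careful application of the Jordan curve theorem to such pairs of ``embedded rays to infinity'', is that one may assign a $\{\text{left},\text{right}\}$-valued label $L(X,Y)$ encoding on which side of $Y$ the tail of $X$ ultimately lies, in a way that is a tail-measurable function of the pair $(X,Y)$.

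Having established this labeling, fix a reference vertex $o$ and let $Y$ denote an independent random walk from $o$. Define a function on vertices by
\[
h(w) \;=\; \mathbf{P}\bigl(L(X^w,Y)=\text{left}\bigr),
\]
where $X^w$ is an independent simple random walk from $w$. The values lie in $[0,1]$, so $h$ is bounded. Because $L$ is determined by the tails of the walks, conditioning on the first step of $X^w$ and applying the Markov property yields the mean-value identity $h(w)=\tfrac{1}{\deg(w)}\sum_{w'\sim w}h(w')$, so $h$ is harmonic. Symmetry between swapping the two walks gives $h(o)=\tfrac12$. To show $h$ is not identically $\tfrac12$, I would leverage the non-intersection assumption: two independent transient walks from $o$ almost surely separate and the tail $\sigma$-algebra of the joint walk carries nontrivial information, so one can locate a vertex $w$ such that walks from $w$ lie preferentially on one side of a typical trajectory from $o$, forcing $h(w)\ne\tfrac12$.

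The principal obstacle is the topological step: making $L$ a \emph{deterministic, tail-measurable} function of the two infinite trajectories in the embedded graph. One has to rule out degenerate configurations such as infinitely many mutual windings or boundaries that fail to be Jordan arcs, and must verify stability under adding or removing finite prefixes of either walk so that the Markov identity used in the harmonicity calculation is preserved. This is the technical core of the argument in \cite{BCG12} and is precisely where the planarity of $G$ enters; the definition of $h$ and the verification of its properties are then routine.
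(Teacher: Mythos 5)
The paper does not actually prove this statement: it is quoted from \cite{BCG12}, so there is no internal proof to compare against, and your proposal has to stand on its own. As written it has gaps beyond the topological step you explicitly defer. First, the promotion of ``finite intersection with positive probability'' to ``almost sure finite intersection'' via ``a $0$--$1$ argument along the lines of \cref{lem:intersection}'' is not available: that lemma is about \emph{unimodular} random rooted graphs and its proof uses reversibility and stationarity of the environment seen from the walk; for a fixed deterministic planar graph no such zero--one law is known (and none is needed -- the argument must simply be organised to work on an event of positive probability). Second, the labelling $L(X,Y)$ as ``which side of $Y$ the tail of $X$ lies on'' is not well posed: after the last intersection the tail of $Y$ is a single ray, and a ray does not separate the plane. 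The separating object is the \emph{union} of the two tails (a double ray through the last intersection point), and what it naturally labels is the escape side of a \emph{third} trajectory relative to the pair, not a left/right attribute of the pair $(X,Y)$ itself. This is not merely the deferred Jordan-curve technicality; it changes what the harmonic function can be a function of, and hence how harmonicity and non-constancy are argued.

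The more serious gap is non-constancy. Your candidate $h(w)=\P\bigl(L(X^w,Y)=\text{left}\bigr)$ averages over the reference walk $Y$ as well, and this averaging can wash out exactly the information you need: for instance, on a regular tree properly embedded in the plane (a planar, non-Liouville graph without the intersection property), for any neighbour $w$ of $o$ there is an orientation-reversing automorphism of the embedded tree fixing the edge $ow$ and preserving the law of the walks while exchanging left and right, so $h(w)=\tfrac12$; nothing in your sketch rules out $h\equiv\tfrac12$ in general, and no mechanism is offered for locating a vertex with $h(w)\neq\tfrac12$. The argument of \cite{BCG12} is structured differently precisely to avoid this: one conditions on the reference trajectory (or pair of trajectories), so that for a.e.\ realisation the conditional side-probability $w\mapsto\P\bigl(\cdot\mid Y\bigr)$ is a bounded harmonic function; assuming $G$ is Liouville forces all of these to be a.s.\ constant in $w$, and an exchange/symmetry argument (using that ``$X$ on one side of the pair'' determines the complementary event for the other walk) then contradicts the positive probability of finite intersection. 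Without this conditioning-and-contradiction step, or some substitute argument producing a provably non-constant bounded harmonic function, the proposal does not establish \eqref{iLiouville}$\Rightarrow$\eqref{iIntersect}.
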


The \textbf{Dirichlet energy} of a function $f: V \to \R$ defined on the
vertex set of $G$ is defined to be
\[
  \cE(f) = \frac{1}{2}\sum_{e\in
    E^\rightarrow}\left(f(e^-)-f(e^+)\right)^2.
\]
The following is classical; see \cite[Exercise 9.43]{LP:book}.

\begin{prop}[\cref{thm:dichotomy}, \eqref{iLiouville} implies \eqref{iDirichelet}]
  Let $G$ be a connected graph.  Then the bounded harmonic functions of
  finite Dirichlet energy are dense in the space of harmonic functions
  of finite Dirichlet energy with norm $\|f\|^2 =\cE(f)$.  In
  particular, if $G$ admits a non-constant harmonic function of finite
  Dirichlet energy, then $G$ admits a bounded non-constant harmonic
  function of finite Dirichlet energy.
\end{prop}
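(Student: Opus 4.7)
The plan is to work in the Hilbert space $\mathbf{D}$ of functions of finite Dirichlet energy on $G$ modulo additive constants, with inner product $\langle f,g\rangle_\cE = \tfrac{1}{2}\sum_{e\in E^\rightarrow}(f(e^+)-f(e^-))(g(e^+)-g(e^-))$. A summation by parts shows that $h\in\mathbf{D}$ is harmonic if and only if $\langle h,\phi\rangle_\cE = 0$ for every finitely supported $\phi$, so the subspace $\mathbf{HD}\subseteq\mathbf{D}$ of harmonic functions of finite Dirichlet energy is closed, and $\mathbf{D}$ decomposes orthogonally as $\mathbf{HD}\oplus\mathbf{D}_0$, where $\mathbf{D}_0$ is the $\cE$-closure of the finitely supported functions. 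Let $P$ denote the orthogonal projection onto $\mathbf{HD}$.

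The key ingredient will be that $P$ does not increase the sup norm: if $\|f\|_\infty\le M$ then $\|Pf\|_\infty\le M$. To prove this, fix an exhaustion $\langle V_n\rangle$ of $G$ and let $u_n$ be the function that agrees with $f$ off $V_n$ and is harmonic at every vertex of $V_n$. The maximum principle gives $\|u_n\|_\infty\le\|f\|_\infty$. Since $u_n - f$ has finite support it lies in $\mathbf{D}_0$, and a standard Dirichlet-principle argument (using $f = Pf + f_0$ and the fact that $u_n$ minimises $\cE$ over functions agreeing with $f$ on $G\setminus V_n$) shows $u_n\to Pf$ pointwise and in $\cE$-norm as $n\to\infty$, yielding the sup-norm bound.

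Given $h\in\mathbf{HD}$, set $h_M = (h\wedge M)\vee(-M)$, so $\|h_M\|_\infty\le M$. Both $h_M$ and $h-h_M$ are $1$-Lipschitz functions of $h$ (the latter is the signed distance to $[-M,M]$), so for every oriented edge $e$,
\[
|(h-h_M)(e^+)-(h-h_M)(e^-)|^2 \;\le\; |h(e^+)-h(e^-)|^2,
\]
the right-hand side being edge-summable. Since $h(x)-h_M(x)=0$ whenever $M>|h(x)|$, the left-hand side tends to $0$ for each $e$, and dominated convergence on $E^\rightarrow$ gives $\cE(h-h_M)\to 0$. Define $\tilde h_M := P h_M\in\mathbf{HD}$. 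By the previous paragraph $\|\tilde h_M\|_\infty\le M$, and since $Ph = h$ and $P$ is a contraction,
\[
\|\tilde h_M - h\|_\cE = \|P(h_M - h)\|_\cE \le \|h_M - h\|_\cE \xrightarrow[M\to\infty]{} 0.
\]
This proves the density statement. The ``in particular'' clause is then immediate: if $h$ is non-constant then $\cE(h)>0$ (as $G$ is connected), so $\cE(\tilde h_M)>0$ for $M$ large enough, and such a $\tilde h_M$ is a non-constant bounded harmonic function of finite Dirichlet energy.

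The main obstacle is the sup-norm bound $\|Pf\|_\infty\le\|f\|_\infty$, which is not a formal consequence of the Hilbert-space projection: it requires identifying $P$ concretely with the limit of harmonic extensions along an exhaustion and invoking the maximum principle. Everything else is a clean truncate-and-project argument combined with dominated convergence on the edge set.
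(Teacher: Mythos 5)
Your argument is correct, and it is exactly the classical proof the paper has in mind: the paper gives no proof of this proposition, deferring to \cite[Exercise 9.43]{LP:book}, and the standard solution there is precisely your truncate-and-project scheme in the Royden decomposition $\mathbf{D}=\mathbf{HD}\oplus\mathbf{D}_0$, with the maximum principle along an exhaustion giving the sup-norm bound on the harmonic projection and dominated convergence giving $\cE(h-h_M)\to 0$. The only step you gloss is the pointwise convergence $u_n\to Pf$ (one either pins constants using transience, via $|v(x)|^2\le \Reff(x\leftrightarrow\infty)\,\cE(v)$ for $v\in\mathbf{D}_0$, or extracts a bounded subsequential pointwise limit and identifies it with $Pf$ by lower semicontinuity; the recurrent case is trivial since then $\mathbf{HD}$ consists of constants), which is standard and does not affect correctness.
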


Benjamini, Lyons, Peres and Schramm \cite{BLPS} also relate analytic
properties of $G$ to the WUSF and FUSF of $G$.

\begin{thm}[\cite{BLPS}; \cref{thm:dichotomy}, equivalence of items
  \eqref{iUSF} \eqref{iDirichelet}]
  Let $G$ be an infinite connected graph.  Then the measures $\FUSF_G$
  and $\WUSF_G$ are distinct if and only if $G$ admits non-constant harmonic
  functions of finite Dirichlet energy.
\end{thm}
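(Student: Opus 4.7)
The plan is to work in the Hilbert space $\ell^2_-(E)$ of antisymmetric, square-integrable edge functions equipped with the Dirichlet inner product $\langle f,g\rangle = \tfrac12\sum_{e\in E^\rightarrow}f(e)g(e)$. Three subspaces play the central role: the star space $\bigstar$, defined as the closed linear span of the \emph{stars} $\chi_v$ (the antisymmetric function equal to $1$ on oriented edges out of $v$ and $-1$ on edges into $v$); its subspace $\bigstar_F$, the closure of finitely-supported linear combinations of stars; and the cycle space $\diamondsuit$, the closure of the span of finite oriented cycles. A direct computation shows $\langle f,\chi_v\rangle=\sum_{e:e^-=v}f(e)$, so $\diamondsuit=\bigstar_F^{\perp}$ and consequently $\ell^2_-(E)=\bigstar_F\oplus\diamondsuit$.

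First I would invoke the transfer current representation of the uniform spanning forests, due to Burton--Pemantle in the finite case and extended by BLPS to arbitrary infinite graphs: both $\WUSF_G$ and $\FUSF_G$ are determinantal point processes on the edge set, with kernels given respectively by the \emph{wired} and \emph{free transfer currents} $I^W$ and $I^F$, and these are precisely the orthogonal projections onto $\bigstar_F$ and onto $\bigstar$ in $\ell^2_-(E)$. Since a determinantal measure is determined by its kernel, we have $\WUSF_G=\FUSF_G$ if and only if $I^W=I^F$, which happens if and only if $\bigstar_F=\bigstar$.

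Next I would identify the quotient $\bigstar\ominus\bigstar_F$ with the space of nonconstant harmonic functions of finite Dirichlet energy. The gradient map $h\mapsto \nabla h$, with $\nabla h(e)=h(e^+)-h(e^-)$, sends a function $h:V\to\R$ of finite Dirichlet energy to an element of $\ell^2_-(E)$, and a direct calculation gives $\langle \nabla h,\chi_v\rangle = -\sum_{u\sim v}(h(u)-h(v))$, which vanishes for all $v$ exactly when $h$ is harmonic. Thus $\nabla h \in \diamondsuit=\bigstar_F^{\perp}$ iff $h$ is harmonic, while $\nabla h\in\bigstar$ always (the closed span of stars contains every gradient of a Dirichlet-finite function, since finitely-supported functions are Dirichlet-dense in the full Dirichlet space). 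Combining these, $\nabla h\in \bigstar\cap \bigstar_F^\perp=\bigstar\ominus\bigstar_F$ iff $h$ is harmonic of finite Dirichlet energy, and the gradient map is injective modulo constants. Therefore $\bigstar=\bigstar_F$ iff every harmonic function of finite Dirichlet energy is constant, completing the equivalence.

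The main technical obstacle is the determinantal/transfer-current representation in the infinite-volume setting: one has to verify that the free and wired exhaustion limits exist and that the associated finite-volume transfer currents converge in the appropriate sense to $I^F$ and $I^W$, with the convergence strong enough to pass through all edge-inclusion probabilities. Since the statement is invoked here as a citation from \cite{BLPS}, I would simply refer to that work for this step; everything else reduces to the Hilbert-space geometry outlined above.
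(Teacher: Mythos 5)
The theorem you are proving is only cited in the paper (to \cite{BLPS}), so the comparison is with the standard BLPS argument, which is indeed the route you are attempting; however, your write-up has a genuine gap that makes the argument circular. First, as you define them, $\bigstar$ (the closed linear span of the stars) and $\bigstar_F$ (the closure of finitely supported linear combinations of stars) are literally the same subspace, since a closed linear span \emph{is} the closure of the set of finite linear combinations; so the dichotomy "WUSF kernel $=$ projection onto $\bigstar_F$, FUSF kernel $=$ projection onto $\bigstar$" is vacuous as written. The correct identification is that $\WUSF_G$ corresponds to the projection onto $\bigstar$ (equivalently, the closure of the gradients of finitely supported functions), while $\FUSF_G$ corresponds to the projection onto $\diamondsuit^\perp$, the space of \emph{all} gradients of Dirichlet-energy-finite functions. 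Second, and more seriously, your claimed identity $\diamondsuit=\bigstar_F^\perp$, i.e.\ $\ell^2_-(E)=\bigstar_F\oplus\diamondsuit$, is not a general fact: the computation $\langle f,\chi_v\rangle=\sum_{e:e^-=v}f(e)$ only shows that stars and cycles are orthogonal. In general one has the Hodge-type decomposition $\ell^2_-(E)=\bigstar\oplus\diamondsuit\oplus\nabla\mathbf{HD}$, where $\nabla\mathbf{HD}$ is the space of gradients of harmonic Dirichlet functions, and the identity $\bigstar^\perp=\diamondsuit$ holds \emph{if and only if} there are no nonconstant harmonic Dirichlet functions --- which is exactly the statement to be characterized. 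The same circularity appears in your claim that finitely supported functions are Dirichlet-dense, so that every Dirichlet gradient lies in $\bigstar$: this fails, for instance, on the $3$-regular tree, where $\diamondsuit=\{0\}$ yet $\bigstar\neq\ell^2_-(E)$, and its failure is measured precisely by $\nabla\mathbf{HD}$.

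The repair is to keep the third summand explicit: show $\bigstar\subseteq\diamondsuit^\perp$ and that $\diamondsuit^\perp\ominus\bigstar$ is canonically isomorphic to the nonconstant harmonic Dirichlet functions modulo constants (using that any Dirichlet function splits into a limit of finitely supported functions plus a harmonic part), and then conclude that the two kernels agree if and only if $\nabla\mathbf{HD}=\{0\}$. Alternatively, you can avoid invoking the full determinantal structure: since $\FUSF_G$ stochastically dominates $\WUSF_G$, the two measures coincide if and only if $\FUSF_G(e\in\F)=\WUSF_G(e\in\F)$ for every edge $e$, and the transfer current theorem for single edges gives $\FUSF_G(e\in\F)-\WUSF_G(e\in\F)=\bigl\|P_{\nabla\mathbf{HD}}\chi_e\bigr\|^2$, which vanishes for all $e$ exactly when there are no nonconstant harmonic Dirichlet functions. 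Either way, the heart of the proof is the three-term decomposition, not the two-term one you asserted.
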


\medskip

In general, the FUSF is far less understood as the WUSF: no criterion
for its connectivity is known, nor is it known whether the number of
components of the FUSF is non-random in every graph.  However, for
simply connected planar maps, the FUSF is relatively well understood
thanks to the following duality: Given a map $M$ and a set $W\subset E$,
let $W^\dagger:=\{e^\dagger \in E^\dagger : e \notin W\}$ be the set of
dual edges whose corresponding primal edges are not contained in
$W$. Observe that if $t$ is a spanning tree of a finite planar map $M$,
then the dual $t^\dagger$ is a spanning tree of $M^\dagger$ --- it is
connected because $t$ has no cycles, and has no cycles because $t$ is
connected.  This observation leads to the following.

\begin{prop}[USF Duality {\cite[Theorem~12.2]{BLPS}}]
  \label{prop:USFduality}
  Let $M$ be a simply connected map with locally finite dual $M^\dagger$
  and let $\F$ be a random variable with law $\FUSF_M$.  Then
  $\F^\dagger$ has law $\WUSF_{M^\dagger}$.
\end{prop}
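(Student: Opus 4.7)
The plan is to reduce to the classical duality of spanning trees on finite planar multigraphs by taking an appropriate exhaustion limit. Recall first the finite case: for any finite connected planar multigraph $G$ with dual $G^\dagger$, the map $T \mapsto T^\dagger := \{e^\dagger : e \in E(G),\, e \notin T\}$ is a bijection between spanning trees of $G$ and spanning trees of $G^\dagger$, since a spanning tree of $G$ is exactly the complement of a spanning tree of $G^\dagger$. Consequently, if $T \sim \UST_G$ then $T^\dagger \sim \UST_{G^\dagger}$.

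To pass to the infinite setting, I would use the simple connectivity of $M$ to extract an exhaustion $(M_n)_{n \ge 1}$ by finite connected submaps which are themselves simply connected, meaning that the complement of $M_n$ in the ambient sphere is connected. Such an exhaustion can be obtained by starting from an exhaustion by balls and ``filling in'' any bounded complementary components, using simple connectivity of $M$. The key geometric observation is that each such $M_n$ has exactly one \emph{external} face in the sphere; the remaining, \emph{internal}, faces of $M_n$ are precisely those faces of $M$ all of whose boundary edges lie in $M_n$. Therefore $M_n^\dagger$ coincides with the graph obtained from $M^\dagger$ by identifying into a single vertex $\partial_n$ all vertices of $M^\dagger$ that are not internal faces of $M_n$ and deleting the resulting self-loops. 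This is exactly the wiring operation used to define $\WUSF_{M^\dagger}$, and the assumption that $M^\dagger$ is locally finite guarantees that the set of internal faces of $M_n$ exhausts $V(M^\dagger)$ as $n \to \infty$.

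To finish, fix any finite edge set $A \subset E(M)$. For all sufficiently large $n$, $A \subset E(M_n)$, and the finite duality gives
\[
\UST_{M_n}(A \cap T = \emptyset) = \UST_{M_n^\dagger}(A^\dagger \subset T^\dagger).
\]
The left side tends to $\FUSF_M(A \cap \F = \emptyset)$ by the definition of the free uniform spanning forest, while the right side tends to $\WUSF_{M^\dagger}(A^\dagger \subset \F^\dagger)$, since the sequence $(M_n^\dagger)_{n \ge 1}$ is precisely the wired exhaustion of $M^\dagger$ associated to the internal faces of $M_n$. Events of the form $\{A^\dagger \subset \F^\dagger\}$ for finite $A^\dagger$ determine the law of a random spanning forest, so $\F^\dagger$ has law $\WUSF_{M^\dagger}$.

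The main obstacle will be the topological bookkeeping in the second step: constructing exhaustions by simply connected submaps and verifying that their planar duals really do coincide with the wired subgraphs used to define $\WUSF_{M^\dagger}$. One also needs to handle the case where $M$ is embedded in the plane rather than the sphere, which can be reduced to the spherical case by one-point compactification with the added point placed inside the external face of every $M_n$. Once these identifications are pinned down, the rest of the argument is a routine passage to the weak limit.
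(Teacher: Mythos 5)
Your argument is correct and follows essentially the same route as the paper's source: the paper does not reprove this proposition but cites it as \cite[Theorem~12.2]{BLPS}, recording only the finite-planar observation that the complementary dual of a spanning tree is a spanning tree of the dual map --- exactly the bijection your proof turns into a limit statement by choosing an exhaustion of $M$ by finite simply connected submaps $M_n$ whose duals (after discarding self-loops at the outer face, which are dual to bridges and hence irrelevant) are precisely the wired graphs used to define $\WUSF_{M^\dagger}$. The topological bookkeeping you flag is indeed the only real work (e.g.\ that local finiteness of $M^\dagger$ together with properness forces every face of $M$ to be bounded, so the outer face of $M_n$ is the unique non-$M$ face, and that the internal faces exhaust $V(M^\dagger)$), and it goes through as you outline.
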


In general, if $M$ is an infinite simply connected map and $\F$ is an
essential spanning forest of $M$ (that is, a spanning forest such that
every component is infinite), then $\F^\dagger$ is an essential spanning
forest of $M^\dagger$ --- it is a forest because every component of $\F$
is infinite, and is essential because $\F$ has no cycles.  Moreover, the
forest $\F$ is connected if and only if every component of $\F^\dagger$
is one-ended.  Furthermore, a combination of results from Aldous and
Lyons \cite{AL07} yields the following.

\begin{prop}[\cref{thm:dichotomy}, \eqref{iWUSFConn} implies \eqref{iAmen}]
  Let $(G,\rho)$ be a unimodular random rooted graph with
  $\E[\deg(\rho)]<\infty$ and let $\F$ be a sample of $\WUSF_G$.  If
  $\F$ is connected almost surely, then $(G,\rho)$ is invariantly amenable.
\end{prop}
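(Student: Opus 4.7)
The plan is to reduce the problem to a statement about $(\F,\rho)$ via coupling equivalence. Since $\F$ is a.s.\ an acyclic spanning subgraph of $G$ and is a.s.\ connected, it is a.s.\ a spanning tree of $G$, and in particular a connected bond percolation on $G$. By the Example preceding \cref{lem:hypcoupling}, $(G,\rho)$ and $(\F,\rho)$ are coupling equivalent. Combining \cref{lem:hypcoupling} with \cref{prop:invexh}, it therefore suffices to show that the unimodular random rooted tree $(\F,\rho)$ is hyperfinite.

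By the recalled \emph{expected degree of the WUSF} property, $\E[\deg_\F(\rho)] = 2$. Passing to the ergodic decomposition of the marked graph $(G,\rho,\F)$, we may assume $(\F,\rho)$ is ergodic. If $\F$ is a.s.\ finite it is trivially hyperfinite, so assume $\F$ is a.s.\ infinite, and hence by \cref{prop:endsamen} has $1$, $2$, or $\infty$ ends a.s. The infinite-end case would make $(\F,\rho)$ invariantly nonamenable, contradicting $\E[\deg_\F(\rho)] = 2$: indeed, a standard mass-transport argument shows that any unimodular random rooted tree with $\E[\deg(\rho)] = 2$ has at most two ends a.s.\ (cf.\ \cite[Theorem 8.9]{AL07}). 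In the two-ended case, the density of branch vertices must vanish by a similar mass transport, so $\F \cong \Z$ a.s., which is clearly hyperfinite.

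This leaves the one-ended case, which is the main step. Here every edge $e \in \F$ separates $\F$ into a unique finite component $P(e)$ (the \emph{past} of $e$) and an infinite one. Define the bond percolation $\omega_n = \{e \in \F : |P(e)| \leq n\}$ on $\F$; since this is a measurable function of the isomorphism class of $(\F,\rho)$, the marked tree $(\F,\rho,\omega_n)$ is unimodular by \cref{lem:unimodstability}. The sequence $(\omega_n)_{n \geq 1}$ is increasing with union $\F$. Each cluster of $\omega_n$ is finite: along any sequence of edges $e_0, e_1, \ldots$ in the cluster oriented toward the unique end, one has the strict nesting $P(e_0) \subsetneq P(e_1) \subsetneq \cdots$, so $|P(e_i)| \geq |P(e_0)| + i$, forcing the chain to terminate after at most $n$ steps, and the full cluster is contained in $P$ of its top edge. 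Thus $(\omega_n)_{n\geq 1}$ is a finitary exhaustion of $(\F,\rho)$, establishing hyperfiniteness and completing the proof.

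The main obstacle is the classification of the end structure of unimodular random rooted trees with $\E[\deg(\rho)] = 2$, in particular the mass transport ruling out the infinite-end case; given this (which is standard via \cite{AL07}), the finitary exhaustion in the one-ended case is then a direct combinatorial construction.
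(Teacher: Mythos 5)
Your argument follows essentially the same route as the paper's proof, which consists of three citations: by \cite[Proposition 7.3]{AL07} the expected degree of $\F$ is $2$, by \cite[Theorem 6.2]{AL07} this forces the connected forest $\F$ to have at most two ends, and \cite[Theorem 8.9]{AL07} then yields invariant amenability of $(G,\rho)$. You reprove the content of that last step by hand: transferring hyperfiniteness through the coupling equivalence of $(G,\rho)$ and $(\F,\rho)$ via \cref{lem:hypcoupling} and \cref{prop:invexh}, and exhibiting an explicit finitary exhaustion of a one-ended tree by the percolations $\omega_n=\{e:|P(e)|\le n\}$ (your cluster-finiteness argument is correct). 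The fact you leave as standard, that expected degree $2$ forces at most two ends, is exactly what the paper outsources to \cite[Theorem 6.2]{AL07} (your citation of Theorem 8.9 for it is off, but the claim is the right one).

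Two small repairs are needed. First, in the two-ended case your assertion that $\F\cong\Z$ almost surely is false: $\Z$ with a pendant leaf attached to every vertex is a two-ended unimodular tree of expected degree $2$, so the density of degree-$\ge 3$ vertices need not vanish. This is harmless, because \cref{prop:endsamen}, which you already invoke, says directly that a two-ended unimodular random rooted graph with finite expected degree is invariantly amenable, hence hyperfinite by \cref{prop:invexh}. Second, passing to the ergodic decomposition of the \emph{marked} graph $(G,\rho,\F)$ does not obviously preserve $\E[\deg_\F(\rho)]=2$: on an ergodic component, $\F$ is merely some invariant connected spanning tree, and such trees can have expected degree strictly larger than $2$ (think of a fixed $3$-regular tree, which is its own spanning tree); only the average over components is forced to be $2$. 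Either decompose $(G,\rho)$ instead --- since the WUSF is a measurable function of the graph, the conditional law of $\F$ given an ergodic component of $(G,\rho)$ is still the WUSF of that component and so still has expected degree $2$ --- or note that every infinite unimodular random tree has expected degree at least $2$, so an overall average of $2$ forces almost every component to have expected degree exactly $2$. With these adjustments your proof is complete and is a self-contained substitute for the paper's appeal to \cite[Theorems 6.2 and 8.9]{AL07}.
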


\begin{proof}
  By Proposition~7.3 of \cite{AL07}, the average degree of $\F$ is $2$.
  By Theorem~6.2 there, this implies that $\F$ has at most two ends.
  Since $\F$ is connected, Theorem~8.9 of \cite{AL07} implies that
  $(G,\rho)$ is invariantly amenable.
\end{proof}

\subsection{Connectivity and degrees in the FUSF}

The first main result of this section relates the average curvature and
the expected degree of the FUSF in a simply connected unimodular random
rooted map.

\begin{thm}\label{thm:forestformula}
  Let $(M,\rho)$ be an infinite, simply connected unimodular random map.
  Suppose that $\E[\deg(\rho)]<\infty$, and let $\F$ be a sample of
  $\FUSF_M$. Then
  \begin{equation}\label{eq:FUSFformula}
    \E[\deg_{\F}(\rho)]=\frac{1}{\pi}\E[\theta(\rho)] =
    2-\frac{1}{\pi}\bbK(M,\rho).
  \end{equation}
\end{thm}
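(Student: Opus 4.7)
The second equality in \eqref{eq:FUSFformula} is immediate from the definition $\bbK(M,\rho) = 2\pi - \E[\theta(\rho)]$, so the content of the theorem is the identity $\E[\deg_\F(\rho)] = \E[\theta(\rho)]/\pi$. My plan is to dualize and combine two ingredients: USF duality (\cref{prop:USFduality}), which identifies $\F^\dagger$ with $\WUSF_{M^\dagger}$, and the Aldous--Lyons fact that the expected degree of the WUSF-root in any unimodular random rooted graph equals~$2$.

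I would first treat the case in which $M^\dagger$ is locally finite almost surely (so every face of $M$ has finite degree). Then USF duality gives $\F^\dagger \sim \WUSF_{M^\dagger}$, and $(M^\dagger,\rho^\dagger,\F^\dagger)$ is unimodular under $\P^\dagger$, so Aldous--Lyons yields $\E^\dagger[\deg_{\F^\dagger}(\rho^\dagger)] = 2$. Applying the change-of-measure formula \eqref{eq:dualmeasure} to the function $g(M^\dagger,f,\F^\dagger) = \deg_{\F^\dagger}(f)$ translates this identity back to $\P$:
\[
\E\bigg[\sum_{f \perp \rho} \frac{\deg_{\F^\dagger}(f)}{\deg(f)}\bigg]
= 2\,\E\bigg[\sum_{f \perp \rho}\frac{1}{\deg(f)}\bigg].
\]

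Next I would use a mass transport on the unimodular marked map $(M,\rho,\F)$ to rewrite the left-hand side as a vertex sum. For each ordered pair $(e_1,e_2)$ of oriented edges of $M$ such that $e_1 \notin \F$ and $e_1^r = e_2^\ell$, let $e_1^-$ send mass $1/\deg(e_1^r)$ to $e_2^-$. Since there are precisely $\deg(e_1^r)$ choices of $e_2$ with $e_2^\ell = e_1^r$, the outgoing mass at $v$ telescopes to $\#\{e_1 : e_1^- = v,\ e_1 \notin \F\} = \deg(v)-\deg_\F(v)$, while the incoming mass at $u$ equals $\sum_{f\perp u}\deg_{\F^\dagger}(f)/\deg(f)$ after identifying $\{e_1:e_1 \notin \F,\ e_1^r=f\}$ with the oriented dual edges at $f$ lying in $\F^\dagger$. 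The mass transport principle combined with the previous display gives $\E[\deg(\rho)-\deg_\F(\rho)] = 2\E[\sum_{f \perp \rho}1/\deg(f)]$, and substituting $\E[\deg(\rho)] = \E[\sum_{f\perp\rho}1]$ and collecting terms produces $\E[\deg_\F(\rho)] = \E[\sum_{f\perp\rho}(\deg(f)-2)/\deg(f)] = \E[\theta(\rho)]/\pi$.

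The main obstacle is the extension to maps $M$ with faces of infinite degree: then $M^\dagger$ fails to be locally finite, $\WUSF_{M^\dagger}$ is not defined in the standard sense, and \cref{prop:USFduality} does not apply as stated. My plan to handle this is to modify $M$ inside each infinite face via a random arc system as in the proof of \cref{prop:semicontcurv}, producing an enlarged unimodular random rooted map $M'$ on the same vertex set whose dual is locally finite, and then to take a limit as the arc density is tuned so that the added arcs become locally invisible near $\rho$. Applying the previous case to $M'$ gives the formula for $M'$, after which one must control the effect of the extra edges on $\deg_\F(\rho)$; the delicate point is that the FUSF measure is not in general continuous in the local topology on maps, so this reduction requires more than a naive weak-limit argument.
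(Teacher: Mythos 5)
Your locally finite dual case is correct and is essentially the paper's own ``alternative, duality-based'' proof from \cref{sec:expecteddegree}: your mass transport identity is exactly the content of \cref{lem:omegadagger} (the paper derives it via the uniformly chosen oriented edge $\eta$ and the $\deg$-biasings rather than an explicit transport, but it is the same computation), combined with \cref{prop:USFduality} and the Aldous--Lyons fact that the WUSF has expected root degree $2$. Note that the paper's \emph{primary} proof of \cref{thm:forestformula} is different and shorter: it first proves connectivity of the FUSF (\cref{thm:FUSFconnectivity}) and then applies \cref{prop:submaps} to the spanning tree $\F$ itself, whose faces are all infinite so that $\theta_\F(v)=\pi\deg_\F(v)$ and hence $\bbK(\F,\rho)=2\pi-\pi\E[\deg_\F(\rho)]=\bbK(M,\rho)$.

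The genuine gap is the case of infinite faces, which you correctly identify but leave unresolved: ``take a limit as the arc density is tuned'' is not an argument, and, as you yourself note, the FUSF is not continuous in the local topology, so a naive weak-limit step fails. The paper closes this case with a two-sided argument that avoids any convergence of FUSF measures. For the upper bound (\cref{L:forestbound}), one observes that \emph{any} unimodular spanning forest $\omega$ of $M$ is also a spanning forest of the triangulated map $M'$ (same vertex set, locally finite dual, and $\bbK(M',\rho)=\bbK(M,\rho)$ by \cref{prop:submaps}); since every component of $\omega^\dagger$ is then infinite, $\E^\dagger[\deg_{\omega^\dagger}(\rho^\dagger)]\geq 2$ by \cite[Theorem 6.1]{AL07}, and \cref{lem:omegadagger} gives $\E[\deg_\omega(\rho)]\leq 2-\frac{1}{\pi}\bbK(M,\rho)$. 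For the lower bound, one keeps each added arc of $M'$ independently with probability $\eps$ to obtain $M'_\eps$ with locally finite dual, applies the locally finite case to get $\E[\deg_{\F_\eps}(\rho)]=2-\frac1\pi\bbK(M,\rho)$, and then uses the monotonicity property that the FUSF of the subgraph $M\subseteq M'_\eps$ stochastically dominates the restriction of $\FUSF_{M'_\eps}$ to $E(M)$, so that
\[
\E[\deg_\F(\rho)]\;\geq\;\E[\deg_{\F_\eps}(\rho)]-\E\big[\deg_{M'_\eps}(\rho)-\deg_M(\rho)\big],
\]
with the error term vanishing as $\eps\to0$ by dominated convergence. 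Without these two ingredients (the general forest upper bound and the subgraph-domination lower bound) your reduction to the locally finite case does not go through, so as written the proposal proves the theorem only when every face of $M$ has finite degree.
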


As an easy consequence we get the following component of \cref{thm:dichotomy}.

\begin{corollary}[\cref{thm:dichotomy}, equivalence of \eqref{iCurv} and
  \eqref{iUSF}  and $\bbK\leq 0$]\label{C:curv_USF}
  Let $(M,\rho)$ be an infinite, simply connected unimodular random map with
  $\E[\deg(\rho)]<\infty$.  Then the average curvature satisfies
  $\bbK(M,\rho)\leq 0$, and equals zero if and only if
  $\FUSF_M = \WUSF_M$ almost surely.
\end{corollary}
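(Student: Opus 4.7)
The plan is to read off both parts of the corollary from the identity
\[
  \E[\deg_\F(\rho)] = 2 - \frac{\bbK(M,\rho)}{\pi}
\]
of \cref{thm:forestformula}, combined with the two facts about uniform spanning forests recalled earlier in this section: that $\FUSF_M$ stochastically dominates $\WUSF_M$, and that the expected degree of the root in the wired uniform spanning forest of any unimodular random rooted graph with $\E[\deg(\rho)]<\infty$ equals $2$.

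First I would fix a monotone coupling of the two spanning forests so that $\WUSF \subseteq \FUSF$ pointwise as edge sets, taken to be unimodular via \cref{lem:couplingmarks}. Under this coupling $\deg_\F(\rho) \geq \deg_\WUSF(\rho)$, so
\[
  2 - \frac{\bbK(M,\rho)}{\pi} = \E[\deg_\F(\rho)] \geq \E[\deg_\WUSF(\rho)] = 2,
\]
which gives $\bbK(M,\rho) \leq 0$. One direction of the equivalence is then immediate: if $\FUSF_M = \WUSF_M$ a.s.\ the inequality above is an equality, so $\bbK(M,\rho) = 0$. Conversely, if $\bbK(M,\rho) = 0$ then the non-negative quantity $\deg_\F(\rho) - \deg_\WUSF(\rho)$ has zero expectation, hence vanishes almost surely; equivalently, $\rho$ is a.s.\ not incident to any edge of the percolation $\omega := \FUSF \setminus \WUSF$.

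It remains to promote this last statement to the conclusion that $\omega$ is empty almost surely, which I would accomplish by a standard mass-transport argument. On the event $\{\omega \neq \emptyset\}$, let each vertex of $M$ send one unit of mass to a uniformly chosen closest vertex incident to $\omega$ (well defined by local finiteness together with non-emptiness); on the complement, send no mass. The expected mass out of $\rho$ equals $\P(\omega \neq \emptyset)$, while the expected mass into $\rho$ is zero, since the only contributions could come from the zero-probability event that $\rho$ itself is incident to $\omega$. The mass-transport principle then forces $\P(\omega \neq \emptyset) = 0$, so $\FUSF = \WUSF$ almost surely under the coupling and hence the two marginals coincide. The only nontrivial step in this plan is this final mass-transport argument, but it is a direct analogue of propagation-from-the-root arguments already used throughout the paper.
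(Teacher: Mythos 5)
Your overall route is the same as the paper's: combine \cref{thm:forestformula} with the facts that $\FUSF_M$ stochastically dominates $\WUSF_M$ and that the expected root degree of the WUSF of any unimodular random rooted graph is $2$. However, there is a genuine gap in how you implement it: you ``fix a monotone coupling of the two spanning forests\ldots taken to be unimodular via \cref{lem:couplingmarks}.'' That lemma does not provide this. Its proof samples the two markings \emph{conditionally independently} given $(M,\rho)$, and under the independent coupling one does not have $\WUSF\subseteq\FUSF$ pointwise. What you need is a coupling that is simultaneously monotone and unimodular; Strassen's theorem gives a monotone coupling for each fixed map $M$, but turning this into a unimodular marking of $(M,\rho)$ requires a measurable, isomorphism-equivariant selection (or a limiting/averaging argument) that neither you nor \cref{lem:couplingmarks} supplies. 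Since both the deduction ``$\bbK=0\Rightarrow\rho$ is a.s.\ not incident to $\omega=\FUSF\setminus\WUSF$'' and your final mass-transport are carried out on the coupled object, the converse direction does not go through as written. (For the inequality $\bbK(M,\rho)\le 0$ no coupling is needed at all: $\deg_\F(\rho)$ is an increasing function of the forest, so stochastic domination of the measures already gives $\E[\deg_\F(\rho)]\ge \E[\deg_{\WUSF}(\rho)]=2$ for each fixed $(M,\rho)$, and you can integrate.)

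The cleanest repair avoids the unimodular monotone coupling entirely by working with conditional single-edge marginals, which is essentially what the paper's one-line assertion (``the measures differ if and only if the inequality is strict'') encodes. For each fixed $M$ and edge $e$ one has $\FUSF_M(e\in\F)\ge\WUSF_M(e\in\F)$, so $D(M,u):=\sum_{e:e^-=u}\bigl(\FUSF_M(e\in\F)-\WUSF_M(e\in\F)\bigr)\ge 0$ and $\E[D(M,\rho)]=\E[\deg_{\FUSF}(\rho)]-2$. If $\bbK(M,\rho)=0$ then $D(M,\rho)=0$ a.s.; your mass-transport from step~6 (each vertex sends unit mass to a nearest vertex $u$ with $D(M,u)>0$, on the event that such a vertex exists) then shows that a.s.\ $D(M,u)=0$ for \emph{every} vertex $u$, i.e.\ all single-edge marginals of $\FUSF_M$ and $\WUSF_M$ agree. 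Finally, for each fixed $M$, Strassen's theorem (no unimodularity needed at this stage) gives a monotone coupling of $\FUSF_M$ and $\WUSF_M$, under which equal single-edge marginals force the coupled forests to coincide, so $\FUSF_M=\WUSF_M$ a.s. With this rearrangement your argument becomes a correct, fully detailed version of the step the paper leaves implicit.
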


\begin{proof}
  Let $\F$ be a sample of $\FUSF_M$.  Since the expected degree of the
  WUSF in any unimodular random rooted graph is two, and $\FUSF_M$
  stochastically dominates $\WUSF_M$, we have that
  $\E[\deg_\F(\rho)]\geq2$ and that the measures $\FUSF_M$ and $\WUSF_M$
  differ almost surely if and only if this inequality is strict. We
  conclude by applying \cref{thm:forestformula}.
\end{proof}

\cref{thm:forestformula} follows as an immediate corollary of
\cref{prop:submaps} and the following theorem, which is the second main
result of this section.  In \cref{sec:expecteddegree}, we give an
alternative, duality-based proof of \cref{thm:forestformula} that does
not rely on \cref{thm:FUSFconnectivity} or \cref{prop:submaps}, and also
applies to the free minimal spanning forest.

\begin{thm}[Connectivity of the FUSF]\label{thm:FUSFconnectivity}
  Let $(M,\rho)$ be an infinite, simply connected unimodular random rooted map with
  $\E[\deg(\rho)] < \infty$.  Then the free uniform spanning forest of
  $M$ is connected almost surely.
\end{thm}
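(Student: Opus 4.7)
The natural approach is via planar duality. For a simply connected map with locally finite dual, Proposition \ref{prop:USFduality} identifies $\FUSF_M$ with $\WUSF_{M^\dagger}$ via $\F\mapsto \F^\dagger$, and as noted in the discussion preceding that proposition, $\F$ is connected if and only if every component of $\F^\dagger$ is one-ended. My plan is therefore to reduce to the locally-finite-dual case and then prove that every component of $\WUSF_{M^\dagger}$ is one-ended almost surely.

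First I would handle the presence of infinite faces. Following the arc-filling construction from the proof of Proposition \ref{prop:submaps} (see \cref{fig:M1}), place an independent uniform random system of arcs inside each infinite face of $M$ to triangulate it, producing a unimodular random rooted map $M'$ with the same vertex set as $M$ such that $M'$ has locally finite dual a.s.\ and $\E[\deg_{M'}(\rho)]<\infty$. Since adding edges to the underlying graph modifies the FUSF in a monotone fashion, I would argue (using Wilson's algorithm rooted at infinity to generate $\FUSF_{M'}$ and tracking the restriction to $M$) that connectivity of $\FUSF_{M'}$ suffices to give connectivity of $\FUSF_M$; this step uses essentially the same techniques as the reduction in \cref{prop:submaps}.

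With $M^\dagger$ locally finite, Proposition \ref{prop:dualmaps} together with equation \eqref{eq:dualdegree} shows that the dual measure $\P^\dagger$ is unimodular on $\cM_\bullet$ with $\E^\dagger[\deg(\rho^\dagger)]<\infty$. By the quoted fact that the expected WUSF-degree of the root in any unimodular random rooted graph with finite expected degree equals $2$, and because each component of the WUSF is a tree, it follows from the standard Aldous--Lyons classification of ends of unimodular random trees (cf.\ Theorem~6.2 of \cite{AL07}, used also in \cref{prop:endsamen}) that every component of $\WUSF_{M^\dagger}$ is either one-ended or two-ended almost surely. The main obstacle is then to rule out two-ended components. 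For this I would use a mass-transport argument: on a two-ended component $C$ there is a canonical bi-infinite simple path (the trunk) consisting of the edges not contained in any finite branch of $C$; sending mass from each trunk vertex to its two trunk-neighbours would produce a well-defined transport that must be invariant under reversal of the trunk orientation, and combining this with the simple connectedness of $M$ — which forces the dual trunk to separate $M$ into two topologically distinguishable infinite pieces — leads via the mass transport principle to a contradiction, unless the event $\{\rho^\dagger \in \text{two-ended component}\}$ has probability zero.

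The delicate part is the last step: ruling out two-ended components is the only genuinely structural input, and is where simple connectedness of $M$ (via planar duality) must interact with unimodularity. The reduction to locally finite duals and the setup of the dual unimodular measure are technical but routine; the accounting with expected degree $2$ and at-most-two-endedness of tree components is standard; it is the exclusion of the two-ended case that carries the real content of the theorem.
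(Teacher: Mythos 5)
Your overall framing -- pass to the dual and show that every component of the dual wired forest is one-ended -- is indeed the paper's strategy in the case where $M^\dagger$ is locally finite, but two of your steps do not hold up. First, the reduction of the infinite-face case to the locally-finite-dual case via the arc-filling map $M'$ is a genuine gap. The only monotonicity available is the one stated in the paper: $\FUSF_M$ stochastically dominates the restriction of $\FUSF_{M'}$ to $M$. This controls expected degrees (it is exactly how the paper proves the degree formula in the non-locally-finite case, cf.\ \cref{T:FMSFFormula}), but it says nothing about connectivity: connectivity of $\FUSF_{M'}$ does not pass to $\FUSF_M$, and ``tracking the restriction to $M$'' in Wilson's algorithm is not meaningful because $\FUSF_M$ is not a function of $\FUSF_{M'}$. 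The paper does not reduce at all; it treats the infinite-face case directly, by a variant of Wilson's algorithm rooted at $\{\infty\}\cup F_\infty$ (\cref{P:Fdualinfty}), a reversibility argument showing that the dual walk started at a finite face hits $F_\infty$ almost surely, and a mass transport showing that every component of $\F^\dagger\setminus F_\infty$ is finite (\cref{prop:conn2}), which is what forces $\F$ to be connected.

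Second, in the locally-finite-dual case you correctly locate the real content in excluding two-ended components of $\WUSF_{M^\dagger}$, but the proposed argument for it does not work: sending unit mass from each trunk vertex to its two trunk neighbours balances trivially (expected mass out and in are both $2$) and produces no contradiction, and the appeal to ``the dual trunk separates $M$'' only restates the duality criterion you are trying to verify. If such a soft transport sufficed, one-endedness of WUSF components would be elementary, whereas it is a substantial theorem: the paper invokes precisely this external input, namely \cref{T:Hwired} (every component of the WUSF of a transient unimodular random rooted graph is one-ended, due to the second author), when $M^\dagger$ is transient; when $M^\dagger$ is recurrent the dual WUSF is a single tree with at most two ends, and two-endedness is ruled out by comparing the number of ends of $M$ and $M^\dagger$ and applying \cref{lem:intersection} (in that case $\F$ is also the wired forest of $M$, so it cannot have exactly two components). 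The paper even records that the analogous two-ended question is open for general recurrent unimodular graphs, which underlines that this step cannot be dispatched by a routine mass transport. So the skeleton (duality plus one-endedness of the dual WUSF) is right, but both the infinite-face case and the core one-endedness step are missing as written.
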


\cref{thm:FUSFconnectivity} is complemented by work by the
second and third authors \cite{HutNach15b}, who prove the corresponding
theorem for all simply connected planar maps with \emph{bounded
  degrees}.
Since the measure $\FUSF_G$ stochastically dominates $\WUSF_G$ for every
graph $G$, and has no cycles, we deduce the following immediate corollary.

\begin{corollary}[\cref{thm:dichotomy}, equivalence of \eqref{iUSF} and
  \eqref{iWUSFConn}]
  Let $(M,\rho)$ be an infinite, simply connected unimodular random map with
  $\E[\deg(\rho)]<\infty$.  Then $\FUSF_M=\WUSF_M$ if and only if
  the wired uniform spanning forest of $M$ is connected almost surely.
\end{corollary}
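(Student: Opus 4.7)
The plan is to derive this corollary directly from the two ingredients the excerpt highlights: the stochastic domination $\FUSF_M \succeq \WUSF_M$ (which holds for every graph) together with the fact that $\FUSF_M$ is supported on spanning forests (hence cycle-free), and \cref{thm:FUSFconnectivity}, which asserts that $\FUSF_M$ is almost surely connected for an infinite simply connected unimodular random rooted map with finite expected root degree.

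For the implication that connectivity of $\WUSF_M$ forces $\FUSF_M = \WUSF_M$, I would invoke the standard monotone coupling of $\WUSF_M$ and $\FUSF_M$, under which $\WUSF_M \subseteq \FUSF_M$ almost surely. Suppose $\WUSF_M$ is a.s.\ connected. Since $\FUSF_M$ is a spanning forest, any edge added to the connected spanning subgraph $\WUSF_M$ would create a cycle. Hence the coupled pair must satisfy $\WUSF_M = \FUSF_M$ almost surely, and by uniqueness of the marginals this gives $\FUSF_M = \WUSF_M$ as measures. This is already recorded as \eqref{iWUSFConn}$\Rightarrow$\eqref{iUSF} earlier in the excerpt.

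For the converse, I would apply \cref{thm:FUSFconnectivity} directly: under the hypotheses of the corollary, $\FUSF_M$ is almost surely connected. If the laws of $\WUSF_M$ and $\FUSF_M$ coincide, then $\WUSF_M$ is likewise almost surely connected.

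There is no real obstacle here; the substantive content lies in \cref{thm:FUSFconnectivity} (which is an input proved elsewhere in the paper) and in the classical fact that the monotone coupling of $\WUSF$ and $\FUSF$ exists and is supported on nested configurations. The proof itself is a two-sentence combination of these inputs.
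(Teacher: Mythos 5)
Your proposal is correct and follows essentially the same route as the paper: the direction \eqref{iWUSFConn}$\Rightarrow$\eqref{iUSF} is the general fact that a connected spanning forest cannot be strictly contained in another spanning forest (via the monotone coupling and stochastic domination $\FUSF_M \succeq \WUSF_M$), and the converse is an immediate application of \cref{thm:FUSFconnectivity}, exactly as the paper deduces it.
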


Having finite expected degree is necessary for this to hold.  Let $T_n$
be a binary tree of height $n$ drawn in the plane.  The
Benjamini-Schramm limit of $T_n$ as $n$ tends to infinity is known as
the canopy tree, and can be thought of as an `infinite binary tree
viewed from a leaf'.  Let $M_n$ be the finite map obtained by drawing
two copies of $T_n$ so that one is the reflection of the other, and
attaching these two copies together at their leaves (see
\cref{fig:trees}).  Replace each edge of $M_n$ at distance $k$ from the
leaves by $3^k$ parallel edges, and call the resulting map $M'_n$.  The
Benjamini-Schramm limit $(M',\rho) = \lim M'_n$ exists and is formed of
two similarly thickened canopy trees, attached together at their leaves.

\begin{figure}
  \centering
  \includegraphics[height=4cm]{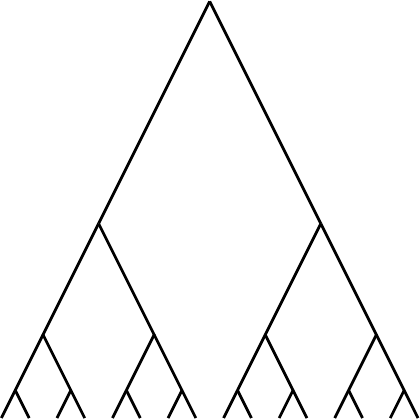} \qquad
  \includegraphics[height=4cm]{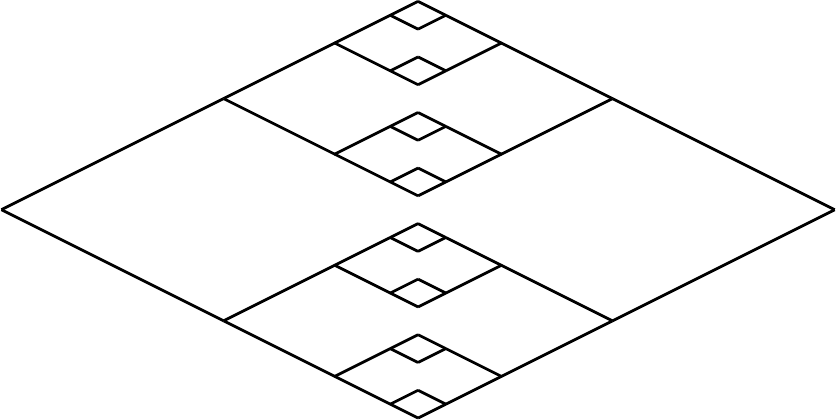}
  \caption{The maps $T_4$ (left) and $M_4$ (right).}
  \label{fig:trees}
\end{figure}

The random walk on $M'$ has constant drift away from the leaves, so it
is possible for two random walks to be absorbed in different halves of
$M'$.  Thus $M'$ does not have the intersection property (nor the
non-intersection property), and $\WUSF_{M'}$ is not connected.  However,
the space of bounded harmonic functions is spanned by the probability of
being absorbed at either side of $M'$.  It follows that bounded (and
hence all) harmonic functions have infinite Dirichlet energy, and that
$\FUSF_{M'} = \WUSF_{M'}$.

\medskip

Towards a proof of \cref{thm:FUSFconnectivity}, observe that if $M$ has
locally finite dual almost surely, then, by \cref{prop:USFduality}, it
suffices to showt that every component of the WUSF of $M^\dagger$ is
one-ended almost surely.  Fortunately, this is known to hold in several
large classes of graphs: The following was proven by the second author
\cite{H15,hutchcroft2015interlacements}.  See also earlier works by Pemantle
\cite{Pem91}, Benjamini, Lyons, Peres, and Schramm \cite{BLPS}, and
Aldous and Lyons \cite{AL07}.  See \cite{LMS08} for further
one-endedness results in the deterministic setting.

\begin{thm}[\cite{H15,hutchcroft2015interlacements}]
  \label{T:Hwired}
  Let $(G,\rho)$ be a transient unimodular random rooted graph. Then
  every component of the wired uniform spanning forest of $G$ is one-ended almost surely.
\end{thm}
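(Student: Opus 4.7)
The plan is to combine Wilson's algorithm rooted at infinity with a mass transport / update-tolerance argument, exploiting the fact that the expected degree of the root in the WUSF of any unimodular random rooted graph equals $2$.

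First I would sample $\mathfrak{F} \sim \WUSF_G$ via Wilson's algorithm rooted at infinity: enumerate the vertices $v_1, v_2, \ldots$ of $G$ (say by distance from $\rho$, with ties broken measurably), set $F_0 = \emptyset$, and at step $n$ run an independent simple random walk from $v_n$ until it hits $F_{n-1}$ or escapes to infinity (the latter being possible since $G$ is transient). Loop-erase the walk and add it to $F_{n-1}$ to obtain $F_n$. Then $\mathfrak{F} = \bigcup_n F_n$ has law $\WUSF_G$. From this construction, the \emph{future} of a vertex $v$ in $\mathfrak{F}$ --- defined as an infinite simple path out of $v$ in its component --- is obtained, in distribution, as the loop erasure of a simple random walk started at $v$. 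Since $G$ is transient, this loop-erased walk is almost surely an infinite simple path, so every component of $\mathfrak{F}$ has at least one end.

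Next, recall that one-endedness of every component is equivalent to the statement that the \emph{past} of every vertex is finite almost surely, where the past of $v$ is the set of vertices $u$ whose future in $\mathfrak{F}$ passes through $v$. The strategy is to assume for contradiction that
\[
p := \mathbb{P}(\text{the past of } \rho \text{ is infinite}) > 0,
\]
which by the mass transport principle is an invariant property, and to derive a contradiction with the identity $\mathbb{E}[\deg_{\mathfrak{F}}(\rho)] = 2$.

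The main obstacle, and the technical heart of the argument, is the last step: producing extra edges at $\rho$ to violate the degree identity. Here I would use update tolerance for Wilson's algorithm (equivalently, a random interlacement representation): given the configuration, resample the trajectory from some vertex $u$ deep in the past of $\rho$ by an independent random walk from $u$. With positive conditional probability this new walk avoids the existing forest entirely and exits to infinity without revisiting $\rho$, thereby replacing an edge incident to $\rho$ by a new infinite branch that is edge-disjoint from the first. Iterating this resampling along an infinite subset of the past --- which exists precisely because the past is assumed infinite --- allows one to transport a strictly positive amount of expected extra degree to $\rho$. Combined with the unimodular mass transport principle, this forces $\mathbb{E}[\deg_{\mathfrak{F}}(\rho)] > 2$, contradicting the Aldous--Lyons identity. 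The delicate part is verifying that the resamplings can be done in an honestly unimodular way so that the extra degree contribution does not vanish under the mass transport; this is where the interlacements construction (as in \cite{hutchcroft2015interlacements}) provides the cleanest framework, since the trajectories are indexed by a Poisson process that is jointly invariant with the graph.
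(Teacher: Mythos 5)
The paper does not actually prove \cref{T:Hwired}: it is imported wholesale from \cite{H15,hutchcroft2015interlacements}, so there is no internal proof to compare against, and your proposal has to be judged as an attempted reconstruction of those arguments. The sound parts of your outline are the standard reductions: Wilson's algorithm rooted at infinity shows every vertex has an infinite future ray, so each component has at least one end, and one-endedness of all components is equivalent to almost sure finiteness of all pasts; the identity $\E[\deg_\F(\rho)]=2$ is also correctly quoted. But the step you call the technical heart is exactly the content of the cited papers, and as written it does not work. First, the new walk cannot ``avoid the existing forest entirely'': the WUSF is a \emph{spanning} forest, so every vertex the walk visits already lies in $\F$. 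Presumably you mean avoiding the tree of $\rho$, or avoiding its edge set, but this is not what update tolerance provides (update tolerance concerns inserting a single edge and deleting the corresponding future edge, and yields only absolute continuity of the updated configuration with respect to $\WUSF$), and the claim that such an avoidance event has positive conditional probability is unjustified.

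Second, and more fundamentally, the proposed contradiction ``this forces $\E[\deg_{\F}(\rho)]>2$'' cannot be reached this way. The configurations produced by your resamplings are not distributed as the WUSF --- at best their laws are absolutely continuous with respect to it --- so exhibiting extra edges at $\rho$ in these modified configurations says nothing about the expectation of $\deg_\F(\rho)$ under $\WUSF_G$ itself, which is equal to $2$ unconditionally, whether or not pasts are infinite. What the degree identity actually buys (via \cite[Theorem 6.2]{AL07}) is that components have at most two ends; ruling out a second end, equivalently an infinite past, requires a genuinely different mechanism: in \cite{H15} a mass-transport contradiction built on top of update tolerance (positive-probability events are transported to positive-probability events, not expectations), and in \cite{hutchcroft2015interlacements} the interlacement Aldous--Broder representation together with stationarity in the time parameter, which controls the past of a vertex directly. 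Your sketch names these tools but does not supply the argument, so the essential step of the theorem is missing rather than merely compressed.
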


Two-ended unimodular graphs necessarily have connected and two-ended
$\WUSF_G$.  However, it is an open question whether WUSF components in a
one-ended, recurrent, unimodular graphs can have two ends (more than two
is impossible).  The final result of this section settles the planar
case.

\begin{thm}\label{T:WUSFends}
  Let $(M,\rho)$ be a recurrent unimodular random rooted planar map with
  $\E[\deg(\rho)]<\infty$.  Then the wired uniform spanning forest of $M$ has the same number of
  ends as $M$ almost surely (which is either one or two since $M$ is
  recurrent).
\end{thm}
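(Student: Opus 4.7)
The plan is to establish four properties of the wired uniform spanning forest $\F$ of $M$ and then combine them. First, $M$ has either one or two ends almost surely: this follows from \cref{prop:endsamen} once we observe that $M$ cannot have infinitely many ends, since any subgraph of $M$ is recurrent by Rayleigh monotonicity, so $M$ cannot be invariantly nonamenable (otherwise we could extract a bounded-degree nonamenable subtree, which is transient, contradicting recurrence of $M$), and hence $M$ cannot have infinitely many ends. Second, $\F$ is almost surely connected: recurrence implies the intersection property, so by the BLPS result (equivalence of \eqref{iIntersect} and \eqref{iWUSFConn} in \cref{thm:dichotomy}), $\F$ is a.s.\ a spanning tree of $M$. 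Third, $\F$ has at most two ends a.s.: $\E[\deg_\F(\rho)]=2$ by \cite{AL07}, and a unimodular random rooted tree with mean root-degree $2$ has at most two ends a.s.\ (again \cite{AL07}). Fourth, any spanning tree of a $k$-ended graph has at least $k$ ends: for any finite $F\subset V(M)$, the restriction of $\F$ to each infinite component of $M\setminus F$ is a spanning forest of that component and so contains at least one infinite subtree, showing that $\F\setminus F$ has at least as many infinite components as $M\setminus F$.

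Combining these four properties immediately resolves the two-ended case: $\F$ has both at most and at least two ends, so exactly two ends.

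The main obstacle is the one-ended case, where I must rule out $\F$ having two ends. Suppose for contradiction that $M$ is one-ended but $\F$ has two ends almost surely. Then $\F$ contains a unique bi-infinite simple path $P$ (the spine), with all remaining vertices lying in finite subtrees hanging off $P$; in particular $P$ is an invariant bi-infinite path in the planar unimodular map $M$. To derive a contradiction, first reduce to the simply connected case. Since $M$ is one-ended, every face of $M$ is finite (by \cref{lem:endsfaces}), so $M^\dagger$ is locally finite and itself unimodular. Since recurrence implies invariant amenability, $\FUSF_M = \WUSF_M$ almost surely, so $\F = \FUSF_M$, and by USF duality (\cref{prop:USFduality}) we obtain $\F^\dagger = \WUSF_{M^\dagger}$. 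The two-endedness of $\F$ forces $\F^\dagger$ to split into exactly two infinite connected components in $M^\dagger$, one on each side of the spine $P$. This structural constraint should yield a contradiction: if $M^\dagger$ is transient, \cref{T:Hwired} implies every component of $\WUSF_{M^\dagger}$ is one-ended, which combined with ergodicity of $(M,\rho)$ and the dual structure of $\F^\dagger$ is incompatible with having exactly two components; if $M^\dagger$ is recurrent, the argument recurses and the contradiction comes from a direct mass-transport argument exploiting the uniqueness of $P$ (each vertex sends unit mass to its closest vertex on $P$; unimodularity and ergodicity force both sides of $P$ to contribute positive density, contradicting one-endedness of $M$). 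The non-simply-connected planar case is reduced to the simply connected one either by passing to a suitable unimodular cover or by applying the mass-transport argument directly in the planar embedding.
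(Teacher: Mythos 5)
Your skeleton (count ends; two-ended case by the four easy facts; one-ended case by passing to the dual, where two-endedness of $\F$ forces the dual forest $\F^\dagger=\WUSF_{M^\dagger}$ to have exactly two components) is essentially the paper's argument, with the roles of the map and its dual swapped. But the decisive final step --- \emph{why} exactly two components is impossible --- is missing, and the two substitutes you offer do not work. In the transient-dual branch, \cref{T:Hwired} only tells you that each component of $\WUSF_{M^\dagger}$ is one-ended; two components each one-ended is perfectly consistent with that (indeed it is exactly what the dual of a two-ended spanning tree would look like), so ``combined with ergodicity\dots is incompatible'' is an assertion, not a proof. In the recurrent-dual branch, the proposed mass transport proves nothing: sending unit mass to the nearest vertex of the spine $P$ only shows the bushes have finite expected size, and ``both sides of $P$ have positive density'' is in no tension with one-endedness of $M$ (think of $\Z^2$ and the $x$-axis). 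Note also that the paper explicitly records that, absent planarity, it is \emph{open} whether WUSF components of one-ended recurrent unimodular graphs can be two-ended, so any purported contradiction that does not genuinely exploit the planar/dual structure at the last step should make you suspicious.

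The ingredient you are missing is \cref{lem:intersection}: for a unimodular random rooted graph with finite expected degree, the wired uniform spanning forest is almost surely either connected or has infinitely many components. Apply this to $M^\dagger$, which is unimodular with finite expected degree by \eqref{eq:dualdegree} (it is locally finite since $M$ is one-ended, by \cref{lem:endsfaces}); then ``exactly two components of $\F^\dagger=\WUSF_{M^\dagger}$'' is an immediate contradiction, with no case split on transience or recurrence of $M^\dagger$ needed, and \cref{T:Hwired} is not needed at all. This is precisely how the paper concludes (there the recurrent map plays the role of the dual and \cref{lem:intersection} is applied to the primal; the content is the same). Two smaller points: the claim that a two-ended spanning tree of a one-ended simply connected map has dual complement with \emph{exactly} two components deserves a sentence (at least two follows from the Jordan-type separation by the spine, since dual paths crossing $P$ must use duals of spine edges, which lie in $\F$; that there are finitely many, equivalently one per side, is standard but not free --- though for the contradiction via \cref{lem:intersection} you only need ``at least two and finitely many''); and your closing remark about reducing a ``non-simply-connected planar case'' is unnecessary, since a one-ended planar unimodular map is automatically simply connected (its surface is the plane, e.g.\ by \cref{thm:topology} or \cref{Lem:endscomparison}), and the two-ended (cylinder) case is already settled by your counting step.
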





\subsection{Proof of \cref{thm:FUSFconnectivity,T:WUSFends}}

The proofs are closely linked and are split according to whether the
dual map $M^\dual$ is locally finite or not.

\begin{proof}[Proofs of when $M^\dual$ is locally finite]
  Let $(M,\rho)$ be a unimodular random rooted map with
  $\P[\deg(\rho)]<\infty$, and suppose that the dual $M^\dagger$ is
  locally finite almost surely.  We may assume that $(M,\rho)$ is
  ergodic, otherwise we take an ergodic decomposition.  Let $\F$ be a
  sample of $\FUSF_M$ and let $\F^\dagger$ be the dual forest.  Since
  $\E[\deg(\rho)]<\infty$, the law of $(M^\dagger,\rho^\dagger)$ is
  absolutely continuous w.r.t.\ a reversible random rooted map by
  \cref{prop:dualmaps}.

  If $M^\dagger$ is almost surely transient, \cref{T:Hwired} implies
  that every component of $\F^\dual$ is one-ended almost surely, and we
  deduce that $\F$ is connected almost surely.

  If $M^\dagger$ is almost surely recurrent, then $\F^\dual$ is
  connected almost surely.  Being unimodular and recurrent, this tree
  has at most two ends.  Since $M^\dual$ is locally finite, $M$ and
  $M^\dual$ have the same number of ends. If $\F^\dual$ was two-ended,
  while $M$ is one-ended, then $\F$ would have exactly two components,
  contradicting \cref{lem:intersection}.  If \asaf{$M$} is two-ended,
  then so is the spanning tree \asaf{$\F$}, giving \cref{T:WUSFends}.  In
  either case we deduce that $\F$ is connected almost surely.
\end{proof}


The remainder of this section is dedicated to the proof of
\cref{thm:FUSFconnectivity,T:WUSFends} in the presence of infinite
faces.  We begin by developing a variant of Wilson's algorithm that
allows us to sample the dual of the FUSF using random walks when the
dual is not locally finite.

Given a graph $G$ and a path $\gamma$ in $G$ that is either finite or
transient, i.e.\ visits each vertex of $G$ at most finitely many times,
the \textbf{loop-erasure} $\textsf{LE}(\gamma)$ is formed by erasing
cycles from $\gamma$ chronologically as they are created.  Formally,
$\textsf{LE}(\gamma)_i = \gamma_{t_i}$ where the times $t_i$ are defined
recursively by $t_0 = 0$ and
$t_i = 1 + \max \{ t \geq t_{i-1} : \gamma_t = \gamma_{t_{i-1}}\}$.  If
the graph has multiple edges, then $\textsf{LE}(\gamma)$ uses the edge
$\gamma$ uses between times $t_i-1$ and $t_i$.  The loop-erasure of a
simple random walk is known as \textbf{loop-erased random walk}, was
introduced by Lawler \cite{Lawler80}, and was extensively studied.

\textbf{Wilson's algorithm} \cite{Wilson96} is a method of sampling a
uniform spanning tree of a finite graph by joining together loop-erased
random walk paths.  Benjamini, Lyons, Peres and Schramm \cite{BLPS}
introduced a variant of Wilson's algorithm for sampling the WUSF of an
infinite transient graph, known as \textbf{Wilson's algorithm rooted at
  infinity}.  Let $G=(V,E)$ be a connected, locally finite graph and let
$(v_i)_{i \geq 1}$ be an enumeration of the vertex set $V$.  We sample a
sequence of forests $(\F_i)_{i \geq 0}$ in $G$ recursively as follows:
\begin{enumerate}
\item If $G$ is finite or recurrent, fix a vertex $v_0$ of $G$ and let
  $\F_0=\{v_0\}$.  If $G$ is transient set $\F_0=\emptyset$.
\item Given $\F_i$, start a simple random walk from $v_i$ in $M$,
  independent of everything already sampled, and stopped if and when it
  first hits a vertex already included in $\F_i$.
\item Take the loop-erasure of this random walk path, and let $\F_{i+1}$
  be the union of $\F_i$ and this loop-erased path.
\item Let $\F=\bigcup_{i\geq0}\F_i$.
\end{enumerate}
The resulting random forest has law $\UST_G$ when $G$ is finite
\cite{Wilson96} and $\WUSF_G$ when $G$ is infinite \cite{BLPS}.

If $M^\dual$ is locally finite, then we can sample $\FUSF_M$ by using
Wilson's algorithm to sample $\F^\dual$ with law $\WUSF_{M^\dual}$.  We
introduce a variation for sampling the dual of $\FUSF_M$, when $M^\dual$
is not locally finite.  Let $\Ffin$ and $F_\infty$ be respectively the
set of finite and infinite faces of $M$.  The following is called
\textbf{Wilson's algorithm rooted at $\{\infty\}\cup F_\infty$}.


Let $(f_i)_{i\geq 1}$ be an enumeration of the finite faces of $M$.  We
sample an increasing sequence of forests $(\F^\dual_i)_{i\geq 0}$ in
  $M^\dual$ recursively as follows:
\begin{enumerate}[label={(\arabic*$'$)}]
\item Let $\F^\dagger_0=F_\infty$.
\item Given $\F^\dagger_i$, start a simple random walk from $f_i$ in
  $M^\dual$, independent of everything we have already sampled, and
  stopped if and when it first hits a face already included in
  $\F^\dagger_i$.  In particular, the walk is stopped if it hits a face
  of infinite degree, so that it is well-defined.
\item Take the loop-erasure of this random walk path, and let
  $\F^\dagger_{i+1}$ be the union of $\F^\dagger_i$ and this loop-erased
  path.
\item Let $\F^\dagger=\bigcup_{i\geq0}\F^\dagger_i$.
\end{enumerate}

\begin{prop}\label{P:Fdualinfty}
  Let $M=(V,E,\sigma)$ be a simply connected map with dual $M^\dagger$
  and let $\F^\dagger$ be a random subset of $E^\dagger$ sampled by
  Wilson's algorithm rooted at $\{\infty\}\cup F_\infty$. Then
  $\F=(\F^\dagger)^\dagger$ is a sample of $\FUSF_M$.
\end{prop}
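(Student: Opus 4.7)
The plan is to prove the proposition by an exhaustion argument that transports the standard finite-case planar duality between $\UST_M$ and $\UST_{M^\dagger}$ to the infinite setting. Fix an exhaustion $V_1 \subset V_2 \subset \cdots$ of the vertex set of $M$ by finite connected subsets with $\bigcup_n V_n = V$, and let $M_n$ denote the submap of $M$ carried by $V_n$, viewed as a finite sphere map by collapsing the complement of (a neighborhood of) $V_n$ into a single outer face $\partial_n^\ast$. The dual map $M_n^\dagger$ is then a finite sphere map whose vertex set consists of those faces of $M$ contained in the interior of $M_n$, together with a single boundary vertex $\partial_n$ corresponding to $\partial_n^\ast$. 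Under the canonical bijection between edges of $M_n$ and edges of $M_n^\dagger$, the standard finite planar duality says that if $T_n$ is a uniform spanning tree of $M_n$, then $T_n^\dagger$ is a uniform spanning tree of $M_n^\dagger$.

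Next I would sample $T_n^\dagger$ by Wilson's algorithm on the finite graph $M_n^\dagger$ rooted at $\partial_n$, using some enumeration that refines the enumeration $(f_i)_{i\geq 1}$ of $\Ffin$ in the statement. The key observation is that a simple random walk on $M_n^\dagger$ started from any $f\in\Ffin\cap V(M_n^\dagger)$ can be coupled with a simple random walk on $M^\dagger$ started from $f$ so that the two walks agree up to the first time the latter either hits a vertex in $F_\infty$ or exits the set of finite faces appearing in $V_n$'s interior, at which moment the former is absorbed at $\partial_n$. Loop-erasures commute with this coupling. Letting $\F^\dagger_{(n)}$ denote the forest produced by Wilson's algorithm on $M_n^\dagger$ rooted at $\partial_n$ with the induced enumeration, it follows that, after identifying $\partial_n$ with $F_\infty\cup\{\infty\}$, the forests $\F^\dagger_{(n)}$ are consistent with the first $n$ steps of Wilson's algorithm on $M^\dagger$ rooted at $\{\infty\}\cup F_\infty$.

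By definition of $\FUSF_M$, $T_n = \UST_{M_n}$ converges weakly to $\FUSF_M$ as $n\to\infty$. By finite duality, $T_n^\dagger$ has the law of $\F^\dagger_{(n)}$ under Wilson's algorithm on $M_n^\dagger$, and by the coupling above these forests converge in distribution to the forest $\F^\dagger$ produced by the infinite-volume algorithm of the statement. Dualizing, $(\F^\dagger)^\dagger$ has the law of the weak limit of $T_n = \UST_{M_n}$, which is $\FUSF_M$, as desired.

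The main obstacle is making the coupling-and-limit argument fully rigorous, since one must verify that the random walks in the infinite-volume algorithm almost surely terminate at every stage, i.e.\ that a walk on $M^\dagger$ from a finite face cannot be trapped in $\Ffin$ without ever hitting $F_\infty$, the existing tree, or escaping to infinity. This is the infinite-degree analogue of the termination step in the standard proof that Wilson's algorithm rooted at infinity samples $\WUSF$ for transient graphs \cite{BLPS}, and it can be handled in essentially the same way by exploiting the fact that the finite-volume algorithm always terminates and that exit from $V_n$ corresponds in the limit to hitting $F_\infty\cup\{\infty\}$. Secondary care is required when choosing the exhaustion so that vertices and edges incident only to infinite faces are eventually included in $M_n$; this is straightforward to arrange.
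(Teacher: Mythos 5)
Your proposal follows essentially the same route as the paper's proof: exhaust $M$, use finite planar duality to identify the dual of $\UST_{M_n}$ with the UST of the dual map in which all faces meeting the complement are merged into a boundary vertex $\partial_n$, run Wilson's algorithm there rooted at $\partial_n$ using the very same random walks on $M^\dagger$ stopped at $F_\infty$, and pass to the limit exactly as in the Benjamini--Lyons--Peres--Schramm argument for Wilson's algorithm rooted at infinity. The one point to tighten is your justification of termination: the fact actually needed is that a walk on $M^\dagger$ started at a finite face and stopped at $F_\infty$ is almost surely either killed in finite time or transient (if it visited some finite face infinitely often it would almost surely eventually follow a fixed finite path into $F_\infty$), which is what gives $\tau_i^n\to\tau_i$ and convergence of the loop-erasures; this is an intrinsic property of the walk and does not follow merely from the finite-volume algorithms terminating.
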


\begin{proof}
  Let $(V_n)_{n\geq0}$ be an exhaustion of $V$ such that the submap of
  $M$ induced by $V\setminus V_n$, denoted $M_n$, does not have any
  finite connected components for any $n$.  The dual of $M_n$ may be
  constructed from $M^\dagger$ by identifying every face $f$ of $M$ that
  does not have all of its vertices included in $V_n$ into a
  single vertex $\partial_n$, and deleting all the self-loops that are
  created.  In particular, all infinite faces of $M$ are identified into
  $\partial_n$ for every $n\geq1$.

  Note that a simple random walk $(X_n)_{n\geq0}$ on $M^\dagger$,
  started at some finite face $f$ and stopped if and when it hits
  $F_\infty$, is either transient or killed at a finite time almost
  surely.  Given these observations, the rest of the proof, below, proceeds
  similarly to the usual proof of the veracity of Wilson's algorithm
  rooted at infinity~\cite[Theorem 5.1]{BLPS}.

  Let $H$ be a finite set of edges of $M^\dagger$, and let
  $f_1,\dots,f_l$ be an enumeration of the set of faces $f$ of $M$ that
  are endpoints of at least one of the edges in $H$. Let
  $( ( X^i_j )_{j\geq0}: i = 1,\dots,l)$ be a
  collection of independent random walks in $M^\dagger$, where the walk
  $( X^i_j )_{j\geq0}$ is started at $f_i$ and stopped the
  first time that it hits an infinite face of $M$. Run Wilson's
  algorithm in $M_n^\dagger$, rooted at $\partial_n$ and starting with
  the faces $f_1,\ldots,f_l$ in that order, using the random walks
  $( X^i_j )_{j\geq0}$: For each $i\in[l]$, let $\tau^n_i$
  be the first time that the random walk $( X^i_j)_{j\geq0}$
  visits the portion of the spanning tree generated up to time $i-1$, so
  that
  \[
    \UST_{M^\dagger_n}(H \subset T) = \P\Big(H \subseteq \bigcup_{i\leq l}
    \mathsf{LE}\big((X^i_j)_{j=0}^{\tau^n_i}\big) \Big).
  \]

  Now, similarly, run Wilson's algorithm on $M^\dagger$ rooted at
  $\{\infty\}\cup F_\infty$, starting with the faces $f_1,\ldots,f_l$ in
  that order and using the random walks
  $( X^i_j )_{j\geq0}$, and let $\tau_i$ be the first time
  that the random walk $( X^i_j)_{j\geq0}$ visits the
  portion of the spanning tree generated up to time $i-1$ (which might
  now be infinite).  Since the walks $( X^i_j )_{j\geq0}$
  are finite or transient almost surely, we have that
  \[
    \tau^n_i \to \tau_i \quad \text{ and } \quad
    \mathsf{LE}\big((X^i_j)_{j=0}^{\tau^n_i}\big) \to
    \mathsf{LE}\big((X^i_j)_{j=0}^{\tau_i}\big)
  \]
  almost surely as $n\to\infty$.  It follows that
  \begin{align*}
    \FUSF_{M}(H \subset \F^\dagger) %
    &= \lim_{n\to\infty}\UST_{M^\dagger}(H \subset T) =
    \lim_{n\to\infty}\P\Big(H \subseteq \bigcup_{i\leq l}
    \mathsf{LE}\big((X^i_j)_{j=0}^{\tau^n_i}\big) \Big) \\
    &= \P\Big(H \subseteq \bigcup_{i\leq l}
    \mathsf{LE}\big((X^i_j)_{j=0}^{\tau_i}\big) \Big)
  \end{align*}
  completing the proof.
\end{proof}

\begin{prop}\label{prop:conn2}
  Let $(M,\rho)$ be a simply connected, unimodular random rooted map
  with $\P[\deg(\rho)]<\infty$ and suppose that the dual $M^\dagger$
  contains a vertex of infinite degree almost surely. Let $\F$ be a
  sample of $\FUSF_M$. Then every connected component of
  $\F^\dagger \setminus F_\infty$ is finite almost surely, and
  consequently $\F$ is connected almost surely.
\end{prop}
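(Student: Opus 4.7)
The plan is to sample $\F^\dagger$ via the variant of Wilson's algorithm rooted at $\{\infty\}\cup F_\infty$ from \cref{P:Fdualinfty}, and then to show that every loop-erased random walk appearing in the algorithm terminates by hitting $F_\infty$ in finitely many steps almost surely. Since each walk is stopped upon first entering the current partial forest, which always contains $F_\infty$, it suffices to show that simple random walk on $M^\dagger$ started from any finite face is absorbed in $F_\infty$ in finite time almost surely; this walk is well-defined until it reaches $F_\infty$ since every vertex of $\Ffin$ has finite degree in $M^\dagger$.

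Granted this absorption statement, each finite face is joined in $\F^\dagger$ to some infinite face by a finite path, but this alone does not yet give the stronger conclusion: one could still imagine a neighbor $v$ of $F_\infty$ in $\F^\dagger$ whose subtree contains infinitely many finite faces, giving an infinite component of $\F^\dagger \setminus F_\infty$. To rule this out, I would use a further mass-transport argument, exploiting $\E[\deg(\rho)]<\infty$ and unimodularity, to show that the subtree of $\F^\dagger$ hanging off each neighbor of $F_\infty$ is almost surely finite.

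The absorption claim is the heart of the proof, and I would prove it by splitting into cases according to \cref{lem:hyperfinitefaces}: if $M$ has $1$ or $2$ infinite faces, then $(M,\rho)$ is hyperfinite, and I would use a finitary exhaustion of $M$ (coupled with the dual random walks via \cref{lem:couplingmarks}) to transfer the absorption statement from the finite-map setting, where it is automatic, to the infinite case by a limiting argument; in the case of infinitely many infinite faces, hyperfiniteness is unavailable and one must argue directly, leveraging the mass-transport principle to deduce recurrence of the dual walk relative to $F_\infty$ from the positive density of edges of $M$ incident to $F_\infty$.

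Finally, once every component of $\F^\dagger\setminus F_\infty$ is shown to be finite, the connectivity of $\F$ is immediate from the planar duality for essential spanning forests recalled before the proposition: since the walks in the algorithm are stopped on first entering $F_\infty$, no edge of $\F^\dagger$ joins two infinite faces, so each component of $\F^\dagger$ contains at most one infinite face; combined with the finiteness of the components of $\F^\dagger\setminus F_\infty$, each component of $\F^\dagger$ is at most one-ended, and hence $\F$ is connected almost surely. The main obstacle is the absorption statement in the case of infinitely many infinite faces: the naive strategy of showing recurrence of the subgraph of $M^\dagger$ induced by $\Ffin$ fails because the dual graph may itself be transient, and one must carefully exploit the interplay between finite expected degree, unimodularity, and the topological structure of a simply connected planar map.
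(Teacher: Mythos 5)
Your outline has the same architecture as the paper's argument (sample $\F^\dagger$ by Wilson's algorithm rooted at $\{\infty\}\cup F_\infty$, prove an absorption statement for the dual walk, finish with a mass transport, and conclude connectivity by duality), but the step you yourself identify as the heart of the matter --- that simple random walk on $M^\dagger$ started at a finite face hits $F_\infty$ in finite time almost surely --- is precisely what you do not prove, and the route you sketch would not go through. The case split via \cref{lem:hyperfinitefaces} is both unnecessary and problematic: in the one- or two-infinite-face case, ``transferring absorption from finite maps by a limiting argument'' is not available in any soft form, because hitting $F_\infty$ is not a local event, the finite pieces of a finitary exhaustion have no faces corresponding to $F_\infty$, and hitting times are not continuous under such approximations (this is exactly the content of the claim, not a consequence of hyperfiniteness); and in the infinitely-many-infinite-faces case you explicitly leave the argument open. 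So the proposal has a genuine gap at its central point.

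The paper's device, which handles all cases uniformly, is to split each infinite face into degree-one leaves, one for each incident dual edge, producing a locally finite graph $G'$ with vertex set $\Ffin\cup L$. Rooting $G'$ at the tail of $\eta^\dagger$, where $\eta$ is a uniform oriented edge at $\rho$ under the degree-biased law, the rooted marked component $(G'',\rho',\mathbbm{1}_L)$ is reversible by the same computation as in \cref{Sec:dual}; since (by simple connectivity) every component of $M^\dagger\setminus F_\infty$ neighbours an infinite face, $G''$ contains a vertex of $L$ a.s., and stationarity together with the positive density of $L$ forces the walk to visit $L$ a.s.; coupling this walk with the dual walk stopped at $F_\infty$ gives absorption. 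This is in effect the rigorous version of your ``positive density'' heuristic, and it makes your worry about transience of the dual irrelevant --- no recurrence statement is needed. The same construction is also what legitimises your second step: the concluding transport (each vertex sends unit mass to the $L$-vertex of its component of $\F^\dual$, which is unique because Wilson's algorithm starts from $L$ and each loop-erased path attaches to exactly one existing component at its first hitting time) must be run on a unimodular graph containing the dual vertices, and $M^\dagger$ itself is not locally finite and is given no unimodular rooting by \cref{Sec:dual}; the paper runs it on the $\deg(\rho')^{-1}$-biasing of the reversible law of $(G',\rho')$. Without the $G'$ construction (or an equivalent), both the absorption claim and the finiteness-of-components step in your outline remain unproven.
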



\begin{proof}[Proof of \cref{prop:conn2}]
  The main ingredient is the observation that a random walk on $M^\dual$
  started at a finite face will hit $F_\infty$ in a finite time almost
  surely.

  To see this, we first bias the law of $M$ by $\deg(\rho)$ to get a
  reversible map.  Next, we convert \asaf{$M^\dagger$} into a
  a (possibly disconnected) graph $G'$ with edge set $E^\dagger$ as
  follows. For each edge $e^\dagger \in E^\dagger$ with an endpoint in
  $F_\infty$, replace this endpoint with a new vertex of degree one.
  The vertex set of $G'$ is $\Ffin \cup L$, where $L$ is the set of new
  degree one vertices of $G'$ corresponding to edges of $M$ incident to
  $F_\infty$.
   We define a marking $m$ on the vertices of $G'$ where $m(v) = \mathbbm{1}(v\in L)$.
  Let $\eta$ be chosen uniformly from among the oriented edges emanating from $\rho$, and let $\rho'$ be the tail of $\eta^\dagger$ in $G'$.
  Let $G''$ denote the connected component of $G'$ containing $\rho'$.  
  A similar argument to that used in \cref{Sec:dual} shows that the random marked rooted graph $(G'',\rho',m)$ is reversible.
  Thus, since $G''$ contains a vertex in $L$ a.s., the random walk on $G''$ must visit $L$ a.s.
  We conclude
  by noting that the random walk on $M^\dagger$ started at a vertex
  $f\in \Ffin$ and stopped when it first hits $F_\infty$ can be coupled
  with the random walk on $G'$ started at the same $f$ and stopped when
  it first hits $L$ so that the two hitting times agree.

  To study the FUSF, bias the reversible law of $(G',\rho')$ by
  $1/\deg(\rho')$ to get a unimodular graph.
  Wilson's algorithm rooted at $\{\infty\}\cup F_\infty$ can be seen as
  generating a spanning forest $\F^\dual$ of $G'$, starting with
  $\F^\dual_0=L$ and adding loop erased random walks.
  \cref{P:Fdualinfty} implies that $\F^\dual$ is distributed as the dual of the free uniform spanning forest of $M$.
  The argument above implies that every component of $\F^\dual$
  contains a unique vertex of $L$.
  Consider the mass transport where each vertex of $G'$ sends a unit
  mass to the vertex of $L$ in its component of $\F^\dual$.  Then every
  vertex sends mass $1$, and hence no vertex receives infinite mass.
  Thus, every component of $\F^\dual$ is finite, and the result follows.
\end{proof}


\begin{proof}[Proofs when $M^\dual$ is not locally finite]
  If $M$ is simply connected then \cref{thm:FUSFconnectivity,T:WUSFends}
  follow immediately from \asaf{\cref{prop:conn2}}.  If $M$ is recurrent and
  not simply connected, then $M$ and $\F$ must both have two ends, giving
  \cref{T:WUSFends}. (\cref{thm:FUSFconnectivity} makes no claim about multiply connected maps.)
\end{proof}

\subsection{Percolation and minimal spanning forests}

While the uniform spanning forests are related to random walks, the
\emph{minimal} spanning forests are related to bernoulli bond
percolation.  We recall some of the connections, and refer the reader to
\cite[\S11]{LP:book} for a detailed account.
\textbf{Bernoulli-$p$ bond percolation} on $G$, denoted $\omega_p$, is
the random subgraph of $G$ defined by keeping each edge of $G$
independently with probability $p$ and deleting the rest.  The Bernoulli
bond percolations $\{\omega_p\}_{p \in [0,1]}$ on a graph $G$ may be
coupled monotonically by letting $\{U(e)\}_{e \in E}$ be a collection of
i.i.d.\ Uniform$([0,1])$ random variables indexed by the edge set of $G$
and setting $\omega_p(e)=\mathbbm{1}(U(e)\leq p)$ for every $e\in E$ and
$p\in[0,1]$.  The \textbf{critical probability} of $G$ is defined by
\[
  p_c(G) := \inf\left\{ p : \P(\omega_p \text{ has an infinite connected
      component}) = 1 \right\}.
\]
It is well-known \cite{AL07,NewSch81} that if $(G,\rho)$ is an ergodic
unimodular random rooted graph, then for each $p\in[0,1]$ the number of
infinite connected components of $\omega_p$ for any is non-random and is
in $\{0,1,\infty\}$ almost surely.  Moreover, if $(G,\rho)$ is a
unimodular random rooted graph and $p$ is such that $\omega_p$ has a
unique infinite cluster almost surely, then $\omega_{p'}$ has a unique
infinite connected component almost surely for every $p'\geq p$
\cite{HaggPe99,LS99,AL07}.
In light of this, the \textbf{uniqueness threshold} of a graph $G$ is defined to be
\[
p_u(G) = \inf\{ p : \P(\omega_p \text{ has a unique infinite
  connected component}) = 1 \}.
\]
Note that if $(G,\rho)$ is an ergodic, infinite unimodular random rooted
graph, then the quantities $p_c(G)$ and $p_u(G)$ are non-random.  It is
of interest to determine which graphs have a non-uniqueness phase for
Bernoulli bond percolation.
The following was proven by Aldous and Lyons \cite{AL07}, following work by in the transitive setting by Aizenman, Kesten, and Newman \cite{AizKesNew87}, Burton and Keane \cite{burton1989density}, and Gandolfi, Keane, and Newman \cite{gandolfi1992uniqueness}.
\begin{thm}[\cref{thm:dichotomy}, \eqref{iAmen} implies \eqref{iPerc}]
\label{thm:amenableuniquecluster}
Let $(G,\rho)$ be an invariantly amenable unimodular random rooted graph. Then $\omega_p$ has at most one infinite cluster almost surely for every $p\in [0,1]$, and in particular $p_c=p_u$ almost surely.
\end{thm}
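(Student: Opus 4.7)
The plan is to adapt the classical Burton-Keane trifurcation argument \cite{burton1989density} to the unimodular setting, with invariant amenability playing the role of the asymptotic isoperimetric inequality and the mass transport principle replacing translation invariance.

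Passing to the ergodic decomposition I may assume $(G,\rho)$ is ergodic, so that the number of infinite clusters of $\omega_p$ is almost surely a constant in $\{0,1,\infty\}$ by the dichotomy cited before the theorem. Suppose for contradiction that there exists $p\in[0,1]$ such that $\omega_p$ has infinitely many infinite clusters almost surely. Call a vertex $v$ a \emph{trifurcation} of $\omega_p$ if $v$ lies in an infinite cluster $C$ and $C\setminus\{v\}$ has at least three infinite components. The first step is to show that $\delta := \P(\rho \text{ is a trifurcation})$ is strictly positive. Since $\omega_p$ has infinitely many infinite clusters, for $r$ sufficiently large there is positive probability that the ball $B(\rho,r)$ meets three distinct infinite clusters. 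Conditioning on such an event and using insertion/deletion tolerance of Bernoulli percolation to rewire finitely many edges inside $B(\rho,r)$ so that $\rho$ is joined directly to exactly these three clusters produces a trifurcation at $\rho$ with positive probability, whence $\delta>0$.

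Next I would invoke invariant amenability: for any $\epsilon>0$ choose a finitary percolation $\eta$ on $(G,\rho)$ with $\E[|\partial_E K_\eta(\rho)|/|K_\eta(\rho)|]<\epsilon$. Using \cref{lem:couplingmarks}, couple $\eta$ with $\omega_p$ so that $(G,\rho,\omega_p,\eta)$ is unimodular. Applying the mass transport principle to the transport in which each trifurcation $u$ distributes unit mass uniformly among the vertices of its $\eta$-cluster $K_\eta(u)$ gives
\[
\E\!\left[\frac{\#\{\text{trifurcations of }\omega_p \text{ in }K_\eta(\rho)\}}{|K_\eta(\rho)|}\right] = \delta.
\]

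The final ingredient is the deterministic Burton-Keane count inside the finite set $K_\eta(\rho)$: every trifurcation $v\in K_\eta(\rho)$ admits three vertex-disjoint infinite paths in $\omega_p$, each of which must exit $K_\eta(\rho)$ through an edge of $\partial_E K_\eta(\rho)$; a standard forest argument then yields
\[
\#\{\text{trifurcations in }K_\eta(\rho)\} \leq |\partial_E K_\eta(\rho)| \quad\text{almost surely.}
\]
Taking expectations and combining with the displayed mass-transport identity gives $\delta<\epsilon$, and since $\epsilon>0$ was arbitrary this contradicts $\delta>0$. Therefore no $p$ admits infinitely many infinite clusters; the $\{0,1,\infty\}$ dichotomy rules out any finite number $\geq 2$, and the inequality $p_c\leq p_u$ combined with uniqueness gives $p_c=p_u$. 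The step requiring the most care is the deterministic Burton-Keane bound inside $K_\eta(\rho)$: one must choose the three disjoint infinite paths at distinct trifurcations in a coordinated way so that the forest-counting argument applies cluster-by-cluster within each finite $\eta$-component.
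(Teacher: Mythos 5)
Your overall route -- trifurcations, the mass-transport identity, and the finitary-percolation formulation of invariant amenability -- is exactly the Burton--Keane/Gandolfi--Keane--Newman argument that underlies the results of \cite{AL07} which the paper simply cites for this theorem (the paper gives no self-contained proof), so the strategy is sound and more explicit than the paper's. The final displayed MTP identity, the combination with $\#\{\text{trifurcations in }K_\eta(\rho)\}\leq|\partial_E K_\eta(\rho)|$, and the deduction $p_c=p_u$ are all correct; moreover the step you single out as delicate, the deterministic counting lemma, is the standard forest/leaf count and is not where the difficulty lies.

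The genuine gap is in your proof that $\delta=\P(\rho\text{ is a trifurcation})>0$. The modification you describe -- ``rewire finitely many edges inside $B(\rho,r)$ so that $\rho$ is joined directly to exactly these three clusters'' -- is the $\Z^d$ construction, and in a general unimodular random rooted graph it need not be available: there may be no three paths from $\rho$ to the three clusters that are disjoint away from $\rho$ (for instance $\rho$ may have degree $2$, or every route from $\rho$ to the relevant boundary vertices may pass through a common cut vertex), so no finite rewiring makes $\rho$ itself a trifurcation on that event. What the local modification does give is a trifurcation \emph{somewhere} in $B(\rho,r)$: on the event that $B(\rho,r)$ meets three distinct infinite clusters, each such cluster has an infinite component off the ball attached to a ball vertex $v_i$ by an open boundary edge; open a spanning tree $T$ of the ball, close all other edges with both endpoints in the ball \emph{and} all boundary edges except the three attachment edges (without closing these, two of the branches could be reconnected outside the ball and the construction fails), and then the median of $v_1,v_2,v_3$ in $T$ is a trifurcation. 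To pass from ``with positive probability some vertex of $B(\rho,r)$ is a trifurcation'' to $\delta>0$ you then need a second mass-transport (root-is-typical) argument, e.g.\ send unit mass from each vertex $u$ to every trifurcation in $B(u,r)$: positive expected mass sent forces positive expected mass received at $\rho$, which is impossible if $\delta=0$. This is precisely the point where unimodularity is needed beyond your final identity, and as written your argument skips it.
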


\begin{proof} This follows by combining Corollary 6.11, Lemma 6.12,  Proposition 8.8, and Corollary 8.10 of \cite{AL07}.
\end{proof}
A long standing conjecture of Benjamini and
Schramm~\cite{bperc96} asserts conversely that every nonamenable
transitive graph has $p_c<p_u$.  \medskip

Lyons, Peres and Schramm \cite{LPS06} related the non-uniqueness phase
to minimal spanning forests.  Given a finite graph $G$ and an injective
\textbf{weight} function $U : E(G) \to \R$ the \textbf{minimal spanning
  tree} of $G$ with respect to $U$ is defined to be the spanning tree
$T$ of $G$ minimising the total weight $\sum_{e\in T}U(e)$.
Equivalently, an edge $e$ of $G$ is contained in $T$ if and only if
there does not exist a simple cycle in $G$ containing $e$ such that $e$
maximises $U(e)$ among the edges in this cycle.  We write
$\mathsf{MST}_G$ for the distribution on spanning trees of $G$ obtained
by letting $T$ be the minimal spanning tree of $G$ with respect to
weights $\{U(e)\}_{e \in E}$ given by i.i.d.\ Uniform$([0,1])$ random
variables.

This extends to infinite graphs using exhaustions, as for uniform
spanning trees.  Given an exhaustion $(V_n)_{n\geq0}$ of an infinite
graph $G$, we define the \textbf{free and wired minimal spanning
  forests} as the weak limits
\[
  \FMSF_G(S \subset T) := \lim_{n \to \infty} \mathsf{MST}_{G_n}(S \subset T)
\]
and
\[
\WMSF_G(S \subset T) := \lim_{n \to \infty} \mathsf{MST}_{G_n^*}(S \subset T).
\]
The limits exist and do not depend on the choice of exhaustion.  If
$(G,\rho)$ is a unimodular random rooted graph and $\F$ is a sample of
either $\WMSF_G$ or $\FMSF_G$, then $\F$ is a percolation on $(G,\rho)$.
Unlike in the uniform case, both of the minimal spanning forests may
also be defined directly on the infinite graph $G$ as follows.  Let
$\{U(e) : e \in E\}$ be a collection of i.i.d.\ Uniform$([0,1])$ random
variables indexed by the edge set of $G$.  An edge $e$ of $G$ is
included in free minimal spanning forest of $G$ if and only if it is not
the heaviest edge in any simple cycle in $G$.  An edge $e$ of $G$ is
included in the wired minimal spanning forest of $G$ if and only if it
is not the heaviest edge in any simple cycle in $G$ or in any
bi-infinite simple path (or 'cycle through infinity') in $G$.

\begin{thm}[\cref{thm:dichotomy},
  equivalence of \eqref{iMSF} and \eqref{iPerc}]
  Let $(G,\rho)$ be an infinite unimodular random rooted graph with
  $\E[\deg(\rho)]<\infty$.  Then $p_c(G)<p_u(G)$ if and only if
  $\FMSF_G \neq \WMSF_G$ if and only if there is at most one infinite
  cluster in Bernoulli $p$-percolation on $G$, at every $p\in[0,1]$.
\end{thm}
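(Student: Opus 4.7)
The proof will follow the strategy of Lyons, Peres and Schramm~\cite{LPS06}, exploiting the standard monotone coupling of the minimal spanning forests and Bernoulli bond percolations via i.i.d.\ Uniform$([0,1])$ edge weights $\{U(e)\}_{e \in E}$. With $\omega_p := \{e : U(e) \leq p\}$, the characterizations recalled above state that an edge $e = (u,v)$ lies in $\FMSF_G$ iff $u$ and $v$ lie in distinct clusters of $\omega_{U(e)^-} := \{e' : U(e') < U(e)\}$, and that $e \in \WMSF_G$ iff moreover these two clusters are not both infinite. Consequently,
\[
e \in \FMSF_G \setminus \WMSF_G \iff u \text{ and } v \text{ lie in two distinct infinite clusters of } \omega_{U(e)^-}.
\]

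The plan is to first establish that $\FMSF_G \ne \WMSF_G$ with positive probability if and only if there exists some $p \in [0,1]$ at which $\omega_p$ has at least two infinite clusters with positive probability. One direction is immediate from the characterization above: if $e \in \FMSF_G \setminus \WMSF_G$, then $\omega_{U(e)^-}$ carries at least two infinite clusters. For the converse, suppose that for some $p_0$ the subgraph $\omega_{p_0}$ has at least two infinite clusters with positive, and hence (by the $\{0,1,\infty\}$-dichotomy from \cite{AL07,NewSch81} after passing to the ergodic decomposition) full probability. Since $G$ is connected, there must then exist an edge $e = (u,v)$ with $U(e) > p_0$ whose endpoints lie in distinct infinite clusters of $\omega_{p_0}$. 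Let $T := \inf\{t \geq p_0 : \text{the two clusters of } \omega_{p_0} \text{ containing } u,v \text{ merge in } \omega_t\}$. If $T \geq U(e)$, then at level $U(e)^-$ the endpoints of $e$ still lie in distinct infinite clusters and $e \in \FMSF_G \setminus \WMSF_G$. Otherwise $T < U(e)$, and the unique edge $e'$ with $U(e') = T$ realizing the merge satisfies $e' \in \FMSF_G \setminus \WMSF_G$ by the same characterization.

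I would next reduce the remaining equivalence with $p_c(G) < p_u(G)$ to the monotonicity of uniqueness for Bernoulli bond percolation on unimodular random rooted graphs, transferred from the transitive setting of H\"aggstr\"om–Peres \cite{HaggPe99} and Lyons–Schramm \cite{LS99} to the unimodular setting in \cite{AL07}: if $\omega_p$ has a unique infinite cluster a.s.\ then so does $\omega_{p'}$ for every $p' \geq p$. Combined with the trichotomy that the number of infinite clusters is almost surely $0$, $1$, or $\infty$, this yields the familiar picture: $\omega_p$ has no infinite cluster for $p < p_c$, infinitely many for $p_c < p < p_u$, and a unique one for $p > p_u$. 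Hence there exists some $p \in [0,1]$ at which $\omega_p$ has more than one infinite cluster if and only if $p_c(G) < p_u(G)$. Chaining the two equivalences yields the theorem.

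The main obstacle is the careful treatment of the continuum of levels $p$ and the passage from a multiplicity statement at a single $p_0$ to the existence of a ``distinguishing'' edge: two distinct infinite clusters present at level $p_0$ may merge at some later edge weight below $U(e)$, so one must argue at the exact level $T$ of the first merge, using that almost surely all edge weights are distinct. The $\{0,1,\infty\}$ dichotomy and the monotonicity of uniqueness are the non-trivial inputs; everything else is verified by applying the mass transport principle to the unimodular marked graph $(G,\rho,(U(e))_{e \in E})$, which is unimodular by \cref{lem:unimodstability}.
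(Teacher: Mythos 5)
The core gap is in the converse direction of your first equivalence, i.e.\ in producing an edge of $\FMSF_G\setminus\WMSF_G$ from non-uniqueness at a level $p_0$. You assert that ``since $G$ is connected, there must exist an edge $e=(u,v)$ with $U(e)>p_0$ whose endpoints lie in distinct infinite clusters of $\omega_{p_0}$''. Connectivity of $G$ only gives a \emph{path} between two infinite clusters, not an edge with one endpoint in each: distinct infinite clusters need not be adjacent, and showing that two of them are adjacent with positive probability is a genuine step (in Lyons--Peres--Schramm \cite{LPS06} it is done via insertion tolerance, by opening the closed edges of a path beyond the last vertex belonging to one of the clusters, which makes a closed edge joining two distinct infinite clusters appear with positive probability). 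Your fallback through the merge time $T$ has a second unproved step: you take ``the unique edge $e'$ with $U(e')=T$ realizing the merge'', but the infimum need not be attained by a single edge --- the two clusters can be connected in $\omega_t$ for every $t>T$ by longer and longer paths while not being connected in $\omega_T$, in which case no such $e'$ exists. Circumventing precisely this is why LPS work with the identity $\P\bigl(e\in\FMSF_G\setminus\WMSF_G\bigr)=\int_0^1\P\bigl(\text{endpoints of $e$ in distinct infinite clusters of $\omega_p$}\bigr)\,dp$ and with statements about Lebesgue-a.e.\ $p$ rather than a single level; a similar Fubini argument is also needed for your ``immediate'' direction, since the two infinite clusters of $\omega_{U(e)^-}$ may have all their $\omega_p$-approximations finite for every $p<U(e)$.

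Separately, your ``familiar picture'' skips the critical level. To pass from ``there exists $p$ with more than one infinite cluster'' to ``$p_c<p_u$'', and to obtain clause \eqref{iPerc} at \emph{every} $p\in[0,1]$ including $p=p_c$, you must rule out multiple infinite clusters at $p_c$ when $p_c=p_u$; monotonicity of uniqueness and the $\{0,1,\infty\}$ trichotomy alone do not do this. This is exactly what the paper's proof adds to the quoted LPS theorem (that $\FMSF_G=\WMSF_G$ iff $\omega_p$ has a unique infinite cluster for a.e.\ $p$): in the invariantly amenable case there is at most one infinite cluster at every $p$ by \cref{thm:amenableuniquecluster}, and in the invariantly nonamenable case there is no infinite cluster at $p=p_c$ by \cref{Thm:nonamenableperc} (see \cite{BLPS99,AL07}). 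So where the paper's argument is a short deduction from these cited inputs, your plan amounts to reproving the LPS equivalence, and the two steps above are where that reproof currently breaks.
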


\begin{proof}
  Lyons, Peres and Schramm \cite{LPS06} proved that an infinite
  connected graph $G$ has $\FMSF_G=\WMSF_G$ if and only if for
  $\omega_p$ has a unique infinite cluster for Lebesgue-a.e.\
  $p\in[0,1]$.  Combining this with monotonicity of uniqueness
  \cite[Theorem 6.7]{HaggPe99,AL07} implies that if $\FMSF_G=\WMSF_G$
  then $p_c(G)=p_u(G)$ and hence there is at most one infinite cluster
  at every $p\in[0,1]$ except possibly at $p=p_c$.
  If $G$ is invariantly amenable, there is at most one infinite cluster
  at every $p$ by \cref{thm:amenableuniquecluster} and so also at $p=p_c$. If $G$ is nonamenable, then by
  \cite{AL07} there is no infinite cluster at $p=p_c$.
\end{proof}

The minimal spanning forests share several properties with their uniform
cousins:

\begin{description}
\item[Free dominates wired] The measure $\FMSF_G$ stochastically
  dominates the measure $\WMSF_G$ for every graph $G$.
\item[Domination and subgraphs] let $H$ be a connected subgraph of $G$.
  Then the FMSF of $H$ stochastically dominates the restriction of the
  FMSF of $G$ to $H$.
\item[Expected degree of the WMSF] The expected degree in the WMSF of
  root of any unimodular random rooted graph is two
  \cite[Proposition~7.3]{AL07}.
\item[Amenability and boundary conditions] If $(G,\rho)$ is an
  invariantly amenable random rooted graph, then $\FMSF_G=\WMSF_G$
  almost surely \cite[Proposition 18.14]{AL07}.
\item[Planar duality] If $M$ is a simply connected map with locally
  finite dual $M^\dagger$ and $\F$ is a sample of $\FMSF_M$, then
  $\F^\dagger$ has law $\WMSF_{M^\dagger}$ \cite[\S11.5]{LP:book}.
\end{description}

\medskip

From the above, we deduce that if $(G,\rho)$ a unimodular random rooted
graph, the measures $\FMSF_G$ and $\WMSF_G$ coincide almost surely if
and only if the expected degree of $\rho$ in the FMSF of $G$ is two.  As
for the uniform spanning forests, we relate the expected degree of the
FMSF to the average curvature.  The proof of \cref{T:FMSFFormula} is
general enough to provide an alternative proof of
\cref{thm:forestformula} that does not rely on connectivity of the
forest.

\begin{thm}[\cref{thm:dichotomy}, equivalence of \eqref{iCurv} and
  \eqref{iMSF}]
  \label{T:FMSFFormula}
  Let $(M,\rho)$ be an infinite simply connected unimodular random
  rooted map with $\E[\deg(\rho)]<\infty$ and let $\F$ be a sample of
  $\FMSF_M$.  Then
  \[\E[\deg_\F(\rho)] = 2 - \frac{1}{\pi}\bbK(M,\rho).\]
  In particular, $\FMSF_M=\WMSF_M$ almost surely if and only if
  $\bbK(M,\rho)=0$.
\end{thm}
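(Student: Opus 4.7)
The plan is to prove the expected-degree formula by planar duality and a mass transport, using as the key input that the wired minimal spanning forest of any unimodular random rooted graph has expected root-degree $2$. The equivalence with $\bbK(M,\rho)=0$ will then follow by comparing expected degrees under the monotone coupling of $\FMSF$ and $\WMSF$.

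First I would assume that $M^\dagger$ is almost surely locally finite. Then by the planar duality property of MSFs recalled just above, $\F^\dagger$ has law $\WMSF_{M^\dagger}$, and by \cref{prop:dualmaps} the dual measure $\P^\dagger$ is unimodular on $\cM_\bullet$. Hence $\E^\dagger[\deg_{\F^\dagger}(\rho^\dagger)]=2$, and inserting the explicit form \eqref{eq:dualmeasure} of $\P^\dagger$ this becomes
\[
  \E\!\left[\sum_{f\perp\rho}\frac{\deg_{\F^\dagger}(f)}{\deg(f)}\right]
  = 2\,\E\!\left[\sum_{f\perp\rho}\frac{1}{\deg(f)}\right].
\]
To convert this into a statement about the primal degree of $\F$, I would apply the mass transport in which every oriented edge $e$ of $M$ with $e\notin\F$ sends a total mass $1$ from $e^-$, distributed with weight $1/\deg(e^\ell)$ over the $\deg(e^\ell)$ corners of the face $e^\ell$. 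The total mass sent out of $\rho$ is $\deg(\rho)-\deg_\F(\rho)$, while the total mass received by $\rho$ equals $\sum_{f\perp\rho}\deg_{\F^\dagger}(f)/\deg(f)$, because the number of oriented edges with $e^\ell=f$ and $e\notin\F$ is precisely $\deg_{\F^\dagger}(f)$. Both expressions are dominated by $\deg(\rho)$, so the mass-transport principle applies and, combined with the displayed identity, yields
\[
  \E[\deg(\rho)] - \E[\deg_\F(\rho)]
  = 2\,\E\!\left[\sum_{f\perp\rho}\frac{1}{\deg(f)}\right].
\]
Using $\E[\deg(\rho)]=\E[\sum_{f\perp\rho}1]$ and the definition of $\bbK$, this rearranges to the target $\E[\deg_\F(\rho)]=2-\bbK(M,\rho)/\pi$.

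For the general case in which $M$ may have infinite faces, I would reduce to the locally-finite-dual case by augmenting each infinite face of $M$ with the random arc system from the last case of the proof of \cref{prop:submaps}, and assigning each arc an independent weight strictly heavier than every original-edge weight (say uniform on $[1,2]$). The resulting unimodular random map $M'$ has locally finite dual, shares the vertex set of $M$, and by \cref{prop:submaps} satisfies $\bbK(M',\rho)=\bbK(M,\rho)$. The heavy arc weights guarantee that the restriction of $\FMSF(M')$ to $E(M)$ coincides with $\FMSF(M)$. Applying the preceding paragraph to $M'$ gives the formula for $M'$, and matching the expected contribution of arcs at $\rho$ in $\FMSF(M')$ against the (vanishing) curvature correction $\bbK(M',\rho)-\bbK(M,\rho)=0$ transfers the formula back to $M$. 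Finally, the equivalence $\FMSF_M=\WMSF_M$ a.s.\ iff $\bbK(M,\rho)=0$ is immediate: $\FMSF$ stochastically dominates $\WMSF$, the latter has expected root-degree $2$, $\bbK(M,\rho)\le 0$ by \cref{C:curv_USF}, and equal expected degrees under the monotone coupling force the two percolations to coincide a.s.

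The main obstacle is the infinite-face reduction. Unlike the FUSF setting, the random arc system of \cref{prop:submaps} can contain genuine arc-only cycles, so heavy arc weights alone do not automatically exclude every arc from $\FMSF(M')$ at $\rho$; matching the expected arc-contribution to $\deg_{\F'}(\rho)$ with the vanishing curvature correction is the technical crux of the argument.
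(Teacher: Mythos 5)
Your locally-finite-dual computation is fine and is essentially the paper's own argument: duality sends $\FMSF_M$ to $\WMSF_{M^\dagger}$, the root of the WMSF of a unimodular random rooted graph has expected degree two, and your corner mass transport does the same bookkeeping as the paper's \cref{lem:omegadagger}, which converts the dual expectation via the uniformly chosen oriented edge and \eqref{eq:dualmeasure}. The genuine gap is in your reduction of the infinite-face case to this one.

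Once you give the arcs independent weights that are uniform on $[1,2]$, the minimal spanning forest of $M'$ for these weights is no longer $\FMSF_{M'}$: the definitions of $\FMSF$ and $\WMSF$ in the paper, the planar duality statement, and above all the expected-degree-two theorem \cite[Proposition 7.3]{AL07} that your first paragraph relies on are all stated for i.i.d.\ Uniform$[0,1]$ weights. So ``applying the preceding paragraph to $M'$'' only yields the formula for the \emph{standard} $\FMSF_{M'}$, which in general does contain arcs at $\rho$ and whose restriction to $E(M)$ is merely stochastically dominated by $\FMSF_M$, not equal to it; whereas the forest that does restrict to $\FMSF_M$ is the inhomogeneous-weight MSF, exactly the object for which you have no expected-degree identity. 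The equality $\bbK(M',\rho)=\bbK(M,\rho)$ gives no control on the expected arc-degree of $\rho$ in $\FMSF_{M'}$ (which is typically strictly positive), so the proposed ``matching against the vanishing curvature correction'' has nothing to match. Incidentally, the obstacle you single out is not the real one: since $M$ is connected, every arc closes a cycle together with a path of original, strictly lighter edges, so under your weights no arc ever enters the inhomogeneous MSF; the missing piece is rather a version of the expected-degree-two theorem for the wired MSF with unimodular but non-identically-distributed weights. To avoid proving that, argue as the paper does: establish the upper bound $\E[\deg_\F(\rho)]\le 2-\frac{1}{\pi}\bbK(M,\rho)$ by viewing $\F$ as a spanning-forest percolation on $M'$, which has the same curvature (\cref{L:forestbound}), and the lower bound by retaining each arc independently with probability $\eps$, applying the locally-finite-dual formula (with standard weights) to $M'_\eps$, invoking the fact that $\FMSF_M$ stochastically dominates the restriction of $\FMSF_{M'_\eps}$ to $E(M)$, and letting $\eps\to 0$ with dominated convergence.
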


The equivalence of \eqref{iPerc} and \eqref{iAmen} in
\cref{thm:dichotomy} can also be proven directly as follows. Let
$(M,\rho)$ be a simply connected, invariantly nonamenable unimodular random rooted
map. If $(M,\rho)$ has locally finite dual, we deduce that
$p_c(M)<p_u(M)$ by applying the following two results.

\begin{thm}[Benjamini, Lyons, Peres and Schramm \cite{BLPS99,BLPS99b};
  Aldous and Lyons \cite{AL07}] \label{Thm:nonamenableperc} Let
  $(G,\rho)$ be an invariantly nonamenable unimodular random rooted
  graph with $\E[\deg(\rho)]<\infty$. Then $\omega_{p_c}$ does not
  contain any infinite connected components almost surely.
\end{thm}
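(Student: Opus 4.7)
The plan is to argue by contradiction, assuming $\theta(p_c) := \P\bigl(|K_{\omega_{p_c}}(\rho)| = \infty\bigr) > 0$. I would begin by coupling all Bernoulli-$p$ percolations monotonically via i.i.d.\ edge weights $U(e) \sim \mathrm{Unif}[0,1]$, setting $\omega_p = \{e : U(e) \leq p\}$; the marked graph $(G, \rho, (U(e))_e)$ is unimodular, so each $\omega_p$ is a unimodular bond percolation on $G$.

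The first ingredient is a subcritical Cheeger bound. For any $p < p_c$ the percolation $\omega_p$ is finitary, so the mass-transport identity from the excerpt's amenability section gives
\[
\E\bigl[|\partial_E K_{\omega_p}(\rho)|/|K_{\omega_p}(\rho)|\bigr] = \E[\deg(\rho)] - \E[\deg_{\omega_p}(\rho)] = (1 - p)\,\E[\deg(\rho)].
\]
Invariant nonamenability forces the lower bound $\mathbf{i}^{\mathrm{inv}}(G,\rho) > 0$ on the left-hand side, yielding $p_c \leq 1 - \mathbf{i}^{\mathrm{inv}}/\E[\deg(\rho)] < 1$. This upper bound on $p_c$ is a useful structural fact, but it is not yet a contradiction at $p = p_c$ itself.

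The heart of the argument will be to upgrade the finitary bound so as to rule out an infinite cluster at $p_c$ directly. The guiding idea is that any infinite cluster at $p_c$ should be robust enough to survive a slight decrease of $p$. Under the monotone coupling, conditional on $\omega_{p_c}$, the percolation $\omega_{p_c - \epsilon}$ arises by independently deleting each open edge with probability $\epsilon/p_c$; equivalently, $\omega_{p_c - \epsilon}$ restricted to $\omega_{p_c}$ is a Bernoulli-$(1 - \epsilon/p_c)$ percolation on $\omega_{p_c}$. If I can show that for some small $\epsilon > 0$ this secondary percolation retains an infinite cluster with positive probability, then $\theta(p_c - \epsilon) > 0$, contradicting the definition of $p_c$.

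The main obstacle will be proving this survival claim, which reduces to showing that Bernoulli percolation on $K_\infty(\omega_{p_c})$ has critical probability strictly less than one. Since the invariant Cheeger constant of $K_\infty(\omega_{p_c})$, viewed as a unimodular random rooted graph conditional on $\rho \in K_\infty$, need not be positive---boundaries internal to $K_\infty$ can be strictly smaller than those in $G$---the inheritance from $\mathbf{i}^{\mathrm{inv}}(G,\rho) > 0$ is not direct. The classical route, following \cite{BLPS99b} and adapted to the unimodular setting by Aldous--Lyons, is to pass through a Harris--FKG based ``ghost field'' perturbation: one enlarges the system with an auxiliary marking encoding connections between two independent copies of $\omega_{p_c}$, applies the MTP to the enlarged unimodular graph, and exploits invariant nonamenability to obtain a strict monotonicity statement for $\theta$ at $p_c$, which closes the contradiction.
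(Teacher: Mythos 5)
The paper gives no proof of \cref{Thm:nonamenableperc}; it is cited directly from \cite{BLPS99,BLPS99b,AL07} and used as a black box. So there is nothing internal to compare against, and your proposal has to be judged against the known argument in the literature.

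Your high-level structure is correct: set up the monotone coupling, derive the subcritical Cheeger identity
$\E[|\partial_E K_{\omega_p}(\rho)|/|K_{\omega_p}(\rho)|] = (1-p)\E[\deg(\rho)]$ for $p<p_c$ (which indeed gives $p_c < 1$), and then seek a contradiction by showing that a hypothetical infinite cluster at $p_c$ would survive thinning to $\omega_{p_c-\eps}$. The reduction to showing $p_c\bigl(K_\infty(\omega_{p_c})\bigr) < 1$ is exactly right, as is your observation that invariant nonamenability of $G$ does not pass automatically to the cluster.

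The gap is that the sketch for closing this reduction is not a proof and does not match what \cite{BLPS99b,AL07} actually do. The phrase ``Harris--FKG ghost-field perturbation with two independent copies'' is not the mechanism in those references -- you seem to be borrowing the flavour of sharp-threshold or two-point-function arguments from a different corner of percolation theory, and it is not clear how one would make that rigorous in the unimodular setting without essentially reproving the result by other means. The classical argument instead splits on the number of infinite clusters of $\omega_{p_c}$. If there is a unique infinite cluster $C$, one argues that $C$, with finite clusters of $\omega_{p_c}$ absorbed onto it, is coupling-equivalent to $G$ and hence invariantly nonamenable, so the Cheeger computation you already wrote applies to $C$ and gives $p_c(C)<1$. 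If there are infinitely many infinite clusters, each cluster is typically invariantly \emph{amenable}, so no Cheeger bound is available; there one locates furcation points via the mass transport principle, constructs an embedded spanning tree of the cluster with branching number strictly greater than one, and shows (by a Galton--Watson comparison) that this tree survives Bernoulli thinning. Without one of these two arguments (or an equivalent), the ``survival at $p_c-\eps$'' step is unsupported and the proof is incomplete. I would recommend replacing the last paragraph with the case split and the tree construction, or simply citing the relevant propositions of \cite{BLPS99b} and \cite[Theorem 6.12]{AL07} as the paper itself does.
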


\begin{thm}[Benjamini and Schramm {\cite[Theorem 3.1]{BS00}}]
  Let $(M,\rho)$ be an invariantly nonamenable, simply connected
  unimodular random rooted map with locally finite dual $M^\dagger$ and
  suppose that $\E[\deg(\rho)]<\infty$. Then $\omega_p$ has a unique
  infinite connected component if and only if every component of
  $\omega^\dagger_p$ is finite. It follows that
  \[
    p_u(M) = 1 - p_c(M^\dagger)
  \] almost surely and that $\omega_{p_u}$ contains a unique infinite
  connected component almost surely.
\end{thm}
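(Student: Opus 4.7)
My plan is to deduce the theorem from the planar duality of Bernoulli percolation combined with \cref{Thm:nonamenableperc} applied to both $M$ and $M^\dagger$. The first step is to set up the duality: for $\omega_p$ a Bernoulli-$p$ bond percolation on $M$, define $\omega_p^\dagger$ on $M^\dagger$ by declaring $e^\dagger$ open if and only if $e$ is closed. Since the primal-dual edge correspondence is a bijection and the edge states are independent, $\omega_p^\dagger$ is a Bernoulli-$(1-p)$ bond percolation on $M^\dagger$. Using \cref{prop:dualmaps} and the fact that invariant nonamenability is a coupling-equivalence invariant (via \cref{lem:hypcoupling} and \cref{prop:invexh}), $(M^\dagger,\rho^\dagger)$ is itself a simply connected, invariantly nonamenable unimodular random rooted map with locally finite dual and finite expected root degree, so \cref{Thm:nonamenableperc} applies to $M^\dagger$ as well.

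The heart of the proof is the equivalence that $\omega_p$ has a unique infinite cluster if and only if every cluster of $\omega_p^\dagger$ is finite. For the direction from ``all dual clusters finite'' to ``unique primal infinite cluster,'' I would argue topologically: viewing the embedding in the plane, the open primal edges form a $1$-dimensional subset $\omega^{\mathrm{geo}}$ of the plane whose connected components are the primal clusters, while the open connected components of the complement of $\omega^{\mathrm{geo}}$ are exactly the regions corresponding to dual clusters of $\omega_p^\dagger$. If two distinct infinite primal clusters existed, planar topology forces the complement of their union to contain an unbounded open region, which contains infinitely many faces of $M$ (since faces have finite degree by the locally-finite-dual hypothesis), yielding an infinite dual cluster and a contradiction.

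For the converse, suppose $\omega_p^\dagger$ has at least one infinite cluster. By unimodularity and a Burton--Keane-type argument, the number of infinite dual clusters is almost surely $1$ or $\infty$. By planarity and simple connectivity, any infinite dual cluster contains a bi-infinite simple dual path $\gamma^\dagger$. Drawn in the plane, $\gamma^\dagger$ separates the plane into at least two unbounded open regions, and primal clusters in opposite regions cannot be connected through $\gamma^\dagger$ since the dual edges of $\gamma^\dagger$ correspond to closed primal edges. A further mass-transport argument, together with nontriviality at subcritical dual densities supplied by \cref{Thm:nonamenableperc} applied to $M^\dagger$, shows that at least two of these regions contain primal vertices in infinite clusters, giving at least two infinite primal clusters.

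Granting the key equivalence, $p_u(M)=1-p_c(M^\dagger)$ is immediate: $p \geq p_u(M)$ iff $\omega_p$ has a.s.\ a unique infinite cluster iff $\omega_p^\dagger$ has a.s.\ no infinite cluster iff $1-p \leq p_c(M^\dagger)$, using monotonicity of uniqueness \cite{HaggPe99,LS99,AL07}. The boundary case $p=p_u(M)$ and the final assertion that $\omega_{p_u}$ has a unique infinite cluster follow from \cref{Thm:nonamenableperc} applied to $M^\dagger$: at $p_c(M^\dagger)$ there is almost surely no infinite dual cluster, so $\omega^\dagger_{1-p_u}$ has no infinite cluster and the equivalence delivers uniqueness of $\omega_{p_u}$. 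The main obstacle will be making the topological arguments underlying the key equivalence rigorous, in particular the claim that an infinite dual cluster contains a bi-infinite separating path; the underlying graph $M$ may have more ends than its (necessarily one-ended) embedding surface, so one must carefully exploit the locally-finite-dual hypothesis to carry out the topological separation arguments, perhaps after first reducing via invariant mass-transport to a one-ended decomposition of the map.
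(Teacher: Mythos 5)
First, a point of comparison: the paper does not actually prove this statement. It is quoted from Benjamini and Schramm \cite[Theorem 3.1]{BS00} with the remark that their argument for transitive planar graphs ``extends immediately'' to unimodular maps, and details are omitted because the equivalence of \eqref{iAmen} and \eqref{iPerc} is obtained independently via the curvature/expected-degree route (\cref{T:FMSFFormula}). So your proposal is an attempt to reconstruct the Benjamini--Schramm argument, and as written it has genuine gaps in both halves of the central equivalence.

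In the direction ``every cluster of $\omega_p^\dagger$ is finite $\Rightarrow$ $\omega_p$ has a unique infinite cluster'' you only rule out two or more infinite clusters; you never produce one. Excluding the possibility that neither $\omega_p$ nor $\omega_p^\dagger$ has an infinite cluster is exactly where invariant nonamenability must enter (on $\Z^2$ at $p=1/2$ both configurations have only finite clusters), and it is a substantive step, not a formality. Moreover, your uniqueness argument conflates an unbounded complementary region of the plane with a cluster of $\omega_p^\dagger$: the faces lying in such a region are adjacent in the dual graph, but the dual edges joining them are open only when the primal edges they cross are closed, and the region between two infinite primal clusters is in general crisscrossed by further open primal edges, so an unbounded region does not by itself yield an infinite dual cluster (and even the existence of an unbounded complementary component needs an argument). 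In the converse direction the first step fails as stated: an infinite cluster of $\omega_p^\dagger$ need not contain a bi-infinite simple path --- it may be a ray, and a ray does not separate the plane --- and the claim that at least two of the complementary regions of your separating path meet infinite primal clusters is precisely the interleaving argument that forms the bulk of the proof in \cite{BS00} (it uses insertion tolerance and the fact that when there are infinitely many infinite clusters each has infinitely many ends); appealing to \cref{Thm:nonamenableperc} ``at subcritical dual densities'' does not supply it. Note also that for the contrapositive you only need to exclude \emph{exactly one} infinite primal cluster, so the zero-cluster case is free, and the hard content is passing from one infinite cluster to two. Finally, the obstacle you defer at the end can simply be dispatched: simple connectivity together with a locally finite dual forces the underlying graph to be one-ended (only finitely many faces, all bounded, meet any compact subset of the plane), so no further reduction is needed there. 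The closing deduction of $p_u(M)=1-p_c(M^\dagger)$ and uniqueness at $p_u$ from the equivalence, monotonicity of uniqueness, and \cref{Thm:nonamenableperc} applied to $M^\dagger$ is fine.
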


Since at $p_c$ there is no infinite cluster, and at $p_u$ one exists, it
follows that $p_c<p_u$.  We remark that Benjamini and Schramm proved
their theorem for transitive planar graphs, but their proof extends
immediately to our setting.  Since we provide an alternative proof via
the curvature, we omit further details.

If $M$ does not have locally finite dual, then it must have infinitely
many infinite faces by \cref{lem:hyperfinitefaces}, so that the
underlying graph of $M$ is infinitely ended. In this case, we have that
$p_u(M)=1$ almost surely (see \cref{propinftyendspu}), while $p_c(M)<1$
by \cref{Thm:nonamenableperc}.

Thus, we have the following.

\begin{corollary}[\cref{thm:dichotomy}, \eqref{iAmen} implies \eqref{iPerc}]
  Let $(M,\rho)$ be an infinite, simply connected, invariantly nonamenable
  unimodular random rooted map with $\E[\deg(\rho)]<\infty$.  Then
  $p_c(M)<p_u(M)$ almost surely.
\end{corollary}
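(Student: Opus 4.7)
The plan is to carry out the case analysis explicitly sketched in the paragraphs immediately preceding the statement, split according to whether the dual map $M^\dagger$ is locally finite almost surely. Since we may pass to the ergodic decomposition, I will treat $p_c(M)$ and $p_u(M)$ as deterministic throughout.

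Suppose first that $M^\dagger$ is locally finite almost surely. I would combine two inputs: \cref{Thm:nonamenableperc}, which says that under invariant nonamenability $\omega_{p_c(M)}$ has no infinite connected component almost surely; and the Benjamini-Schramm planar duality theorem, which gives $p_u(M) = 1 - p_c(M^\dagger)$ almost surely together with the fact that $\omega_{p_u(M)}$ has a unique infinite cluster almost surely. If the two thresholds were equal, Bernoulli percolation at the common value would simultaneously have no infinite cluster and a unique infinite cluster, which is absurd, so $p_c(M) < p_u(M)$.

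Suppose next that $M^\dagger$ fails to be locally finite with positive probability. By ergodicity this failure holds almost surely, so $M$ has at least one face of infinite degree almost surely. Invariant nonamenability rules out hyperfiniteness, so \cref{lem:hyperfinitefaces} forbids the number of infinite faces from being a positive finite constant, forcing $M$ to have infinitely many infinite faces almost surely. Together with \cref{lem:endsfaces} this makes the underlying graph of $M$ infinitely ended, and then \cref{propinftyendspu} yields $p_u(M) = 1$ almost surely. On the other side, $\omega_1 = M$ has an infinite cluster because $M$ is infinite, but \cref{Thm:nonamenableperc} precludes an infinite cluster at $p_c$, so $p_c(M) < 1 = p_u(M)$.

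The main technical obstacle is the Benjamini-Schramm planar duality used in the first case. The statement originally appears in \cite{BS00} for vertex-transitive planar graphs, and I would need to verify that its proof — a Burton-Keane style encounter-point argument showing that coexistence of several infinite primal clusters and at least one infinite dual cluster at the same $p$ is impossible — goes through when the normalized counting over the vertex orbit is replaced by the mass-transport principle. Beyond this adaptation the argument is geometric and purely local, so the extension is routine, and once it is in hand both cases of the corollary reduce to direct applications of the already-stated theorems.
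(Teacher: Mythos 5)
Your proposal is correct and follows essentially the same route as the paper: the paper's own argument is exactly the case split on local finiteness of $M^\dagger$, combining \cref{Thm:nonamenableperc} with the Benjamini--Schramm duality theorem in the locally finite case, and \cref{lem:hyperfinitefaces}, \cref{lem:endsfaces}, and \cref{propinftyendspu} in the other case. The paper likewise notes that the Benjamini--Schramm theorem must be transferred from the transitive planar setting to unimodular random rooted maps (asserting the extension is immediate and omitting details since the curvature/FMSF route gives an independent proof), so your flagged obstacle matches the paper's own caveat.
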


\subsection{Expected degree formula}\label{sec:expecteddegree}


We prove \cref{T:FMSFFormula}.  Every property of the minimal spanning
forests that we use also holds for the uniform spanning forests, so that
we also obtain an alternative proof of \cref{thm:forestformula} that
does not rely on \cref{thm:FUSFconnectivity}.

\begin{proof}[Proof of \cref{T:FMSFFormula} (and
  \cref{thm:forestformula})]

\hspace{0mm}

\paragraph{Locally finite dual case.}
Let $\omega$ be a percolation on $(M,\rho)$, and let
$\omega^\dagger = \{e^\dagger \in E^\dagger : e \notin \omega\}$ be the
dual percolation.  As in \cref{Sec:dual}, let $\eta$ be chosen uniformly
at random from the set $E_\rho^\rightarrow$ of oriented edges of $M$
emanating from $\rho$, let $\rho^\dagger=\eta^r$, let
$\P^\mathrm{rev}$ be the $\deg(\rho)$-biasing of $\P$ and let
$\P^\dagger$ be the $\deg(\rho^\dagger)^{-1}$ biasing of
$\P^\mathrm{rev}$, so that $(M^\dagger,\rho^\dagger)$ is a unimodular
random rooted map under $\P^\mathrm{rev}$. We write $\E^\dagger$ for the
expectation operator associated to $\P^\dagger$.

\begin{lem}\label{lem:omegadagger}
  For any percolation $\omega$ we have
  \[
    \E[\deg_\omega(\rho)]= \E[\deg(\rho)] - \E\bigg[\sum_{f\perp \rho}
    \deg(f)^{-1}\bigg]\E^\dagger[\deg_{\omega^\dagger}(\rho^\dagger)].
  \]
\end{lem}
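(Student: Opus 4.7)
The plan is to rearrange the desired identity, interpret both sides as averages of unimodular quantities, and close the loop with a single mass transport.

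First I would unpack the right-hand side using the explicit formula \eqref{eq:dualmeasure} for $\P^\dagger$. Since $\P^\dagger$ arises as the $\deg(\rho^\dagger)^{-1}$-biasing of $\P_{\mathrm{rev}}$, the formula gives
\[
\E\!\left[\sum_{f\perp\rho}\deg(f)^{-1}\right]\E^\dagger[\deg_{\omega^\dagger}(\rho^\dagger)]
= \E\!\left[\sum_{f\perp\rho}\frac{\deg_{\omega^\dagger}(f)}{\deg(f)}\right].
\]
Thus the lemma reduces to showing
\[
\E[\deg(\rho)]-\E[\deg_\omega(\rho)] = \E\!\left[\sum_{f\perp\rho}\frac{\deg_{\omega^\dagger}(f)}{\deg(f)}\right].
\]
The left-hand side has an obvious meaning: it equals $\E\#\{e : e^-=\rho,\ e\notin\omega\}$, which is also the expected number of oriented dual edges in $\omega^\dagger$ whose primal tail is $\rho$. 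So what is needed is to redistribute this count through the incident faces of $\rho$ using the mass transport principle.

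The main step is to define a mass transport $F(M,u,v)$ that sends mass around the corners of each face. Concretely, for each oriented edge $e$ with $e^-=u$ (that is, for each corner at $u$), look at the face $f=e^\ell$, and for each oriented edge $e'$ lying on the boundary of $f$ with $(e')^-=v$ and $e'\notin\omega$, let $u$ send mass $1/\deg(f)$ to $v$. This transport is non-negative, so the MTP applies without any integrability issues (all face degrees are at least one in the locally finite dual setting). The outgoing mass from $u$ is
\[
\sum_v F(M,u,v) = \sum_{f\perp u}\frac{\#\{e'\in\partial f : e'\notin\omega\}}{\deg(f)} = \sum_{f\perp u}\frac{\deg_{\omega^\dagger}(f)}{\deg(f)}.
\]
The incoming mass at $v$ telescopes: summing first over corners of each face $f$ contributes a factor of $\deg(f)$, cancelling the weight $1/\deg(f)$, so
\[
\sum_u F(M,u,v) = \sum_f \#\{e' : (e')^-=v,\ (e')^\ell=f,\ e'\notin\omega\} = \deg(v)-\deg_\omega(v).
\]
Applying the MTP at $\rho$ gives exactly the reduced identity above.

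There is no serious obstacle here beyond bookkeeping: the only slight subtlety is keeping the face multiplicities straight when writing $\sum_{f\perp v}$, which per the convention of \cref{Sec:maps} is really a sum over oriented edges $e$ with $e^-=v$ (using $f=e^\ell$). Once one commits to that, the cancellation of $\deg(f)$ in the incoming mass calculation is automatic, and combining the MTP identity with the dual-measure expansion yields the lemma.
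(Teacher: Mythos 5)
Your argument is correct, but it takes a noticeably different route from the paper's. The paper's proof never introduces a new mass transport: it conditions on the uniformly chosen oriented edge $\eta$ at the root, writes $\E[\deg_\omega(\rho)]=\E[\deg(\rho)]\bigl(1-\Prev(\eta^\dagger\in\omega^\dagger)\bigr)$ and $\E^\dagger[\deg_{\omega^\dagger}(\rho^\dagger)]=\E^\dagger[\deg(\rho^\dagger)]\,\Prev(\eta^\dagger\in\omega^\dagger)$, and then eliminates $\Prev(\eta^\dagger\in\omega^\dagger)$ using the dual expected-degree formula \eqref{eq:dualdegree}; the whole thing is a change-of-measure computation pivoting on the bijection $e\mapsto e^\dagger$. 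You instead unwind $\E^\dagger$ via \eqref{eq:dualmeasure} into the primal face-sum $\E\bigl[\sum_{f\perp\rho}\deg_{\omega^\dagger}(f)/\deg(f)\bigr]$ and then close with an explicit corner transport; your bookkeeping there (outgoing mass $\sum_{f\perp u}\deg_{\omega^\dagger}(f)/\deg(f)$, incoming mass $\deg(v)-\deg_\omega(v)$ after the $\deg(f)$ cancellation) is right, and non-negativity indeed removes any integrability concern. What each approach buys: the paper's version is shorter given that \eqref{eq:dualdegree} is already on the table, while yours is more self-contained (it uses only \eqref{eq:dualmeasure} plus the mass transport principle) and makes the combinatorial content transparent — closed oriented edges at $\rho$ versus closed edge-sides of the faces at $\rho$. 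One small point common to both arguments that you should make explicit: \eqref{eq:dualmeasure} is stated for events depending on $(M^\dagger,\rho^\dagger)$ only, so you need its routine extension to functionals of the marked dual $(M^\dagger,\rho^\dagger,\omega^\dagger)$, which follows directly from the definition of $\P^\dagger$ as a biasing of $\Prev$ with the percolation carried along.
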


\begin{proof}
  Observe that, since $\eta$ is uniformly distributed on
  $E^\rightarrow_\rho$ conditional on $(M,\rho)$, we have
  \begin{align*}
    \E[\deg_\omega(\rho)]&=\E[\deg(\rho)\mathbbm{1}(\eta\in \omega)]
    = \E\left[\deg(\rho)\left(1-\mathbbm{1}(\eta^\dagger\in \omega^\dagger)\right)
    \right],
  \end{align*}
  and so
  \begin{align*}
    \E[\deg_\omega(\rho)]=\E[\deg(\rho)]\left(1-\P^\mathrm{rev}\left(\eta^\dagger\in\omega^\dagger\right)\right).
  \end{align*}
  Similarly,
  since under the measure $\P^\dagger$ and conditional on $(M^\dagger,\rho^\dagger)$, $\eta^\dagger$ is uniformly distributed on $E^\rightarrow_{\rho^\dagger}$,
  \begin{align*}
    \E^\dagger[\deg_{\omega^\dagger}(\rho^\dagger)]=
    \E^\dagger[\deg(\rho^\dagger)]\P^\mathrm{rev}\left(\eta^\dagger\in\omega^\dagger\right).
  \end{align*}
  It follows that
  \begin{align*}\E[\deg_\omega(\rho)]&=\E[\deg(\rho)]\left(1-\frac{\E^\dagger[\deg(\rho^\dagger)]}{\E[\deg(\rho)]}\E^\dagger[\deg_{\omega^\dagger}(\rho^\dagger)]\right).
  \end{align*}
  Applying the expected degree formula \eqref{eq:dualdegree}, we deduce
  that
  \begin{align*}\E[\deg_\omega(\rho)]&=\E[\deg(\rho)]\left(1-\frac{\E\big[\sum_{f\perp
          \rho}\deg(f)^{-1}\big]}{\E[\deg(\rho)]}\E^\dagger[\deg_{\omega^\dagger}(\rho^\dagger)]\right),
  \end{align*}
  which rearranges to give the desired expression.
\end{proof}

Let $\F$ have law $\FMSF_M$. By \cref{prop:USFduality}, the dual forest
$\F^\dagger$ is distributed according to $\WMSF_{M^\dagger}$.  Since the
expected degree at the root of the WMSF in any unimodular random rooted
graph is $2$, we have $\E^\dagger[\deg_{\F^\dagger}(\rho^\dagger)]=2$
and consequently, by \cref{lem:omegadagger},
\[
  \E[\deg_\F(\rho)] = \E[\deg(\rho)]-2\E\Bigg[\sum_{f\perp
    \rho}\deg(f)^{-1}\Bigg]= \E\left[\sum_{f\perp
      \rho}\frac{\deg(f)-2}{\deg(f)}\right]= 2 -
  \frac{1}{\pi}\bbK(M,\rho).
\]
This completes the proof in the case that the dual of $M$ is locally finite.

\paragraph{Non-locally finite dual case.}
We begin with an upper bound that holds in greater generality.  Given a
unimodular random rooted map $(M,\rho)$ with $M^\dual$ not locally
finite, recall the construction of the map $M'$ from the proof of \cref{prop:submaps}, in which
each infinite face of $M$ is triangulated.

\begin{lem}\label{L:forestbound}
  If $\omega$ is a percolation on an infinite unimodular random rooted map $(M,\rho)$
  that is almost surely a spanning forest of $M$, then
  \[
    \E[\deg_\omega(\rho)] \leq 2 -\frac1\pi \bbK(M,\rho).
  \]
\end{lem}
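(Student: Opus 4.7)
The plan is to reduce to the case in which $M^\dagger$ is locally finite and then apply the duality formula \cref{lem:omegadagger}. When $M^\dagger$ is not locally finite, I would invoke the construction from the proof of \cref{prop:submaps} to embed $M$ as a submap of a unimodular random rooted map $(M',\rho)$ with locally finite dual, the same vertex set as $M$, and $\bbK(M',\rho)=\bbK(M,\rho)$. Since $\omega$ is acyclic and $V(M)=V(M')$, it remains a spanning forest when viewed as a percolation on $M'$, so it suffices to prove the bound for $(M',\rho,\omega)$; from here on I therefore assume $M^\dagger$ is locally finite.

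Applying \cref{lem:omegadagger} to $\omega$ and simplifying using $\deg(\rho)=\sum_{f\perp\rho}1$ shows that the target inequality is equivalent to
\[
\E^\dagger\bigl[\deg_{\omega^\dagger}(\rho^\dagger)\bigr]\geq 2.
\]
To establish this lower bound I would use classical planar duality: since $\omega$ is acyclic and $M$ is simply connected, $\omega^\dagger$ is a connected subgraph of $M^\dagger$ containing $\rho^\dagger$, and the cluster of $\rho^\dagger$ in $\omega^\dagger$ inherits a unimodular structure via \cref{lem:unimodclusters}. Sampling the wired uniform spanning forest of this cluster would produce a subgraph of $\omega^\dagger$ whose expected degree at $\rho^\dagger$ equals $2$ by \cite[Proposition~7.3]{AL07}, and subgraph monotonicity of expected degree then yields $\E^\dagger[\deg_{\omega^\dagger}(\rho^\dagger)]\geq 2$. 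Substituting back gives the desired bound $\E[\deg_\omega(\rho)]\leq 2-\bbK(M,\rho)/\pi$.

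The main obstacle I anticipate is making the lower bound $\E^\dagger[\deg_{\omega^\dagger}(\rho^\dagger)]\geq 2$ fully rigorous in the multigraph setting. The subtlety is that a face $f$ of $M'$ could have its entire boundary walk contained in $\omega$ without $\omega$ containing a simple cycle (for instance when the boundary is non-simple and traverses a single edge twice), in which case the corresponding dual vertex is isolated in $\omega^\dagger$ and drags the expected dual degree down. I expect to handle such degenerate faces either by a supplementary mass-transport argument controlling their density, or by an alternative approach in which one first extends $\omega$ to a unimodular spanning tree $T$ of $M'$ via a Wilson-type procedure on the contracted graph $M'/\omega$ and then invokes the inclusion $T^{c,\dagger}\subseteq \omega^\dagger$ together with the duality in \cref{prop:USFduality}.
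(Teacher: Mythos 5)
Your reduction to the locally finite dual case via $M'$ and the reformulation, via \cref{lem:omegadagger}, of the desired bound as $\E^\dagger[\deg_{\omega^\dagger}(\rho^\dagger)]\geq 2$ both match the paper's argument. The gap is in the remaining step, and you have in effect flagged it yourself: your route (WUSF of the cluster of $\rho^\dagger$ in $\omega^\dagger$, expected root degree $2$, then monotonicity) only works if that cluster is almost surely infinite, since the uniform spanning tree of a finite graph has expected root degree strictly less than $2$. Moreover your preliminary claim that $\omega^\dagger$ is connected is false in general: take $M=\Z^2$ and $\omega$ the set of all horizontal edges; this is a spanning forest, yet $\omega^\dagger$ is a disjoint union of bi-infinite dual lines. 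What is missing is the planar duality fact the paper relies on: because $\omega$ has no cycles and $M$ (hence $M'$) is simply connected, \emph{every} component of $\omega^\dagger$ is infinite --- a finite component of $\omega^\dagger$ would correspond to a finite set of faces separated from infinity by edges of $\omega$, and a finite forest embedded in the plane cannot separate the plane. (In particular, the degenerate faces you worry about, whose boundary is a doubled tree contained in $\omega$, cannot occur at all in an infinite map.) With this infiniteness in hand, the paper concludes directly from \cite[Theorem 6.1]{AL07}, which gives expected root degree at least $2$ for any percolation on a unimodular random rooted graph all of whose clusters are infinite; your WUSF-plus-monotonicity argument would then also work, but it is this infiniteness statement, not the density of bad faces, that must be proved.

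Neither of your proposed workarounds closes this gap. A mass-transport bound on the density of degenerate faces is insufficient: if finite clusters of $\omega^\dagger$ occurred with positive probability at the dual root, the expected dual degree could genuinely fall below $2$, so they must be excluded entirely rather than shown to be sparse. The alternative of extending $\omega$ to a unimodular spanning tree $T$ of $M'$ is also problematic: the existence of such a unimodular extension is not automatic (it is a connectivity/treeability statement of the same nature as \cref{thm:FUSFconnectivity}, which this lemma and the surrounding section are specifically designed to avoid), and even granted such a $T$, bounding $\E^\dagger[\deg_{T^\dagger}(\rho^\dagger)]$ below by $2$ again requires knowing that the components of $T^\dagger$ are infinite, i.e.\ exactly the duality fact above.
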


\begin{proof}
  If $M$ has a locally finite dual, then every component of
  $\omega^\dagger$ is infinite.  It follows from \cite[Theorem
  6.1]{AL07} that
  $\E^\dagger[\deg_{\omega^\dagger}(\rho^\dagger)]\geq2$, and the claim
  follows from \cref{lem:omegadagger}.
  If $M^\dual$ is not locally finite, note that $\omega$ is also a
  percolation on the map $M'$, which has the same curvature as $M$, and the claim follows as above.
\end{proof}

For the reverse inequality we approximate $M$ by maps with locally finite duals.
Delete each edge of $M'$ not in $E$ independently with
probability $1-\eps$, and call the resulting map $M'_\eps$.  Clearly
$(M'_\eps,\rho)$ converges locally to $(M,\rho)$ as $\eps\to 0$, and it is not hard to see that $M'_\eps$ has locally finite dual a.s.\ for every $\eps>0$.  Let
$\F_\eps$ be a sample of $\FMSF_{M'_\eps}$.  Since $(M'_\eps,\rho)$ has
locally finite dual, we have that
\[
  \E[\deg_{\F_\eps}(\rho)]=2-\frac{1}{\pi}\bbK(M'_\eps,\rho) = 2 -
  \frac{1}{\pi}\bbK(M,\rho),
\]
where the second equality follows from \cref{prop:submaps}.

Since the underlying graph of $M$ is a subgraph of the underlying graph
of $M'_\eps$, the forest $\F$ stochastically dominates the restriction
$\F_\eps\cap E$ for every $\eps>0$.  Hence, by the dominated convergence theorem, we have
\begin{align*}
  \E[\deg_{\F}(\rho)]
  &\geq \E[\deg_{\F_\eps}(\rho)]-\E[\deg_{M'_\eps}(\rho) - \deg_M(\rho)] \\
  &= 2 - \frac{1}{\pi}\bbK(M,\rho)
  - \E\big[ \deg_{M'_\eps}(\rho) - \deg_M(\rho) \big]
  \xrightarrow[\eps\to0]{} 2 - \frac{1}{\pi}\bbK(M,\rho),
\end{align*}
completing the proof.
\end{proof}

\section{The Conformal Type}
\label{sec:conformal}

Given a map $M$ such that every face of $M$ has degree at least three, we
may form a surface $S(M)$ (as described in \cref{Sec:maps}) by gluing regular unit
polygons together according to the combinatorics of $M$, the boundaries of
these polygons becoming the edges of $M$ embedded in $S(M)$.  Recall that
we consider the upper half-space $\{x+iy \in \C : y>0\}$ with edges
$\{[n,n+1]:n \in \Z\}$ to be a regular $\infty$-gon.
The surface $S(M)$ is endowed naturally with a conformal structure by
defining an atlas as follows.
\begin{itemize}
\item For each face $f$ of $M$, we take as a chart the identity map from
  the interior of the regular polygon corresponding to $f$ to itself.
\item For each edge $e$ of $M$, we define an open neighbourhood of the
  interior of $e$ in $S$ by adding to $e$ the two triangles formed by the
  endpoints of $e$ and the centres of the two faces adjacent to $e$ (if
  either face is infinite, we interpret this triangle to be the infinite
  strip starting at $e$ and perpendicular to the boundary of the face). To
  define a coordinate chart on this neighbourhood, we simply place the two
  triangles next to each other in the plane.  The reason to take only a
  triangle and not the entire face is that this chart is well defined
  even if both sides of $e$ are incident to the same face.
  \begin{center}
    \includegraphics[height=17mm]{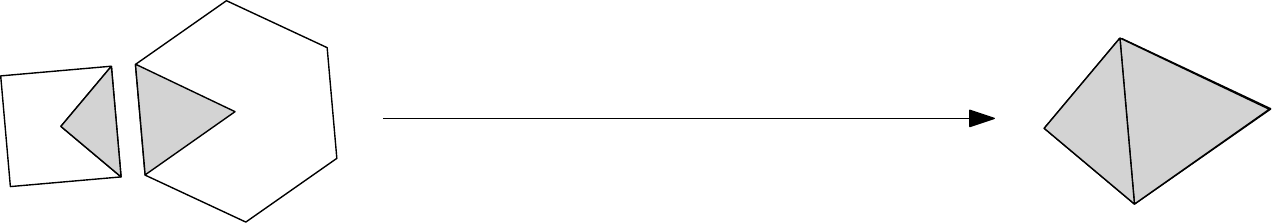}
  \end{center}
\item For each vertex $v$ of $M$, we define an open neighbourhood of $v$ in
  $S$ similarly by intersecting the corners of the faces adjacent to $v$
  with open discs of radius $1/2$ centred at $v$.  We define a chart on
  this neighbourhood by first laying the corners out sequentially around
  the origin (with possible overlapping), and then applying the function $z
  \mapsto z^{2\pi/\theta(v)}$, suitably interpreted to get an injective map
  into the plane.  The radius is chosen so that this definition remains valid
   when multiple corners of the same face are located at the same vertex.
  \begin{center}
    \includegraphics[height=5cm]{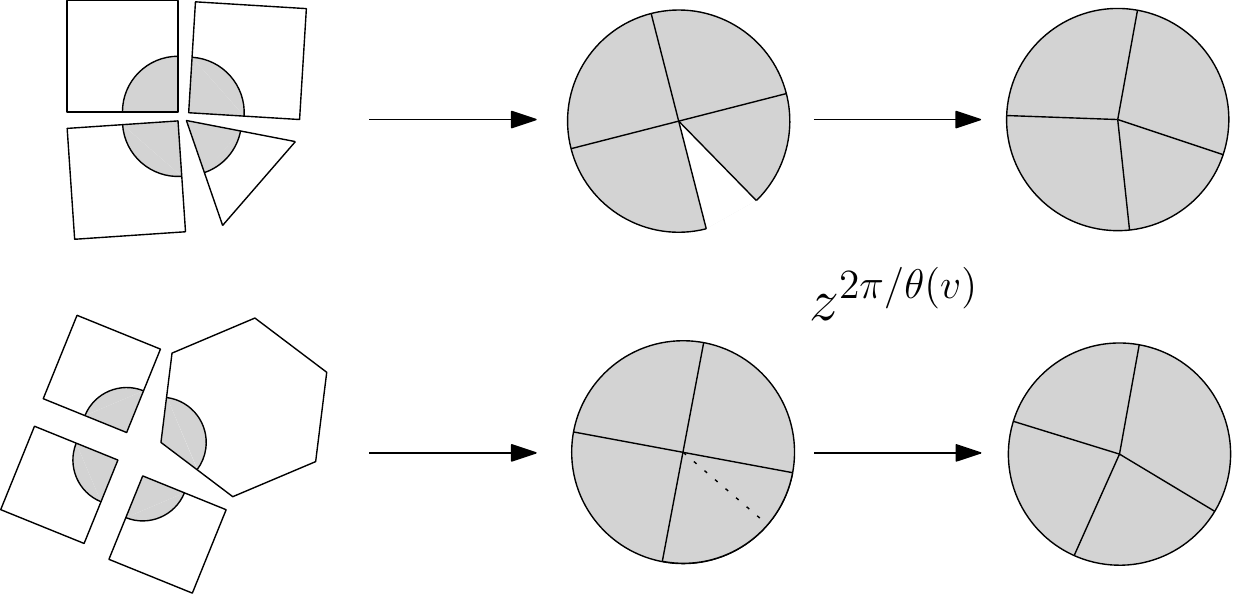}
  \end{center}
\end{itemize}
The coordinate changes are easily seen to be analytic, so that this atlas
does indeed define a Riemann surface structure on $S(M)$.  We denote this
Riemann surface by $\cR(M)$.

The definition can be extended to maps containing faces of degree 1 and
2 in various ways.  One of these is as follows: Given a map $M$, let
$\hat M$ be obtained from $M$ by triangulating faces of degree $1$ or
$2$ by adding a vertex inside each face of $M$ that has degree $1$ or
$2$, and connecting this vertex to each corner of the face.  Every face
of $\hat M$ has degree at least three, and we define
$\cR(M) = \cR(\hat M)$.  The map $M$ can be embedded in $\cR(M)$ by
restricting the natural embedding of $\hat M$ into $\cR(\hat M)$.  An
alternative way to deal with general maps is to apply this triangulation
procedure to all faces of finite degree, resulting in a map with only
faces of degree $3$ or $\infty$.

We remark that there are many other natural (and inequivalent) ways to
associate Riemann surfaces to maps. For example, we could associate to
each face of $M$ a disc of circumference $k$, with boundary split into
$k$ arcs of length one corresponding to the edges, and glue adjacent
faces according to arc length along their shared edges.  For
concreteness we proceed with the Riemann surface defined above, though
our arguments apply with minor adaptations for many other surfaces
associated with $M$.

If $M$ is simply connected, the uniformization theorem implies that
$\cR(M)$ is conformally equivalent to the sphere, the plane or the disc,
and we call $M$ \textbf{conformally elliptic, parabolic, or hyperbolic}
accordingly.    The
resulting embedding of $M$ into the sphere, plane, or disc given by
uniformizing $\cR(M)$ is unique up to M\"obius transformations of the
sphere, plane or disc as appropriate.  This is referred to as the
\textbf{conformal embedding} of $M$.  Conformal embeddings of unimodular
random planar maps are conjectured to play a key role in the theory of
two-dimensional quantum gravity, see for example \cite{curien13glimpse} and
references therein. Elliptic maps are necessarily finite --   the focus of this section is to distinguish conformally
parabolic maps from conformally hyperbolic maps.

The conformal parabolicity of an infinite planar map is equivalent to the recurrence of
Brownian motion on $\cR(M)$, which can heuristically serve as a proxy for recurrence of the
map itself.  Gill and Rohde \cite{GiRo13} proved that every
Benjamini-Schramm limit of finite planar maps with uniformly bounded face
degrees is conformally parabolic almost surely.  The main result of this
section generalises and, together with \cref{cor:invariantexhaustion},
provides a converse to their result.

\begin{thm}[\cref{thm:dichotomy}, equivalence of \eqref{iConf} and
  \eqref{iAmen} in the simply connected case.]
  \label{thm:conformal}
  Let $(M,\rho)$ be an infinite, ergodic, simply connected unimodular
  random rooted map with $\E[\deg(\rho)]<\infty$.  Then $M$ is conformally
  parabolic almost surely if and only if $(M,\rho)$ is hyperfinite.
\end{thm}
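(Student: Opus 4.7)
The two directions of the equivalence both rest on tying the conformal type of $\cR(M)$ to the vertex extremal length (VEL) type of the map $M$. The plan is to establish both (a) VEL parabolicity of $M$ implies conformal parabolicity of $\cR(M)$, and (b) the existence of a VEL hyperbolic bounded-degree sub-forest of $M$ forces $\cR(M)$ to be conformally hyperbolic.

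For the \textbf{hyperfinite implies conformally parabolic} direction, I would first apply \cref{cor:invariantexhaustion} to exhibit $(M,\rho)$ as a Benjamini--Schramm limit of finite planar maps, and then invoke the Benjamini--Schramm theorem (item \eqref{iBS} implies \eqref{iVEL}) to conclude that $M$ is VEL parabolic almost surely. To pass from VEL parabolic to conformally parabolic, I would take an admissible vertex measure $m$ on $M$ with $\sum_v m(v)^2 < \infty$ and $m(\gamma)=\infty$ along every path to infinity, and manufacture from it a conformal metric $\rho\,|dz|$ on $\cR(M)$ supported in a disjoint family of small neighbourhoods of the vertices. For each vertex $v$ I would place a radially symmetric density of total area comparable to $m(v)^2$, adapted to the cone singularity of angle $\theta(v)$ at $v$. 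Two things need verifying: that the total $\rho^2$-area is finite, and that every rectifiable curve in $\cR(M)$ escaping to infinity has infinite $\rho$-length. The latter follows because any such curve induces a path in $M$ escaping to infinity through the sequence of vertex neighbourhoods it visits, and $m$ has infinite line integral along that path.

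For the \textbf{conformally parabolic implies hyperfinite} direction, I would argue the contrapositive. If $(M,\rho)$ is invariantly nonamenable, the Benjamini--Lyons--Schramm theorem cited at the end of \cref{sec:vel} provides a percolation $\omega$ on $M$ that is almost surely a bounded-degree nonamenable sub-forest. Being nonamenable of bounded degree, $\omega$ is transient and hence VEL hyperbolic. I would then show that the presence of such a sub-forest prevents $\cR(M)$ from being conformally parabolic by reversing the construction of the previous paragraph: given any conformal metric $\rho\,|dz|$ of finite area on $\cR(M)$, produce an edge measure on $\omega$ by integrating $\rho$ along each embedded edge (viewed as an arc in $\cR(M)$), and convert it to a vertex measure using the bounded degree of $\omega$, yielding a VEL parabolicity witness on $\omega$ that contradicts its VEL hyperbolicity.

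The main obstacle will be handling vertices of $M$ whose angle sum $\theta(v)$ is large, producing cone singularities of arbitrary angle in $\cR(M)$. The construction of $\rho$ near such a vertex must give a contribution to the $L^2$-area that is controlled independently of $\theta(v)$, while still forcing any curve passing close to $v$ to pick up length comparable to $m(v)$; I anticipate choosing a density of the form $m(v)/\bigl(r\sqrt{\theta(v)}\bigr)$ in polar coordinates centred at $v$, the factor $\sqrt{\theta(v)}$ cancelling the cone angle in the area integral while still giving length $\sim m(v)$ along any radial crossing. A similar care will be needed at infinite faces, which are embedded as half-planes rather than bounded polygons, and where one must cut off the metric far from the boundary to keep the area finite.
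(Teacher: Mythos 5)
Both halves of your argument run through deterministic VEL-to-conformal-type comparisons, and both break down exactly where unbounded degrees and unbounded (or infinite) face degrees enter --- which is the regime this theorem is about (the bounded-face-degree case is Gill--Rohde). In the direction hyperfinite $\Rightarrow$ conformally parabolic, the metric you build is supported only in small neighbourhoods of the vertices, but a divergent curve in $\cR(M)$ need not enter any of those neighbourhoods: it can cross each edge near its midpoint and traverse each face through its bulk, so it picks up finite (indeed zero) $\rho$-length, and the claim that such a curve ``induces a path in $M$ through the sequence of vertex neighbourhoods it visits'' fails. If you instead spread the mass $m(v)$ over the corner regions at $v$ so as to block all curves, the area control collapses: a corner region in a face of degree $d$ has area of order $d$ in the polygonal surface, and infinite faces are half-planes of infinite area, so finiteness of $\sum_v m(v)^2$ no longer bounds the $\rho^2$-area. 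There is no deterministic ``VEL parabolic implies conformally parabolic'' theorem at this level of generality to cite, and indeed the paper proves this direction probabilistically: it couples $M$ with the Poisson--Delaunay triangulation of the hyperbolic plane via the uniformizing map, with the Schwarz--Pick area estimate $\area_\H(U_v)\le C\deg(v)$ (\cref{lem:hypareaestimate}) and the coupling-invariance of hyperfiniteness (\cref{lem:hypcoupling}) doing the work that a pointwise metric construction cannot.

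In the converse direction the same issue reappears: from a finite-area conformal metric on a parabolic $\cR(M)$ you propose to define edge weights by integrating $\rho$ along the embedded edges, but $\sum_e\bigl(\int_e\rho\,ds\bigr)^2$ is not controlled by $\int\rho^2\,dA$ --- a finite-area density gives no bound on line integrals along prescribed curves (these can even be infinite). The standard repair, replacing each edge by a nearby curve of controlled length obtained by averaging over a collar of the edge and then joining these curves up near the vertices, needs bounded local geometry: the extremal-length constants degrade with the cone angle $\theta(v)$, i.e.\ with the degrees of the vertices and faces of $M$ along the path, and the bounded degree of the subforest $\omega$ does not help because the ambient cone angles come from $M$, not from $\omega$. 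The paper avoids this entirely: from conformal parabolicity it takes the conformal embedding into $\C$, analyses the convex hull of $z(V)$ by a mass-transport argument, and in each of the three cases (strip, half-plane, full plane) exhibits a unimodular coupling with $\Z$ or with a Euclidean Delaunay tessellation of zero average curvature, concluding hyperfiniteness from \cref{lem:hypcoupling} and the curvature criterion. So your architecture is genuinely different from the paper's, but as it stands both of its new analytic steps have gaps, and it is unimodularity (not the bounded degree of a subforest) that is needed to close them.
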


See \cref{Sec:multiplyconnected} for the equivalence of \eqref{iConf} and \eqref{iAmen}
for multiply connected maps.

\begin{remark}
Although it is natural to try to prove \cref{thm:conformal} as an application of \cref{thm.invcurvature}, it is difficult to verify the required integrability condition on the geodesic curvatures of edges. Indeed, we were able to do so only under a stronger moment assumption on $\deg(\rho)$.
\end{remark}

\begin{proof}[Proof of \cref{thm:conformal}, Conformal parabolicity implies
  hyperfiniteness]
  Let $(M,\rho)$ be a unimodular random rooted map such that $M$ is a.s.\
  conformally parabolic.  We claim that $(M,\rho)$ is hyperfinite (finite expected degree is not needed for this implication).  Let $z$
  be a conformal embedding of $M$ into the plane, which is uniquely
  determined up to translation and scaling.

  Consider the image of the vertices $z(V)$.   We first claim that the convex hull $C$ of $z(V)$ is either the
  full plane, a half plane, or a doubly infinite strip; the remainder of the proof will be split into cases accordingly.  Indeed, since
  $z(V)$ is locally finite, if $C$ is any other set then there must exist a
  vertex $v$ of $M$ such that $z(v)$ is an extreme point of $C$.  The external angles of $C$ at these
  vertices are positive, add up to at most $2\pi$, and are independent of the choice of
  $z$. Transporting a mass $\alpha$ from every vertex of $M$ to every vertex
  such that $z(v)$ is an extreme point of $C$ with external angle $\alpha$ gives a contradiction since
  the mass out from each vertex is bounded while vertices such that $z(v)$ is an extreme point of $C$
  receive infinite mass.
  We now show that in each of the three cases, $M$ is hyperfinite.

  \paragraph{Case 1: $C$ is a strip.}
  Without loss of generality we may assume the strip is parallel to
  $\R$.  (Otherwise apply the subsequent argument to a suitable rotation
  of $z$.)  Consider the bi-infinite path on $z(V)$ visiting the vertices of $M$
  in the order given by their real part (breaking ties randomly).  This
  defines a unimodular coupling between $(M,\rho)$ and $\Z$, and it
  follows from \cref{lem:hypcoupling} that $(M,\rho)$ is hyperfinite in
  this case.

  \paragraph{Case 2: $C$ is a half-plane.}
  Without loss of generality we may assume that $C$ is the upper
  half-plane.
  We will use the linear structure of the boundary of $H$ to show that
  $(M,\rho)$ is coupling equivalent to $\Z$, so that it will follow from
  \cref{lem:hypcoupling} that $(M,\rho)$ is hyperfinite. Since $z(V)$ is
  locally finite, there exist vertices $v$ such that the cone $\{w :
  \arg(z(v)-w)\in(\pi/3,2\pi/3)\}$ contains no other points of $z(V)$. (The
  choice of angles is arbitrary; any cone with finite intersection with the
  half-plane will do.)  Call such vertices \textbf{exposed}.

  It follows from ergodicity and unimodularity that a.s.\ there are
  infinitely many exposed vertices.  Observe that, again using the fact
  that $z(V)$ is locally finite, for every bounded interval $[a,b]$,
  there exist at most finitely many exposed vertices $v$ that have
  $\Re z(v) \in [a,b]$.  Define a path whose vertices are the exposed
  vertices of $M$, arranged in order of their real part.
  It follows that $(M,\rho)$ and $\Z$ are coupling equivalent as claimed.

  \paragraph{Case 3: $C$ is the full plane.}
  We define $D$ to be the Delaunay tessellation with vertex set given by
  the set of points $z(V)$.
  That is, $D$ is a map embedded in $\C$ with vertex set $z(V)$.  Faces
  of $D$ correspond to discs (or half planes) with no vertices in their
  interior and at least $3$ vertices on their boundary, so that each
  face is circumscribed in the corresponding disc.
  Note that the isomorphism class of $(D,\rho)$ is independent of the
  choice of the conformal embedding, which is unique up to homothety,
  and so $(M,\rho)$ and $(D,\rho)$ are coupling equivalent.

  Since every half plane intersects $z(V)$, the faces of $D$ are all
  finite.  If $D$ itself is not locally finite, then for every infinite
  degree vertex $v$ of $D$ there exists an infinite sequence of
  vertices $u_i$ and closed discs or half-planes $C_i$ such that $C_i$
  contains both $z(v)$ and $z(u_i)$ in its boundary and no points of
  $z(V)$ in its interior. By taking a subsequential limit, it follows
  from the fact that $z(V)$ is locally finite that there must exist a
  half-space containing $z(v)$ in its boundary and no points of $z(V)$
  in its interior, contradicting the assumption that $C$ is the full plane.

  Thus $D$ and its dual are locally finite.  In this case, it follows
  from the measurability of the conformal embedding that $(D,\rho)$ is a
  unimodular random rooted map, and that $(M,\rho)$ and $(D,\rho)$ are
  coupling equivalent.  Thus, by \cref{lem:hypcoupling}, it suffices to
  prove that $(D,\rho)$ is hyperfinite.

  Define a mass transport as follows.  For each vertex $u$ and face $f$
  of $D$ incident to $u$, let $\ang(f,u)$ be the angle of the corner of
  $f$ at $u$.  Transport a mass of $\ang(f,u)/\deg(f)$ from $u$ to each
  of the vertices incident to $f$, including $u$ itself.  The mass sent
  out by each vertex $u$ is $2\pi$.   Since each face is a polygon
  in the plane, the mass received is
  \[
    \sum_{f\perp u} \frac{\text{ sum of internal angles of $f$}}{\deg(f)}
    = \sum_{f\perp u} \frac{\deg(f)-2}{\deg(f)}\pi.
  \]
  Applying the mass-transport principle yields that the average
  curvature of $(D,\rho)$ is zero.  It follows from the equivalence
  of items \eqref{iCurv} and \eqref{iAmen} of \cref{thm:dichotomy} that
  $(D,\rho)$ is hyperfinite as claimed.
\end{proof}

\medskip

%
For the converse, suppose $M$ is a conformally hyperbolic, so that we
have a conformal map $\phi$ from $\cR(M)$ to the hyperbolic plane $\H$,
which is unique up to isometries.  Our strategy is to use $\phi$ to give
a unimodular coupling of $M$ with another map which is known to be
invariantly nonamenable.  One possibility is to use (as above) the
Delaunay tessalation on $\phi(V)$.  The mass transport above would be
used to prove that $D$ has negative mean curvature, and hence is
nonamenable.  The difficulty lies in establishing the local finiteness
of the resulting map and its dual.  Indeed with no further assumptions
the resulting map can have infinite faces, and a modification is needed.

A crucial first step is establishing that $\phi(V)$ is reasonably dense
in $\H$, in the sense that there is a unimodular partition of $\H$ which
assigns finite mean area to every vertex.  For each corner $c$ at each
vertex $v$ of $M$, let $U_c$ be the quadrilateral in the face of $c$
with corners given by $v$, the centre of the face, and the midpoints of
the two edges forming the corner.  If the face is infinite, we take
$U_c$ to be the half-infinite strip in $f$ with right angled corners at
the mid-points of the two edges of the corner (see \cref{F:partition}).
We define a partition of $\cR(M)$ by
\[
U_v = \bigcup_{c\perp v} U_c,
\]

\begin{figure}
\includegraphics[height=0.25\textwidth]{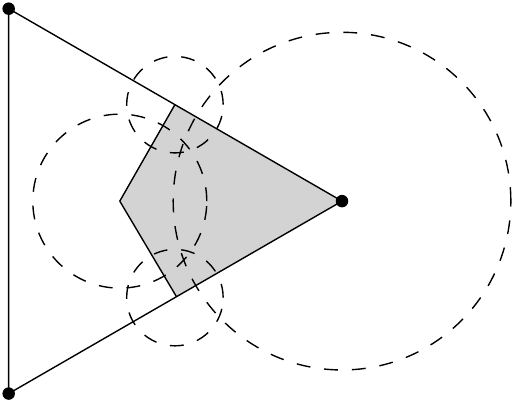} \qquad \includegraphics[height=0.25\textwidth]{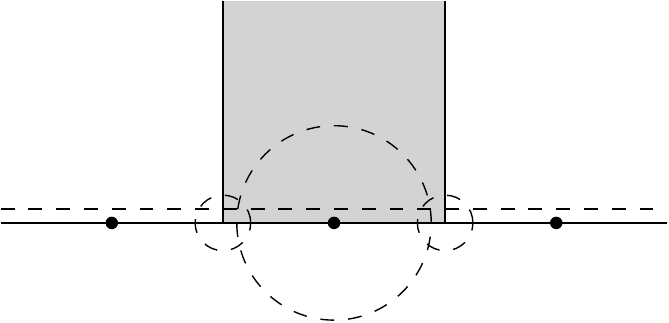}
\caption{The covering of $U_e$ (grey) by discs (dashed boundaries) used
  in the proof of \cref{lem:hypareaestimate}. Left: the case that $e^r$
  has degree three. Right: the case that $e^r$ has infinite degree.}
\label{F:partition}
\end{figure}


Given sets $K\subset S \subset \cR(M)$, with $S$ simply connected, let
$\phi_S$ be a conformal equivalence between $S$ and the unit disc.  We write
$\area_{\bbH}(K,S)$ for the hyperbolic area of $\phi_S(K)$ (which is
independent of the choice of $\phi_S$).  We also write
$\area_{\bbH}(K) = \area_{\bbH}(K,\cR(M))$.  Recall the Schwarz-Pick Lemma: If
$\cR$ is a Riemann surface that is conformally equivalent to the disc,
$S$ is a simply connected open subset of $\cR$, and $K$ is a Borel
subset of $S$, then the hyperbolic area of $K$ considered as a subset of
$S$ is greater than or equal to the hyperbolic area of $K$ considered as
a subset of $\cR$.  Thus $\area_{\bbH}(K) \leq \area_{\bbH}(K,S)$.

\begin{lem}\label{lem:hypareaestimate}
  There exists a constant $C$ such that the following holds.  Let $M$ be
  a conformally hyperbolic, simply connected map such that every face of
  $M$ has degree at least three.  Then for every vertex $v$,
  \[
    \area_\H(U_v) \leq C \deg(v).
  \]
\end{lem}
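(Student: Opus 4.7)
The plan is to reduce the lemma to a uniform per-corner bound $\area_\H(U_c) \le C$, since summing over the $\deg(v)$ corners $c$ at $v$ then yields $\area_\H(U_v) \le C \deg(v)$. For each corner I will apply the Schwarz-Pick estimate to a covering of $U_c$ by simply connected open subsets $\{S_j\}$ of $\cR(M)$ drawn from the atlas (face charts, edge charts, vertex charts, or subsets thereof), and sum the resulting bounds $\area_\H(U_c \cap S_j) \le \area_\H(U_c \cap S_j,\, S_j)$.

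The main case is a corner $c$ in an infinite face $f$. Identifying $f$ with the upper half plane via its face chart, $U_c$ is the half-infinite strip $\{(x,y) : -\tfrac{1}{2} \le x \le \tfrac{1}{2},\ y \ge 0\}$. I will decompose $U_c = B \cup \bigcup_{n \ge 1} A_n$ with $B = U_c \cap \{y \le 1\}$ and $A_n = U_c \cap \{2^{n-1} \le y \le 2^n\}$, and cover each $A_n$ by the Euclidean disc $D_n \subset f$ centered at $(0,\ 3 \cdot 2^{n-2})$ of radius $2^{n-1}$. Since $D_n$ lies inside the face chart, it is a simply connected subdomain of $\cR(M)$ conformally equivalent to $\mathbb{D}$. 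A direct check shows that after rescaling $D_n$ to $\mathbb{D}$, the image of $A_n$ lies inside $\{|z| \le 1/\sqrt{2}\}$ with Euclidean area $2^{-(n-1)}$, whence $\area_\H(A_n, D_n) \le 16 \cdot 2^{-(n-1)}$. Summing the geometric series and applying Schwarz-Pick yields $\sum_{n \ge 1} \area_\H(A_n) \le C_1$ for an absolute constant $C_1$.

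The bounded bottom piece $B$, and every corner $c$ in a finite face of degree $k \ge 3$, are handled by a bounded number of atlas neighbourhoods: the vertex chart at $v$ (which is well-defined since $\theta(v) \le \pi \deg(v) < \infty$), the edge charts of the two edges $e_1, e_2$ of the corner (which extend across $\partial f$ into a neighbouring face and so compactly contain the pieces of $U_c$ abutting $\partial f$), together with $O(1)$ face-chart discs for the bulk. For finite faces, uniformity in $k$ uses $R_k/r_k = \sec(\pi/k) \le 2$, where $R_k, r_k$ are the circumradius and inradius of the regular $k$-gon, so that the bulk of $U_c$ — of Euclidean diameter $R_k$ — is covered by a bounded number of face-chart discs of radius comparable to $r_k$, each contributing a bounded hyperbolic area by Schwarz-Pick. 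The main obstacle is the summability in the infinite-face case: the dyadic discs $D_n$ must be sized so that the rescaled image of $A_n$ remains at conformal distance bounded below from $\partial D_n$, which is precisely what produces the geometric decay $\area_\H(A_n, D_n) \lesssim 2^{-n}$. The finite-face analysis is then more routine, with the remaining care lying in ensuring that the edge-chart extensions across $\partial f$ produce simply connected subdomains compactly containing the boundary pieces of $U_c$, independent of the geometry of neighbouring faces.
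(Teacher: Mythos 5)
Your overall strategy (cover $U_c$ by simply connected chart subdomains and apply Schwarz--Pick to each piece) is the same as the paper's, and your dyadic-disc estimate for the unbounded part of a corner in an infinite face is correct; the paper gets the same bound more quickly by noting that the open infinite face is itself conformally a hyperbolic half-plane, in which the deep part of the strip has area exactly $16$. The genuine gap is in the finite-face case, which you dismiss as routine but which is exactly where the uniformity in $\deg(f)$ has to be won. Any disc contained in the open face has radius at most $r_k$, and the ``bulk'' of $U_c$ (the part not handled by the unit-scale vertex and edge neighbourhoods) contains points at a fixed absolute distance, hence relative distance of order $1/r_k \asymp 1/\deg(f)$, from the boundary of whichever such disc covers them. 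Schwarz--Pick together with the containment estimate \eqref{eq:hyparea} then yields a bound of order $\deg(f)$ per disc, not $O(1)$; so ``a bounded number of face-chart discs of radius comparable to $r_k$, each contributing a bounded hyperbolic area by Schwarz--Pick'' is unjustified as stated, and taken literally it would only give $\area_\H(U_v)\lesssim \sum_{f\perp v}\deg(f)$, which is not the statement of the lemma. An extra ingredient is needed: the paper takes the disc inscribed in $f$, shrinks it by $1/16$ to get area at most $16\deg(f)$, and then uses the $\deg(f)$-fold rotational symmetry of the polygon to conclude that the corner's wedge receives only a $1/\deg(f)$ share, hence area at most $16$. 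Alternatively, your own infinite-face device transfers: since $U_c$ has width at most $1$ at every distance from $\partial f$, dyadic discs of radius $\sim 2^n$ centred on the spine of $U_c$ (for $2^n\lesssim r_k$) cover the slab at height $\sim 2^n$ with rescaled Euclidean area $\sim 2^{-n}$, and the geometric series again gives a uniform bound. Either repair is needed; without one the finite-face step fails.

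A secondary point: your reduction to a per-corner bound $\area_\H(U_c)\le C$ is fine in principle (the paper remarks this stronger statement holds), but the vertex chart does not deliver it for free. The ball of radius $7/16$ around $v$ has hyperbolic area of order $\theta(v)$, which is unbounded, so ``the vertex chart at $v$'' by itself only gives a contribution of order $\theta(v)\le\pi\deg(v)$; to get $O(1)$ per corner you must additionally use the rotational symmetry of the cone at $v$ to argue that each corner's sector receives only an $\ang$-proportional (hence bounded) share. If you do not want to argue this, you should instead treat the vertex ball once per vertex, as the paper does, which still yields the stated bound $C\deg(v)$ after summing the $O(1)$ edge-midpoint and face contributions over the $\deg(v)$ corners.
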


We shall require the following fact: For every $0<\eps<1$, the
hyperbolic area of the set $\{ z \in \C : |z|<1-\eps\}$, considered as a
subset of the open unit disc, is given by
\begin{equation}\label{eq:hyparea}
  \frac{4\pi(1-\eps)^2}{1-(1-\eps)^2} \leq \frac{2\pi}{\eps}.
\end{equation}

\begin{proof}
  For each corner $c$ incident to vertex $v$ and face $f$, let
  $U^1_c,\dots,U^4_c$ be the following subsets of the polygon associated
  to $f$:
  \begin{enumerate}
  \item $U^1_c$ and $U^2_c$ are the $1/8$ neighbourhoods of the midpoints
    of the edges of $c$.
  \item $U^3_c$ is the $7/16$ neighbourhood of $v$.
  \item If $f$ has finite degree, then $U^4_c$ is the intersection of
    $U_c$ with the disc that is centred at the centre of the polygon
    corresponding to $f$ and that reaches distance $1/16$ from the
    boundary of the polygon.  If $f$ has infinite degree, we let $U^4_c$
    be the part of $U_c$ at distance at least $1/16$ from the boundary
    of the half-plane corresponding to $f$).
  \end{enumerate}
  It is easily verified by elementary trigonometry that the $U_c$ is
  contained in the union $\bigcup_{i=1}^4 U^i_c$.  See
  \cref{F:partition} for an illustration.

We first prove that each of $U^1_c, U^2_c,$ and $U^4_c$
  have uniformly bounded hyperbolic areas.
  For $U^1_c$, let $S$ be the ball of radius $1/4$ around the midpoint
  of an edge $e$, so that $U^1_c \subset S$. Recall that for an edge
  $e$, the chart at $e$ includes the two triangles formed by the
  endpoints of $e$ and the centres of the faces adjacent to $e$.  These
  triangles have disjoint interiors even if the same face is on both
  sides of $e$.  It is easily verified that $S$ is always contained in
  the domain formed by placing the two triangles together.
  Since $U^1_c$ is the ball of radius $1/8$ around the midpoint of $e$,
  it follows by Schwarz-Pick and \eqref{eq:hyparea} that
  $\area_{\bbH}(U^1_c) \leq \area_{\bbH}(U^1_c,S) \leq C$ for some constant
  (namely, $4\pi$).  The corresponding claim for $U^2_c$ follows
  similarly.

  We next claim that $\area_{\bbH}(U^4_c) \leq 16$.  If the corresponding
  face $f$ has finite degree, let $S$ be the disc circumscribed in the
  polygon of $f$, and let $K$  be the disc that has the same centre as $S$ and radius
  $1/16$ smaller than that of $S$.  The circumference of $S$ is less than the
  perimeter of $f$, and hence the radius $\rho$ of $S$ is at most
  $\deg(f)/2\pi$.  Thus, it follows from \eqref{eq:hyparea} that
  \[
    \area_{\bbH}(K,S) \leq 16 \deg(f).
  \]
  Since $U^4_c$ is a wedge of $K$, it follows by the symmetry of $S$ that
  $\area_{\bbH}(U^4_c,S) \leq 16$.  By Schwarz-Pick, the same bound holds for
  $\area_{\bbH}(U^4_c)$.
  If $f$ has infinite degree, then as a subset of $f$, $U^4_c$ is
  isometric to the set $\{x+iy : x\in[0,1], y\geq1/16\}$ in the half
  plane representation of $\H$. This set is easily verified to have
  hyperbolic area $16$. (One could also deduce this from the finite degree case by taking a
  limit.)

  Finally, we claim that for every vertex $v$, the hyperbolic area of
  $U^3_v :=\bigcup_{c \perp v} U^3_c$ is at most $C \deg(v)$, where $C$
  is a universal constant.  Observe that $U^3_v$ is simply the ball of
  radius $7/16$ around $v$ in $\cR(M)$.  Let $S$ be the ball around $v$
  of radius $1/2$.  The chart provided for $S$ maps $U^3_v$ to a ball of
  radius $(7/16)^{2\pi/\theta(v)}$, while $S$ gets mapped to the ball of
  radius $(1/2)^{2\pi/\theta(v)}$ with the same centre.  It follows from \eqref{eq:hyparea} that
  \[
    \area_{\bbH}(U^3_v,S) \leq  \frac{2\pi}{1-(7/8)^{2\pi/\theta(v)}} \leq C \theta(v) \leq \pi C
    \deg(v),
  \]
  where $C$ is a constant. (Verifying that such a constant exists is a
  simple calculus exercise.) The claim now follows from Schwarz-Pick.

  The lemma follows by combining the estimates given for $U^3_v$ and for each of $U^1_c$, $U^2_c$, and $U^4_c$
  for each corner at $v$.
\end{proof}

We remark that a simple additional argument gives the slightly stronger
statement that $\area_{\bbH}(U_c)$ is uniformly bounded for each corner.

\begin{proof}[Proof of \cref{thm:conformal}, Conformal hyperbolicity
  implies invariant nonamenability.]
  We may assume that every face of $M$ has degree at least three.
  Otherwise, we consider the map $\hat M$ used to define $\cR(M)$, all the faces of which have
  degree at least $3$, and which is coupling equivalent to $M$.  Let
  $\phi$ be a conformal equivalence between $\cR(M)$ and the hyperbolic
  plane.

  Let $Z$ be an independent Poisson point process of intensity $1$ on
  the hyperbolic plane, and let $D$ be the Delaunay triangulation
  associated to $Z$.  Since $D$ is a Poisson-Delaunay triangulation of
  the hyperbolic plane, $D$ is invariantly nonamenable (see
  \cite{BPP,AHNR15}).  To prove nonamenability of $M$ we give
  a unimodular coupling of $M$ and $D$, using a larger graph $G$, and
  conclude by applying \cref{lem:hypcoupling}.

  The graph $G$ has vertex set $V\cup Z$, and has as edges the edges of
  $M$, the edges of $D$, and an edge connecting each $v \in V$ to every
  point $z \in Z \cap \phi(U_v)$.  We mark the edges of $G$ according to
  which of these three types they come from.  Note that the law of
  $(G,\rho)$ does not depend on the choice of $\phi$.  It follows from
  \cref{lem:hypareaestimate} that the expected number of points in
  $Z \cap \phi(U_{\rho})$ is finite.  It is easily verified, using the
  measurability of the conformal embedding, that if we sample $(G,\rho)$
  biased by $1+|Z \cap \phi(U_\rho)|$ and then let $\hat \rho$ be
  uniform on the set $\{\rho\} \cup (Z\cap\phi(U_\rho))$, then the
  resulting random rooted graph $(G,\hat \rho)$ is unimodular.
  Similarly, if we sample $(M,\rho)$ and $Z$ biased by
  $|Z \cap \phi(U_\rho)|$, and let $\rho'$ be uniform on the set
  $Z \cap \phi(U_\rho)$, then the resulting graph $(D,\rho')$ is
  unimodular.  Thus, we have defined a unimodular coupling between
  $(M,\rho)$ and $(D,\rho')$.
\end{proof}

\section{Multiply-connected maps}\label{Sec:multiplyconnected}

\subsection{The topology of unimodular random rooted maps.}

In this section we study multiply-connected unimodular random rooted
maps.  We begin by classifying the possible topologies of the surface
associated to a unimodular random rooted map $(M,\rho)$.  Biringer and
Raimbault \cite{BirRai14} classified the possible topologies of
unimodular random rooted complete, orientable, hyperbolic surfaces.
Their methods readily generalise to our setting, yielding the following
theorem.  In fact, the proof is slightly less technical in our setting,
and we provide a quick sketch below.

\begin{thm}[Topology of unimodular random rooted maps]\label{thm:topology}
  Let $(M,\rho)$ be an infinite unimodular random rooted map.  Then the
  surface associated to $M$ is almost surely homeomorphic to one of the
  following surfaces: the plane, the cylinder, the Cantor tree, the
  infinite prison window, Jacob's ladder, or the blossoming Cantor tree.
  The type is determined by how many ends the surface has and whether or not it is planar.
\end{thm}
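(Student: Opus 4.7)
My plan is to extract from $(M,\rho)$ a triple of topological invariants of $S=S(M)$ — the total genus, the end space, and the subset of ends that are accumulated by genus — and then invoke the Ker\'ekj\'art\'o--Richards classification of non-compact orientable surfaces, which states that these invariants determine $S$ up to homeomorphism. The six surfaces listed in the statement correspond exactly to the combinations in which the genus is either $0$ or $\infty$, the end space is a singleton, a two-point space, or a Cantor set, and (in the nonplanar case) every end is accumulated by genus. So it suffices to rule out all other combinations.

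By the ergodic decomposition described in \cref{Sec:ergod}, I may assume $(M,\rho)$ is ergodic. The event that $M$ is planar is invariant under changing the root, hence has probability zero or one by ergodicity. For the number of ends, I combine \cref{prop:endsamen} — the underlying graph $G$ has $1$, $2$, or infinitely many ends almost surely — with \cref{Lem:endscomparison}, which gives that $S$ has at most as many ends as $G$. A direct mass-transport argument rules out the case of finitely many ends strictly between $2$ and $\infty$: if $S$ had $k$ ends with $3\le k<\infty$, then for each end one could distinguish a finite separator $W$ realising the end partition and transport unit mass from every vertex to the closest vertex of $W$, contradicting the mass-transport principle. The resulting number of ends of $S$ is then itself a root-invariant random variable and so is almost surely a constant in $\{1,2,\infty\}$.

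The same ergodicity plus mass-transport template pins down the remaining invariants. For each $n\ge 0$ the event that $S$ has genus at most $n$ is root-invariant, so by ergodicity the genus is a constant; if that constant were a finite $n\ge 1$, then only finitely many handles of $S$ would be topologically distinguished, and a mass transport sending unit mass from every vertex to the vertex nearest a handle would yield infinite expected received mass, a contradiction. Hence the genus is a.s.\ zero or infinite, matching the planar/nonplanar dichotomy. In the nonplanar case I need that every end is accumulated by genus: if some end admitted a planar neighbourhood, this neighbourhood would be separated from the rest of $S$ by a finite set $W$ of vertices, and unit mass sent from every vertex to the closest vertex of $W$ would again contradict unimodularity. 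Finally, when there are infinitely many ends, the end space (a closed subset of a Polish space) is perfect by yet another mass-transport argument — an isolated end corresponds to a finite distinguished separator — and is totally disconnected and compact by construction, hence homeomorphic to the Cantor set. Feeding the six possible triples of invariants into Ker\'ekj\'art\'o--Richards identifies $S$ with one of the six listed surfaces, which depends only on the number of ends (via the cardinality of the end space) and planarity.

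The most delicate step will be the mass-transport arguments used to exclude ``finitely many handles'' and ``some planar end''. In the original Biringer--Raimbault argument one exploits a canonical injectivity radius function on a hyperbolic surface to name finite distinguished structures; in our combinatorial setting I instead need to show that a finite topological obstruction — a handle, a planar end, or an isolated end — always yields a finite vertex set in $M$ that can be canonically defined in an isomorphism-invariant way and that attracts mass from an infinite region. Making the choice of such separators measurable and isomorphism-equivariant (so that the conditional distribution only depends on $(M,\rho)$) is the main technical burden of the proof.
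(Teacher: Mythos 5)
Your proposal follows essentially the same route as the paper's sketch: show that the end space of $S(M)$ is a point, two points, or a Cantor set, use mass transport to force the genus to be zero or infinite with handles accumulating at every end, and then invoke the Ker\'ekj\'art\'o--Richards classification; the measurability/equivariance issue you flag at the end is likewise left informal in the paper. The only cosmetic difference is that you route the end count through \cref{prop:endsamen} and \cref{Lem:endscomparison} (the former assumes $\E[\deg(\rho)]<\infty$, which is not a hypothesis of \cref{thm:topology}), whereas the paper applies the Aldous--Lyons end argument directly to the surface with no moment assumption --- but your supplementary mass-transport arguments on the surface cover this case anyway.
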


Here, the \textbf{cantor tree} is a `tree made of tubes', and is
homeomorphic to the complement of the Cantor set in the sphere, the
\textbf{infinite prison window} is `the lattice $\Z^2$ made of tubes',
\textbf{Jacob's ladder} is `an infinite ladder made of tubes', and the
\textbf{blossoming Cantor tree} is a Cantor tree with a handle attached
near each bifurcation. See \cref{fig:topology} for illustrations.  Be
warned that homeomorphism is an extremely weak notion here.  For
example, `the lattices $\Z^d$ made of tubes' are all homeomorphic to
each other for all $d\geq2$.
We leave it to the reader to verify that each of
the surfaces listed in \cref{thm:topology} can occur as the almost sure homeomorphism class of a unimodular random rooted map.

\begin{figure}
  \centering
  \includegraphics[width=0.95\textwidth]{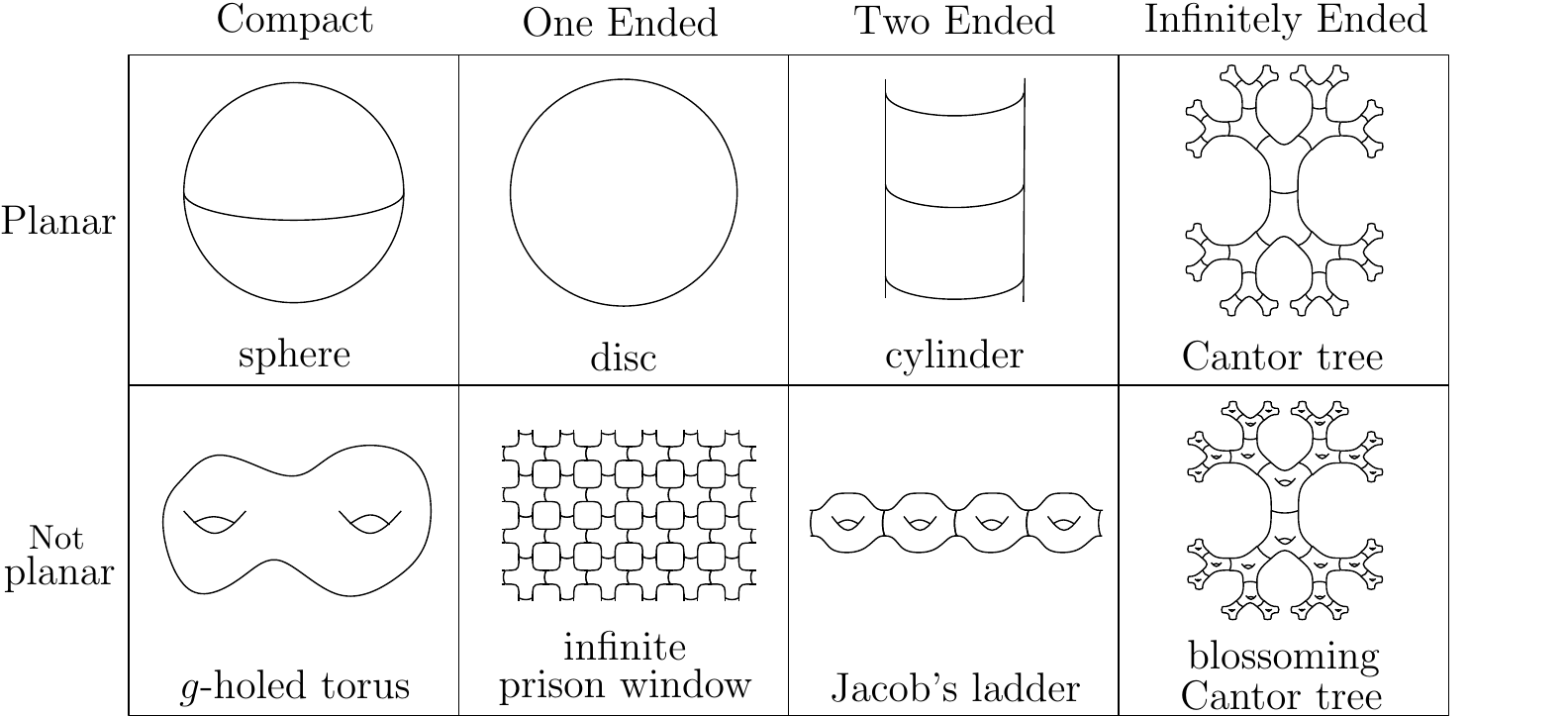}
  \caption{Possible topologies of a unimodular random map. The surface
    $S(M)$ associated to a unimodular random map $(M,\rho)$ is almost
    surely homeomorphic to one of the above.}
  \label{fig:topology}
\end{figure}

In \cite{BirRai14}, Biringer and Raimbault must also allow for the
surfaces above to be punctured at a locally finite set of points,
corresponding to isolated ends of the surface. This does not occur in
our setting, as the surfaces corresponding to unimodular random rooted
maps do not have isolated ends.

\begin{proof}[Sketch of proof]
  An \textbf{end} $\xi$ of an infinite graph $G$ may be defined as a
  function that assigns a connected component $\xi_K$ of $G\setminus K$
  to each finite set of vertices $K$ of $G$, and satifies the
  consistency condition that $\xi_{K'}\subseteq \xi_{K}$ whenever
  $K' \supseteq K$.  The \textbf{space of ends} of $G$, denoted
  $\partial\sE(G)$ is the topological space with the set of ends of $G$
  as its underlying set and with a basis of open sets given by sets of
  the form $A_{K,W} = \{\xi \text{ an end of $G$} : \xi_K = W\}$, where
  $K \subset V$ is finite and $W \subset V$ is a connected component of
  $G \setminus K$. Note that the basis sets are also closed, so that the
  space of ends is always \emph{zero-dimensional}, that is, its topology
  is induced by a basis of sets that are both open and closed.  The
  space of ends of a surface is defined similarly, replacing instances
  of the word `finite' by `compact' above, and is also zero-dimensional.

  It is well-known that every unimodular random rooted graph either has
  one, two, or infinitely many ends, and, in the last case, the space of
  ends does not have any isolated points
  \cite[Proposition 6.10]{AL07}.  Since the space of ends of any graph
  is also compact, it follows in the last case that the space of ends is
  homeomorphic to the Cantor set (which, by Brouwer's Theorem
  \cite{brouwer1910structure}, is the only compact, zero-dimensional
  Hausdorff space with no isolated points). A similar proof applies to
  show that if $(M,\rho)$ is a random rooted map with associated surface
  $S=S(M)$, then $S$ has either one, two, or infinitely many ends and in
  the last case the space of ends of $S$ is homeomorphic to a Cantor
  set.  Next, a standard mass transport argument (each vertex sends unit
  mass to the `nearest handle') shows that if $S$ contains handles, then
  the handles of $S$ accumulate towards every end of $S$.  That is, if
  $S$ has handles then, for every compact subset $K$ of $S$, every
  non-precompact connected component of $S\setminus K$ contains a
  handle.

  We next apply the classification theorem for non-compact surfaces due
  to Ker\'ekj\'art\'o and Richards \cite{richards1963classification},
  which states that if two non-compact orientable surfaces $S_1$ and
  $S_2$ have the same number of handles (which in our case will be zero
  or infinity) and there exists a homeomorphism
  $\phi:\partial\sE(S_1)\to\partial\sE(S_2)$ such that the handles of
  $S_1$ accumulate to $\xi \in \partial\sE(S_1)$ if and only if the
  handles of $S_2$ accumulate to $\phi(\xi)$, then $S_1$ and $S_2$ are
  homeomorphic and $\phi$ extends to a homeomorphism from the ends
  compactification of $S_1$ to the ends compactification of $S_2$. Thus,
  by the above discussion, the homeomorphism class of $S=S(M)$ is
  determined almost surely by its number of ends and by the existence or
  non-existence of handles. This yields the six different possibilities
  listed in the statement of the theorem (see Figure 6).
\end{proof}


The main result of this section is that the average curvature of a
unimodular random rooted map restricts the possible topologies of the
map.

\begin{thm}[Topology and average curvature]
  \label{thm:conformaltype}
  Let $(M,\rho)$ be an ergodic unimodular random map. Then the almost
  sure conformal type of $M$ is determined by its average curvature:
  Either
  \begin{enumerate}[nosep]
  \item The average curvature of $(M,\rho)$ is positive, in which case
    $M$ is conformally elliptic and $S(M)$ is homeomorphic to the sphere
    almost surely,
  \item the average curvature of $(M,\rho)$ is zero, in which case $M$
    is conformally parabolic and $S(M)$ is homeomorphic to the plane,
    the cylinder, or the torus almost surely,
  \end{enumerate}
  or else
  \begin{enumerate}
  \item[(3)] the average curvature of $(M,\rho)$ is negative, in which
    case $M$ is conformally hyperbolic $S(M)$ is homoemorphic to either
    the plane, the blossoming Cantor tree, Jacob's ladder, the infinite
    prison window, or a compact surface of genus at least two almost
    surely. \end{enumerate}
\end{thm}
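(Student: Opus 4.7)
My plan is to prove \cref{thm:conformaltype} by case analysis based on whether $M$ is finite or infinite, and in the infinite case, based on the topology of $S(M)$ classified in \cref{thm:topology}.

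For finite $M$, the identity $\bbK(M,\rho) = 4\pi(1-\genus(M))/|V|$ from \cref{ex:finite} immediately yields the correspondence between the sign of $\bbK$ and the genus, and hence the conformal type of the resulting compact Riemann surface (sphere when $\bbK > 0$, torus when $\bbK = 0$, higher-genus when $\bbK < 0$). For infinite $M$, I first handle the simply connected case, where $S(M)$ is the plane: \cref{C:curv_USF} gives $\bbK \leq 0$, and \cref{thm:conformal} equates $\bbK = 0$ with hyperfiniteness and conformal parabolicity of $M$. For infinite planar but multiply connected maps, where $S(M)$ is a cylinder or Cantor tree, \cref{prop:endsamen} determines invariant (non)amenability from the number of ends, and the equivalence of \eqref{iCurv} and \eqref{iAmen} in \cref{thm:dichotomy} then pins down the sign of $\bbK$: the cylinder ($2$-ended, invariantly amenable) falls in case (2), while the Cantor tree ($\infty$-ended, invariantly nonamenable) falls in case (3).

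The main obstacle is the non-planar infinite case, comprising Jacob's ladder, the infinite prison window, and the blossoming Cantor tree, all of which have handles. My proposed strategy is to reduce to the simply connected setting via a covering argument: pass to the universal cover $(\tilde M, \tilde \rho)$, use that curvature is a purely local quantity so $\bbK(\tilde M, \tilde \rho) = \bbK(M,\rho)$, and observe that a surface with handles cannot arise as a quotient of $\C$ by a free proper isometric action (the only such quotients being the plane, cylinder, and torus). Hence $\cR(\tilde M)$ cannot be conformally parabolic, and applying \cref{thm:conformal} to $\tilde M$ forces $\bbK(\tilde M, \tilde \rho) < 0$. In the reverse direction, once $\bbK < 0$ is known, the universal cover of $S(M)$ is the disc, giving conformal hyperbolicity of $M$.

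The delicate point is fitting the universal cover into the unimodular framework, since the cover is an infinite-sheeted extension with no canonical unimodular rooting. Plausible workarounds are either to approximate the universal cover by a tower of finite covers of $M$ (each still unimodular by \cref{lem:unimodstability}) and invoke \cref{prop:semicontcurv} in the limit, or to bypass covers entirely and prove a Gauss-Bonnet identity for unimodular random rooted maps expressing $\bbK$ as the limiting average Euler characteristic per vertex along a finitary exhaustion; each handle then contributes $-4\pi$ to this density, forcing $\bbK < 0$ on any topology carrying a positive density of handles. Whichever route succeeds, combining it with the simply connected and planar cases above yields the full correspondence between the sign of $\bbK$ and the topology of $S(M)$.
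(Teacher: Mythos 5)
Your main line of attack is the one the paper actually uses: pass to the universal cover $(\tilde M,\tilde\rho)$, use that curvature is local so $\bbK(\tilde M,\tilde\rho)=\bbK(M,\rho)$, apply the simply connected theory (\cref{C:curv_USF} and \cref{thm:conformal}) to read the conformal type of $\tilde\cR(M)$ off the sign of $\bbK$, classify $\cR(M)$ as a quotient (sphere; plane/cylinder/torus; or hyperbolic), and intersect with \cref{thm:topology}. The point you flag as delicate -- unimodularity of the universal cover -- is not left open in the paper: the lemma stated immediately before the proof of \cref{thm:conformaltype}, proved in \cite[Section 4.1]{AHNR15}, says that $(\tilde M,\tilde\rho)$ is unimodular when $\tilde\rho$ is an arbitrary lift of $\rho$, so no approximation scheme is needed. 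Your proposed substitutes would not carry the load as stated: \cref{lem:unimodstability} only covers modifications that keep the vertex set fixed, so it does not make finite covers unimodular, and producing a root-equivariant tower of finite covers converging to $\tilde M$ is itself a nontrivial residual-finiteness-type construction; the Gauss--Bonnet/exhaustion route is worse, because finitary exhaustions exist only for hyperfinite maps -- precisely what you cannot assume when trying to prove $\bbK<0$ for the surfaces with handles -- and the torus (handles, $\bbK=0$) shows that ``a positive density of handles forces $\bbK<0$'' cannot be the whole story.

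A second genuine issue is your treatment of the infinite multiply-connected planar case (cylinder, Cantor tree) via ``the equivalence of \eqref{iCurv} and \eqref{iAmen} in \cref{thm:dichotomy}'': in the paper that equivalence is established for multiply connected maps only \emph{as a consequence of} \cref{thm:conformaltype} (Section 7.2), while the direct proofs (\cref{C:curv_USF}, \cref{thm:forestformula}) assume simple connectivity; invoking it here is therefore circular. The repair is simply to drop the case split and run the covering argument uniformly for every infinite $M$: $\bbK(M,\rho)=\bbK(\tilde M,\tilde\rho)\le 0$ by \cref{C:curv_USF} applied to the simply connected cover, with $\bbK=0$ iff $\tilde\cR(M)$ is conformally the plane, iff $\cR(M)$ is a quotient of $\C$ (plane, cylinder or torus), and $\bbK<0$ iff $\tilde\cR(M)$ is the disc; combining with \cref{thm:topology} then yields the stated lists, which is exactly the paper's proof. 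Your finite case via \cref{ex:finite} is fine.
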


The theorem will follow by combining \cref{thm:conformal},
\cref{thm:topology} and the notion of the universal cover of a map.
%
Recall that a surjective, holomorphic function $\Pi:S\to S'$ between two
Riemann surfaces is a \textbf{holomorphic covering} if it is locally a
homeomorphism, that is, if for every $x\in S$ there exists an open
neighbourhood $U$ of $S$ such that the restriction of $\Pi$ to $U$ is a
homeomorphism between $U$ and its image.  Given a Riemann surface $S$,
the \textbf{universal cover} of $S$ is a simply connected Riemann
surface $\tilde S$ together with a covering $\Pi:\tilde S\to S$. The
universal cover exists for any $S$, and is unique in the sense that if
$\Pi':\tilde S '\to S$ is another simply connected Riemann surface
covering $S$, then there exists a conformal equivalence
$\phi:\tilde S' \to \tilde S$ such that $\Pi' = \Pi \circ \phi$.

The universal cover of a map is defined analogously.  Given a pair of
maps $M=(G,\sigma)$ and $M'=(G',\sigma')$, we say that a graph
homomorphism $\phi: G \to G'$ is a \textbf{map homomorphism} if
$\sigma'\circ\phi=\phi\circ\sigma$, and say that $\phi$ is a
\textbf{covering} if for every vertex $v$ and every face $f$ of $M$, the
restriction of $\phi$ to each of $\{e \in E^\rightarrow : e^-=v\}$ and
$\{e \in E^\rightarrow : e^r=f\}$ is injective.  The \textbf{universal
  cover of a map} $M$ is a simply connected map $\tilde M$ together with
a covering $\pi:\tilde M \to M$. Every map has a universal cover, and
the universal cover of a map $M$ is unique in the sense that if
$\pi':\tilde M' \to M$ is a covering from a simply connected map
$\tilde M'$ to $M$, there exists an isomorphism of maps
$\phi: \tilde M \to \tilde M'$ such that $\pi'\circ\phi=\pi$.

We remark that the universal cover $\pi:\tilde M\to M$ of $M$ may be constructed by
lifting every edge of $M$ in the surface $S(M)$ to the universal
cover $\Pi:\tilde S(M)\to S(M)$ (see e.g.\ \cite[p. 60]{Hatcher}
for the topological notion of path lifting).  In particular, if
$\cR(\tilde M)$ is the Riemann surface associated to $\tilde M$, then
there exists a conformal equivalence $\Phi:\cR(\tilde M)\to\tilde\cR(M)$
such that $\Pi\circ\Phi\circ\tilde z = z\circ\pi$.  (See e.g.\
\cite[Section 9.2]{St05} for a direct construction.)

Covers of unimodular maps are unimodular.  The following is proven in
\cite[Section 4.1]{AHNR15}.

\begin{lem}
  Let $(M,\rho)$ be a unimodular random rooted map. Let $(\tilde M,\pi)$
  be the universal cover of $M$ and let $\tilde \rho$ be an arbitrary
  element of $\pi^{-1}(\rho)$. Then $(\tilde M, \tilde\rho)$ is a
  unimodular random rooted map.
\end{lem}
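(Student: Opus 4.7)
The plan is to verify the mass-transport principle for $(\tilde M,\tilde \rho)$ by projecting any given mass transport on $\tilde M$ down to a mass transport on $M$, applying the mass-transport principle for $(M,\rho)$ to the projection, and then lifting back. The key algebraic ingredient is that the deck transformation group $\Gamma$ of the universal cover $\pi:\tilde M\to M$ acts by map automorphisms of $\tilde M$, and, because $\tilde M$ is simply connected, $\Gamma$ acts freely and transitively on every fibre $\pi^{-1}(v)$, $v\in V(M)$.

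Given a non-negative Borel mass transport $\tilde f:\cM_{\bullet\bullet}\to[0,\infty]$, I will define a projected mass transport $f:\cM_{\bullet\bullet}\to[0,\infty]$ by
\[
f(M,u,v) := \sum_{\tilde v\in\pi^{-1}(v)}\tilde f(\tilde M,\tilde u,\tilde v),
\]
for an arbitrary choice of $\tilde u\in\pi^{-1}(u)$. Since any two such choices differ by an element of $\Gamma$, which acts on $\pi^{-1}(v)$ by bijections preserving $\tilde f$-values, the sum is independent of this choice. The uniqueness of the universal cover up to map isomorphism, together with the fact that any isomorphism of $(M,u,v)$ lifts to an isomorphism of universal covers carrying chosen preimages to chosen preimages, shows that $f$ descends to a well-defined, Borel measurable function on isomorphism classes of doubly rooted maps.

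By construction, $\sum_{v}f(M,u,v)=\sum_{\tilde v\in\tilde M}\tilde f(\tilde M,\tilde u,\tilde v)$ for any preimage $\tilde u$ of $u$. Applying the mass-transport principle for $(M,\rho)$ to $f$, evaluating the left-hand sum with the choice $\tilde u=\tilde \rho$, and writing $\tilde u_0$ for an arbitrary preimage of each given $u$ on the right-hand side, yields
\[
\E\sum_{\tilde v\in\tilde M}\tilde f(\tilde M,\tilde \rho,\tilde v) = \E\sum_{v}f(M,\rho,v) = \E\sum_{u}f(M,u,\rho) = \E\sum_{u}\sum_{\tilde \rho'\in\pi^{-1}(\rho)}\tilde f(\tilde M,\tilde u_0,\tilde \rho').
\]
Parametrising $\pi^{-1}(\rho)=\{\gamma^{-1}\tilde \rho:\gamma\in\Gamma\}$ and using that $\tilde f$ is invariant under the automorphism $\gamma$ of $\tilde M$ gives $\tilde f(\tilde M,\tilde u_0,\gamma^{-1}\tilde \rho) = \tilde f(\tilde M,\gamma\,\tilde u_0,\tilde \rho)$; as $u$ ranges over $V(M)$ and $\gamma$ over $\Gamma$, the pair $\gamma\,\tilde u_0$ ranges bijectively over $V(\tilde M)$, so the right-hand side collapses to $\E\sum_{\tilde u\in\tilde M}\tilde f(\tilde M,\tilde u,\tilde \rho)$, which is the desired identity.

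The principal technicality is to justify that $(M,\rho)\mapsto(\tilde M,\tilde \rho)$ is a measurable operation from $\cM_\bullet$ to itself, so that the projected mass transport $f$ really is a Borel function of the isomorphism class of $(M,u,v)$; this follows from the local nature of the universal cover construction, namely that the ball of radius $r$ about $\tilde \rho$ in $\tilde M$ is a deterministic function of the ball of radius $r$ about $\rho$ in $M$, making the construction continuous in the local topology on rooted maps. A secondary remark, implicit above, is that the arbitrariness of the choice of $\tilde \rho\in\pi^{-1}(\rho)$ is genuine at the level of distribution: any two choices produce rooted maps that are isomorphic via a deck transformation, hence represent the same element of $\cM_\bullet$.
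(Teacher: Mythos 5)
Your core argument is correct, and it is essentially the standard proof of this lemma (the paper itself gives no argument but defers to \cite[Section 4.1]{AHNR15}): push a transport $\tilde f$ on $\tilde M$ down to $f(M,u,v)=\sum_{\tilde v\in\pi^{-1}(v)}\tilde f(\tilde M,\tilde u,\tilde v)$, use that the deck group $\Gamma$ consists of orientation-preserving map automorphisms of $\tilde M$ commuting with $\pi$ and acts freely and transitively on each vertex fibre (true for the universal cover as constructed in the paper) to see that $f$ is well defined and to unfold $\sum_u\sum_{\tilde\rho'\in\pi^{-1}(\rho)}$ into $\sum_{\tilde u\in V(\tilde M)}$, and then apply the mass-transport principle for $(M,\rho)$. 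That computation is right.

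The one step that is wrong as stated is your justification of measurability. It is \emph{not} true that $B_r(\tilde M,\tilde\rho)$ is a function of $B_r(M,\rho)$, and $(M,\rho)\mapsto(\tilde M,\tilde\rho)$ is \emph{not} continuous in the local topology: whether two lifts of short walks from $\tilde\rho$ coincide is the question of whether a loop in $M$ is null-homotopic in $S(M)$, which cannot be read off any finite ball. Concretely, let $M_N$ be the sphere quadrangulation obtained from the tube $\Z_4\times\{0,\dots,N\}$ by gluing a square face into each boundary $4$-cycle, rooted near the middle, and let $M$ be the bi-infinite tube $\Z_4\times\Z$; then $B_r(M_N,\rho_N)\cong B_r(M,\rho)$ for $N$ large, yet $\tilde M_N=M_N$ while $\tilde M\cong\Z^2$, and already the radius-$2$ balls of the covers disagree ($12$ versus $13$ vertices). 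Fortunately the mass-transport principle only needs $f$ to be Borel, and this does hold: since $S(M)$ deformation retracts onto the graph together with its finite faces, a loop is null-homotopic if and only if, for some $R$, it can be reduced to a point by backtrack moves and substitutions of boundary walks of finite faces all contained in $B_R(M,\rho)$, and each such event is determined by a ball of slightly larger radius. Hence $B_r(\tilde M,\tilde\rho)$, and with it your projected transport, is a Borel (though not continuous) function on $\cM_{\bullet\bullet}$, and with this repair your proof is complete.
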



\begin{proof}[Proof of \cref{thm:conformaltype}]
  Observe that $\kappa_{\tilde M}(\tilde \rho)=\kappa(\rho)$ for any
  rooted map $(M,\rho)$, so that
  $\bbK(\tilde M,\tilde \rho) = \bbK(M,\rho)$.  By applying
  \cref{thm:conformal} and the classical theory of Riemann surfaces, we
  obtain that
  \begin{itemize}
  \item $\bbK(M,\rho)>0$ if and only if $M$ is finite and simply
    connected and $\tilde\cR(M) = \cR(M)$ is conformally equivalent to
    the sphere,
  \item $\bbK(M,\rho)=0$ if and only if $\tilde\cR(M)$ is conformally
    equivalent to the plane, if and only if $\cR(M)$ is conformally
    equivalent to one of the plane $\C$, the cylinder $\C/\Z$ or a torus
    $\C/\Lambda$ for some lattice $\Lambda\subset \C$, and
  \item $\bbK(M,\rho)<0$ if and only if $\tilde\cR(M)$ is conformally
    equivalent to the hyperbolic plane (or disc).
  \end{itemize}
  In the last case there are many possibilities for the conformal
  equivalence class $\cR(M)$; any topology other than the sphere, plane,
  cylinder or a torus is possible.  We conclude by applying the
  additional topological constraints on $\cR(M)$ imposed by
  \cref{thm:topology}.
\end{proof}

We next connect the topology of $S(M)$ to the number of ends of the
underlying graph.  If $S(M)$ is homeomorphic to the plane or the
infinite prison window, it is possible for the underlying graph to have
$1$, $2$, or infinitely many ends. (Examples in the plane case are
$\Z^2$, $\Z$ and a regular tree; Add handles to these for examples with
the topology of the infinite prison window.)  The number of ends of the
underlying graph of a map $M$ is at least the number of ends of $S(M)$.
Thus if $S(M)$ is homeomorphic to the Cantor tree or the blossoming
Cantor tree, and at least two ends if $S(M)$ is homeomorphic to the
cylinder or Jacob's ladder.  The case of two-ended $S(M)$ is covered by
the following:

\begin{lem}\label{lem:twoendcylinder}
  Let $(M,\rho)$ be a unimodular random rooted map with
  $\E[\deg(\rho)]<\infty$.  If $S(M)$ is homeomorphic to the cylinder or
  to Jacob's ladder almost surely, then the underlying graph of $M$ is recurrent and
  two-ended almost surely.
\end{lem}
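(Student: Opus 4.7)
The plan is to prove that the underlying graph $G$ of $M$ has exactly two ends almost surely and then invoke \cref{prop:endsamen} to conclude recurrence. The lower bound $\mathrm{ends}(G)\ge 2$ is immediate from \cref{Lem:endscomparison}, since both the cylinder and Jacob's ladder have exactly two ends. For the upper bound, I will show that $(M,\rho)$ is hyperfinite; by \cref{prop:invexh} this is equivalent to invariant amenability, and then the contrapositive of the ``infinitely many ends implies invariantly nonamenable'' direction of \cref{prop:endsamen} rules out infinitely many ends, leaving exactly two.

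To construct a finitary exhaustion of $(M,\rho)$, I use the topology of $S:=S(M)$. Since $S$ has exactly two ends, it admits a bi-infinite sequence of disjoint essential simple closed curves $\{\gamma_n\}_{n\in\Z}$ in $S$, transverse to the edges of $M$ and escaping to the two ends as $n\to\pm\infty$, such that each compact annular region $A_n$ between $\gamma_n$ and $\gamma_{n+1}$ contains only finitely many vertices (by local finiteness of the embedding, since $A_n$ is compact). For each $k\ge 1$, define $\omega_k$ to be the bond percolation on $M$ obtained by deleting every edge crossed by some $\gamma_n$ with $|n|\ge k$; each cluster of $\omega_k$ is contained either in the compact region bounded by $\gamma_{-k}$ and $\gamma_k$ or in one of the compact annuli $A_n$ with $|n|\ge k$, and is therefore finite. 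Moreover $\omega_k\subseteq\omega_{k+1}$ and $\bigcup_k \omega_k = M$, so $(\omega_k)_{k\ge 1}$ is a finitary exhaustion of $M$, establishing hyperfiniteness.

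The main obstacle is ensuring that this construction yields a unimodular marking of $(M,\rho)$, since the choice of $\{\gamma_n\}$ depends on an embedding of $M$ into $S$ and on the ambient topology, neither of which is intrinsic to the random rooted map. I would handle this by coupling $(M,\rho)$ with an auxiliary invariant source of randomness---e.g., i.i.d.\ uniform $[0,1]$ marks on the corners of $M$, or an independent Poisson point process on $S$---and using this randomness together with the standard topological decomposition of a two-ended surface to canonically select $\{\gamma_n\}$ in a way that respects map automorphisms. For the cylinder case a cleaner alternative is available: the universal cover $\tilde M$ is simply connected with $\bbK(\tilde M,\tilde\rho)=\bbK(M,\rho)=0$, hence invariantly amenable by the simply-connected case of \cref{thm:dichotomy}, and since $\pi_1(\text{cylinder})=\Z$ is amenable, invariant amenability descends to the quotient $M$. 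Once hyperfiniteness and hence two-endedness are established in both cases, recurrence of $M$ is delivered by the two-ended part of \cref{prop:endsamen}.
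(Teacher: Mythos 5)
Your overall skeleton (ends $\geq 2$ from \cref{Lem:endscomparison}; ends $\leq 2$ by proving hyperfiniteness and invoking \cref{prop:invexh} and \cref{prop:endsamen}; recurrence from the two-ended part of \cref{prop:endsamen}) is logically sound, but the proof has a genuine gap at its core step, the construction of the finitary exhaustion. A finitary exhaustion is by definition a sequence of \emph{unimodular} $\{0,1\}$-markings, so the entire difficulty of proving hyperfiniteness is concentrated in making the choice of the curve family $\{\gamma_n\}$ measurable and automorphism-covariant; this is exactly the point you defer to ``auxiliary randomness together with the standard topological decomposition of a two-ended surface.'' There is no standard recipe for this: i.i.d.\ corner labels or a Poisson process on $S(M)$ give you local symmetry breaking, but you still must produce, equivariantly, a pairwise disjoint, locally finite, bi-infinite family of essential separating curves, and it is not clear how to do this without essentially already knowing invariant amenability (indeed, asserting that an equivariant finitary exhaustion can be selected is very close to asserting the conclusion). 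Your cylinder shortcut has its own problems: the input $\bbK(M,\rho)=0$ comes from \cref{thm:conformaltype}, and the further claim that invariant amenability of the universal cover ``descends to the quotient'' along the $\Z$ deck action is plausible but is nowhere proved in the paper and needs an argument (the cover and the base are not related by a percolation, so \cref{lem:hypcoupling} does not apply directly). More seriously, the shortcut says nothing about Jacob's ladder, where the average curvature can be strictly negative (\cref{thm:conformaltype} explicitly allows Jacob's ladder in the negative-curvature case), so the universal-cover route is unavailable there and your argument for that case rests entirely on the unproved curve construction.

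For comparison, the paper avoids any equivariant topological selection by letting the random walk supply the invariance: it biases by $\deg(\rho)$ to get a reversible map, assumes for contradiction that $M$ is transient (noting that more than two ends forces invariant nonamenability and hence transience), and uses the two-endedness of $S(M)$ to find $r,D$ such that with positive probability the ball $B_r(M,X_n)$ has degree sum at most $D$ and separates $S(M)$ into two unbounded pieces. Stationarity produces infinitely many disjoint such balls along the walk; these give infinitely many disjoint cutsets of degree sum at most $D$ in the unbounded piece in which the transient walk eventually stays, so that piece is recurrent by Nash--Williams, a contradiction. This yields recurrence and at most two ends simultaneously, with two-endedness then following from \cref{Lem:endscomparison} as in your first step. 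If you want to salvage your approach, the honest work is in turning the topological exhaustion of the two-ended surface into a unimodular one, and I do not see a short way to do that.
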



\begin{proof}
  We bias by $\deg(\rho)$ and prove the equivalent statement for a
  reversible map.  We may also assume that $(M,\rho)$ is ergodic.
  Suppose for contradiction that $M$ is either transient or has more than two ends almost surely. In the latter
  case, $M$ has infinitely many ends almost surely, is invariantly
  nonamenable, and hence transient almost surely.

  Since $S(M)$ is two-ended almost surely, there exists some $r$ and $D$
  such, with positive probability, the ball $B_r(M,X_n)$ of radius $r$
  about $X_n$ in $M$ has degree sum at most $D$ and the complement
  $S(M)\setminus z(B_r(M,X_n))$ has two non-precompact connected
  components, each of which is necessarily one-ended.  Denote this event
  by $A_n$.
  By stationarity, $A_n$ occurs for infinitely many $n$ almost surely.
  Let $(n_m)_{m\geq0}$ be a sequence of times at which $A_n$ occurs, and
  such that the balls $B_r(M,X_{n_i})$ are disjoint.  Let $W_1,W_2$ be
  the subgraphs of $M$ in the two unbounded components of
  $S\setminus z(B_r(M,X_{n_0}))$.  Since $M$ is transient almost surely,
  the simple random walk $(X_n)_{n\geq0}$ eventually stays in one of the
  $W_i$, and so $W_i$ must be transient.  However, the balls
  $B_r(M,X_{n_m})$ yield an infinite collection of disjoint cutsets of
  degree sum at most $D$ separating $\rho$ from infinity in the subgraph
  induced by $W_i$.  Thus, this graph is recurrent by the Nash-Williams
  criterion \cite{LP:book}, a contradiction.
\end{proof}


\subsection{\cref{thm:dichotomy} in the multiply-connected planar case}

In light of \cref{thm:topology} we need to consider maps where $S(M)$ is homeomorphic to either
the cylinder or the Cantor tree.

Suppose that $(M,\rho)$ is an infinite, multiply-connected unimodular
random planar map with $\E[\deg(\rho)]<\infty$.  If $\bbK(M,\rho)=0$,
then the proof of \cref{thm:conformaltype} implies that $\cR(M)$ is
conformally equivalent to the cylinder.  \cref{lem:twoendcylinder} then
implies that the underlying graph of $M$ is recurrent and two-ended almost surely. We deduce that
\cref{iAmen,iLiouville,iDirichelet,iIntersect,iVEL,iRecurrent,iTree,iTree2}
of \cref{thm:dichotomy} hold for $(M,\rho)$ as an immediate consequence of recurrence.  The remaining items of
\cref{thm:dichotomy} hold for $(M,\rho)$ as a consequence of invariant
amenability.

Now suppose that $\bbK(M,\rho)<0$.  In this case,
\cref{thm:conformaltype} implies that $S(M)$ is almost surely
homeomorphic to the Cantor tree and consequently that the underlying
graph of $M$ is infinitely-ended almost surely by
\cref{Lem:endscomparison}, and hence that $(M,\rho)$ is invariantly
nonamenable.  The following two propositions, which are well-known to
experts, show that the negations of \cref{iPerc,iDirichelet} hold for
$(M,\rho)$.  The negations of the remaining items of
\cref{thm:dichotomy} follow from the negations of
\cref{iPerc,iDirichelet} using implications that are valid for all
unimodular random rooted graphs and that we have already reviewed
earlier in the paper; see the green and blue arrows in \cref{fig:logic}.

\begin{prop}\label{propinftyendspu}
  Let $(G,\rho)$ be a unimodular random rooted graph with
  $\E[\deg(\rho)]<\infty$, and suppose that $G$ is infinitely ended
  almost surely. Then $p_u(G)=1$ almost surely.
\end{prop}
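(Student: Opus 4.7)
My plan is to argue by contradiction, combining a structural theorem on unimodular percolations with a finite-energy (deletion-tolerance) argument exploiting the infinitely-many-ends assumption.

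After passing to the ergodic decomposition we may assume $(G,\rho)$ is ergodic, so that $p_u(G)$ is almost surely a deterministic constant. Suppose for contradiction $p_u(G)<1$. By monotonicity of uniqueness in $p$ (H\"aggstr\"om--Peres \cite{HaggPe99}), there exists $p \in (p_u(G),1)$ for which the Bernoulli$(p)$ percolation $\omega_p$ has a unique infinite cluster $C$ almost surely, with positive density $\theta(p) := \P(\rho \in C) > 0$. The first structural input I would apply is the unimodular version of the Burton--Keane theorem due to Benjamini, Lyons, Peres and Schramm (see \cite{BLPS} and \cite[Theorem 6.12]{AL07}): a finite-energy unimodular percolation that has a unique infinite cluster almost surely has a cluster with at most two ends almost surely. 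So $C$ has at most two ends a.s.

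The second structural input comes from the infinitely-many-ends assumption on $G$. For each vertex $v$, define $R(v)$ to be the minimal radius such that $G \setminus B_{R(v)}(v)$ has at least three infinite components; this is a measurable function of the isomorphism class of $(G,v)$, and is almost surely finite by hypothesis. Let $F := F(\rho)$ denote the (finite) set of edges with at least one endpoint in $B_{R(\rho)}(\rho)$, and let $A_1, A_2, \ldots$ be the infinite components of $G \setminus B_{R(\rho)}(\rho)$. The key finite-energy step is the following: conditional on $(G,\rho,R(\rho))$, the event $\mathcal{E}$ that every edge of $F$ is closed in $\omega_p$ has probability $(1-p)^{|F|}>0$, and the restrictions $\omega_p|_{A_i}$ are mutually independent Bernoulli$(p)$ percolations on the respective $A_i$, independent of $\mathcal{E}$. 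On $\mathcal{E}$, the infinite clusters of $\omega_p$ are in bijection with the infinite clusters appearing in each $\omega_p|_{A_i}$, so if two distinct $A_i$'s could each support an infinite cluster with positive probability, we would obtain two or more infinite clusters of $\omega_p$ on a positive-probability event, contradicting uniqueness. Therefore, almost surely at most one of the infinite components $A_i$ of $G\setminus B_{R(\rho)}(\rho)$ admits an infinite cluster in its own Bernoulli$(p)$ percolation.

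The main obstacle, and the technical heart of the proof, is to turn this ``unique supporting direction'' conclusion into a contradiction with positive density of $C$. My plan for this step is a mass-transport argument: at each $v \in C$, the cluster $C$ extends to infinity through at most two of the infinite components of $G \setminus B_{R(v)}(v)$ (by the at-most-two-ends property of $C$), and by the finite-energy step at most one of them can carry an infinite Bernoulli$(p)$ cluster. Transporting unit mass from each vertex $u\in C$ to the (at most one) distinguished $A_{i(v)}(v)$ into which $C$ escapes, for each $v$ with $u \in A_{i(v)}(v)$, and applying the MTP, one compares the outgoing and incoming totals and shows that finite expected degree $\E[\deg(\rho)]<\infty$ forces $\theta(p)=0$, contradicting $\theta(p)>0$. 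Making this transport well-defined and integrable (choosing the ``distinguished direction'' measurably and controlling the transported mass) will be the delicate part.
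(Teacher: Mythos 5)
Your setup is sound as far as it goes: reducing to the ergodic case, fixing $p\in(p_u,1)$ for contradiction, and the deletion-tolerance observation that if two distinct infinite components of $G\setminus B_{R(\rho)}(\rho)$ each carried an infinite cluster of their own Bernoulli$(p)$ percolation with positive probability, then closing the finitely many edges incident to the ball would produce two infinite clusters with positive probability. (Your ``at most two ends'' claim is also true --- in fact the unique cluster is one-ended by exactly this deletion-tolerance trick, so no Burton--Keane-type theorem is needed, and the citation you give is not really what proves it.) The genuine gap is the final step, which you yourself flag as the ``technical heart'': the mass transport that is supposed to convert ``at most one escape direction per vertex'' into a contradiction with $\theta(p)>0$ is not actually defined --- mass is sent ``to'' a component $A_{i(v)}(v)$ rather than to a vertex, the indexing over pairs $(u,v)$ is ambiguous, it is not explained why incoming and outgoing totals should disagree, and it is unclear what role $\E[\deg(\rho)]<\infty$ is supposed to play. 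As written this is a hoped-for argument, not a proof; and the structural facts you establish beforehand (the unique cluster has few ends; at most one component of $G\setminus B_{R(\rho)}(\rho)$ supports percolation) are not by themselves contradictory --- the contradiction only appears once one knows that the infinite cluster must be seen in \emph{every} branch.

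That last statement is precisely the missing lemma, and it is how the paper argues, more directly and without assuming uniqueness for contradiction. For $p\in(p_c,1)$ let $I$ be the union of all infinite clusters of $\omega_p$. \emph{Claim:} for every finite $K\subset V$, almost surely every infinite component $W$ of $G\setminus K$ intersects $I$. Proof by mass transport: each vertex sends unit mass to the nearest vertex of $I$ (nonempty a.s.\ since $p>p_c$). If $W\cap I=\emptyset$, then every $u\in W$ sends its mass to a vertex of $I$ outside $W$; any geodesic to it passes through $K$, so the receiving vertex lies within distance $\max_{k\in K}d(k,I)$ of $K$, a finite set. Thus infinitely many vertices send mass into a finite set, so some vertex (and hence, by re-rooting, the root with positive probability) receives infinite mass, contradicting expected mass in $=$ expected mass out $\le 1$; note no degree moment is needed here. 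Given the claim, take a finite $K$ such that $G\setminus K$ has at least two infinite components $W_1,W_2$, and condition on the positive-probability event that all edges incident to $K$ are closed: the a.s.\ claim persists, every infinite cluster now lies inside a single component of $G\setminus K$, and hence $W_1$ and $W_2$ each contain a whole infinite cluster. So uniqueness fails at every $p\in(p_c,1)$, and the case $p_c=1$ is trivial, giving $p_u=1$. Incidentally, this lemma would also close your own argument immediately: under your assumed uniqueness, $I=C$ meets all three infinite components of $G\setminus B_{R(\rho)}(\rho)$ a.s., which on your ``all boundary edges closed'' event forces the connected set $C$ to lie in three disjoint components at once.
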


\begin{proof}[Proof of \cref{propinftyendspu}]
  If $p_c=1$ the claim holds trivially, so suppose not.  Fix some
  $p\in(p_c,1)$, and let $I\subset V$ be the union of all infinite
  components of Bernoulli $p$-percolation.  We first observe that for
  every finite $K\subset V$, each infinite component of $G\setminus K$
  intersects $I$ almost surely.  Indeed, if not, we get a contradiction
  by transporting unit mass from each vertex to the nearest vertex of
  $I$.  There is some finite set $K$ so that $G\setminus K$ has multiple
  infinite connected components.  With positive probability all edges
  incident to $K$ are closed, and thus there are multiple infinite
  percolation components.
\end{proof}

\begin{prop}\label{prop:inftyendsdirichlet}
  Let $(G,\rho)$ be a unimodular random rooted graph with
  $\E[\deg(\rho)]<\infty$, and suppose that $G$ is infinitely ended
  almost surely. Then $G$ admits non-constant harmonic functions of
  finite Dirichlet energy almost surely.
\end{prop}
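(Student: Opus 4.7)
The plan is to construct a non-constant harmonic function of finite Dirichlet energy from a finite cutset separating two ends of $G$. First, by \cref{prop:endsamen}, infinitely-endedness implies that $(G,\rho)$ is invariantly nonamenable, and together with $\E[\deg(\rho)]<\infty$ this forces $G$ to be transient almost surely (via the bounded-degree, nonamenable spanning subforest produced at the end of \cref{sec:vel} together with Rayleigh monotonicity). Consequently, a.s.\ we may fix a finite set $K \subset V$ such that $G \setminus K$ contains at least two infinite connected components; let $C_1$ and $C_2$ be two of them.

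Let $f := \mathbbm{1}_{C_1}$. Since every edge on which $f$ varies has an endpoint in $K$, we have $\cE(f) \leq \sum_{v \in K}\deg(v) < \infty$. In the Hilbert space of Dirichlet functions modulo constants, take the orthogonal projection $h$ of $f$ onto the closed subspace of harmonic functions of finite Dirichlet energy (the Royden decomposition). Then $h$ is harmonic with
\[
\cE(h) \leq \cE(f) < \infty.
\]
Equivalently, $h$ admits the probabilistic representation $h(v) = \P_v(X_n \text{ is eventually in } C_1)$, which is well-defined because transience together with the finiteness of $K$ forces the walk to a.s.\ visit $K$ only finitely often and hence be eventually trapped in one infinite component.

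The key remaining step---and the main obstacle---is to verify that $h$ is non-constant. The cleanest route is through the electrical network interpretation: with the normalization of boundary value $1$ at the $C_1$-end and $0$ at the $C_2$-end, $\cE(h)$ equals the effective conductance $C_\infty$ between these ends. The Dirichlet principle applied to $f$ gives $C_\infty \leq |\partial_E C_1| < \infty$, while a flow-based (Thomson-principle) argument, using that one can route a unit current from the $C_1$-end through the finite cutset $K$ to the $C_2$-end with finite energy (each leg having finite energy by transience of $G$), yields $C_\infty > 0$; hence $h$ is non-constant. Alternatively, one may identify $h$ directly with the probabilistic hitting function above and apply a strong-Markov decomposition at the hitting time of a fixed vertex in $C_2$ by a walk started in $C_1$, using transience to guarantee strictly positive escape probability and thereby $h(v_1)>h(v_2)$ for $v_1\in C_1,\,v_2\in C_2$.
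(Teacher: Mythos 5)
Your overall skeleton (take $f=\mathbbm{1}_{C_1}$, note it has finite Dirichlet energy, project onto the harmonic Dirichlet functions / identify the harmonic part with $h(v)=\P_v(X_n \text{ eventually in } C_1)$) is the same standard mechanism the paper invokes via \cite[Exercise~9.23]{LP:book}. But the non-constancy step has a genuine gap: you claim one can route a unit current from the $C_1$-end through $K$ to the $C_2$-end with finite energy, ``each leg having finite energy by transience of $G$.'' Transience of $G$ does not give finite-energy unit flows to infinity \emph{inside} $C_1$ and inside $C_2$ separately (equivalently, positive probability for the walk to escape within each of these components without returning to $K$); it only gives one flow to infinity in $G$, which may escape entirely through a single component. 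Without transience of both components, $h$ can be constant: for a finite cutset whose other infinite component is ``recurrent towards $K$,'' the walk ends up in $C_1$ from everywhere and $h\equiv 1$. The same objection applies to your alternative strong-Markov argument, which again only uses transience of $G$. Concretely, your argument as written would apply verbatim to the (non-unimodular) graph obtained from $\Z^3$ by attaching a single infinite ray at one vertex --- transient, with a one-vertex cutset separating two infinite components --- and would ``prove'' the existence of a non-constant harmonic Dirichlet function; but any harmonic Dirichlet function there is constant along the ray and restricts to a harmonic Dirichlet function on $\Z^3$, hence is constant. So the conclusion cannot follow from the facts you use.

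What is missing is exactly the content of the paper's \cref{lem:transientends}: for a unimodular random rooted graph with infinitely many ends, \emph{every} infinite component of $G\setminus K$ is transient, for every finite $K$. This is where unimodularity enters beyond giving transience of $G$: the paper's proof combines hitting probabilities, the symmetry of the Green's function $\deg(x)G(x,y)$, and a mass-transport argument to rule out a recurrent component. Once two transient components are available, your flow/Thomson argument (or the probabilistic one) goes through and yields a non-constant harmonic function of finite Dirichlet energy, which is precisely how the paper concludes. To repair your proof, insert and prove this component-transience lemma (or cite it); as it stands, the step it replaces is false in general.
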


\cref{prop:inftyendsdirichlet} will require the following lemma.

\begin{lem}\label{lem:transientends}
  Let $(G,\rho)$ be a unimodular random rooted graph, and suppose that
  $G$ is infinitely ended almost surely.
  Then for every finite set $K \subset V$, every infinite connected
  component $W$ of $G \setminus K$ is transient.
\end{lem}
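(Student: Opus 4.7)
The plan is to produce a bounded-degree nonamenable subgraph of $G[W]$, from which transience of $G[W]$ follows by standard arguments (positive Cheeger constant plus bounded degree implies transience, which then passes to the supergraph $G[W]$ by Rayleigh monotonicity). Under the assumption $\E[\deg(\rho)]<\infty$, which is in force in the application to \cref{prop:inftyendsdirichlet}, \cref{prop:endsamen} gives that $(G,\rho)$ is invariantly nonamenable, and the theorem of Benjamini, Lyons and Schramm stated at the end of \cref{sec:vel} then yields an invariant percolation $\omega$ on $G$ that is a forest each component of which is nonamenable with degrees uniformly bounded by some constant $M$.

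Fix a finite $K\subset V$ and an infinite component $W$ of $G\setminus K$, and let $\omega|_W$ denote the subforest of $\omega$ with vertex set $V(\omega)\cap W$ and edge set those $\omega$-edges with both endpoints in $W$. My first structural claim is that $\omega|_W$ has at least one infinite connected component almost surely. Granting this, any such component is a subtree of a bounded-degree nonamenable $\omega$-component from which only the finitely many vertices in $K$ have been removed, and so retains bounded degree and a positive Cheeger constant; it is thus nonamenable and transient, providing the required transient subgraph of $G[W]$.

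To establish the claim, I observe that any finite component $C$ of $\omega|_W$ sits inside some infinite $\omega$-component $T$. Maximality of $C$ combined with the observation that every $G$-edge leaving $W$ must land in $K$ forces every $T$-edge leaving $C$ to land in $K$. Since $\omega$ has uniformly bounded degree $M$, at most $M|K|$ $\omega$-edges are incident to $K$, so $\omega|_W$ has at most $M|K|$ finite components, and if none were infinite then $V(\omega)\cap W$ would be finite.

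The main obstacle is ruling out this last scenario. My plan is to argue via a mass-transport argument using the canonical nearest-vertex map $\pi\colon V\to V(\omega)$, with ties broken by an auxiliary i.i.d.\ labelling whose inclusion in the marks preserves unimodularity. The mass transport principle applied to $\pi$ forces the Voronoi cells $\pi^{-1}(u)$ to have finite expected size, and the infinitely-ended hypothesis --- which via the Cantor-set structure of the end space ensures that $W$ contains infinitely many ends of $G$ and hence has $G$-vertices arbitrarily far from $K$ --- should prevent the infinite set $W$ from being covered by finitely many finite Voronoi cells centred on $V(\omega)\cap(W\cup K)$, yielding the desired contradiction with $|W|=\infty$.
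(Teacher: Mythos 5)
Your route is genuinely different from the paper's. The paper argues by contradiction with potential theory: if some infinite component $W$ of $G\setminus K$ were recurrent, the hitting probabilities $F(v,u)$ of each $u\in K$ from $v\in W$ would be bounded below, while transience of $G$ forces $F(u,v)\to 0$ along $W$; reversibility of the Green function then gives $G(v,v)/\deg(v)\to\infty$ along $W$, and a mass transport sending unit mass from each vertex to the nearest vertex $w$ with $G(w,w)/\deg(w)\le C$ produces a vertex receiving infinite mass. You instead import the Benjamini--Lyons--Schramm bounded-degree nonamenable forest $\omega$ (quoted at the end of \cref{sec:vel}) and show it meets $W$ in an infinite subtree. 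Your reduction steps are correct: every finite component of $\omega|_W$ uses a distinct $\omega$-edge into $K$, of which there are at most $M|K|$; deleting finitely many vertices from a bounded-degree nonamenable tree leaves every infinite component nonamenable, hence transient; and transience passes to $W$ by Rayleigh monotonicity. The lemma as stated carries no moment hypothesis, but the paper's own proof also passes through invariant nonamenability (\cref{prop:endsamen}) and transience of $G$, which use $\E[\deg(\rho)]<\infty$, so your explicit use of that assumption (which holds in the application, \cref{prop:inftyendsdirichlet}) is not a real divergence.

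The one genuine gap is in your final step. It is not true that $\pi(v)\in W\cup K$ for $v\in W$: the nearest open vertex to $v$ may lie beyond $K$ in another component of $G\setminus K$ (indeed $V(\omega)\cap(W\cup K)$ could a priori be empty), so $W$ need not be covered by cells centred on $V(\omega)\cap(W\cup K)$; and the appeal to the Cantor set of ends only yields vertices of $W$ far from $K$ (which already follows from local finiteness) and does not by itself produce a contradiction. The repair is short: if $v\in W$ and $\pi(v)\notin W$, then any geodesic from $v$ to $\pi(v)$ passes through some $k\in K$, and for every $w'\in V(\omega)$ the triangle inequality gives $d(k,w')\ge d(v,w')-d(v,k)\ge d(v,\pi(v))-d(v,k)=d(k,\pi(v))$, so $\pi(v)$ is a nearest open vertex to $k$. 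Hence if $V(\omega)\cap W$ were finite, all of $W$ would lie in the cells of the finite set consisting of $V(\omega)\cap W$ together with the nearest open vertices to the elements of $K$. Since the mass-transport argument (upgraded from finite expected cell size at the root to the almost sure finiteness of \emph{all} cells by the standard ``everything shows at the root'' argument) makes every cell finite, this contradicts $|W|=\infty$; moreover the contradiction is obtained deterministically on a single almost sure event, so it holds simultaneously for all finite $K$, as the quantifier in \cref{lem:transientends} requires. With this fix your proof is complete; note that essentially the same triangle-inequality observation is what is implicitly needed to see that some vertex receives infinite mass in the final step of the paper's own proof.
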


\begin{proof}
  Since $G$ has infinitely many ends a.s., we have that $(G,\rho)$ is
  invariantly nonamenable and hence that $G$ is transient.  Let $F(u,v)$
  be the probability that a simple random walk started at $u$ hits $v$.
  Suppose for a contradiction that there exists a finite set
  $K\subset V$ and a recurrent infinite connected component $W$ of
  $G \setminus K$.  It follows that for every vertex $v\in W$, a random
  walk started at $v$ must hit the set $K$ almost surely, and hence
  that, for each vertex $u$ of $K$, we have
  \[
    \inf_{v \in W} F(v,u) \geq \inf_{v \in W} F(v,K) \inf_{w\in K}
    F(w,u) > 0.
  \]
  Since $G$ is transient but $W$ is recurrent, we have that for every $\eps>0$ there exist at most
  finitely many vertices $v \in W$ such that $F(u,v) \geq \eps$, i.e.\
  $F(u,v)\to 0$ as $v\to\infty$ in $W$.  Symmetry of the Green's function
  $\deg(x) G(x,y)$ gives
  \[
    \frac{F(u,v) G(v,v)}{\deg(v)} = \frac{F(v,u) G(u,u)}{\deg(u)}.
  \]
  It follows that $G(v,v)/\deg(v)$ tends to infinity as $v\to\infty$ in
  $W$.

  Choose $C$ sufficiently large that $G(\rho,\rho)/\deg(\rho) \leq C$
  with positive probability.  Define a mass transport by, for each
  vertex $v$ of $G$, transporting a mass of $1$ to the closest vertex to
  $v$ that has $G(w,w,)/\deg(w) \leq C$.  (If there are multiple choices
  of the vertex $w$, choose one uniformly.)  Then every vertex sends a
  mass of at most one but, in the situation described, some vertices
  recieve an infinite amount of mass.  This contradicts the mass
  transport principle.
\end{proof}

\begin{proof}[Proof of \cref{prop:inftyendsdirichlet}]
  It is well-known that if $G$ is a graph and there exists a finite set
  $K$ such that $G\setminus K$ has multiple transient connected
  components, then $G$ admits a non-constant harmonic Dirichlet
  function: The probability that the walk eventually stays in a
  particular connected component is such a function (see
  \cite[Exercise~9.23]{LP:book}).
\end{proof}

\section{Soficity}

We are now ready to prove \cref{thm:soficmaps}.
A unimodular random rooted graph $(G,\rho)$ is called
\textbf{strongly sofic} if $(G,\rho,m)$ is sofic for every marking $m$
of $(G,\rho)$.  (The constant marking shows this implies soficity.)
One main ingredient to \cref{thm:soficmaps} is the following,
which was proven for Cayley graphs of free groups by \cite{Bowen03}.

\begin{thm}[Bowen \cite{Bowen03}; Elek \cite{Elek10}; Elek and Lippner
  \cite{ElekLipp10}; Benjamini, Lyons and Schramm \cite{URT}]
  \label{thm:sofictrees}
  Every unimodular random rooted tree is strongly sofic.
\end{thm}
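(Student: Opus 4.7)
The plan is to reduce the theorem to the strong soficity of unimodular random rooted trees (Theorem \ref{thm:sofictrees}) by using the free uniform spanning forest of $M$ as a canonical spanning tree and encoding all additional map structure as a marking on that tree. After passing to the ergodic decomposition, I first assume $\E[\deg(\rho)] < \infty$ so that \cref{thm:FUSFconnectivity} applies; the infinite expected degree case will be handled at the end by a vertex blow-up.

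Given $(M,\rho)$ with finite expected degree, sample the free uniform spanning forest $T$ of $M$, which by \cref{thm:FUSFconnectivity} is almost surely a spanning tree. Encode $(M,\rho)$ as a marked tree $(T,\rho,m)$ by attaching to each vertex $v$ a mark consisting of (i) the cyclic permutation $\sigma_v$ of the oriented edges of $M$ at $v$, with each edge labelled tree or non-tree, and (ii) for each non-tree directed edge $(v,u)$ at $v$, the unique path from $v$ to $u$ in $T$, recorded as a finite sequence of oriented tree-edges. The mark space is Polish. Since the marks are a measurable function of the isomorphism class of $(M,\rho,T)$, and $(M,\rho)$ is in turn recoverable from $(T,\rho,m)$, \cref{lem:unimodstability} yields that $(T,\rho,m)$ is a unimodular marked tree.

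By \cref{thm:sofictrees}, there is a sequence $(T_n,\rho_n,m_n)$ of finite marked trees with uniformly chosen roots $\rho_n$ converging to $(T,\rho,m)$ in the local topology. From each $(T_n,m_n)$ I construct a finite map $M_n$ as follows: start from the underlying graph $T_n$; for each vertex $v$ and each non-tree mark at $v$ whose encoded tree-path is entirely contained in $T_n$, add a non-tree edge from $v$ to the endpoint of the path (discarding any mark whose path exits $T_n$); and install the cyclic permutations $\sigma_v$, restricted to the edges actually present, to define the map structure via \cref{thm:permmaps}. The resulting maps $M_n$ will in general have high genus, as anticipated by \cref{thm:dichotomy}. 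Benjamini-Schramm convergence $(M_n,\rho_n)\to(M,\rho)$ is verified by a two-scale estimate: for any target radius $r$ and $\varepsilon>0$, choose $R$ large enough that with probability at least $1-\varepsilon$ every non-tree edge of $M$ appearing in the $r$-ball around $\rho$ has its encoding tree-path of length at most $R$; then local convergence of the marked trees at radius $R$ transfers to local convergence of the maps at radius $r$.

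The main technical obstacle is the ``dangling mark'' issue in the reconstruction: at vertices near the boundary of $T_n$, many non-tree marks will point to paths that exit $T_n$ and must therefore be discarded, so $M_n$ does not literally realise every mark in $m_n$. The two-scale argument absorbs this error because a uniformly chosen root $\rho_n$ is unlikely to lie close to the boundary of $T_n$ at any fixed radial scale, so with high probability the reconstruction is faithful in the $r$-neighbourhood of $\rho_n$. Finally, to dispose of the case $\E[\deg(\rho)] = \infty$, I replace each vertex $v$ of $M$ by a cycle of $\deg(v)$ auxiliary vertices, one per corner of $M$ at $v$, with each original edge of $M$ attached to the two corresponding corner vertices. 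This ``vertex blow-up'' $\tilde M$ is simply connected, has every vertex of degree exactly three (hence finite expected degree), and admits a canonical local contraction back to $M$ given by collapsing each auxiliary cycle; after re-rooting to compensate for the degree bias introduced by the blow-up, the first part of the argument applies to $\tilde M$ and soficity of $M$ follows by pushing the finite approximations through the contraction.
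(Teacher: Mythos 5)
Your proposal does not prove the stated theorem; in fact it \emph{assumes} it. The statement to be established is Theorem~\ref{thm:sofictrees}: every unimodular random rooted \emph{tree} is strongly sofic. Your very first sentence announces a plan ``to reduce the theorem to the strong soficity of unimodular random rooted trees (Theorem~\ref{thm:sofictrees})'' and then carries this out for a simply connected unimodular random rooted \emph{map} $M$, using the FUSF of $M$ and \cref{thm:FUSFconnectivity}. That is the content of Theorem~\ref{thm:soficmaps}, not Theorem~\ref{thm:sofictrees}; invoking~\ref{thm:sofictrees} to derive something that you then call a proof of~\ref{thm:sofictrees} is circular. Nothing in your argument produces finite approximations to a marked unimodular random tree from scratch --- you simply take them as given from~\ref{thm:sofictrees}.

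The tree theorem is a result with its own, quite different proof in the literature (Bowen for free-group actions, then Elek, Elek--Lippner, and Benjamini--Lyons--Schramm in the generality used here), and the paper under review does not reprove it --- it cites it and uses it, exactly as you do, on the way to Theorem~\ref{thm:soficmaps}. A genuine proof of~\ref{thm:sofictrees} has to construct the finite approximants directly on trees (e.g.\ via local-rule labelings or orbit-equivalence-relation techniques); no spanning forest or planarity is available as a reduction tool, since the object already is a tree. So the missing idea is the entire content of the theorem.

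As a side remark, the part of your proposal that addresses Theorem~\ref{thm:soficmaps} is reasonable in spirit and close to the paper (the paper routes through a general coupling lemma, \cref{thm:soficcouplings}, rather than your explicit path-encoding marks), but it inherits the $\E[\deg(\rho)]<\infty$ requirement from \cref{thm:FUSFconnectivity}. Your proposed vertex blow-up does not repair this: to make the blow-up unimodular you must re-root with weight proportional to $\deg(\rho)$, and that biasing is not a probability measure when $\E[\deg(\rho)]=\infty$. But the primary defect is the one above --- you have not proved the theorem you were asked to prove, and any proof along your lines would be circular.
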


The following theorem is an adaptation of a related theorem of Elek and
Lippner \cite{ElekLipp10} in the setting of group actions.  Even in our
setting, it is well-known to experts that treeable unimodular random
graphs (i.e., unimodular random graphs admitting a unimodular random
spanning tree) are sofic.

\begin{thm}\label{thm:soficcouplings}
  Let $(G_1,\rho_1)$ and $(G_2,\rho_2)$ be coupling equivalent
  unimodular random rooted graphs.  Then $(G_1,\rho_1)$ is strongly
  sofic if and only if $(G_2,\rho_2)$ is strongly sofic.
\end{thm}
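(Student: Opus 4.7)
The plan is to reduce the coupling equivalence statement to a pair of implications about connected percolations, parallel to the argument used for hyperfiniteness in \cref{lem:hypcoupling}. Let $(G,\rho,\omega_1,\omega_2)$ be a unimodular coupling of $(G_1,\rho_1)$ and $(G_2,\rho_2)$. I will prove that for any unimodular $(G,\rho)$ and any connected percolation $\omega$ on it (possibly equipped with auxiliary marks), the cluster $(\omega,\rho)\mid\omega(\rho)=1$ is strongly sofic if and only if the ambient $(G,\rho,\omega)$ is strongly sofic. Granting this, if $(G_1,\rho_1)$ is strongly sofic then viewing it as the cluster of $\omega_1$ inside the coupling gives strong soficity of $(G,\rho,\omega_1,\omega_2)$, and then viewing $(G_2,\rho_2)$ as the cluster of $\omega_2$ gives strong soficity of $(G_2,\rho_2)$; the reverse implication is symmetric.

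The easier direction passes from the ambient graph to the percolation cluster. Given any marking $m$ of $(\omega,\rho)\mid\omega(\rho)=1$, extend it to a marking $\hat m$ of all of $(G,\rho)$ using \cref{lem:couplingmarks2}. Apply strong soficity of the ambient graph to approximate $(G,\rho,\omega,\hat m)$ by finite marked graphs $(H_n,o_n,\omega_n,\hat m_n)$ with uniform roots, and consider the restriction to the cluster of $o_n$ in $\omega_n$ conditional on $\omega_n(o_n)=1$. Since $o_n$ is uniform on $V(H_n)$, conditioning on $\omega_n(o_n)=1$ yields $o_n$ uniform on the open vertices of $H_n$, and hence uniform within its cluster, so the restricted finite graphs approximate $(\omega,\rho,m)$ in distribution.

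The main obstacle lies in the reverse direction: lifting a finite approximation of the smaller subgraph $\omega$ to an approximation of the ambient $G$. The strategy is to encode the entire structure of $G$ as a marking of $\omega$. For each $u\in V(G)$, let $w(u)\in V(\omega)$ be chosen uniformly from the vertices of $\omega$ nearest to $u$ in the graph metric of $G$; this is well defined since $\omega$ is connected, and a mass-transport argument (as in the proof of \cref{lem:couplingmarks2}) shows each fiber $w^{-1}(v)$ is a.s.\ finite. Mark each vertex $v$ of $\omega$ with a measurable encoding of the induced subgraph of $G$ on $w^{-1}(v)$, of all edges of $G$ from $w^{-1}(v)$ to other fibers (each such edge identified by a pointer to its other endpoint, expressed as a walk in $\omega$ together with a label for the destination vertex inside the target fiber), and of all marks of $G$ on these vertices and edges. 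This yields a Polish-valued marking $m^\sharp$ of $(\omega,\rho)$ from which the rooted marked graph $(G,\rho,m)$ is measurably reconstructable once a root inside the fiber at $\rho$ is specified.

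Finally, apply strong soficity of $(\omega,\rho,m^\sharp)$ to obtain finite marked rooted graphs $(H_n,o_n,m^\sharp_n)$ with uniform root converging to $(\omega,\rho,m^\sharp)$. For each $n$ use $m^\sharp_n$ to reconstruct a finite graph $G_n$ by gluing the encoded fibers together, bias by $|w^{-1}(o_n)|$ (a measurable function of $m^\sharp_n$), and select a new root uniformly within the fiber of $o_n$. The biasing compensates for the varying fiber sizes so that the new root is uniform on $V(G_n)$, and a mass-transport argument analogous to the one at the end of the proof of \cref{lem:couplingmarks2} identifies the limiting law as $(G,\rho,m)$. The technical subtleties that will need care are ensuring that the reconstruction is continuous in the local topology and that the pointer-based encoding of inter-fiber edges genuinely defines a Polish mark space.
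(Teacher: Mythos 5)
Your overall architecture coincides with the paper's: reduce to the statement that for a connected percolation $\omega$ on a unimodular $(G,\rho)$, the ambient graph is strongly sofic if and only if the rooted cluster is; handle the easy direction via \cref{lem:couplingmarks2} and restriction of finite approximations; and, for the hard direction, encode $G$ in the marks of $\omega$ via the nearest-vertex fibers $w^{-1}(v)$, apply strong soficity of the cluster, and rebuild finite graphs with a fiber-size biasing and uniform re-rooting. All of that matches the published argument. The gap is in the decisive step of the hard direction: you reconstruct $G_n$ by ``gluing the encoded fibers together'' using one-sided pointers and then assert that the reconstructed graphs converge to $(G,\rho,m)$. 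This does not follow from the local convergence $(H_n,o_n,m^\sharp_n)\to(\omega,\rho,m^\sharp)$, because the reconstruction is not continuous (not even almost surely continuous) for the local topology: whether an edge is attached to a vertex of the root fiber is dictated by pointers stored at fibers whose base points may be at arbitrarily large $\omega$-distance from the root. Local convergence constrains only the laws of bounded balls of the marked graphs $(H_n,o_n,m^\sharp_n)$, so one can perturb the marks far from the root (e.g.\ plant many long pointer walks terminating in typical fibers) without disturbing local convergence, while making the degree of the new root in the reconstructed $G_n$ blow up; the limit would then fail to be $(G,\rho,m)$. The concluding mass-transport argument you invoke addresses the rooting bias, not this non-locality.

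What is missing is precisely the truncation device that the paper builds in. There, each edge's identity (an i.i.d.\ uniform label together with its mark) is stored in the mark data of \emph{both} endpoints, and for fixed parameters $R$ and $\eps$ one only draws an edge between vertices lying in fibers whose base points are within distance $R$ in $H_n$ and whose stored labels match to within $\eps$. This truncated reconstruction depends only on a bounded ball of the marked graph, hence is continuous, so the finite reconstructions converge to a graph $G^{R,\eps}$; letting $\eps\to0$ and then $R\to\infty$ recovers $(G,\rho,m)$ as a weak limit of sofic graphs, which suffices since soficity is closed under weak limits. Your scheme can be repaired along the same lines: symmetrize the pointer data (or specify a rule discarding unreciprocated or dangling pointers, which will occur in finite approximations whose marks are not globally consistent), include only inter-fiber edges whose pointer walks have length at most $R$, prove convergence for each fixed $R$, and only then take $R\to\infty$. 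Without some such cutoff-and-limit argument, the claimed convergence of the reconstructed graphs is unjustified.
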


The proof can be summarised as follows: Suppose $(G_1,\rho_1)$ is
strongly sofic, and let $m$ be a marking of $(G_2,\rho_2)$.  We can
encode both the structure of $G_2$ and the marks $m$ as a marking
$\hat m$ of $(G_1,\rho_1)$.  The strong soficity of $(G_1,\rho_1)$ allows
us to approximate $(G_1,\rho_1,\hat m)$ by a sequence of finite
graphs.  We then use this sequence to define an approximating sequence
for $(G_2,\rho_2,m)$.  Making this argument rigorous takes some care.

\begin{proof}
  Since $(G_1,\rho_1)$ and $(G_2,\rho_2)$ can both be considered as
  percolations on some unimodular random graph $(G,\rho)$, it suffices
  to prove that A unimodular graph is strongly sofic if and only if a
  connected percolation on it is strongly sofic.  Explicitly,
  if $(G,\rho)$ is a unimodular random rooted graph and
  $\omega$ is an almost surely connected percolation on $G$, then
  $(G,\rho)$ is strongly sofic if and only if the unimodular random
  rooted graph $(H,\rho')$ obtained from $(\omega,\rho)$ by conditioning
  on $\omega(\rho)=1$ is strongly sofic.

  First suppose that $(G,\rho)$ is strongly sofic and let $m$ be a
  marking of $(H,\rho')$.
  By \cref{lem:couplingmarks2}, there exists a marking $m$ of $G$ such
  that $(G,\rho,\omega,m)$ is unimodular and such that the law of
  $(H,\rho',m)$ coincides with the law of $(\omega,\rho,m)$ conditional
  on $\omega(\rho)=1$.  Since $(G,\rho)$ is strongly sofic, there exists
  a sequence of finite unimodular random marked graphs
  $(G_n,\rho_n,\omega_n,m_n)$ converging to $(G,\rho,\omega,m)$ in
  distribution.  Let $(H_n,\rho'_n,m_n)$ be the unimodular random rooted
  graph obtained from $(\omega_n,\rho_n,m_n)$ by conditioning on
  $\omega(\rho)=1$.  The sequence $(H_n,\rho'_n,m_n)$ converges to
  $(H,\rho',m)$ and, since $m$ was arbitrary, $(H,\rho')$ is strongly
  sofic.

  Suppose conversely that $(H,\rho')$ is strongly sofic and let $m$ be
  an $\bbX$-marking of $(G,\rho)$. By \cref{lem:couplingmarks}, we may
  assume that $(G,\rho,\omega,m)$ is unimodular. For each vertex $u$ of
  $G$, let $v(u)$ be chosen uniformly from the set of vertices in
  $\omega$ that minimize the graph distance to $u$, and for each vertex
  $v$ of $\omega$ let $U_v=\{v(u)=v\}$. Transporting mass $1$ from $u$
  to $v(u)$ for every vertex $u$ of $G$ shows that $\E|U_\rho|<\infty$,
  and in particular $U_v$ is finite for every vertex $v$ of $\omega$
  almost surley.
Conditional on $(G,\rho,\omega,m)$, let
$\{U(v):v\in V\}\cup\{U(e):e\in E\}$ be a collection i.i.d.\ uniform
$[0,1]$ random variables indexed by the vertices and edges of $G$, and
for each vertex $v$ of $G$ such that $\omega(v)=1$, define
\[ \hat{m}(v) = \left\{\left(U(u),m(u),\{ (U(e),m(e)) : e \text{ is an
        edge incident to } u \text{ in } G\}\right): u \in
    U_v\right\}. \]
If we denote the space of finite subsets of the metric space $X$ by $X^\ast$, then $\hat m$ takes values in the metric space
\[\left([0,1]\times\bbX \times \left([0,1]\times\bbX\right)^\ast\right)^\ast.\]
By conditioning on $\omega(\rho)=1$, we obtain a unimodular random
rooted marked graph $(H,\rho,\hat m)$.  Since $(H,\rho)$ is strongly
sofic, there exists a sequence of finite unimodular random rooted marked
graphs $(H_n,\rho_n,\hat{m}_n)$ converging in distribution to
$(H,\rho,\hat{m})$, and we may assume that $\E|\hat m_n(\rho_n)|<\infty$
for each $n\geq 1$. For each $\eps>0$ and $R \in \N$, bias
$(H_n,\rho_n,\hat m_n)$ by $|\hat m(\rho)|$ and construct a finite
unimodular random rooted marked graph
$(G_n^{R,\eps}, \rho_n, m_n^{R,\eps})$ from $(H_n,\rho'_n,\hat{m}_n)$ as
follows.
\begin{enumerate}
\item Let the vertex set of $G_n^{R,\eps}$ be the union
  $\bigcup_{v\in H_n}\hat m(v)$, and let $\rho_n$ be chosen uniformly
  from $\hat m(\rho'_n)$. For each vertex $u$ of $G_n^{R,\eps}$, let
  $u=(u^1,u^2,u^3)$ be the three coordinates of $u$, and let $m(u)=u^2$.
\item Draw an edge between two vertices $u_1$ and $u_2$ of $G_n^{R,\eps}$ if and only if
\begin{enumerate}\item
$u_1\in \hat m_n(v_1)$ and $u_2\in\hat m_n(v_2)$ for some vertices $v_1$ and $v_2$ of $H_n$ that are at distance at most $R$ in $H_n$, and
\item there exists a pair of points
  \[(t,x) \in u^3\subset [0,1]\times \bbX\text{ and }(s,y)\in u^3\subset
    [0,1]\times\bbX\] such that the distance between $(t,x)$ and $(s,y)$
  is less than $\eps$ in the product metric. If there are multiple such
  pairs, we draw multiple edges as appropriate.
\end{enumerate}
\item Let $\{Z_n(e)\}$ be a collection of i.i.d.\ Bernoulli-$1/2$ random
  variables indexed by the edges of $G_n^{R,\eps}$. For each edge $e$ of
  $G_n^{R,\eps}$, let $(t,x)$ and $(s,y)$ be the matching pair of points
  in $[0,1]\times\bbX$ that led us to draw $e$ in step (2), and let
  $m_n^{R,\eps}(e)=x$ if $Z_n(e)=0$ and $m_n^{R,\eps}(e)=y$ if
  $Z_n(e)=1$.
\end{enumerate}
This construction is continuous for the local topology, and hence for
each fixed $\eps$ and $R$, the finite marked graphs
$(G^{R,\eps}_n,\rho_n,m_n^{R,\eps})$ converge to the marked graph
$(G^{R,\eps},\rho,m^{R,\eps})$ defined by applying the same procedure to
$(H,\rho',\hat{m})$. Taking $\epsilon \to 0$, we obtain the marked graph
$(G^R,\rho,m^R)$ which consists of those edges of $(G,\rho)$ whose
endpoints are of distance at most $R$ in $\omega$.  Finally, taking
$R \to \infty$ we recover $(G,\rho,m)$. Thus, $(G,\rho,m)$ is a weak
limit of sofic unimodular random rooted marked graphs, and it follows
that $(G,\rho,m)$ is sofic.
\end{proof}

\begin{proof}[Proof of \cref{thm:soficmaps}.] Let $(M,\rho)$ be a simply
  connected unimodular random rooted map, let $(G,\rho)$ be the
  underlying graph of $M$, and let $\F$ be a sample of $\FUSF_M$. By
  \cref{thm:FUSFconnectivity}, $\F$ is connected almost surely, and so
  $(G,\rho)$ is coupling equivalent to the unimodular random rooted tree
  $(\F,\rho)$. It follows from \cref{thm:sofictrees,thm:soficcouplings}
  that $(G,\rho)$ is strongly sofic.  By encoding the map $(M,\rho)$ as
  a marking of $(G,\rho)$ as in \cref{Sec:unimod} and \cite[Example
  9.6]{AL07}, we conclude that the unimodular random rooted map
  $(M,\rho)$ is also sofic.
\end{proof}

\section{Open Problems}

We expect that the dichotomy of \cref{thm:dichotomy} extends to many further properties of planar unimodular random rooted maps. In this section, we discuss several such properties that might be addressed.

\subsection{Rates of escape of the random walk}

Can the type of a unimodular random planar map be determined by the rate of escape of the random walk? The work of Ding, Lee, and Peres \cite{ding2013markov} (together with the characterization of parabolic unimodular random planar maps as Benjamini-Schramm limits of finite planar maps) implies that the random walk is at most diffusive on any parabolic unimodular random planar map of finite expected degree.

\begin{thm}[\cite{ding2013markov}]
There exists a universal constant $C$ such that for every
parabolic unimodular random rooted map $(M,\rho)$ with $\E[\deg(\rho)]<\infty$, we have
\[\E\left[\deg(\rho) d(\rho,X_n)^2\right] \leq Cn \, \E\left[\deg(\rho)\right]\]
for all $n\geq 0$.
\end{thm}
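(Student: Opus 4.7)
The plan is to combine the theorem of Ding, Lee, and Peres \cite{ding2013markov} that every finite planar graph has Markov type $2$ with a universal constant, together with the characterisation of parabolic unimodular random planar maps (with finite expected root degree) as Benjamini-Schramm limits of finite planar maps, which is given by the equivalence of \eqref{iAmen} and \eqref{iBS} in \cref{thm:dichotomy} and the explicit hyperfinite exhaustion produced in \cref{cor:invariantexhaustion}.

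First I would verify the finite case. If $G$ is a finite planar graph, $\rho$ is chosen uniformly from $V(G)$, and $(X_t)_{t \geq 0}$ is simple random walk on $G$ started at $\rho$, a short calculation relating expectation under the uniform measure to expectation under the stationary measure $\pi(v) = \deg(v)/2|E(G)|$ yields
\[
  \E\!\left[\deg(\rho)\,d(\rho,X_t)^2\right] \;=\; \E[\deg(\rho)]\cdot \E_\pi\!\left[d(X_0,X_t)^2\right].
\]
The Markov type $2$ property of \cite{ding2013markov} bounds the second factor by $Ct$ with a universal constant $C$, using that $d(X_0,X_1) = 1$ almost surely for simple random walk, so the claimed inequality holds for every finite planar graph.

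To transfer the inequality to the infinite setting, I would take a finitary exhaustion $(\omega_i)_{i\ge 1}$ of $(M,\rho)$ produced by hyperfiniteness, set $M_i = K_{\omega_i}(\rho)$ equipped with the map structure inherited from $M$, and invoke \cref{lem:unimodclusters} to see that $(M_i,\rho)$ is a finite unimodular random rooted planar map with $\rho$ conditionally uniform in $M_i$. The finite version then gives
\[
  \E\!\left[\deg_{M_i}(\rho)\, d_{M_i}(\rho,X_t^{(i)})^2\right] \;\leq\; C t\, \E[\deg_{M_i}(\rho)]
\]
for every $i$, where $X^{(i)}$ denotes simple random walk on $M_i$ started at $\rho$.

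The final step is to let $i \to \infty$. Since $M_i \uparrow M$ and the walk on $M$ visits at most $t+1$ vertices in the first $t$ steps, one may couple $X^{(i)}$ and $X$ so that the two processes agree on $[0,t]$ with probability tending to $1$, giving almost sure convergence of $d_{M_i}(\rho,X_t^{(i)})$ to $d_M(\rho,X_t)$. All distances are uniformly bounded by $t$, so the integrand on the left is dominated by $\deg_M(\rho)\,t^2$, which is integrable by the hypothesis $\E[\deg(\rho)]<\infty$; dominated convergence then handles the left side, while monotone convergence in $\deg_{M_i}(\rho) \uparrow \deg_M(\rho)$ handles the right. The main point requiring care is constructing the coupling and producing an integrable dominating function, and both are resolved cleanly by the crude bound $d(\rho,X_t) \leq t$ on walks of length $t$ together with the finite expected degree assumption.
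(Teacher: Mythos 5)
Your proposal is correct and follows exactly the route the paper indicates (the paper only sketches it in one sentence): the Markov type $2$ theorem of Ding, Lee, and Peres for finite planar graphs combined with the characterisation of parabolic maps as Benjamini--Schramm limits of finite planar maps via a hyperfinite exhaustion. Your limiting argument, with the degree-biasing identity in the finite case and the coupling plus dominated/monotone convergence step, supplies the details the paper leaves implicit.
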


On the other hand, if $(M,\rho)$ is a hyperbolic unimodular random rooted map with $\E[\deg(\rho)]<\infty$ that has at most exponential growth, meaning that
\[\limsup_{n\to\infty} \frac{1}{n}\log |B(\rho,n)| < \infty,\]
then the random walk on $M$ has positive speed, that is,
\[\lim_{n\to\infty} \frac{1}{n}d(\rho,X_n) > 0 \]
a.s., where the limit exists a.s.\ by Kingman's subadditive ergodic theorem. (Note that the exponential growth condition always holds for graphs of bounded degree.) This can be seen in several ways: it is an easy consequence of a theorem of Benjamini, Lyons, and Schramm \cite[Theorem 3.2]{BLS99} (see also \cite[Theorem 8.13]{AL07} and \cite[Theorem 3.2]{AHNR15}) that every invariantly nonamenable unimodular random rooted graph with finite expected degree and at most exponential growth has positive speed. Meanwhile, it
is a result of Benjamini and Curien~\cite{BC2011}, generalizing the work of  Kaimanovich, Vershik, and others \cite{kaimanovich2002boundary,kaimanovich1983random,kaimanovich2004boundaries,kaimanovich2003random,kaimanovich1998hausdorff,kaimanovich1990boundary}, that every non-Liouville unimodular random rooted graph with finite expected degree and at most exponential growth has positive speed.

In general, however, there do exist invariantly nonamenable, non-Liouville, unimodular random rooted graphs with finite expected degree such that the random walk has zero speed almost surely. An example of such a graph will appear in a forthcoming paper by the second author. We do not know of a planar example, which motivates the following question.

\begin{question}
Let $(M,\rho)$ be a hyperbolic unimodular random rooted planar map with $\E[\deg(\rho)]<\infty$. Does the random walk on $M$ have positive speed almost surely?
\end{question}

See \cite{AHNR15} for a related result concerning the positivity of the speed in the hyperbolic metric induced by the circle packing of a hyperbolic unimodular random rooted triangulation with $\E[\deg(\rho)^2]<\infty$.

\subsection{Positive harmonic functions}

\cref{thm:dichotomy} states that the existence of non-constant bounded harmonic functions and of non-constant harmonic Dirichlet functions are both determined by the type. We conjecture that a similar result holds for positive harmonic functions.

\begin{conjecture}
Let $(M,\rho)$ be a parabolic unimodular random rooted map and suppose that $\E[\deg(\rho)]<\infty$. Then $M$ does not admit any non-constant positive harmonic functions almost surely.
\end{conjecture}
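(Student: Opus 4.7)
The plan is to reduce to the ergodic case and then split according to whether $M$ is almost surely recurrent or transient. The recurrent subcase is classical: given any positive harmonic function $h$ and vertices $u,v$, recurrence ensures the simple random walk from $u$ almost surely hits $v$, and applying Fatou's lemma to the nonnegative martingale $h(X_n)$ stopped at this hitting time gives $h(u) \geq h(v)$, with equality by symmetry.

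In the transient subcase, which is the crux, the approach I would pursue is a reversibility argument. Suppose a positive harmonic function could be chosen as a measurable function of the isomorphism class: that is, suppose there existed a Borel map $h:\cG_\bullet \to (0,\infty)$ with $v \mapsto h(M,v)$ positive harmonic on $M$ almost surely and assigning the same value to isomorphic rooted graphs. Then $F(M,u,v) := h(M,v)/h(M,u)$ descends to a function on doubly rooted isomorphism classes, and under the $\deg(\rho)$-biased measure the reversibility identity $(M,\rho,X_n) \eqd (M,X_n,\rho)$ of \cref{Sec:reverse}, combined with the martingale relation $\E[F(M,\rho,X_n)\mid M,\rho]=1$, would yield $\E[F(M,X_n,\rho)]=\E[h(M,\rho)/h(M,X_n)]=1$. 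A conditional application of Jensen's inequality to the convex function $1/x$ then forces $h(M,X_n)=h(M,\rho)$ almost surely, and irreducibility of the random walk implies $h(M,\cdot)$ is almost surely constant.

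The main obstacle, and the reason this is stated as a conjecture, is that a non-constant positive harmonic function need not arise from such an iso-invariant selection. On the $3$-regular tree, for instance, non-constant minimal positive harmonic functions are in bijection with the ends, all of which are exchanged by automorphisms, so no canonical iso-invariant choice is possible, and indeed the argument above correctly does not rule out the positive harmonic functions there. To prove the conjecture one must therefore either construct such a canonical selection using the extra structure of parabolic planar maps, or argue more directly. A natural alternative route is via conformality: by \cref{thm:dichotomy} the Riemann surface $\cR(M)$ is conformally equivalent to $\mathbb{C}$ or $\mathbb{C}/\mathbb{Z}$, and classical Liouville theory shows these surfaces carry no non-constant positive harmonic functions in the continuous sense. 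A discretization theorem in the spirit of Kanai, transferring this triviality from the continuous conformal setting to the discrete graph and adapted to the unbounded-degree unimodular setting, would close the argument; developing such a discretization, and reconciling it with the non-uniqueness of minimal positive harmonic functions on the graph, appears to be the main technical hurdle.
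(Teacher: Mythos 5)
There is a genuine gap here, and in fact the statement you were asked about is stated in the paper as an open \emph{conjecture}: the paper offers no proof, only the remark that it would follow from a positive answer to \cref{Question:trace} (that the complement of the random walk trace has only finite components), which is a different route from either of the ones you sketch. Your own write-up acknowledges this, and the acknowledgement is accurate: what you have proved covers only (a) the recurrent case and (b) the non-existence of an \emph{iso-invariant measurable selection} of a positive harmonic function via the reversibility-plus-Jensen argument. Neither suffices. Parabolic unimodular maps with finite expected degree need not be recurrent (the paper stresses this in the introduction; recurrence is deliberately absent from the list in \cref{thm:dichotomy} and is only recovered under an exponential tail on $\deg(\rho)$ by \cite{GGN13}), so the transient parabolic case genuinely occurs. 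And the existence of a non-constant positive harmonic function on $M$ is a pointwise statement about the graph; it is not refuted by showing that no equivariant Borel choice $h(M,v)$ exists, exactly as your own $3$-regular tree example illustrates. So the conclusion of the conjecture is never reached for transient parabolic maps.

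The alternative route you propose --- transferring the triviality of positive harmonic functions on $\C$ or $\C/\Z$ through the conformal structure of $\cR(M)$ via a Kanai-type rough-isometry discretization --- is plausible in spirit but is precisely where the difficulty lives: Kanai-type comparison theorems require bounded geometry (bounded degree and comparable edge lengths), which is exactly the hypothesis this paper works to avoid, and the embedding of $M$ in $\cR(M)$ has unbounded distortion at high-degree vertices and large faces. Moreover, positive (as opposed to bounded or Dirichlet) harmonic functions are not generally stable under the kind of couplings and rough equivalences used throughout the paper, so even granting conformal parabolicity of $\cR(M)$ one cannot simply pull the Liouville-type statement back to the graph without new estimates. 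In short: the recurrent case and the Jensen argument are fine as far as they go, but the transient case is untouched, and that is the whole content of the conjecture.
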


This conjecture would follow from a positive answer to the following question.

\begin{question}\label{Question:trace}
  Let $(M,\rho)$ be a parabolic unimodular random rooted map and suppose
  that $\E[\deg(\rho)]<\infty$. Let $\langle X_n \rangle_{n\geq0}$ be a
  simple random walk on $M$. Is every component of the complement of the
  trace of $\langle X_n \rangle_{n\geq0}$ finite almost surely?
\end{question}

Note that the answer to \cref{Question:trace} is trivially positive if $M$ is recurrent.

\subsection*{Acknowledgment}

OA was supported by NSERC and the Simons Foundation.
TH was supported by a Microsoft Research PhD Fellowship.
AN was supported by ISF grant 1207/15, and ERC starting grant 676970 RANDGEOM.
GR was supported in part by EPSRC grant EP/I03372X/1.
Part of this work was conducted at the Isaac Newton Institute in
Cambridge, during the programme `Random Geometry' supported by EPSRC
Grant Number EP/K032208/1.

\bibliographystyle{abbrv}
\bibliography{unimodular}







\end{document}